\theoremstyle{plain}
\newtheorem{thm}{Theorem}[section]
\newtheorem{prop}[thm]{Proposition}
\newtheorem{lemma}[thm]{Lemma}
\newtheorem{cor}[thm]{Corollary}
\newtheorem*{fact}{Fact}
\theoremstyle{remark}
\newtheorem*{claim}{Claim}
\newtheorem*{remark}{Remark}
\theoremstyle{definition}
\newtheorem*{dfn}{Definition}
\newtheorem{example}{Example}
\newcommand{\co}{\colon\thinspace}
\newcommand{\bound}{\partial}
\newcommand{\Q}{\mathbb{Q}}
\newcommand{\C}{\mathbb{C}}
\newcommand{\R}{\mathbb{R}}
\renewcommand{\c}{\mathcal{C}}
\newcommand\calh{\mathcal{H}}
\newcommand{\p}{\mathcal{P}}
\newcommand{\s}{\mathcal{S}}
\newcommand{\calc}{\mathcal{C}}
\newcommand{\call}{\mathcal{L}}
\newcommand{\calo}{\mathcal{O}}
\newcommand{\calp}{\mathcal{P}}
\newcommand{\calq}{\mathcal{Q}}
\newcommand{\calt}{\mathcal{T}}
\newcommand{\olp}{\overline{\mathcal{P}}}
\newcommand{\cat}{\ensuremath\mathrm{CAT}}
\newcommand\crush{crushtacean}
\newcommand\calb{\mathcal{B}}
\newcommand\sfa{\mathsf{a}}
\newcommand\sfb{\mathsf{b}}
\newcommand\sfc{\mathsf{c}}
\newcommand\sfd{\mathsf{d}}
\newcommand\sfg{\mathsf{g}}
\newcommand\sfs{\mathsf{s}}
\newcommand\bv{\mathbf{v}}
\newcommand\bw{\mathbf{w}}
\definecolor{gray1}{gray}{0.6}
\begin{document}

\title{Some virtually special hyperbolic $3$-manifold groups}

\author{Eric Chesebro}
\address{Department of Mathematical Sciences, University of Montana}
\email{Eric.Chesebro@mso.umt.edu}

\author{Jason DeBlois}
\address{Department of MSCS, University of Illinois at Chicago}
\email{jdeblois@math.uic.edu}
\thanks{Second author partially supported by NSF grant DMS-0703749}

\author{Henry Wilton}
\address{Department of Mathematics, Caltech}
\email{wilton@caltech.edu}
\thanks{Third author partially supported by NSF grant DMS-0906276}

\begin{abstract}
Let $M$ be a complete hyperbolic 3-manifold of finite volume that admits a decomposition into right-angled ideal polyhedra.  We show that M has a deformation retraction that is a virtually special square complex, in the sense of Haglund and Wise {and deduce that such manifolds are virtually fibered.   We generalise a theorem of Haglund and Wise to the relatively hyperbolic setting and deduce that ${\pi_1}M$  is LERF and that the geometrically finite subgroups of ${\pi_1}M$ are virtual retracts.  }Examples of 3-manifolds admitting such a decomposition include augmented link complements.  We classify the low-complexity augmented links and describe an infinite family with complements not commensurable to any $3$-dimensional reflection orbifold.
\end{abstract}

\maketitle

\section{Introduction} \label{sec:intro}

Let $\{ \p_i \}_{i=1}^n$ be a collection of disjoint ideal polyhedra in $\mathbb{H}^3$.  A \textit{face pairing} on $\{ \p_i \}$ is a collection of isometries $\{\phi_f\, |\, f \in \p_i^{(2)}\, 1 \leq i \leq n\}$ of $\mathbb{H}^3$ with the following properties.  If $f$ is a face of $\p_i$, $\phi_f$ takes $f$ onto a face $f'$ of some $\p_j$, with $\phi_f(\calp_i) \cap \calp_j = f'$, and $\phi_{f'} = \phi_f^{-1}$.  Now let $M$ be a complete hyperbolic $3$-manifold with finite volume.  An \textit{ideal polyhedral decomposition} of $M$ is an isometry between $M$ and a quotient $\bigsqcup_i \p_i/\sim$, where $\sim$ is the equivalence relation generated by a face pairing on $\{\p_i\}$.  If the dihedral angles of every polyhedron $\p_i$ are all equal to $\pi/2$ then the decomposition is called an \emph{ideal right-angled polyhedral decomposition}.

Our first result relates fundamental groups of 3-manifolds that admit ideal right-angled polyhedral decompositions to the class of right-angled Coxeter groups.  A \emph{right-angled Coxeter group} $W$ is defined by a finite, simplicial graph $\Delta$ (called the \emph{nerve} of $W$) and has an easily described presentation: the generators are the vertices; every generator is an involution; and the commutator of two generators is trivial if and only if they are joined by an edge in $\Delta$.  We will refer to the vertices of the nerve as the \emph{standard generating set} for $W$.  The properties of such $W$ discovered in \cite{Agol} and \cite{Hag} will particularly concern us.

\begin{thm} \label{rt ang cox}  Suppose $M$ is a complete hyperbolic $3$-manifold with finite volume that admits a decomposition into right-angled ideal polyhedra.  Then $\pi_1M$ has a subgroup of finite index isomorphic to a word-quasiconvex subgroup of a right-angled Coxeter group (equipped with the  standard generating set).  \end{thm}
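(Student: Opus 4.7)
The plan is to build a two-dimensional square complex $X$ dual to the polyhedral decomposition of $M$, show that $M$ deformation retracts onto $X$, verify that $X$ is a non-positively curved cube complex that is virtually special in the sense of Haglund and Wise, and then invoke their embedding theorem.

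To construct $X$, I would take one vertex for each polyhedron $\p_i$, one edge for each face of the decomposition (joining the vertices of the two polyhedra across the face, and possibly a loop), and one $2$-cell for each edge of the decomposition. Since every interior edge is right-angled, exactly four polyhedra meet around it, so each such $2$-cell is a square; there are no $3$-cells because all vertices of the $\p_i$ are ideal. After truncating cusp neighborhoods, $X$ will sit inside $M$ as a spine, and the usual dual-cell retraction gives a deformation retraction of $M$ onto $X$. Gromov's link condition is then straightforward to verify: the link of the vertex dual to $\p_i$ has the faces of $\p_i$ as its vertices and the edges of $\p_i$ as its edges, so a triangle in this link would mean three pairwise adjacent faces meeting at a common vertex of $\p_i$. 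But the link of each ideal vertex in $\mathbb{H}^3$ is a Euclidean polygon with right angles, hence a rectangle, so exactly four faces meet at every vertex. Thus the link is triangle-free and $X$ is locally $\cat(0)$.

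The main step, and the principal obstacle, is to show that $X$ is virtually special: in some finite regular cover $\wt X$, the hyperplanes should be two-sided, embedded, and free of direct self-osculation and inter-osculation. Each hyperplane of $X$ corresponds to a maximal ``sheet'' of faces in the polyhedral decomposition, and the plan is to show that each potential pathology---non-two-sidedness, self-intersection, direct self-osculation, and inter-osculation of sheets---can be killed in a finite cover in which the offending sheet or pair of sheets is separated. Controlling the combinatorics of the face pairings and their interaction with the sheet structure, perhaps by first proving that $\pi_1 M$ acts on suitable separating walls with enough residual freedom, is where most of the work will be.

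Once virtual specialness is in hand, the Haglund--Wise embedding theorem gives that a finite-index subgroup of $\pi_1 M \cong \pi_1 X$ embeds as a word-quasiconvex subgroup of the right-angled Artin group on the hyperplane graph of the special cover. Combining with the standard Davis--Januszkiewicz embedding of a right-angled Artin group as a finite-index subgroup of a right-angled Coxeter group (on a doubled vertex set) yields the desired word-quasiconvex embedding of a finite-index subgroup of $\pi_1 M$ into a right-angled Coxeter group equipped with its standard generating set.
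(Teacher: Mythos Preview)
Your outline matches the paper's at the architectural level: build a dual square complex that is a spine of $M$, check non-positive curvature, prove virtual specialness, and invoke Haglund--Wise. But the proposal has a genuine gap precisely at the step you flag as ``where most of the work will be.'' You do not supply any mechanism for removing hyperplane pathologies in a finite cover, and the phrase ``enough residual freedom'' does not name one. The paper's key idea here is \emph{geometric}: each hyperplane of the dual complex is carried by an immersed totally geodesic surface in $M$ (the union of faces reachable by ``flat neighbor'' moves across edges), so the hyperplane subgroup is the stabilizer of a geodesic plane in $\mathbb{H}^3$. Long's separability theorem for such stabilizers then makes the hyperplane subgroups separable in $\pi_1 M$, and \cite[Corollary 8.9]{HW} converts separability into a finite cover with no self-intersection or self-osculation. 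Inter-osculation is handled by a separate combinatorial trick: after passing to a twofold cover if necessary, the decomposition becomes \emph{checkered} (faces two-colorable so that adjacent faces have opposite colors), and a short argument shows that hyperplanes of the same color can only osculate while hyperplanes of different colors can only cross, so inter-osculation is impossible. Without the totally geodesic interpretation and the checkering, your plan has no leverage on the virtual specialness step.

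A smaller technical point: your link-condition argument is incomplete. A triangle in the vertex link is a triple of faces of $\p_i$ that are pairwise adjacent along edges; such faces need \emph{not} share a common ideal vertex (this is exactly a prismatic $3$-circuit). Your observation that four faces meet at each ideal vertex only rules out three faces sharing a vertex, not prismatic $3$-circuits. The paper invokes Andreev's theorem, which forbids prismatic $3$- and $4$-circuits in a right-angled ideal polyhedron, to conclude that the link has no cycles of length less than four.
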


See Section \ref{sec:QCERF} for the definition of word quasiconvexity.  In the terminology of \cite{HW}, Theorem \ref{rt ang cox} asserts that $\pi_1M$ is \emph{virtually special}.  The proof relies on work of Haglund--Wise \cite{HW} defining a class of \textit{special} cube complexes --- non-positively curved cube complexes whose hyperplanes lack certain pathologies --- which are locally isometric into cube complexes associated to right-angled Coxeter groups.  In Section \ref{cube cx} we review the relevant definitions and in Section \ref{std square} describe a \textit{standard} square complex associated with an ideal polyhedral decomposition of a hyperbolic $3$-manifold.

When an ideal polyhedral decomposition is right-angled, the associated standard square complex is non-positively curved, and hyperplanes are carried by totally geodesic surfaces.  We will establish these properties in Subsection \ref{std square} and Section \ref{sec:geodesic hyperplanes}.  Separability properties of totally geodesic surfaces then imply that pathologies may be removed in finite covers.  We describe these properties and prove Theorem \ref{rt ang cox} in Section \ref{sec:separability}.

This result has important consequences for the geometry and topology of such manifolds.  The first follows directly from work of Agol \cite{Agol}, and confirms that the manifolds we consider satisfy Thurston's famous Virtually Fibered Conjecture.

\begin{cor}  Suppose $M$ is a complete hyperbolic $3$-manifold with finite volume that admits a decomposition into right-angled ideal polyhedra.  Then $M$ is virtually fibered.  \end{cor}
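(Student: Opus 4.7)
The plan is to combine Theorem \ref{rt ang cox} with the virtual fibering criterion of Agol from \cite{Agol}. By Theorem \ref{rt ang cox}, there is a finite-index subgroup $H \leq \pi_1 M$ that embeds as a word-quasiconvex subgroup of a right-angled Coxeter group $W$. A key result in \cite{Agol} shows that every right-angled Coxeter group satisfies the RFRS (residually finite rational solvable) condition, and since RFRS is clearly inherited by subgroups, $H$ itself is RFRS. Let $\widetilde{M} \to M$ denote the finite cover corresponding to $H$, passing to an orientation double cover first if necessary (this does not disturb the RFRS property, which is stable under passage to finite-index subgroups and supergroups in an obvious sense for our purposes).

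It then suffices to invoke Agol's main theorem from \cite{Agol}: a compact, orientable, irreducible $3$-manifold with (empty or) toroidal boundary, positive first Betti number, and RFRS fundamental group is virtually fibered; and a further finite cover with positive first Betti number is always available from the RFRS filtration of an infinite RFRS group. The compact core of $\widetilde{M}$ satisfies these hypotheses (irreducibility and the toroidal boundary condition are automatic for a finite-volume hyperbolic $3$-manifold), so some finite cover of $\widetilde{M}$ fibers over $S^1$. Viewing the same cover as a finite cover of $M$ exhibits $M$ as virtually fibered.

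The only substantive step is the application of Agol's theorem, which is quoted directly from \cite{Agol}; there is effectively no obstacle beyond the citation, which is exactly what the authors mean by saying the corollary follows ``directly'' from Agol's work. In a fully rigorous write-up, the only mild bookkeeping would be to confirm that the standard reductions (passing to an orientation cover, then to a cover with $b_1 > 0$) can be performed while keeping $\widetilde{M}$ a finite cover of $M$, but each of these reductions is routine in the RFRS setting.
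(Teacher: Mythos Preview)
Your proposal is correct and follows exactly the route the paper intends: the paper gives no proof beyond the sentence ``follows directly from work of Agol \cite{Agol}'', and you have simply spelled out that deduction --- Theorem \ref{rt ang cox} gives a finite-index subgroup of $\pi_1 M$ inside a right-angled Coxeter group, Agol shows such groups are RFRS, RFRS passes to subgroups, and Agol's fibering criterion applies to the (orientable) compact core.  One small remark: Agol's theorem (Theorem 5.1 of \cite{Agol}) does not require $b_1>0$ as a separate hypothesis, so your side comment about arranging positive first Betti number is unnecessary, and your aside about RFRS being stable under finite-index \emph{supergroups} is both false in general and not needed here --- but neither point affects the validity of the argument.
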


{A $3$-manifold that satisfies the hypotheses of Theorem \ref{rt ang cox} is necessarily not compact, so its fundamental group is not hyperbolic in the sense of Gromov, but rather hyperbolic \emph{relative to} the collection of its cusp subgroups.  }Nonetheless, {our second theorem } implies that {its }subgroup structure shares the separability properties of {its }compact {cousins}.  This generalizes \cite[Theorem 1.3]{HW} to the relatively hyperbolic setting.  We say a subgroup $H$ of a group $G$ is a \emph{virtual retract} if $H$ is contained in a finite-index subgroup $K$ of $G$ and the inclusion map $H\hookrightarrow K$ has a left inverse.  {(See \cite{long_subgroup_2008} for further details of virtual retractions.)}

\begin{thm}\label{rel qc still sep}
Let $\s$ be a compact, virtually special cube complex and suppose that $\pi_1\s$ is hyperbolic relative to a collection of finitely generated abelian subgroups.  Then every relatively quasiconvex subgroup of $\pi_1\s$ is a virtual retract.  \end{thm}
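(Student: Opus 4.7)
The plan is to adapt the proof of \cite[Theorem~1.3]{HW} from the hyperbolic to the relatively hyperbolic setting, replacing the Haglund--Wise canonical completion (which requires a compact convex subcomplex) with a relative version that treats the cusp regions separately using the hypothesis that the peripheral subgroups are abelian.

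After passing to a finite cover I may assume that $\s$ is itself special, so that $\pie\s$ embeds in a right-angled Artin group and $\wt{\s}$ is a CAT(0) cube complex on which $\pie\s$ acts properly and cocompactly. Given a relatively quasiconvex subgroup $H\leq\pie\s$, I would invoke work of Hruska on relatively quasiconvex subgroups of cubulated relatively hyperbolic groups to produce an $H$-invariant convex subcomplex $\wt Y\subseteq\wt\s$ such that $\wt Y/H$ decomposes as a compact ``core'' $K$ together with finitely many cubical cusp regions $Y_1,\ldots,Y_n$, each stabilised by a peripheral subgroup $P_i$ of $H$. Each $P_i$ is a finitely generated abelian group, being a subgroup of a peripheral subgroup $Q_j$ of $\pie\s$.

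I would then apply the Haglund--Wise canonical completion to the compact core $K$ to obtain a finite cover $\widehat\s_0\to\s$ in which $K$ embeds as a retract. Independently, for each cusp, since $Q_j\supseteq P_i$ is finitely generated abelian, $P_i$ is a retract of some finite-index subgroup of $Q_j$ (a direct summand after passing to a power). Translating this to the cubical setting yields, for each $i$, a finite cover of the cubical cusp neighbourhood of $\s$ into which $Y_i$ embeds with a retraction onto it.

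The main obstacle is assembling these pieces into a single virtual retraction. I would form a common refinement (fibre product) of $\widehat\s_0$ with the cusp covers, and then verify that the local retractions can be chosen compatibly along the overlaps where the core meets the cusps. The flexibility afforded by the abelian hypothesis---additive splittings on finite-index subgroups of $Q_j$ can be freely adjusted---should allow the peripheral retractions to be modified so as to agree with the restriction of the core retraction on the overlap. Putting everything together produces a finite cover of $\s$ containing a subcomplex with fundamental group $H$ as a retract, proving that $H$ is a virtual retract of $\pie\s$.
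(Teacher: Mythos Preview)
Your approach diverges substantially from the paper's, and the sketch has real gaps.

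The central difficulty, which the paper flags explicitly, is that a relatively quasiconvex subgroup $H$ need \emph{not} be combinatorially quasiconvex in the cube complex: already the diagonal in $\mathbb{Z}^2$ fails.  Consequently there is no reason to expect an $H$-invariant \emph{convex} subcomplex $\wt Y\subseteq\wt\s$ of the shape you describe.  You attribute such a subcomplex to ``work of Hruska'', but Hruska's results characterise relative quasiconvexity in terms of transition points of quasigeodesics, not in terms of cubical convex cores; producing a convex subcomplex with compact core plus cubical cusp pieces is a nontrivial statement that would itself require proof.  Moreover, even granting some such $\wt Y$, the compact core $K$ you cut out by removing cusp neighbourhoods will typically fail to be locally convex, so the Haglund--Wise canonical completion does not apply to it directly.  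Your final assembly step---matching the core retraction with the peripheral ones along overlaps---is where all the work would lie, and ``should allow'' is not an argument.

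The paper sidesteps all of this.  Rather than trying to build a retraction geometrically from pieces, it first \emph{enlarges} the given relatively quasiconvex $Q$ to a \emph{fully} relatively quasiconvex subgroup $H$ that retracts onto $Q$ (Theorem~\ref{t: Fully rel. qc subgroups}, a variant of a result of Manning--Martinez-Pedroza, using the abelian peripheral hypothesis to arrange that each enlargement step amalgamates over a direct factor).  It then proves (Proposition~\ref{p: Fully qc implies combinatorially qc}) that fully relatively quasiconvex subgroups \emph{are} combinatorially quasiconvex, so that Haglund's Theorem~A applies to $H$ directly.  The abelian hypothesis is used algebraically, to produce the retraction $H\to Q$, not to glue geometric pieces.
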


{As in the case of \cite[Theorem 1.3]{HW}, the proof of }Theorem \ref{rel qc still sep} relies on {a } result of Haglund for separating subgroups of right-angled Coxeter groups \cite[Theorem A]{Hag}, but {it also } requires new ingredients to surmount the technical obstacle that not every relatively quasiconvex subgroup is word-quasiconvex.  The first is Theorem \ref{t: Fully rel. qc subgroups}, a variation of \cite[Theorem 1.7]{manning_separation_2008}, which establishes that every relatively quasiconvex subgroup is a retract of a \textit{fully relatively quasiconvex} subgroup (see the definition above Theorem \ref{t: Fully rel. qc subgroups}).  The second ingredient, Proposition \ref{p: Fully qc implies combinatorially qc}, extends work in \cite{hruska_relative_2008} to show that fully relatively quasiconvex subgroups satisfy the hypotheses of \cite[Theorem A]{Hag}.  

{Even without any restrictions on the types of parabolic subgroups allowed, our results prove that certain subgroups of relatively hyperbolic group{s }are virtual retracts: see Theorem \ref{t: Retracts with no hypotheses} and its corollaries for precise statements.}

The consequences of Theorem \ref{rel qc still sep} follow a long-standing theme in the study of $3$-manifolds and their fundamental groups.  For a group $G$ and a subgroup $H$, we say $H$ is \textit{separable} in $G$ if for every $g \in G - H$, there is a finite-index subgroup $K < G$ such that $H<K$ and $g \not\in K$.  If $G = \pi_1M$ for some manifold $M$, work of G.P. Scott links separability of $H$ with topological properties of the corresponding cover $M_H \to M$ \cite{Scott}.  A group is called \emph{LERF} if every finitely generated subgroup is separable.

\begin{cor}\label{c: Corollary 2}
Suppose $M$ is a complete hyperbolic $3$-manifold with finite volume that admits a decomposition into right-angled ideal polyhedra.  Then:
\begin{enumerate}
\item \label{item 1} $\pi_1M$ is LERF.
\item \label{item 2} Every geometrically finite subgroup of $\pi_1M$ is a virtual retract.
\end{enumerate}
\end{cor}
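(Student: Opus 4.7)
The plan is to apply Theorem \ref{rel qc still sep} to the standard square complex $\s$ associated with the right-angled ideal polyhedral decomposition of $M$ (constructed in Section \ref{std square}). Because $M$ has finite volume, the decomposition uses only finitely many polyhedra, so $\s$ is compact; since $\s$ is a deformation retract of $M$, we have $\pi_1 \s \cong \pi_1 M$, and Theorem \ref{rt ang cox} implies this group is virtually special. Finally, $\pi_1 M$ is hyperbolic relative to its cusp subgroups, each isomorphic to $\mathbb{Z}^2$, so the parabolic subgroups are finitely generated abelian. Thus the hypotheses of Theorem \ref{rel qc still sep} are satisfied.

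Part \eqref{item 2} is then immediate: by \cite{hruska_relative_2008}, geometric finiteness of a subgroup $H \leq \pi_1 M$ is equivalent to relative quasiconvexity with respect to the cusp parabolic structure, and Theorem \ref{rel qc still sep} therefore yields that $H$ is a virtual retract.

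For part \eqref{item 1}, let $H \leq \pi_1 M$ be finitely generated. The Tameness Theorem (Agol; Calegari--Gabai) implies that the cover $M_H$ is topologically tame, and Canary's Covering Theorem then forces $H$ to be either geometrically finite or a virtual fiber subgroup. In the first case, part \eqref{item 2} shows $H$ is a virtual retract, hence separable. In the second, there is a finite cover $M' \to M$ fibering over $S^1$ such that, after conjugation, $H$ has finite index in the fiber subgroup $F \leq \pi_1 M'$; since $F$ is normal in $\pi_1 M'$ with infinite cyclic quotient it is separable in $\pi_1 M'$, and surface groups are LERF by Scott's theorem \cite{Scott}, so $H$ is separable in $F$ and thereby in $\pi_1 M'$ and in $\pi_1 M$.

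The main point requiring care is the verification that $\s$ meets the compactness and virtual specialness hypotheses of Theorem \ref{rel qc still sep}; granting Theorems \ref{rt ang cox} and \ref{rel qc still sep}, the rest of the argument is a routine combination of the Tameness and Covering Theorems with standard surface group separability.
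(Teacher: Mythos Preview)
Your argument follows essentially the same route as the paper: verify the hypotheses of Theorem~\ref{rel qc still sep} for the standard square complex, identify geometrically finite subgroups with relatively quasiconvex ones, and deduce LERF by reducing to the geometrically finite case via tameness. The paper compresses your dichotomy argument for item~\eqref{item 1} into a single citation of \cite{canary_mardens_2008} (which packages the Tameness/Covering Theorem reduction from LERF to GFERF), whereas you spell out the virtual-fiber case by hand using Scott's theorem; both are fine.

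There is one small gap. You assert that the cusp subgroups are isomorphic to $\mathbb{Z}^2$, but this is only guaranteed when $M$ is orientable; if $M$ is nonorientable a cusp cross-section may be a Klein bottle, whose fundamental group is not abelian, so Theorem~\ref{rel qc still sep} does not apply directly. The paper handles this by first treating the orientable case and then, for nonorientable $M$, passing to the orientable double cover $M'$: for a geometrically finite $Q\leq\pi_1M$ one shows $Q\cap\pi_1M'$ is a virtual retract of $\pi_1M'$ and then invokes the argument of \cite[Theorem~2.10]{long_subgroup_2008} to promote this to a virtual retraction of $Q$ in $\pi_1M$. You should add this step.
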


We will prove Theorem \ref{rel qc still sep} and Corollary \ref{c: Corollary 2} at the end of Section \ref{sec:QCERF}.

The study of LERF 3-manifold groups dates back to \cite{Scott}.  Although there are examples of graph manifolds with non-LERF fundamental group \cite{burns_notegroups_1987}, it remains unknown whether every hyperbolic 3-manifold group is LERF. Gitik \cite{Gitik} constructed examples of hyperbolic 3-manifolds with totally geodesic boundary whose fundamental groups are LERF, and it is a consequence of Marden's Tameness Conjecture that her closed examples are also LERF.  Agol, Long and Reid proved that the Bianchi groups are LERF \cite{ALR}.  

It is natural to ask to what extent Theorem \ref{rt ang cox} describes \textit{new} examples of $3$-manifold groups that virtually embed into right-angled Coxeter groups, and more generally to what extent it describes new examples of LERF 3-manifold groups.  Hitherto, there have only been a limited number of techniques for proving that finite-volume 3-manifolds are LERF.  The techniques of \cite{Gitik} did not produce non-compact, finite-volume examples, so we shall not consider them here.

Agol, Long and Reid \cite{ALR} proved that geometrically finite subgroups of right-angled, hyperbolic reflection groups are separable.  They deduced a similar result for the Bianchi groups by embedding them as totally geodesic subgroups of higher-dimensional, arithmetic right-angled reflection groups. One might na\"ively suppose that the fundamental group of a $3$-manifold that decomposes into right-angled polyhedra $\{\p_i\}$ is commensurable with the reflection group in one of the $\p_i$, or perhaps a union of several, and therefore that Theorem \ref{rt ang cox} could be deduced using the techniques of \cite{ALR}.  

We address the above possibility in Sections \ref{sec:examples} and \ref{sec: augmented}.  There we describe infinite families of hyperbolic $3$-manifolds that decompose into right-angled polyhedra but are not commensurable with \textit{any} $3$-dimensional reflection orbifold.  Indeed, Section \ref{sec: augmented} considers a very broad class of hyperbolic $3$-manifolds, the augmented link complements (previously considered in \cite{LAD} and \cite{Purcell_cusps}, for example), that decompose into right-angled polyhedra.  Our investigations there strongly support the following hypothesis: a ``generic'' augmented link complement is not commensurable with any $3$-dimensional reflection orbifold.

If $M$ decomposes into isometric copies of a single, highly symmetric polyhedron $\calp$, we show in Proposition \ref{symm comm} that $\pi_1 M$ is indeed commensurable with the reflection group in the sides of $\calp$.  The lowest-complexity right-angled ideal polyhedra (measured by number of ideal vertices) are the $3$- and $4$-antiprisms (see Figure \ref{P_1 and P_2}), and these are sufficiently symmetric for the hypotheses of Proposition \ref{symm comm} to apply.  However, in Section \ref{sec:One cusp}, we describe hybrid examples not commensurable with reflection groups.

\begin{thm}\label{t: One-cusp examples}
For each $n \in \mathbb{N}$, there is complete, one-cusped hyperbolic $3$-manifold $N_n$ that decomposes into right-angled ideal polyhedra, such that $N_n$ is not commensurable with $N_m$ for any $m \neq n$, nor to any $3$-dimensional reflection orbifold.
\end{thm}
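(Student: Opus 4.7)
The plan is to construct each $N_n$ as a ``hybrid'' one-cusped right-angled ideal polyhedral manifold mixing the two low-complexity right-angled ideal polyhedra, the $3$- and $4$-antiprisms, and then to distinguish commensurability classes (and rule out reflection orbifolds) using the shape of the unique cusp. Concretely, I would start from a small compatibly paired core containing (say) one or two $4$-antiprisms, and chain $n$ copies of the $3$-antiprism onto the core along distinguished pairs of faces, arranging the identifications so that all edge cycles have total dihedral angle $2\pi$ (automatic from right-angledness), every vertex link is a sphere, and all ideal vertex cycles fuse into a single cusp. The construction preserves the right-angled ideal polyhedral decomposition, so $N_n$ is a complete, finite-volume, one-cusped hyperbolic $3$-manifold to which Theorem \ref{rt ang cox} applies.

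Second, I would compute the Euclidean structure on the cusp torus of $N_n$ from the link data at the ideal vertices of the antiprisms, writing the cusp modulus $\tau_n \in \mathbb{H}^2/\mathrm{SL}_2(\mathbb{Z})$ explicitly as a function of $n$. The cusp field $\Q(\tau)$, and more generally the commensurability class of the cusp lattice in $\R^2$, is a commensurability invariant of finite-volume hyperbolic $3$-manifolds with a single cusp, so pairwise distinctness of these invariants for the $\tau_n$ yields pairwise non-commensurability of the $N_n$. For a reflection orbifold $\calo$, any cusp of a common finite cover of $N_n$ and $\calo$ must be a torus cover of a Euclidean $2$-orbifold of a planar reflection group, so the underlying lattice is rectangular or rhombic and its modulus lies on the arcs $\mathrm{Re}(\tau) = 0$ or $\mathrm{Re}(\tau) = \tfrac{1}{2}$ in the standard fundamental domain. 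I would verify that each computed $\tau_n$ falls off of these arcs, ruling out commensurability with any $3$-dimensional reflection orbifold in one stroke.

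The main obstacle is the explicit geometric computation of $\tau_n$: one must calculate the Euclidean side-lengths and angles of the cusp triangles contributed by each antiprism type (using the distinct hyperbolic structures on the $3$- and $4$-antiprisms) and track their assembly across the hybrid junction where contributions of different antiprism types meet. The chain must then be designed so that $n \mapsto \tau_n$ is injective up to commensurability of cusp lattices and so that every $\tau_n$ lies off the rectangular and rhombic arcs. Both of these constraints are precisely what the symmetric monotype construction underlying Proposition \ref{symm comm} forces to \emph{fail}; breaking that symmetry through the hybrid gluing is what makes the non-commensurability results available, but it also means the construction and the cusp-shape computation must be designed in tandem, possibly with some tuning of which faces are identified or of the length of the chain.
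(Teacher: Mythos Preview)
Your plan has a genuine gap in the step that rules out commensurability with reflection orbifolds.  You assert that if $N_n$ shares a finite cover $M$ with a reflection orbifold $\mathcal{O}$, then the cusp lattice of $M$ is rectangular or rhombic, and hence that $\tau_n$ lies on the arcs $\mathrm{Re}(\tau)\in\{0,\tfrac12\}$.  Neither implication is valid.  A cusp of $M$ covers a cusp of $\mathcal{O}$, so its lattice is merely a finite-index \emph{sublattice} of the (rectangular) translation lattice of a Euclidean reflection group, and sublattices of rectangular lattices need not be rectangular or rhombic: for example $\langle 3+i,\,2+3i\rangle\subset\mathbb{Z}[i]$ has index $7$ and reduces in the standard fundamental domain to modulus $(1+7i)/5$, with real part $1/5$.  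Even granting your claim about $M$, the cusp lattice of $N_n$ is only \emph{commensurable} with that of $M$, and commensurability with a rectangular lattice is the condition that $\tau_n$ and $\overline{\tau_n}$ lie in the same $\mathrm{PGL}_2(\mathbb{Q})$-orbit---a strictly weaker constraint than $\mathrm{Re}(\tau_n)\in\{0,\tfrac12\}$.  So computing $\tau_n$ off those arcs would prove nothing.

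The paper's route is essentially different.  It builds $N_n$ from two right-angled ideal octahedra and $n$ right-angled ideal cuboctahedra, with explicit face pairings yielding a single cusp; non-arithmeticity follows from the trace field $\mathbb{Q}(i,\sqrt{2})$.  Rather than a cusp-shape obstruction, the paper uses the Goodman--Heard--Hodgson algorithm to identify the Epstein--Penner canonical decomposition and hence the full commensurator $G_n$; a direct combinatorial analysis of the induced tiling (Proposition~\ref{OP comm}) shows that every element of $G_n$ is orientation-preserving, which immediately excludes any reflection group from the commensurability class.  Pairwise incommensurability is obtained from the Bloch invariant, which takes the form $2\beta_1+n\beta_2$ with $\beta_1,\beta_2$ linearly independent in $\mathcal{B}(\mathbb{Q}(i,\sqrt{2}))$.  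Your idea of distinguishing the $N_n$ by cusp parameter is reasonable in principle, but would require proving the $\tau_n$ pairwise $\mathrm{PGL}_2(\mathbb{Q})$-inequivalent, which is again a more delicate computation than checking they are simply distinct.
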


Recently, Haglund and Wise have proved that every Coxeter group is virtually special \cite{haglund_coxeter_????}.  Since $\pi_1 N_n$ is not commensurable with any $3$-dimensional reflection group, the results of \cite{haglund_coxeter_????} do not apply to it.  The proof of Theorem \ref{t: One-cusp examples} uses work of Goodman--Heard--Hodgson \cite{GHH} to explicitly describe the commensurator of $\pi_1 N_n$.

A rich class of manifolds that satisfy the hypotheses of Theorem \ref{rt ang cox} consists of the \textit{augmented links} introduced by Adams \cite{Adams}.  Any link $L$ in $S^3$ with hyperbolic complement determines (not necessarily uniquely) an augmented link using a projection of $L$ which is \textit{prime} and \textit{twist-reduced}, by adding a ``clasp'' component encircling each crossing region.  (See Section \ref{sec: augmented} for precise definitions.)  Each link with hyperbolic complement admits a prime, twist reduced diagram, and the augmented link obtained from such a diagram also has hyperbolic complement (cf.~\cite[Theorem 6.1]{Purcell_cusps}).  Ian Agol and Dylan Thurston showed in an appendix to \cite{LAD} that each augmented link satisfies the hypotheses of Theorem \ref{rt ang cox}.

\begin{example}[Agol--Thurston] \label{fully augmented}  Let $M$ be a complete hyperbolic manifold homeomorphic to the complement in $S^3$ of an augmented link.  Then $M$ admits a decomposition into two isometric right-angled ideal polyhedra.  \end{example}

In Section \ref{sec: augmented}, we describe another polyhedron, the ``\crush'', that distills the most important combinatorial features of the Agol--Thurston ideal polyhedral decomposition.  We record criteria, in Lemmas \ref{symmetric links} and \ref{hidden symmetries}, that describe certain situations in which one may conclude that an augmented link complement is commensurable with the reflection orbifold in the associated right-angled polyhedron.  Section \ref{sec:low cx augmented} describes the scissors congruence classification of the complements of augmented links with up to $5$ crossing regions.  Finally, in Section \ref{sec:lobel} we prove:

\newtheorem*{Lobellthm}{Theorem \ref{Lobell thm}}
\begin{Lobellthm}  There is a class of augmented links $L(n)$, $n \geq 3$, such that for all but finitely many $n$, $M(n) \doteq S^3 - L(n)$ is not arithmetic nor commensurable with any $3$-dimensional hyperbolic reflection orbifold.  Moreover, at most finitely many $M(n)$ occupy any single commensurability class.  \end{Lobellthm}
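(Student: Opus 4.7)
The plan is to construct the family $L(n)$ explicitly and then analyze commensurability by way of the canonical (Epstein--Penner) decomposition, following the template of the proof of Theorem~\ref{t: One-cusp examples}. First I would describe $L(n)$ as the augmented link obtained from an $n$-fold rotationally symmetric planar diagram whose Agol--Thurston decomposition yields two copies of a Löbell-style right-angled ideal polyhedron $P(n)$ with $n$-fold dihedral symmetry. Example~\ref{fully augmented} then gives the right-angled ideal polyhedral decomposition of $M(n) = S^3 - L(n)$ and, via Theorem~\ref{rt ang cox}, shows $\pi_1 M(n)$ is virtually special.

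Next I would identify the Agol--Thurston decomposition with the canonical decomposition of $M(n)$. The two polyhedra meet along every face in pairs and all dihedral angles are $\pi/2$, so the Epstein--Penner tilts can be computed directly from the horoball packing dual to the cusps and shown to vanish. This is the essential link to the Goodman--Heard--Hodgson machinery~\cite{GHH}, which computes $\mathrm{Isom}(M(n))$ (and the commensurator, under a non-arithmeticity assumption) from the combinatorial symmetries of the canonical decomposition.

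From here the three conclusions split as follows. To show that only finitely many $M(n)$ occupy any single commensurability class, I would combine two commensurability invariants: the volume, which equals $2\,\mathrm{vol}(P(n))$ and grows without bound with $n$; and the index of $\pi_1 M(n)$ in its commensurator, which is bounded above by the order of the symmetry group of the canonical decomposition, hence grows at most linearly in $n$. Together these force the volume of the minimal orbifold in the commensurability class to grow, giving the desired finiteness. To rule out arithmeticity, I would examine the cusp modulus: for an arithmetic cusped hyperbolic $3$-manifold the invariant trace field is imaginary quadratic and all cusps have moduli in that field, while the Löbell-type construction produces cusp shapes whose fields have degree going to infinity with $n$. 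Finally, to obstruct commensurability with any $3$-dimensional reflection orbifold, I would apply Margulis's theorem: in the non-arithmetic case, $\mathrm{Comm}(\pi_1 M(n))$ is discrete and equals the fundamental group of the minimal orbifold $O_n$ in the commensurability class, which by~\cite{GHH} is the quotient of $M(n)$ by its full isometry group. An orientation-reversing element of $\mathrm{Isom}(O_n)$ with totally geodesic $2$-dimensional fixed set would lift to a combinatorial symmetry of the canonical decomposition fixing a polyhedral face; I would enumerate the symmetries of the Agol--Thurston decomposition of $M(n)$ and verify that none does so for $n$ large.

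The main obstacle is the last step: combining canonicity of the Agol--Thurston decomposition with a careful enumeration of combinatorial symmetries of $P(n)$ to exclude a reflective isometry of the minimal orbifold. The arithmeticity ruling is routine once the cusp shapes are in hand, and the volume-versus-commensurator-index argument for finiteness is standard; but correctly computing the full isometry group of $M(n)$ from the canonical decomposition and verifying absence of reflections in the minimal quotient requires the kind of detailed case analysis used (in the one-cusped setting) to prove Theorem~\ref{t: One-cusp examples}, now carried out uniformly in the parameter $n$.
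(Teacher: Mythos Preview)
Your outline diverges from the paper's approach in an essential way, and several steps have genuine gaps.

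The paper does \emph{not} compute the commensurator of $\pi_1 M(n)$ directly for each $n$. Instead it takes the $n$-fold cyclic quotient $O(n)=M(n)/\langle\sfb_n\rangle$, observes that $O(n)$ is $(n,0)$-Dehn filling on a \emph{fixed} link complement $S^3-L$, and computes the commensurator of the single group $\Gamma$ associated to $S^3-L$ (Proposition~\ref{prop:comm}, via the Goodman--Heard--Hodgson algorithm applied to the canonical tilings of $S^3-L$). Then a Chabauty-compactness/geometric-convergence argument shows that if infinitely many $\mathrm{Comm}(\Gamma(n))$ properly contained $\langle\Gamma(n),\sfa,\sfs,\sfd_1\rangle$, a subsequential limit would properly contain $\Gamma$, contradicting Proposition~\ref{prop:comm}. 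This replaces an infinite family of commensurator computations by one, and is the key idea you are missing.

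Several specific problems with your plan: (i) You assert that the Agol--Thurston decomposition of $M(n)$ is the Epstein--Penner canonical one. The $M(n)$ have many cusps, so there is a whole family of canonical decompositions; you would need to establish this for a particular choice of horoballs, uniformly in $n$, and the paper does not do (or need) this. (ii) You write that the minimal orbifold $O_n$ ``is the quotient of $M(n)$ by its full isometry group.'' This is false in general: $\mathrm{Isom}(M(n))$ corresponds to the normalizer of $\pi_1 M(n)$, not to the commensurator, and these can differ. The GHH algorithm finds $\mathrm{Comm}$ by ranging over \emph{all} canonical tilings, not just one. (iii) Your finiteness argument does not work as stated: $\mathrm{vol}\,M(n)$ grows linearly in $n$, but so does $|\mathrm{Sym}(\call(n))|$, so the ratio---your lower bound for the minimal-orbifold volume---is bounded and separates nothing. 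The paper instead uses invariant trace fields: $k\Gamma(n)\supset\mathbb{Q}(\cos(4\pi/n))$, whose degree tends to infinity, and this is a commensurability invariant. This also handles non-arithmeticity, since cusped arithmetic groups have imaginary quadratic trace fields. (iv) For the reflection-orbifold obstruction, the paper does not look for reflections in the symmetry group of a tiling of $M(n)$; it analyzes the stabilizer of a specific cusp in $\mathrm{Comm}(\Gamma(n))=\langle\Gamma(n),\sfa,\sfs,\sfd_1\rangle$ and shows every reflection there fixes a line parallel to a fixed direction, so no reflection subgroup acts with finite coarea on the horosphere.
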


The \crush s of the links of Theorem \ref{Lobell thm} are the famous L\"obell polyhedra.  We believe that the behavior recorded in the theorem is generic among augmented links, but these are particularly amenable to analysis.

While this work was in preparation, we became aware of \cite{BHW} and \cite{B}, which provide other examples of virtually special hyperbolic manifolds.  The former mostly concerns arithmetic lattices, while the latter deals with finite-sheeted covers of the 3-sphere that branch over the figure-eight knot.

\subsection*{Acknowledgements}

The authors would like to thank Ian Agol, Dave Futer, Alan Reid and Matthew Stover for useful conversations.  Thanks also to Jack Button for confirming some of our Alexander polynomial computations, and to Jessica Purcell for a helpful reference to \cite{Purcell_cusps}.  Finally, we thank the referee for a careful reading and helpful comments.

\section{Preliminaries}

\subsection{Cube complexes}  \label{cube cx}

In this subsection we review relevant notions about cube complexes following the treatment of Haglund--Wise \cite{HW}.  Another helpful reference is \cite{BrH}, particularly Chapters I.7 and II.5.

\begin{dfn}(\cite[Definition 2.1]{HW})  Let $I = [-1,1] \subset \mathbb{R}$.  A \textit{cube complex} $X$ is a $CW$-complex such that each $k$-cell has a homeomorphism to $I^k \subset \mathbb{R}^k$ with the property that the restriction of the attaching map to each $(k-1)$-face of $\partial I^k$ to $X^{k-1}$ is an isometry onto $I^{k-1}$ followed by the inclusion of a $(k-1)$-cell.  A map $f \co X \to Y$ between cube complexes is \textit{combinatorial} if for each $k$-cell $\phi \co I^{k} \to X$, the map $f \circ \phi$ is a $k$-cell of $Y$ following an isometry of $I^k$.  A \textit{square complex} is a $2$-dimensional cube complex, and we will refer by \textit{vertex}, \textit{edge}, or \textit{square} to the image in $X$ of a $0$-, $1$- or $2$-cell, respectively.  \end{dfn}

Now let $X$ be a square complex.   We will take the \textit{link} of the vertex $(1,1) \in I^2$ to be the line segment in $I^2$ joining $(0,1)$ to $(1,0)$ (the midpoints of the edges abutting $(1,1)$), and the link of another vertex $v$ to be the image of the link of $(1,1)$ under the symmetry taking it to $v$.  The link of a vertex $v \in X$ is the $1$-complex obtained by joining the links of $v$ in the squares of $X$ attaching to it.  We say $X$ is \textit{simple} if for each vertex $v$ there is a combinatorial map from the link of $v$ to a simplicial graph.  In particular, if $X$ is simple then no two squares meet along consecutive edges.

We will say a square complex $X$ is \textit{nonpositively curved} if for each vertex $v$ in $X$, the link of $v$ does not contain any simple cycle with fewer than four edges.  (We are taking Gromov's link condition as a definition; see eg, \cite[Ch. II.5]{BrH} for a discussion.)  In particular, $X$ is simple.  If $X$ is simply connected and nonpositively curved, we will say $X$ is $\cat(0)$.  For a more general discussion, see \cite{BrH}, in particular Chapter II.5.

The notion of a \textit{hyperplane} is very important in defining ``special'' cube complexes.  Here we will specialize the definition in \cite{HW} to square complexes.

\begin{dfn}(\cite[Definition 2.2]{HW})  The \textit{midlines} of $I^2$ are the subsets $I \times \{0\}$ and $\{0\} \times I$, each parallel to two edges of $X$.  The \textit{center} of a square $\phi\co I^2 \to X$ is $\phi(0,0)$, and the \textit{midpoint} of an edge $\phi \co I \to X$ is $\phi(0)$.  A midline of $I^2$ meets its two \textit{dual} edges perpendicularly at their midpoints.

Given a square complex $X$, we define a graph $Y$, the associated \textit{midline complex}, as follows.  The $0$-cells of $Y$ are the midpoints of the edges of $X$, and the $1$-cells of $Y$ are midlines of squares of $X$, attached by the restrictions of the corresponding attaching maps.  A \textit{hyperplane} of $X$ is a component of the associated midline complex $Y$.  \end{dfn}

By the definition of the midline complex, each hyperplane $Y$ has an immersion into $X$, taking an edge to the midline of the square that contains it.  Definition 3.1 of \cite{HW} describes the following pathologies of hyperplane immersions: \textit{self-intersection}, \textit{one-sidedness}, \textit{direct} or \textit{indirect self-osculation}, or \textit{inter-osculation}.  If the hyperplanes of $X$ do not have any such pathologies, and its one-skeleton is bipartite, we will say that $X$ is $C$-\textit{special}.

The following theorem of Haglund--Wise is our main concern.

\begin{thm}[\cite{HW}, Lemma 4.3]\label{t: Haglund--Wise}  Let $X$ be a $C$-special square complex.  Then there exists a right-angled Coxeter group $W$, an injective homomorphism $\pi_1 X\hookrightarrow W$ and a $\pi_1 X$-equivariant, combinatorial, isometric embedding from the universal cover of $X$ into the Davis--Moussong complex of $W$.  In particular, $\pi_1X$ is isomorphic to a word-quasiconvex subgroup of $W$ (with respect to the standard generating set).\end{thm}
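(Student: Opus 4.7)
The plan is to build the right-angled Coxeter group $W$ directly from the hyperplane combinatorics of $X$, then construct a $\pi_1 X$-equivariant combinatorial embedding of $\widetilde{X}$ into the Davis--Moussong complex $\Sigma_W$, exploiting the absence of hyperplane pathologies at each step. Concretely, let the nerve $\Delta$ have one vertex $s_H$ for each hyperplane $H$ of $X$, and put an edge between $s_{H_1}$ and $s_{H_2}$ precisely when $H_1$ and $H_2$ cross in a square of $X$. Let $W$ be the right-angled Coxeter group on the nerve $\Delta$. This is the only sensible candidate, since in $\Sigma_W$ two hyperplanes cross if and only if the corresponding generators of $W$ commute, and any embedding must respect crossings.

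Next I would define the map $\widetilde{f}\co \widetilde{X}\to\Sigma_W$ on the $1$-skeleton as follows. Two-sidedness of each hyperplane of $X$ gives a coherent transverse orientation on the edges dual to it; together with the bipartite $1$-skeleton, this lets us orient every edge of $\widetilde{X}$. Fix a basepoint $\wt v_0$ and send it to the identity vertex of $\Sigma_W$. Each edge-path $\gamma$ in $\widetilde{X}$ from $\wt v_0$ determines a word $w(\gamma)$ in the generators $s_H$ by reading off the hyperplanes crossed in order; send the endpoint of $\gamma$ to the vertex $w(\gamma)\in W\subset\Sigma_W$, and send $\gamma$ to the corresponding edge-path in $\Sigma_W$. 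On $2$-cells, extend by sending each square (whose two midline hyperplanes cross and hence correspond to commuting generators of $W$) to the unique square of $\Sigma_W$ with matching edge labels.

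The main obstacle is to verify that $\widetilde{f}$ is a well-defined local isometry; this is the technical heart of the argument, and it is where the $C$-special hypotheses are used. Well-definedness on the $1$-skeleton of $\widetilde{X}$ requires that two edge paths with the same endpoints produce the same element of $W$. The absence of self-intersection ensures that the label $s_H$ of an edge depends only on its hyperplane; the absence of direct and indirect self-osculation ensures that when two edges of $\widetilde{X}$ share a vertex and are dual to the same hyperplane, they in fact sit across a common square, so the labelling is locally injective on links; and the absence of inter-osculation ensures that whenever two edges at a vertex have commuting labels, they bound a square, so the corresponding link map into the link of a vertex of $\Sigma_W$ (a flag simplicial complex encoding commuting relations of $W$) is an injective simplicial map. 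Together with bipartiteness, these link embeddings say that $\widetilde{f}$ is a combinatorial local isometry in the sense of \cite{HW}, and simply-connectedness of $\widetilde{X}$ plus the CAT$(0)$ property of $\Sigma_W$ upgrade this to a globally injective combinatorial isometric embedding.

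Equivariance and the injectivity of $\pi_1 X\hookrightarrow W$ then follow formally: the deck action of $\pi_1X$ permutes the edge-path labels compatibly, so it lifts to an action on the image by left translations in $W$, which identifies $\pi_1X$ with a subgroup. For the final word-quasiconvexity claim, I would invoke the general fact that the image of a convex subcomplex of $\Sigma_W$ (with respect to the combinatorial metric) is quasiconvex with respect to the word metric on $W$ relative to the standard generating set, because on $W^{(1)}=\Sigma_W^{(1)}$ these two metrics coincide up to the usual edge-length rescaling. Since $\widetilde{f}(\widetilde{X})$ is a combinatorially convex subcomplex (being the image of a locally isometric embedding of a CAT$(0)$ cube complex) and $\pi_1X$ acts cocompactly on it, any orbit of $\pi_1X$ is word-quasiconvex in $W$.
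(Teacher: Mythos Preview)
The paper does not give its own proof of this statement; it is quoted as \cite[Lemma 4.3]{HW} and used as a black box. Your sketch is essentially the Haglund--Wise construction itself: build $W$ on the crossing graph of hyperplanes, define the ``typing map'' $\widetilde X\to\Sigma_W$ by reading hyperplane labels along edge-paths, use the absence of self-intersection/self-osculation/inter-osculation to check that the induced link maps are injective and full, and deduce a combinatorial local isometry. The convexity of the image and the resulting word-quasiconvexity are exactly what the paper later cites as \cite[Lemma 7.7]{HW}.

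One small correction: since each $s_H\in W$ is an involution, you do not need to orient edges of $\widetilde X$ to make the word $w(\gamma)$ well-defined; the letter attached to an edge is the same in either direction. The bipartite hypothesis in the $C$-special definition is not there to orient edges but to guarantee that the typing map actually lands in the Davis--Moussong complex of a right-angled \emph{Coxeter} group (rather than only in the Salvetti complex of the associated right-angled Artin group). Apart from that, your outline matches the standard argument.
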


The Davis--Moussong complex of a right-angled Coxeter group $W$ is a certain CAT(0) cube complex on which $W$ acts naturally.  The reader is referred to \cite{HW} for the definition.  A square complex $X$ is called \emph{virtually special} if $X$ has a $C$-special finite-sheeted covering space. To prove Theorem \ref{rt ang cox}, we will prove that $\pi_1 M$ is isomorphic to the fundamental group of a virtually special square complex.

We will find the notion of a regular neighborhood of a hyperplane from \cite{HW} useful.

\begin{dfn}
Let $Y\to X$ be a hyperplane of a square complex $X$.  A \emph{(closed) regular neighborhood} for $Y$ is a cellular $I$-bundle $p:N\to Y$ equipped with a combinatorial immersion $j:N\to X$ such that the diagram
\[\xymatrix{
      N\ar@{>}[d]^{p}\ar@{>}[rd]^{j}& \\
      Y\ar@{>}[r] & X
}\]
commutes.  (Here the $I$-bundle $N$ is given the obvious square-complex structure: the preimage of a vertex is an edge and the preimage of an edge is a square.)
\end{dfn}

Every hyperplane of a non-positively curved square complex has a regular neighborhood \cite[Lemma 8.2]{HW}.  The $I$-bundle $p \co N \to Y$ has a section taking each $e \in Y^{(1)}$ to a midline of the square $p^{-1}(e)$.  We refer to $Y \subset N$ as embedded by this section.  In \cite[Definition 8.7]{HW}, the \textit{hyperplane subgroup} $\pi_1 Y < \pi_1 X$ is defined as the image of $j_*$ after an appropriate choice of basepoint.

\subsection{A standard square complex}  \label{std square}

In this subsection we will take $M$ to be a complete hyperbolic $3$-manifold of finite volume, with an ideal polyhedral decomposition $\{\p_i\}$.  For a pair of faces $f$ and $f'$ of polyhedra $\p_i$ and $\p_j$ such that $f' = \phi_f(f)$, we say that $f$ and $f'$ represent a face of the decomposition.  Similarly, let $\{e_j\}_{j=1}^n$ be a sequence of edges of polyhedra $\p_{i_j}$ with the property that for each $j <n$, there is a face $f_j$ of $\p_{i_j}$ containing $e_j$ such that $\phi_{f_j}(e_j) = e_{j+1}$.  Then we say the edges $e_j$ represent an edge of the decomposition.

For each $i$, let $\olp_i$ be the union of $\p_i$ with its ideal vertices.  (In the Poincar\'e ball model for $\mathbb{H}^3$, the ideal vertices of $\p_i$ are its accumulation points on $\partial B^3$.)  Each face pairing isometry $\phi_f$ induces a homeomorphism from $\bar{f}$, the union of $f$ with its ideal vertices, to $\bar{f}'$, where $f' = \phi_f(f)$.

The extended face pairings determine a cell complex $\c$ such that $M$ is homeomorphic to $\c - \c^{(0)}$.  The $0$-cells of $\c$ are equivalence classes of ideal vertices under the equivalence relation generated by $v \sim \phi_f(v)$ for ideal vertices $v$ of faces $f$.  The $1$- and $2$- cells of $\c$ are equivalence classes of edges and faces of the $\p_i$ under the analogous equivalence relation, and the $3$-cells are the $\olp_i$.

Let $\c'$ be the barycentric subdivision of the cell complex $\c$ associated to an ideal polyhedral decomposition. If $v$ is a vertex of a cell $\olp$ of $\c'$, the open star of $v$ in $\olp$ is the union of the interiors of the faces of $\olp$ containing $v$.  The open star $\mathfrak{st}(v)$ of $v$ in $\c'$ is the union of the open stars of $v$ in the cells of $\c'$ containing it.   Take $\mathfrak{st}(\c^{(0)})$ to be the disjoint union of the open stars in $\c'$ of the vertices of $\c$.  Then $\s_0 \doteq \c' - \mathfrak{st}(\c^{(0)})$ is the unique subcomplex of $\c'$, maximal with respect to inclusion, with the property that $\s_0^{(0)} = (\c')^{(0)} - \c^{(0)}$.

A simplex of $\s_0$ is determined by its vertex set, which consists of barycenters of cells of $\c$.  We will thus refer to each simplex of $\s_0$ by the tuple of cells of $\c$ whose barycenters are its vertices, in order of increasing dimension.  For example, a simplex of maximal dimension is a triangle of the form $(\bar{e},\bar{f},\overline{\p}_i)$, where $e$ is an edge and $f$ a face of some ideal polyhedron $\p_i$ in the decomposition of $M$, with $e \subset f$.

\begin{lemma} \label{spine}
There is a cellular deformation retraction $\Phi$ taking $M$ to $|\s_0|$.
\end{lemma}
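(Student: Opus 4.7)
The plan is to construct $\Phi$ locally on each top-dimensional simplex of $\calc'$ that contains an ideal vertex of $\calc$, and extend by the identity on $|\s_0|$.

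First, fix a polyhedron $\calp_i$ and an ideal vertex $v$ of $\calp_i$. Each top-dimensional simplex of $\calc'$ lying in $\calp_i$ and containing $v$ is a tetrahedron with vertex set $\{v,\bar e,\bar f,\overline{\calp}_i\}$, where $e$ is an edge of $\calp_i$ incident to $v$ and $f$ is a face of $\calp_i$ containing $e$. On this tetrahedron with $v$ removed, the straight-line homotopy radiating away from $v$ (equivalently, the map that in barycentric coordinates rescales the weights on $\bar e,\bar f,\overline{\calp}_i$ to sum to $1$) is a strong deformation retraction onto the opposite triangle $(\bar e,\bar f,\overline{\calp}_i)$, which lies in $|\s_0|$. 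Two such tetrahedra meeting along a shared subsimplex containing $v$ agree on the overlap because the retraction is radial from $v$, so these local retractions assemble into a deformation retraction of $(\mathfrak{st}(v)\cap\calp_i)\setminus\{v\}$ onto $\mathfrak{st}(v)\cap\calp_i\cap|\s_0|$.

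Next, I would check compatibility with the face pairings. An ideal vertex $\bar v$ of $\calc$ is an equivalence class of ideal vertices $v_j$ of polyhedra $\calp_{i_j}$, and $\mathfrak{st}(\bar v)$ in $\calc'$ is the union of the local stars glued by the face-pairing isometries $\phi_f$. Each such $\phi_f$ carries barycenters of cells of $\calc$ to barycenters of their images, so it induces a simplicial isomorphism between the pieces of $\calc'$ that get identified. Because the local retractions of the previous paragraph are defined in purely barycentric terms, they commute with these identifications and thus assemble into a deformation retraction of $\mathfrak{st}(\bar v)\setminus\{\bar v\}$ onto $\mathfrak{st}(\bar v)\cap|\s_0|$.

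Finally, $M=\calc\setminus\calc^{(0)}$ is the union of $|\s_0|$ with the punctured stars $\mathfrak{st}(\bar v)\setminus\{\bar v\}$ over all ideal vertex classes $\bar v$, and these pieces overlap only in subcomplexes of $|\s_0|$ on which the local retractions are the identity. Defining $\Phi$ to equal the identity on $|\s_0|$ and the assembled retractions on each punctured star yields a continuous cellular deformation retraction $M\to|\s_0|$. I expect the main obstacle to be the bookkeeping needed to verify compatibility across face pairings, but once it is observed that $\phi_f$ acts simplicially on $\calc'$, the barycentric description of the homotopy makes this compatibility automatic.
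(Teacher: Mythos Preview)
Your proof is correct and follows essentially the same approach as the paper: both construct a straight-line retraction from each ideal vertex onto its opposite face in $\s_0$, then assemble these local retractions globally. Your barycentric-coordinate description makes the compatibility across face pairings explicit, whereas the paper simply notes that the local retractions ``may be adjusted to match up along faces of the $\p_i$.''
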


\proof  Let $v$ be an ideal vertex of $\olp_i$, and let $U$ be the open star in $\c'$ of the equivalence class of $v$ in $\c^{(0)}$.  Let $U_0$ be the component of $U \cap \olp_i$ containing $v$.  Then $\overline{U}_0$ is homeomorphic to the cone to $v$ of its frontier in $\olp_i$, a union of triangles of $\s_0$.  Hence there is a ``straight line'' deformation retraction of $\overline{U}_0 - \{v\}$ to its frontier.  These may be adjusted to match up along faces of the $\p_i$, determining $\Phi$.
\endproof

The standard square complex is obtained by taking a union of faces of $\s_0$.

\begin{dfn}  Let $M$ be a complete hyperbolic $3$-manifold with a decomposition into ideal polyhedra $\{\p_i\}$, with associated cell complex $\c$ such that $M \cong \c - \c^{(0)}$, and let $\s_0 = \c' - \mathfrak{st}(\c^{(0)})$, where $\c'$ is the first barycentric subdivision of $\c$.  Define the \textit{standard square complex} $\s$ associated to $\{\p_i\}$, with underlying topological space $|\s| = |\s_0|$, as follows: $\s^{(0)} = \s_0^{(0)}$, $\s^{(1)} = \s_0^{(1)} - \{(\bar{e},\overline{\p}_i)\,|\, e \subset \p_i\}$, and $\s^{(2)} = \{ (\bar{e},\bar{f},\overline{\p}_i) \cup (\bar{e},\bar{g},\overline{\p}_i)\,|\, f,g \subset \p_i\  \mbox{and}\ f\cap g = e\}$.  \end{dfn}

Since each $2$-dimensional face $(\bar{e},\bar{f},\olp_i) \cup (\bar{e},\bar{g},\olp_i)$ of $\s$ is the union of two triangles of $\s_0$ which meet along the edge $(\bar{e},\olp_i)$, it may be naturally identified with a  square.  Furthermore, since it is exactly the set of edges of the form $(\bar{e},\olp_i)$ which are in $\s_0^{(1)} - \s^{(1)}$, $\s$ has the structure of a cell complex.  


\begin{lemma}\label{bipartite}  Let $\s$ be the standard square complex associated to an ideal polyhedral decomposition $\{\calp_i\}$.  Then $\s^{(1)}$ is bipartite.  \end{lemma}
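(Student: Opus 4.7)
The plan is to produce an explicit two-coloring of the vertex set $\s^{(0)}$ and verify that every edge of $\s^{(1)}$ crosses between the two color classes. Since $\s^{(0)} = (\c')^{(0)} - \c^{(0)}$ and $(\c')^{(0)}$ consists of the barycenters of all cells of $\c$, the vertex set of $\s$ partitions naturally according to the dimension of the associated cell of $\c$: it is the disjoint union of the barycenters $\bar e$ of $1$-cells, the barycenters $\bar f$ of $2$-cells, and the barycenters $\overline{\calp}_i$ of $3$-cells. The proposed coloring is to assign one color to the $\bar f$ and the other to the $\bar e$ together with the $\overline{\calp}_i$.

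Next I would inspect which edges of $\s_0^{(1)}$ survive in $\s^{(1)}$. Since $\s_0 \subset \c'$ and $\c'$ is the barycentric subdivision of $\c$, every $1$-simplex of $\s_0$ has the form $(\bar\sigma,\bar\tau)$ where $\sigma$ is a proper face of $\tau$ in $\c$; in the present setting, once the ideal vertices are excluded, the possibilities are $(\bar e,\bar f)$ with $e \subset f$, $(\bar f,\overline{\calp}_i)$ with $f \subset \calp_i$, and $(\bar e,\overline{\calp}_i)$ with $e \subset \calp_i$. The definition of the standard square complex removes precisely the last type, leaving $\s^{(1)}$ as the union of edges of the first two types. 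Both surviving types connect a $2$-cell barycenter to a vertex that is not a $2$-cell barycenter, so every edge of $\s^{(1)}$ is bichromatic, which is the desired bipartition.

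There is no real obstacle here, only bookkeeping: once one notes that the edges removed in the passage from $\s_0$ to $\s$ are exactly those joining the two kinds of vertices placed in the \emph{same} color class, bipartiteness is automatic. In fact this observation gives a clean conceptual reason for removing the edges $(\bar e,\overline{\calp}_i)$ in the definition of $\s$ in the first place: it is precisely what is required to make $\s^{(1)}$ bipartite, which in turn is one of the axioms for $C$-speciality to be verified later.
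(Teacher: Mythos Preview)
Your proof is correct and follows essentially the same approach as the paper's: both partition the vertices by the parity of the dimension of the corresponding cell of $\c$ (the paper says ``parity of dimension'', which gives exactly your split into $\{\bar f\}$ versus $\{\bar e,\overline{\calp}_i\}$) and then observe that the surviving edge types $(\bar e,\bar f)$ and $(\bar f,\overline{\calp}_i)$ each cross the partition. Your write-up is slightly more detailed in enumerating the edges of $\s_0^{(1)}$ and noting which are removed, but the argument is the same.
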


\begin{proof}  By definition, the vertices of $\s$ are barycenters of cells of the cell complex $\c$ associated to $\{\p_i\}$.  We divide them into two classes by parity of dimension.  An edge of $\s$ is of the form $(\bar{f},\olp_i)$ for some $i$, where $f$ is a face of $\calp_i$, or $(\bar{e},\bar{f})$, where $e$ is an edge and $f$ a face of some polyhedron.  In either case, the endpoints belong to different classes.  \end{proof}

Say a cell of $\s$ is \textit{external} if it is contained in $\s \cap \c^{(2)}$, and \textit{internal} otherwise. Each square of $\s$ has two adjacent external edges, of the form $(\bar{e},\bar{f})$ and $(\bar{e},\bar{f}')$ in the notation above, and two internal edges $(\bar{f},\olp_i)$ and $(\bar{f}',\olp_i)$.  In particular, each external edge of each square is opposite an internal edge, and vice-versa.

\begin{lemma} \label{external_spine}  As one-subcomplexes, $\s \cap \c^{(2)} = (\c^{(2)})' - \mathfrak{st}(\c^{(0)})$, where $(\c^{(2)})'$ is the barycentric subdivision of $\c^{(2)}$.  In particular, $\Phi$ restricts to a deformation retraction from $\bigcup_i \partial \p_i$ to $|\s \cap \c^{(2)}|$.  \end{lemma}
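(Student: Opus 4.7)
The plan is to verify both assertions by direct inspection of the cell structures defined in the preceding paragraphs; there is no serious obstacle, only careful bookkeeping.

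For the equality of $1$-subcomplexes, I would enumerate the cells that survive on each side. On the left, the definition of $\s$ lists its $0$-cells as $\bar{e}$, $\bar{f}$, $\olp_i$; its $1$-cells as $(\bar{e},\bar{f})$ and $(\bar{f},\olp_i)$; and each of its squares contains $\olp_i$ in its interior. Since $\olp_i$ is the barycenter of a $3$-cell of $\c$ and lies in the interior of $\p_i$, it does not belong to $\c^{(2)}$, and neither does the open edge $(\bar{f},\olp_i)$ nor the interior of any square of $\s$. Thus $\s \cap \c^{(2)}$ consists precisely of the vertices $\bar{e},\bar{f}$ together with the $1$-cells $(\bar{e},\bar{f})$ with $e \subset f$. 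On the right, the barycentric subdivision $(\c^{(2)})'$ has vertices $\bar{v},\bar{e},\bar{f}$, edges $(\bar{v},\bar{e})$, $(\bar{v},\bar{f})$, $(\bar{e},\bar{f})$, and triangles $(\bar{v},\bar{e},\bar{f})$; removing $\mathfrak{st}(\c^{(0)})$ deletes every open simplex whose closure contains an ideal vertex $\bar{v}$, which leaves exactly $\bar{e}$, $\bar{f}$, and the segments $(\bar{e},\bar{f})$. The two descriptions agree, giving the first claim.

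For the second claim, I would return to the construction of $\Phi$ in the proof of Lemma \ref{spine} and observe that it preserves $\bigcup_i \partial \p_i$ face by face. Inside each $\olp_i$, at an ideal vertex $v$, $\Phi$ is the cone-wise straight-line retraction of $\overline{U}_0 - \{v\}$ onto its frontier in $\olp_i$; restricted to a face $f$ of $\p_i$ containing $v$, this is a straight-line retraction of the open star of $v$ in $f$ onto its frontier in $f$, which is the union of the segments $(\bar{e},\bar{f})$ with $e$ an edge of $f$ containing $v$. These segments are exactly the $1$-cells of $\s \cap \c^{(2)}$ abutting $v$ through $f$. Because $\Phi$ is already defined to be compatible with face identifications (this is how Lemma \ref{spine} was assembled), these face-wise retractions patch together into a global deformation retraction from $\bigcup_i \partial \p_i$ onto $|\s \cap \c^{(2)}|$.

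The only mildly delicate point is remembering that both the internal edges $(\bar{f},\olp_i)$ and the open squares of $\s$ escape $\c^{(2)}$ into the interior of some $\p_i$, so they play no role in the intersection; once this is noted, both assertions follow immediately from the definitions.
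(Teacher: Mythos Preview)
Your argument is correct and follows the same route as the paper's, which simply observes that $\s \cap \c^{(2)} = \s_0 \cap \c^{(2)}$ and that $\Phi$ is cellular; you have just written out these two observations in detail. One small wording slip: the barycenter $\olp_i$ is a \emph{vertex} of each square, not an interior point, but your subsequent reasoning (that the open edge $(\bar{f},\olp_i)$ and the open squares miss $\c^{(2)}$) is what is actually needed and is correct.
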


\proof  By definition $\s \cap \c^{(2)} = \s_0 \cap \c^{(2)}$, whence the first claim of the lemma follows.  The second claim now holds because $\Phi$ is cellular.  \endproof

\begin{lemma} \label{two-sided}  Suppose $H$ is a hyperplane of the standard square complex associated to an ideal polyhedral decomposition $\{\p_i\}$ of a complete hyperbolic $3$-manifold $M$, and let $p \co N \to H$ be the regular neighborhood of $H$.  $N$ has boundary components $\partial_e N$ and $\partial_i N$, mapped by $j \co N \to M$ to a union of external and internal edges, respectively.  \end{lemma}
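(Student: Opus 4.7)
The plan is to show that the $I$-bundle $p\co N\to H$ is trivial, which gives $\partial N$ two components, and then to identify which one maps into external edges and which into internal edges. The starting observation is combinatorial: in a square $(\bar{e},\bar{f},\olp_i)\cup(\bar{e},\bar{g},\olp_i)$ of $\s$, the four vertices appear cyclically as $\bar{e},\bar{f},\olp_i,\bar{g}$; the two external edges $(\bar{e},\bar{f})$ and $(\bar{e},\bar{g})$ are adjacent at $\bar{e}$, while the two internal edges $(\bar{f},\olp_i)$ and $(\olp_i,\bar{g})$ are adjacent at $\olp_i$. Consequently opposite edges are always of different types, so each midline crosses one external edge and one internal edge, and is parallel to one edge of each type.

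I would then label, in each square of $\s$ containing a midline of $H$, the side of that midline adjacent to its parallel external edge as the \emph{external side} and the opposite side as the \emph{internal side}. This produces a local labeling of the two components of $N\setminus H$. To show the labeling is globally consistent I would argue as follows. Two squares of $N$ share a fiber $I_v$ exactly when their images in $\s$ share the edge of $\s$ of which $v$ is the midpoint. If that edge is external, say $e=(\bar{e},\bar{f})$, then in any square of $\s$ containing $e$ the second external edge has the form $(\bar{e},\bar{g})$ (adjacent to $e$ at $\bar{e}$) and the internal edge sharing the other endpoint of $e$ has the form $(\bar{f},\olp_j)$. Hence the midline dual to $e$ has $\bar{e}$ on the side of the parallel external edge and $\bar{f}$ on the side of the parallel internal edge, independent of the choice of square; the external and internal sides at $v$ are thus determined by $v$ alone. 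The analogous statement for an internal edge $e=(\bar{f},\olp_i)$, with $\olp_i$ now playing the role of $\bar{e}$, follows by the same reasoning.

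Consistency of the labeling across shared fibers shows that the $I$-bundle $p$ is trivial, with $\partial N=\partial_e N\sqcup\partial_i N$ given by the two globally defined sides. By construction $j$ maps $\partial_e N$ into the external edges of $\s$ and $\partial_i N$ into the internal edges. The main obstacle is the consistency check in the middle step: the combinatorial structure of $\s$---specifically the fact that the two external edges of each square share the edge-barycenter vertex and the two internal edges share the polyhedron-barycenter vertex---is exactly what makes the local labelings patch together globally.
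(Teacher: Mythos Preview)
Your proof is correct and follows essentially the same approach as the paper's. Both arguments verify that the external/internal labeling on the two sides of each midline is consistent across adjacent squares of $N$, by splitting into the case where the shared edge of $\s$ is external and the case where it is internal; your phrasing is slightly more intrinsic (the labeling of the fiber $I_v$ is determined by $v$ alone, via which endpoint of the dual edge is the edge-barycenter and which is the face- or polyhedron-barycenter), while the paper writes out the neighboring square explicitly in each case, but the content is the same.
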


\begin{proof}  Let $s$ be a square of $\s$.  The vertices of $s$ are the barycenters of $\bar{e}$, $\bar{f}$, $\bar{g}$, and $\olp_i$, where $\p_i$ is a polyhedron in the decomposition of $M$, $e$ is an edge of $\p_i$, and $f$ and $g$ are the faces of $\p_i$ intersecting in $e$.  One midline of $s$ has vertices on the midpoints of the opposite  edges $(\bar{e},\bar{f})$ and $(\bar{g},\olp_i)$ of $s$, and the other has vertices on the midpoints of $(\bar{f},\olp_i)$ and $(\bar{e},\bar{g})$.  Take $H$ to be the hyperplane containing the midline $m$ with vertices on $(\bar{e},\bar{f})$ and $(\bar{g},\olp_i)$.

Let $s_0 = p^{-1}(m) \subset N$; then $s_0$ is a square which $j$ maps homeomorphically to $s$.  The edges of $s_0 \cap \partial N$ are mapped by $j$ to the edges of $s$ parallel to $m$.  These are $(\bar{f},\olp_i)$, which is internal, and $(\bar{e},\bar{g})$, which is external.  Let $b_i$ be the edge mapped to $(\bar{f},\olp_i)$ by $j$, let $b_e$ be mapped to $(\bar{e},\bar{g})$, and let $\partial_i N$ and $\partial_e N$ be the components of $\partial N$ containing $b_i$ and $b_e$, respectively.  It is \textit{a priori} possible that $\partial_i N = \partial_e N$, but we will show that $\partial_i N$ (respectively, $\partial_e N$) is characterized by the fact that its edges map to internal (resp, external) edges of $\s$.

Let $s_1$ be a square of $N$ adjacent to $s_0$.  Then the edge $m_1 \doteq p(s_1)$ of $H$ is the midline of the square $s' = j(s_1)$ adjacent to $s$.  Suppose first that $s'$ meets $s$ along the external edge $(\bar{e},\bar{f})$.  Then there is a polyhedron $\p_j$ of the decomposition with a face $f'$ and edge $e' \subset f$ with $\phi_f(f) = f'$ and $\phi_f(e)=e'$ (ie, $f$ and $f'$ represent the same face of the decomposition of $M$, and $e$ and $e'$ the same edge), such that the vertices of $s'$ are the barycenters of $\bar{e}'$, $\bar{f}'$, $\bar{g}_1$, and $\olp_j$.  Here $g_1$ is the other face of $\olp_j$ containing $e'$.

Since $m_1$ meets $m$, it has an endpoint at the midpoint of $(\bar{e}',\bar{f}')$, which is identified with $(\bar{e},\bar{f})$ in $M$.  Then the other endpoint of $m_1$ is on the opposite edge $(\bar{g}_1,\olp_j)$ of $s'$.  The external edge $(\bar{e}',\bar{g}_1)$ of $s'$ which is parallel to $m_1$ meets the external edge $(\bar{e},\bar{g})$ of $s$ at the barycenter of the edge of the decomposition represented by $e$ and $e'$.  It follows that $j$ maps the edge of $s_1 \cap \partial N$ adjacent to $b_e$ to $(\bar{e},\bar{g})$.  Likewise, the edge of $s_1 \cap \partial N$ adjacent to $b_i$ is mapped to the internal edge $(\bar{f}',\olp_j)$ of $s'$.

Now suppose $s'$ meets $s$ along the internal edge $(\bar{g},\olp_i)$.  Then there is an edge $e_1$ of $g$ such that the vertices of $s'$ are the barycenters of $\bar{e}_1$, $\bar{g}$, $\bar{f}_1$, and $\olp_i$.  Here $f_1$ is the other face of $\p_i$ containing $e_1$.  Then $m_1$ meets $m$ at the midpoint of $(\bar{g},\olp_i)$.  Since $b_e$ is mapped by $j$ to $(\bar{e},\bar{g})$, the edge of $s_1 \cap \partial N$ adjacent to it is mapped to the external edge $(\bar{e}_1, \bar{g})$.  It follows that the other edge of $s_1 \cap \partial N$ is mapped to the internal edge $(\bar{f}_1,\olp_i)$ of $s'$ parallel to $m_1$.

The above establishes that the union of the set of edges of $\partial_i N$ mapped to internal edges of $\s$ is open and nonempty in $\partial_i N$.  Since it is clearly also closed, it is all of $\partial_i N$.  An analogous statement holds for $\partial_e N$, establishing the lemma.
\end{proof}

It is occasionally useful to think of the standard square complex associated to an ideal polyhedral decomposition as a subdivision of the ``dual two-complex''.  If $\c$ is the cell complex associated to the ideal polyhedral decomposition $\{\calp_i\}$, let $D\c$ be the two-complex with a vertex at the barycenter of each $3$-cell of $\c$, for each $f \in \c^{(2)}$ an edge $Df$ crossing $f$, and for each $e \in \c^{(1)}$ a face $De$ crossed by $e$.  The standard square complex $\s$ is obtained from $D\c$ by dividing each face along its intersections with the $2$-cells of $\c$ which meet at the edge.

\begin{lemma} \label{nonpos}  Suppose $\{\p_i\}$ is a decomposition of $M$ into right-angled ideal polyhedra.  The standard square complex $\s$ associated to $\{\p_i\}$ is non-positively curved.
\end{lemma}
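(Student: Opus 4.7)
The plan is to verify Gromov's link condition directly from the definition given in the excerpt: for each vertex $v$ of $\s$, show that the link of $v$ is a simple graph with no simple cycle of length less than $4$. Vertices of $\s$ come in three types --- barycenters of edges, of faces, and of polyhedra of the decomposition --- and I will handle the three cases in turn.

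First, for an edge barycenter $\bar{e}$, the squares of $\s$ at $\bar{e}$ are precisely those of the form $(\bar{e}, \bar{f}, \olp_i, \bar{g})$ for polyhedra $\p_i$ meeting $e$ along faces $f$ and $g$, so the link of $\bar{e}$ is a cycle with one vertex per face of the decomposition containing $e$ and one link edge per polyhedron around $e$. The right-angled hypothesis enters through the observation that the dihedral angles around $e$ sum to $2\pi$ with each equal to $\pi/2$, so exactly four polyhedra meet along $e$ and the link is a $4$-cycle.

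Second, for a face barycenter $\bar{f}$ with $\p_i, \p_j$ the (generically distinct) polyhedra on either side, I will describe the link as a bipartite graph whose two parts are indexed by the $n \geq 3$ edges of the ideal polygon $f$ and by $\olp_i, \olp_j$; each edge $e$ of $f$ contributes exactly one square on each side of $f$, so the link is the complete bipartite graph $K_{n,2}$, of girth $4$. No angle information is required here. Third, for a polyhedron barycenter $\olp_i$, the link has a vertex per face of $\p_i$ and an edge per edge of $\p_i$ (joining the two faces meeting there), which is exactly the $1$-skeleton of the combinatorial dual of $\p_i$. A shortest cycle in the $1$-skeleton of a convex polyhedral decomposition of $S^2$ bounds a disk in the sphere, hence a union of faces, so has length at least the minimum face size of the dual; since faces of the dual of $\p_i$ correspond to ideal vertices with sizes equal to their valences, this minimum is realized at a lowest-valence ideal vertex. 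The right-angled hypothesis closes the argument: the horospherical cross-section at any ideal vertex is a Euclidean polygon with all interior angles equal to $\pi/2$, so it must be a quadrilateral, and every ideal vertex has valence $4$.

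The step I expect to require the most care is the link analysis at $\olp_i$: specifically, the combinatorial claim that the girth of the $1$-skeleton of a convex polyhedral cell decomposition of $S^2$ equals its minimum face size, together with a routine check in each of the three cases that the link is a simple graph --- no multi-edges or loops. The latter could only fail if the face identifications in the cell complex $\c$ produce degenerate gluings (for instance, a face paired with another face of the same polyhedron, or two faces of $\p_i$ sharing more than one edge), and I expect these to be ruled out by convexity of the $\p_i$ together with the standard analysis of the cell structure on $\c$.
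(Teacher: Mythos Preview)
Your case analysis by vertex type is sound in outline and makes explicit something the paper leaves implicit. The paper observes that $\s$ is a subdivision of the dual $2$-complex $D\c$, whose only vertices are the $\olp_i$, verifies the link condition there, and then simply remarks that non-positive curvature passes to the subdivision $\s$. Your analyses at $\bar{e}$ (link a $4$-cycle) and $\bar{f}$ (link $K_{2,n}$) are exactly the checks needed to justify that inheritance, so on those two cases the arguments agree in content if not in packaging.

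The substantive difference, and the gap, is at $\olp_i$. The paper invokes Andreev's theorem directly: a length-$3$ cycle in the link is three faces of $\p_i$ pairwise meeting along edges, which is either three faces around a common vertex (forcing valence $3$) or a prismatic $3$-circuit, and Andreev's conditions for a right-angled ideal polyhedron exclude both. Your route---observe that every face of the dual $(\p_i)^*$ is a quadrilateral and conclude that the girth is $4$---reaches the same conclusion, but the justification you offer (``bounds a disk \ldots\ hence a union of faces, so has length at least the minimum face size'') is not a valid inference as stated: the boundary of a union of faces can certainly be shorter than any single face. The clean fix is the standard fact that a planar graph whose facial cycles all have even length is bipartite (the cycle space over $\mathbb{Z}/2$ is generated by facial cycles, and a symmetric difference of even-length edge sets has even length), so the $1$-skeleton of $(\p_i)^*$ has no odd cycles at all. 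With that in place your argument goes through using only the valence-$4$ observation, and in fact avoids the prismatic-circuit clause of Andreev's theorem entirely. As for your closing worry about degeneracies: a face of $\p_i$ glued to another face of the \emph{same} $\p_i$ is routine rather than pathological (one-polyhedron decompositions abound), and since the link at $\olp_i$ records only the combinatorics of the single convex polyhedron $\p_i$---where two distinct faces meet in at most one edge---simplicity there is automatic.
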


\proof  Recall that $\s$ is non-positively curved if and only if in the link of any vertex, each simple cycle has length at least $4$.  If $v$ is a vertex of $\s$, a simple cycle of length $k$ in the link of $v$ is a sequence of squares $s_0,s_1, \hdots,s_{k-1}$ with the following properties: for each $i$ there is an edge $e_i$ with $v \subset e_i \subset s_i \cap s_{i+1}$ (taking $i+1$ modulo $k$), and $s_i \neq s_j$ and $e_i \neq e_j$ when $i \neq j$.

Since the decomposition $\{\p_i\}$ is into right-angled polyhedra, the dual two-complex $D\c$ described above the lemma is a square complex.  This follows from the fact that each edge of $\c$ is contained in four faces of $\c$.  We will show that $D\c$ is non-positively curved; since $\s$ is a subdivision of $D\c$, it will follow that $\s$ is non-positively curved.

Suppose $v$ is a vertex of $D\c$, and let $\{De_0,\hdots,De_{k-1}\}$ be a simple cycle in the link of $v$ in $D\c$.  The associated sequence of edges $\{ Df_0, \hdots,Df_{k-1}\}$ determines a sequence of distinct faces $\{f_0,\hdots, f_{k-1}\}$ of the polyhedron $\p_i$ containing $v$, each meeting the next in an edge.   It follows immediately from the necessary conditions of Andreev's theorem \cite{Andreev} and the fact that $\p_i$ is right-angled that every such cycle has length at least four.  The conclusion of the lemma follows.
\endproof

\section{Totally geodesic hyperplane groups}  \label{sec:geodesic hyperplanes}

Fix an orientable, complete hyperbolic manifold $M = \mathbb{H}^3/\Gamma$ of finite volume, equipped with a decomposition $\{\p_i\}$ into right-angled ideal polyhedra.  Here we have identified $M$ with the quotient of $\mathbb{H}^3$ by a discrete group of isometries $\Gamma$, thus identifying $\pi_1M$ with $\Gamma$.  Let $\s$ be the standard square complex associated to the polyhedral decomposition as in Section \ref{std square}.  The goal of this section is, for each hyperplane $H \to X$, to identify a totally geodesic surface immersed in $M$ which ``carries'' $H$.

Since each $\p_i$ is right-angled and the angle in $M$ around each edge is $2\pi$, the equivalence class of each edge has four members.  If $f$ represents a face of the decomposition and $e$ an edge of $f$, define the {\it flat $e$-neighbor of $f$} to be the face of the decomposition that meets $f$ at angle $\pi$ along $e$ in $M$.

If $\p_i$ is the polyhedron containing $f$, let $g$ be the other face of $\p_i$ containing $e$.  Let $g' = \phi_{g}(g)$, a face of some polyhedron $\p_j$, and let $e' = \phi_{g}(e)$.  Then $e$ and $e'$ represent the same edge of the decomposition, and the flat $e$-neighbor of $f$ is represented by the face $f_1$ of $\p_j$ which intersects $g'$ along $e'$.  Let $\Sigma_f$ be the collection of faces of the decomposition, minimal with respect to inclusion, satisfying the properties below.
\begin{enumerate}
\item $f \in \Sigma_f$, and
\item if $g\in \Sigma_f$ and $e$ is an edge of $g$, then every flat $e$-neighbor of $g$ is in $\Sigma_f$.
\end{enumerate}
Note that if $g \subset \Sigma_f$ is a 2-cell then $\Sigma_f = \Sigma_g$. Furthermore, there is a sequence $\{f = f_0, f_1, \hdots, f_n = g\}$ such that for each $i>0$ there is an edge $e_i$ with $f_i$ a flat $e_i$-neighbor of $f_{i-1}$.  Call such a sequence a {\it path of flat neighbors.}

Now let $\widehat{\Sigma}_f$ be the quotient of $\Sigma_f$ by the following edge pairings: if $g$ represents an element of $\Sigma_f$ and $e$ is an edge of $g$, glue $g$ to its flat $e$-neighbor $g'$ by the restriction of the face pairing isometry $\phi_{g}$ described above.  Since each face of the decomposition has a unique flat $e$-neighbor along each of its edges, $\widehat{\Sigma}_f$ is topologically a surface without boundary.  It is connected, since any two faces in $\Sigma_f$ are connected by a path of flat neighbors, and it inherits a hyperbolic structure from its faces, since the edge gluing maps are isometries.

The inclusion maps of faces $\{ g \hookrightarrow \p_i\, |\, g \subset \p_i,\ g \in \Sigma_f\}$ determine an immersion from $\widehat{\Sigma}_f$ to $\bigsqcup_i \p_i/ \sim$.  This is not necessarily an embedding because the preimage of an edge may consist of two edges of $\widehat{\Sigma}_f$, each mapped homeomorphically.  However, by construction it is a local isometry.

\begin{lemma}  \label{lem:geodesic_immersion}  Let $i \co \widehat{\Sigma}_f \to M$ be the composition of the inclusion-induced map to $\bigsqcup_i \p_i/\sim$ with the isometry to $M$.  Then $i$ is a proper  immersion which maps onto its image with degree one.
\end{lemma}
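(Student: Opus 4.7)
The plan is to verify in turn that (i) $i$ is a local isometry (hence an immersion), (ii) $i$ is proper, and (iii) $i$ has degree one onto its image.

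For (i), I would check the local-isometry condition at the three types of points of $\widehat{\Sigma}_f$. On the interior of a face $g \in \Sigma_f$, the map $i$ restricts to the isometric inclusion of $g$ into its containing polyhedron, composed with the quotient to $M$. At an interior point of an edge obtained by gluing $g$ to its flat $e$-neighbor $g'$, the two faces meet in $M$ along the corresponding edge of $\calc$ at dihedral angle $\pi$, so they locally form a totally geodesic disk transverse to that edge and $i$ is a local isometry across. At an ideal vertex $v$ of $\widehat{\Sigma}_f$, the surface is a cyclic arrangement of ideal corners of polygons around a common ideal vertex $v_0 \in \calc^{(0)}$; in the upper half-space model of $\mathbb{H}^3$ centered at $v_0$, these corners lift to vertical Euclidean half-strips glued along vertical geodesics, producing a local isometric picture compatible with the cusp structure of $M$ at $v_0$.

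For (ii), finiteness of the polyhedral decomposition forces $\Sigma_f$ to be a finite set of faces of $\calc$, so $\widehat{\Sigma}_f$ is a finite union of ideal polygons glued by isometries along edges---a finite-area hyperbolic surface with finitely many ideal vertices, each mapped into a cusp of $M$ by step (i). Given a compact $K \subset M$, $K$ misses sufficiently small horoball neighborhoods of the cusps of $M$, so $i^{-1}(K)$ misses the corresponding cuspidal neighborhoods of the ideal vertices of $\widehat{\Sigma}_f$; the complement of such neighborhoods is compact, and hence so is $i^{-1}(K)$.

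For (iii), $\Sigma_f$ is by construction a set of distinct faces of the decomposition, that is, of distinct $2$-cells of $\calc$. The open interiors of distinct $2$-cells are pairwise disjoint in $M$, so $i$ restricted to the dense open subset $\bigsqcup_{g \in \Sigma_f} \mathrm{int}(g)$ of $\widehat{\Sigma}_f$ is injective. Thus $i$ is one-to-one onto a dense open subset of its image, giving degree one. The main technical obstacle I anticipate is the local-isometry verification at ideal vertices in step (i): one must confirm that the cyclic gluing of ideal corners around a common $\calc^{(0)}$-vertex really produces a smooth hyperbolic-surface cusp that embeds isometrically into a horoball cross-section of the $M$-cusp. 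The remaining steps are largely bookkeeping, given the finiteness of $\Sigma_f$ and the distinctness of the $2$-cells it contains.
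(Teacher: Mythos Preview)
Your argument is correct and follows essentially the same approach as the paper, which dispatches properness by finiteness of $\{\p_i\}$ and degree one by noting that interiors of distinct faces in $\Sigma_f$ inject into $M$; the local-isometry claim was already recorded just before the lemma, so the paper's proof does not revisit it. One small point: your case analysis in (i) at ideal vertices is unnecessary, since ideal vertices are not points of $\widehat{\Sigma}_f$ and there is nothing to check there for the local-isometry condition---the only nontrivial points are interior edge points, where your flat-neighbor observation suffices.
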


\begin{proof}  If $g$ is a face of $\p_i$, the inclusion $g \hookrightarrow \p_i$ is proper by definition.  Since the collection $\{\p_i\}$ is finite, it follows that $i$ is proper.  By construction, the interior of each face in $\Sigma_f$ is mapped homeomorphically by $i$, thus it has degree one onto its image.  \end{proof}

Since the map $i \co \widehat{\Sigma}_f \to M$ is a proper local isometry and $M$ is complete, the hyperbolic structure on $\widehat{\Sigma}_f$ is complete.  Since it is contained in the union of finitely many polygons of finite area, $\widehat{\Sigma}_f$ has finite area.  Choosing an isometric embedding of $f$ in $\mathbb{H}^2$ thus determines a developing map identifying the universal cover of $\widehat{\Sigma}_f$ with $\mathbb{H}^2$, and identifying $\pi_1 \widehat{\Sigma}_f$ with a subgroup $\Gamma_f$ of $\mathrm{Isom}(\mathbb{H}^2)$.

Now fix a component $\tilde{f}$ of the preimage of $i(f)$ under the universal cover $\mathbb{H}^3 \rightarrow M$.  This choice determines a lift $\tilde{\imath} \co \mathbb{H}^2 \to \mathbb{H}^3$ of $i \co \widehat{\Sigma}_f \to M$, equivariant with respect to the actions of $\Gamma_f$ on $\mathbb{H}^2$ and $i_* (\pi_1 \widehat{\Sigma})$ on $\mathbb{H}^3$.

\begin{lemma} \label{stabilizer}  Let $\mathcal{H}$ be the geodesic hyperplane of $\mathbb{H}^3$ containing $\tilde{f}$.  Then $\tilde{\imath}$ maps $\mathbb{H}^2$ isometrically onto $\calh$, and $i_*$ takes $\pi_1 \widehat{\Sigma}_f$ isomorphically onto $\mathrm{Stab}_{\Gamma}(\calh)$.  \end{lemma}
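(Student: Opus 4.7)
The plan is to first show that $\tilde\imath(\mathbb{H}^2) \subseteq \mathcal{H}$, then upgrade this inclusion to an isometry onto $\mathcal{H}$, and finally obtain the stabilizer statement through a covering-space argument.

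\textbf{Step 1 (image lies in $\mathcal{H}$).} Since $\Sigma_f$ is generated under flat-neighbor moves from $f$, every face $g \in \Sigma_f$ is connected to $f$ by a path of flat neighbors $f = f_0, f_1, \ldots, f_n = g$. I would argue by induction on $n$ that the lift of $g$ to $\mathbb{H}^2$ is mapped by $\tilde\imath$ into $\mathcal{H}$. The base case is by definition of $\tilde f$. For the inductive step, the flat-neighbor relation identifies $f_{i-1}$ and $f_i$ along a common edge across which they meet at angle $\pi$ in $M$; the lifts $\tilde\imath(\tilde f_{i-1})$ and $\tilde\imath(\tilde f_i)$ meet along a common lifted edge at angle $\pi$, and so they span the same geodesic hyperplane of $\mathbb{H}^3$, which by induction is $\mathcal{H}$. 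Since every point of $\mathbb{H}^2$ lies in the lift of some face in $\Sigma_f$, it follows that $\tilde\imath(\mathbb{H}^2) \subseteq \mathcal{H}$.

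\textbf{Step 2 ($\tilde\imath$ is an isometry onto $\mathcal{H}$).} Viewing $\tilde\imath$ as a map $\mathbb{H}^2 \to \mathcal{H}$, it is a local isometry between two complete, simply connected hyperbolic planes. Completeness of $\mathbb{H}^2$ (which follows from Lemma \ref{lem:geodesic_immersion} together with the fact that $\widehat\Sigma_f$ has the complete hyperbolic structure inherited from its finitely many totally geodesic faces) lets one conclude via the standard argument that $\tilde\imath$ is a covering map, and then simple-connectivity of $\mathcal{H}$ forces it to be an isometry.

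\textbf{Step 3 (stabilizer identification).} Equivariance of $\tilde\imath$ immediately gives $i_*(\pi_1 \widehat\Sigma_f) \subseteq \mathrm{Stab}_\Gamma(\mathcal{H})$, and injectivity of $i_*$ follows because $\pi_1 \widehat\Sigma_f$ acts freely on $\mathbb{H}^2$ while $\tilde\imath$ is now known to be a bijection. For the reverse inclusion, I would exploit that Lemma \ref{lem:geodesic_immersion} says $i \co \widehat\Sigma_f \to i(\widehat\Sigma_f)$ is a proper local isometry of degree one, hence an isometry (a proper local isometry between complete finite-area hyperbolic surfaces of the same topological type, with degree one, is a homeomorphism). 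Thus $i(\widehat\Sigma_f) = \mathbb{H}^2 / i_*(\pi_1\widehat\Sigma_f)$ as a hyperbolic surface. On the other hand, $\mathcal{H}$ is precisely the component of the preimage of $i(\widehat\Sigma_f)$ in $\mathbb{H}^3$ that contains $\tilde f$ (any other such component is a distinct geodesic plane, since $\widehat\Sigma_f$ is totally geodesic), so the quotient of $\mathcal{H}$ by $\mathrm{Stab}_\Gamma(\mathcal{H})$ is also $i(\widehat\Sigma_f)$. Two torsion-free groups of isometries of $\mathbb{H}^2$ with the same quotient and with one contained in the other must coincide, giving the desired equality.

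The main obstacle, I expect, is Step 3: specifically, verifying that $i \co \widehat\Sigma_f \to i(\widehat\Sigma_f)$ being a degree-one local isometry upgrades to being a homeomorphism (so that the covering-group comparison at the end really is valid), and more broadly that $\mathcal{H}$ is a full component of the preimage of $i(\widehat\Sigma_f)$ rather than a proper subset of one. Both points ultimately rest on $\widehat\Sigma_f$ being built as a \emph{maximal} collection of flat-neighbor-connected faces, so that no extra face of the decomposition can sit inside $\mathcal{H}$ beyond the ones already in $\Sigma_f$.
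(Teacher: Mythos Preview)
Your proposal is correct and follows essentially the same route as the paper's proof: the paper also argues that $\tilde\imath$, as a lift of a local isometry, maps $\mathbb{H}^2$ isometrically onto $\mathcal{H}$, then uses faithfulness of the $\pi_1\widehat\Sigma_f$-action for injectivity, and the degree-one property from Lemma~\ref{lem:geodesic_immersion} for surjectivity onto the stabilizer.

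The one noteworthy difference is in Step~3. You try to first upgrade $i$ to a homeomorphism onto its image and then compare quotients; this is where you correctly sense an obstacle, since $i$ need not be injective (the paper itself remarks that edges may have two preimages). The paper sidesteps this entirely: it observes that $i$ factors as
\[
\widehat\Sigma_f \;=\; \mathcal{H}/i_*(\pi_1\widehat\Sigma_f)\;\longrightarrow\;\mathcal{H}/\mathrm{Stab}_\Gamma(\mathcal{H})\;\hookrightarrow\; M,
\]
and if the inclusion $i_*(\pi_1\widehat\Sigma_f)\subset\mathrm{Stab}_\Gamma(\mathcal{H})$ were proper, the first map would be a covering of degree ${>}1$, contradicting that $i$ has degree one onto its image. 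So you never need to know that $i$ is a homeomorphism; the degree hypothesis alone does the work.
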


\begin{proof}  Since $i$ is a local isometry, $\tilde{\imath}$ maps $\mathbb{H}^2$ isometrically onto the geodesic hyperplane in $\mathbb{H}^3$ containing $\tilde{\imath}(f) = \tilde{f}$, hence $\calh$.  Since $\pi_1 \widehat{\Sigma}_f$ acts faithfully on $\mathbb{H}^2$ by isometries, its action on $\calh$, and hence all of $\mathbb{H}^3$ is also faithful.  If $i_*(\pi_1 \widehat{\Sigma}_f)$ were properly contained in $\mathrm{Stab}_{\Gamma}(\calh)$, the embedding $i$ would factor through the covering map $\calh/i_*(\pi_1 \widehat{\Sigma}_f) \to \calh/\mathrm{Stab}_{\Gamma}(\calh)$, contradicting the fact that $i$ maps onto its image with degree one.
\end{proof}

Let us now take $\Gamma_f = i_* (\pi_1\widehat{\Sigma}_f)$ and $\widehat{M}_f = \mathbb{H}^3/\Gamma_f$.   By Lemma \ref{stabilizer}, $i \co \widehat{\Sigma}_f \to M$ lifts to an embedding $\hat{\imath}$ to $\widehat{M}_f$, such that $\widehat{M}_f$ is homeomorphic to $\hat{\imath}(\widehat{\Sigma}_f) \times \mathbb{R}$.  We thus obtain the following diagram.

\[ \xymatrix{  \mathbb{H}^2 \ar[d] \ar[r]^{\tilde{\imath}} & \mathbb{H}^3 \ar[d] \\
\widehat{\Sigma}_f  \cong \mathbb{H}^2/\Gamma_f  \ar[dr]_{i} \ar[r]^{\hat{\imath}} & \widehat{M}_f := \mathbb{H}^3/\Gamma_f \ar[d] \\
& M}
 \]
Below we will refer by $\widehat{\Sigma}_f \subset \widehat{M}_f$ to the image of $\hat{\imath}$.

\begin{dfn}  Let $M$ be a complete, orientable, hyperbolic $3$-manifold of finite volume equipped with a decomposition $\{\p_i\}$ into right-angled ideal polyhedra, and suppose $H$ is a hyperplane of the associated square complex, with regular neighborhood $(N,p,j)$.  Choose a midline $m$ of $H$, let $s = p^{-1}(m)$, and let $\p_i$ contain $j(s)$.  There is a unique face $f$ of $\p_i$ containing the external edge of $j(s\cap \partial N)$, and we define $\widehat{\Sigma}(H) = \widehat{\Sigma}_f$, $\Gamma(H) = \Gamma_f$, and $\widehat{M}(H) = \widehat{M}_f$.  \end{dfn}

\begin{lemma}  \label{lem:nbhd_lift}  Using notation from the definition above, let $\widehat{\s}$ be the standard square complex associated to the decomposition $\widehat{M}(H)$ inherits from $\{\p_i\}$.  Then $j \co N \to \s$ lifts to an immersion $\hat{\jmath}$ to $\widehat{\s}$, taking $\partial_e N$ to a spine of $\widehat{\Sigma}(H)$, such that $\hat{\jmath}|_{\partial_e N}$ is an embedding if $\widehat{\Sigma}(H)$ is orientable, and a two-to-one cover if not.  \end{lemma}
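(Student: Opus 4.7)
The plan is to proceed in three steps: produce $\hat\jmath$ as a lift via a $\pi_1$-containment, identify $\hat\jmath(\partial_e N)$ as a spine of $\widehat\Sigma(H)$, and determine the degree of $\hat\jmath|_{\partial_e N}$.  For the first step, I will show $j_*(\pi_1 N) \subseteq \Gamma(H)$ by a geometric argument on the external boundary.  The $I$-bundle $N \to H$ has two distinct boundary components (Lemma \ref{two-sided}), so it is trivial and $\partial_e N \hookrightarrow N$ is a homotopy equivalence.  By construction $j(\partial_e N)$ lies on the union of faces in $\Sigma_f \subset M$, and since flat-neighbor faces lift to coplanar faces in $\mathbb{H}^3$, the preimage in $\mathbb{H}^3$ of the $2$-dimensional subset of $M$ swept out by $\Sigma_f$ is a disjoint union of geodesic planes.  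Lifting $\widetilde{\partial_e N} \to \mathbb{H}^3$ based at a point in the initial face $\tilde f$, connectedness of $\widetilde{\partial_e N}$ (which follows from $\partial_e N \hookrightarrow N$ being a $\pi_1$-isomorphism) forces the image into the single plane $\calh$.  The deck action of $\pi_1 N$ on $\widetilde N$ preserves $\widetilde{\partial_e N}$, hence its image in $\calh$, so $j_*(\pi_1 N) \subseteq \mathrm{Stab}_\Gamma(\calh) = \Gamma(H)$ by Lemma \ref{stabilizer}.  Covering space theory then lifts $j$ to a combinatorial immersion $\hat\jmath \co N \to \widehat\s$, since the covering $\widehat M(H) \to M$ restricts to a combinatorial cover $\widehat\s \to \s$.

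For the second step, Lemma \ref{two-sided} places $\hat\jmath(\partial_e N)$ among external edges of $\widehat\s$, each on a face of $\widehat M(H)$ obtained from $f$ by a sequence of flat-neighbor moves, hence a face of $\widehat\Sigma(H)$.  Thus $\hat\jmath(\partial_e N) \subseteq \widehat\Sigma(H) \cap \widehat\s$, which by Lemma \ref{external_spine} applied to $\widehat\Sigma(H)$ (viewed as a union of $2$-cells of the inherited decomposition) is a spine $\hat\sigma$ of $\widehat\Sigma(H)$.  Surjectivity onto $\hat\sigma$ follows from the case analysis in the proof of Lemma \ref{two-sided}: adjacency of squares in $N$ across external edges realizes flat-neighbor moves between faces, while adjacency across internal edges transitions between edges of a single face; together these exhaust all external edges on all faces of $\widehat\Sigma(H)$, in agreement with the minimality defining $\Sigma_f$.

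For the third step, let $\Gamma(H)^+ \leq \Gamma(H)$ denote the (index at most $2$) subgroup preserving the two sides of $\calh$ in $\mathbb{H}^3$.  Because $N$ has two distinct boundary components, the deck action of $\pi_1 N$ preserves transverse orientation, so $j_*(\pi_1 N) \subseteq \Gamma(H)^+$; and since $M$ is orientable, $\Gamma(H)^+$ coincides with $\Gamma(H)$ exactly when $\widehat\Sigma(H) = \calh/\Gamma(H)$ is orientable as a surface.  The surjective combinatorial immersion $\hat\jmath|_{\partial_e N} \co \partial_e N \to \hat\sigma$ of connected graphs is then a covering of degree $d = [\Gamma(H) : j_*(\pi_1 N)]$.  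The main delicate step is establishing the equality $j_*(\pi_1 N) = \Gamma(H)^+$ (rather than strict containment), which forces $d = [\Gamma(H) : \Gamma(H)^+] \in \{1,2\}$ and yields the claimed embedding-versus-two-to-one-cover dichotomy: a degree-$1$ covering of a connected graph is an embedding, and a degree-$2$ covering is the unique double cover corresponding to the orientation double cover of $\widehat\Sigma(H)$.  I would secure equality by transporting the surjectivity of Step 2 through the homotopy equivalence $\hat\sigma \simeq \widehat\Sigma(H)$: every loop in $\widehat\Sigma(H)$ representing an element of $\Gamma(H)^+$ is homotopic to a loop in $\hat\sigma$ preserving transverse coorientation, and such a loop lifts to $\partial_e N$ via the surjective covering produced above.
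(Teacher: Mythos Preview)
Your Steps 1 and 2 are sound and take a cleaner covering-space approach than the paper's explicit square-by-square construction of $\hat\jmath$.  (A minor ordering issue: Step 1 invokes ``by construction $j(\partial_e N)$ lies on the union of faces in $\Sigma_f$,'' which actually requires the flat-neighbor analysis you only give in Step 2; this is easily rearranged.)

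The genuine gap is in Step 3.  You correctly establish the containment $j_*(\pi_1 N) \subseteq \Gamma(H)^+$, and you correctly want to identify the degree of $\hat\jmath|_{\partial_e N}$ with the index $[\Gamma(H):j_*(\pi_1 N)]$.  (Note, though, that a surjective combinatorial immersion of graphs is \emph{not} automatically a covering---you must check that valences agree at every vertex, which is true here but needs a short verification you do not supply.)  The real problem is your argument for the reverse inclusion $\Gamma(H)^+ \subseteq j_*(\pi_1 N)$: you assert that a coorientation-preserving loop in $\hat\sigma$ ``lifts to $\partial_e N$ via the surjective covering produced above,'' but surjectivity of a covering says nothing about which loops lift to loops---that is governed precisely by the image subgroup you are trying to compute.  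So the degree is not bounded, and the embedding/two-to-one dichotomy does not follow.

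The paper closes this gap by a direct combinatorial bound that is independent of any index calculation: it shows that $\hat\jmath$ is \emph{injective on squares of $N$}, using crucially that $\hat\imath\co\widehat\Sigma(H)\hookrightarrow\widehat M(H)$ is an embedding (so only one external edge of each $\hat\jmath(s)$ can lie in $\widehat\Sigma(H)$, pinning down $p(s)$ as the unique midline parallel to that edge).  Since every edge of $\hat\sigma$ lies in exactly two squares of $\widehat\s$, this immediately forces $\hat\jmath|_{\partial_e N}$ to be at worst two-to-one.  The orientable/nonorientable split is then a one-line connectedness argument on the two sides of $\widehat\Sigma(H)$ in $\widehat M(H)$.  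To repair your proof you would need an independent reason why the index is at most $2$; the most natural one is exactly this square-injectivity, which in effect reproduces the paper's argument.
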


\begin{cor} \label{geod sfces carry}  If $\Sigma(H)$ is orientable, $\pi_1 H = \Gamma(H)$; otherwise $\pi_1 H$ is the index-two orientation-preserving subgroup of $\Gamma(H)$.  \end{cor}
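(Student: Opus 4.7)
The plan is to derive the corollary directly from Lemma~\ref{lem:nbhd_lift} by tracking fundamental groups. By definition $\pi_1 H = j_*(\pi_1 N) \le \Gamma$. First I would invoke Lemma~\ref{two-sided}: because its boundary components $\partial_e N$ and $\partial_i N$ are distinct, $p\co N \to H$ is a trivial $I$-bundle, so $\partial_e N \hookrightarrow N$ is a homotopy equivalence and I may rewrite $\pi_1 H = j_*(\pi_1 \partial_e N)$.

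Using the lift $\hat\jmath \co N \to \widehat\s$ from Lemma~\ref{lem:nbhd_lift} and the identification $\pi_1 \widehat\s = \pi_1 \widehat M(H) = \Gamma(H)$ (from Lemma~\ref{spine} applied in $\widehat M(H)$), we have $\pi_1 H = \hat\jmath_*(\pi_1 \partial_e N) \le \Gamma(H)$, compatibly with $\Gamma(H) \hookrightarrow \Gamma$. Lemma~\ref{lem:nbhd_lift} further tells us that $\hat\jmath|_{\partial_e N}$ maps onto a spine of $\widehat\Sigma(H)$, whose fundamental group is $\Gamma(H)$. So when $\widehat\Sigma(H)$ is orientable, $\hat\jmath|_{\partial_e N}$ is an embedding and hence a homotopy equivalence with the spine, giving $\pi_1 H = \Gamma(H)$; when $\widehat\Sigma(H)$ is non-orientable, $\hat\jmath|_{\partial_e N}$ is a two-to-one cover of the spine, so $\pi_1 H$ has index two in $\Gamma(H)$.

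The main obstacle is to identify this index-two subgroup, in the non-orientable case, as the orientation-preserving subgroup. My plan is to note that the covering $\partial_e N \to \hat\jmath(\partial_e N)$ selects precisely those loops that lift to loops in $\partial_e N$, rather than to paths connecting $\partial_e N$ to $\partial_i N$; equivalently, those that preserve the side of $\widehat\Sigma(H)$ on which $\partial_e N$ lies under the product decomposition $\widehat M(H) \cong \widehat\Sigma(H) \times \mathbb{R}$. By Lemma~\ref{stabilizer}, these two sides correspond to the two sides of the geodesic plane $\calh$ in $\mathbb{H}^3$. Since $\Gamma \le \mathrm{Isom}^+(\mathbb{H}^3)$, an element of $\Gamma(H) = \mathrm{Stab}_\Gamma(\calh)$ preserves each side of $\calh$ precisely when it acts orientation-preservingly on $\calh$, equivalently on $\widehat\Sigma(H)$. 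Hence $\pi_1 H$ is the orientation-preserving subgroup of $\Gamma(H)$.
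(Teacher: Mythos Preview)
Your approach is exactly the one the paper intends: the corollary is meant to fall out of Lemma~\ref{lem:nbhd_lift}, and your first two paragraphs carry this out cleanly.

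The last paragraph has the right endpoint but two confusions worth fixing. First, lifts under the covering $\hat\jmath|_{\partial_e N}\co \partial_e N \to (\text{spine of }\widehat\Sigma(H))$ always stay in $\partial_e N$; the failure mode is not ``a path connecting $\partial_e N$ to $\partial_i N$'' but rather a path in $\partial_e N$ joining the two distinct preimages of the basepoint. Second, in the non-orientable case there is no product decomposition $\widehat M(H) \cong \widehat\Sigma(H)\times\mathbb{R}$: since $\widehat M(H)$ is orientable and $\widehat\Sigma(H)$ is not, $\widehat\Sigma(H)$ is one-sided in $\widehat M(H)$ and the line bundle is twisted. (The sentence after Lemma~\ref{stabilizer} in the paper should likewise be read only in the orientable case.)

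The fix is already implicit in your last sentence. From the proof of Lemma~\ref{lem:nbhd_lift}, the two $\partial_e N$-preimages of an edge of the spine lie in the two squares of $\widehat\s$ on opposite \emph{local} sides of $\widehat\Sigma(H)$. Hence a loop in the spine lifts to a loop in $\partial_e N$ exactly when traversing it returns to the same local side; passing to the universal cover, this says the corresponding element of $\Gamma(H)=\mathrm{Stab}_\Gamma(\calh)$ preserves each side of $\calh$, which --- since $\Gamma\le\mathrm{Isom}^+(\mathbb{H}^3)$ --- is equivalent to preserving the orientation of $\calh$. That identifies $\pi_1 H$ with the orientation-preserving subgroup, as required.
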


\begin{proof}[Proof of Lemma \ref{lem:nbhd_lift}]  Suppose $m_0$ and $m_1$ are two adjacent midlines of $H$, and let $s_0 = p^{-1}(m_0)$ and $s_1 = p^{-1}(m_1)$ in $N$.  Take $\p_{i_0}$ and $\p_{i_1}$ to be the polyhedra containing $j(s_0)$ and $j(s_1)$, respectively, and let $f_0$ be the face of $\p_{i_0}$ and $f_1$ the face of $\p_{i_1}$ containing $j(s_0 \cap \partial_e N)$ and $j(s_1 \cap \partial_e N)$.  If $m_0$ meets $m_1$ at the midpoint of an internal edge of $\s$, it is clear that $\p_{i_0} = \p_{i_1}$ and $f_0 = f_1$.

If $m_0$ meets $m_1$ in an external edge of $\s$, then $\p_{i_0}$ and $\p_{i_1}$ abut in $M$ along a face of the decomposition.  Let $g \subset \p_{i_0}$ represent this face of the decomposition.  Then $g$ and $f_0$ meet along an edge $e$, and $g' = \phi_g(g) \subset \p_{i_1}$ and $f_1$ meet along $e' = \phi_g(e)$.  Hence if $m_0$ meets $m_1$ in an external edge of $\s$, there is an edge $e$ of the decomposition of $M$ such that $f_0$ and $f_1$ represent flat $e$-neighbors.  It follows that a sequence of edges $m_0,m_1,\hdots,m_k$ of $H$, with the property that $m_i$ is adjacent to $m_{i-1}$ for each $i>0$, determines a path of flat neighbors in $\Sigma(H)$.  Therefore $j$ maps $\partial_e N$ into $i(\widehat{\Sigma}(H))$.

Now let $f$ be a face of some polyhedron $\p_i$ representing a face of $\Sigma(H)$.  The cover $\widehat{M}(H)$ inherits a polyhedral decomposition from that of $M$, and since the covering map is injective on a neighborhood of $\hat{\imath}(f)$, there is a unique polyhedron $\widehat{\p}_i$ of this decomposition with the property that $\widehat{\p}_i$ projects to $\p_i$ and contains $\hat{\imath}(f)$.
For a square $s$ of $N$, we thus define $\hat{\jmath}(s)$ to be the component of the preimage of $j(s)$ contained in $\widehat{\p}_i$, where $\p_i$ is the polyhedron containing $j(s)$.

Suppose $\p_{i_0}$ and $\p_{i_1}$ contain faces $f_0$ and $f_1$, respectively, each representing a face of $\Sigma(H)$, which are flat $e$-neighbors for some edge $e$.  Let $g \subset \p_{i_0}$ satisfy $g \cap f_0 = e$ and $\phi_g(e) = \phi_g(g) \cap f_1 \subset \p_{i_1}$.  Since $\hat{\imath}(f_0)$ and $\hat{\imath}(f_1)$ meet in $\widehat{M}(H)$ along the preimage of $e$, $\widehat{\p}_{i_0}$ and $\widehat{\p}_{i_1}$ meet along the face represented by the preimage of $g$.  For adjacent squares $s_0$ and $s_1$ in $N$, it follows that if $j(s_0)$ and $j(s_1)$ meet along an external edge of $\s$, then $\hat{\jmath}(s_0)$ and $\hat{\jmath}(s_1)$ meet along an external edge of $\widehat{\s}$.

If $s_0$ and $s_1$ are adjacent squares of $N$ such that $j(s_0)$ meets $j(s_1)$ in an internal edge of $\s$ contained in a polyhedron $\p_i$, then $\hat{\jmath}(s_0)$ meets $\hat{\jmath}(s_1)$ in $\widehat{\p}_i$.   Thus $\hat{\jmath}$ is continuous.  Since $j$ is an immersion, $\hat{\jmath}$ is an immersion as well.  We claim $\hat{\jmath}$ maps $\partial_e N$ onto $\widehat{\s} \cap \widehat{\Sigma}(H)$.

Since $\hat{\jmath}$ is continuous, the image of $\partial_e N$ is closed in $\widehat{\s} \cap \widehat{\Sigma}(H)$.  Now suppose $e_0$ and $e_1$ are adjacent edges of $\widehat{\s} \cap \widehat{\Sigma}(H)$ such that $e_0 \subset \hat{\jmath}(\partial_e N)$.  Let $s_0 \subset N$ be a square such that $\hat{\jmath}(s_0)$ contains $e_0$, and let $m_0 = p(s_0)$ be a midline of $j(s_0)$.  There is a square $s$ of $\s$, containing the projection of $e_1$ to $M$, such that $s \cap j(s_0)$ is a union of edges containing the projection of $e_0 \cap e_1$.  Let $m_1$ be the midline of $s$ meeting $m_0$; then $m_1 \in H$, so by definition $s_1 = p^{-1}(m_1)$ is mapped by $j$ to $s$.  Now from the above it follows that $\hat{\jmath}(s_1)$ contains $e_1$.  This implies that $\hat{\jmath}(\partial_e N)$ is open in $\widehat{\s} \cap \widehat{\Sigma}(H)$ and proves the claim.

Lemma \ref{external_spine} implies that $\widehat{\s} \cap \widehat{\Sigma}(H)$ is a spine for $\widehat{\Sigma}(H)$, hence $\hat{\jmath}$ maps $\partial_e N$ onto a spine of $\widehat{\Sigma}(H)$.  Each square $s \subset N$ has the property that $s \cap \partial_e N$ is the unique edge of $s$ mapped by $\hat{\jmath}$ into $\widehat{\Sigma}(H)$.  For let $f\subset \Sigma(H)$ be the face of $\p_i$ containing $j(s \cap \partial_e N)$, where $\p_i$ contains $j(s)$, let $g$ be the face containing the other external edge of $j(s)$, and let $f_1$ be the flat $e$-neighbor of $f$, where $e = f \cap g$.  Then $\hat{\imath}(f)$ and $\hat{\imath}(f')$ are in $\widehat{\Sigma}(H)$.  If the face $\hat{g}$ of $\widehat{\p}_i$ adjacent to $\hat{\imath}(f)$ were also in $\hat{\imath}(\widehat{\Sigma}(H))$, $\hat{\imath}$ would not be an embedding.

Now suppose $\hat{\jmath}(s_0) = \hat{\jmath}(s_1)$ for squares $s_0$ and $s_1$ of $N$.  By the property above, there is an edge $e$ of $\widehat{\s} \cap \widehat{\Sigma}(H)$ such that $\hat{\jmath}(s_0 \cap \partial_e N) = e = \hat{\jmath}(s_1 \cap \partial_e N)$.  It follows that $j$ maps the external edge of each of $s_0$ and $s_1$ to the projection of $e$ in $M$.  By definition, $p(s_0)$ is the midline of $j(s_0)$ parallel to $j(s_0 \cap \partial_e N)$, and the same holds true for $s_1$.  Thus $p(s_0) = p(s_1)$, so $s_0 = s_1$.

The paragraph above implies that $\hat{\jmath}|_{\partial_e N}$ is at worst two-to-one, since each external edge of $\widehat{\s}$ is contained in exactly two squares.  Since $\widehat{M}(H)$ is orientable, if $\widehat{\Sigma}(H)$ is orientable as well, then it divides any sufficiently small regular neighborhood into two components.  Since $N$ is connected and $\hat{\jmath}$ is continuous, in this case its image is on one side of $\widehat{\Sigma}(H)$, so $\hat{\jmath}|_{\partial_e N}$ is an embedding.

If $\widehat{\Sigma}(H)$ is nonorientable, then a regular neighborhood is connected.  Thus in this case, for any edge $e$ of $\widehat{\s} \cap \widehat{\Sigma}(H)$, both squares containing $e$ are in the image of $\hat{\jmath}$, and the restriction to $\partial_e N$ maps two-to-one.
\end{proof}

The final result of this section characterizes some behaviors of hyperplanes of $\s$ in terms of the behavior of their associated totally geodesic surfaces.  Below we say distinct hyperplanes $H_1$ and $H_2$ are \textit{parallel} if $\Sigma(H_1) = \Sigma(H_2)$.

\begin{prop} \label{hyperplanes vs surfaces}  Let $M$ be a complete, orientable hyperbolic $3$-manifold equipped with a decomposition $\{\p_i\}$ into right-angled ideal polyhedra, with associated standard square complex $\s$, and let $H_1$ and $H_2$ be hyperplanes of $\s$.  If $H_1$ osculates $H_2$ along an external edge of $\s$, then either \begin{enumerate}
\item $H_1 = H_2$ and $\Sigma(H_1)$ is nonorientable; or
\item $H_1$ and $H_2$ are parallel and $\Sigma(H_1)=\Sigma(H_2)$ is orientable.  \end{enumerate}
$H_1$ intersects $H_2$ if and only if $i(\widehat{\Sigma}(H_1))$ intersects $i(\widehat{\Sigma}(H_2))$ at right angles.  \end{prop}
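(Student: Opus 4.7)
The plan is to exploit a local correspondence between midlines of squares of $\s$ and faces of the decomposition, and then to invoke Lemma \ref{lem:nbhd_lift}. For any square $s = (\bar{e},\bar{f},\bar{g},\overline{\calp}_i)$ of $\s$, one midline has $(\bar{e},\bar{g})$ as its parallel external edge and therefore lies in a hyperplane with associated surface $\widehat\Sigma_g$, while the other midline lies in a hyperplane with associated surface $\widehat\Sigma_f$. Everything below follows from this observation together with properties of the cover $\widehat M(H)$.

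For the osculation claim, I read the hypothesis as saying that some external edge $\epsilon = (\bar{e},\bar{f})$ has its two parallel midlines, one from each of the two squares containing $\epsilon$ (in the polyhedra $\calp_i, \calp_j$ adjacent to $f$), lying in $H_1$ and $H_2$ respectively. The local correspondence immediately yields $\Sigma(H_1) = \Sigma_f = \Sigma(H_2)$, so it only remains to distinguish when $H_1 = H_2$. I would then pass to $\widehat M(H_1)$, in which $\widehat\Sigma_f$ embeds by Lemma \ref{stabilizer}, and apply Lemma \ref{lem:nbhd_lift}. When $\widehat\Sigma_f$ is orientable it is two-sided in the orientable $\widehat M(H_1)$, so over any lift $\hat\epsilon$ of $\epsilon$ the two parallel midlines in $\widehat\s$ lie on opposite sides of $\widehat\Sigma_f$ and hence in distinct lifted hyperplanes; projecting yields $H_1 \neq H_2$ and case~(2). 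When $\widehat\Sigma_f$ is nonorientable, it is one-sided and a single lifted hyperplane $\widehat H_1$ meets both sides of $\hat\epsilon$, forcing $H_1 = H_2$ and case~(1).

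For the intersection claim, $H_1$ intersects $H_2$ precisely when some square $s = (\bar{e},\bar{f},\bar{g},\overline{\calp}_i)$ of $\s$ has one midline in each, which by the local correspondence is the same as saying $\{\Sigma(H_1),\Sigma(H_2)\} = \{\Sigma_f, \Sigma_g\}$ for a pair of faces $f, g \subset \calp_i$ meeting in $e$. Right-angledness then gives that $i(\widehat\Sigma_f)$ and $i(\widehat\Sigma_g)$ meet along $e$ at dihedral angle $\pi/2$. For the converse, two distinct flat closures cannot share a face (each $\Sigma_h$ being determined by any face it contains), so any right-angle intersection of $i(\widehat\Sigma(H_1))$ and $i(\widehat\Sigma(H_2))$ must be supported on edges of the decomposition, and at any such edge some $\calp_i$ supplies a pair of faces $(f,g)$ producing the required crossing square. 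The main difficulty is the orientability dichotomy of Part~1, where the ``$\epsilon$-on-both-sides'' picture in $\s$ must be upgraded via Lemma \ref{lem:nbhd_lift} to a statement about whether the lifted hyperplanes coincide.
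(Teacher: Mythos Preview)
Your proof is correct and follows essentially the same route as the paper's: read off $\Sigma(H_1)=\Sigma_f=\Sigma(H_2)$ from the local midline-to-face correspondence at the shared external edge and then invoke Lemma~\ref{lem:nbhd_lift} for the orientability dichotomy, and identify the perpendicular faces $f,g\subset\calp_i$ for the intersection claim. One small point on your converse for the intersection statement (which the paper's own proof does not spell out): the square you exhibit at the edge $e$ has midlines in hyperplanes with the correct \emph{surfaces} $\Sigma(H_1),\Sigma(H_2)$, but when these are orientable there are two hyperplanes for each surface, so you should observe that the four squares of $\s$ around $\bar e$ realise all four pairings and hence one of them gives a crossing of $H_1$ with $H_2$ themselves rather than with their parallel copies.
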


\begin{proof}  Suppose $H_1$ osculates $H_2$ along an external edge $e$.  Then there are squares $s_1$ and $s_2$ of $\s$ intersecting along $e$, such that the midline $m_1$ of $s_1$ parallel to $e$ is in $H_1$, and the midline $m_2 \subset s_2$ parallel to $e$ is in $H_2$.  If $f$ is the face of the decomposition containing $e$, then by definition $f \in \Sigma(H_1)$ and $f \in \Sigma(H_2)$.  Since $s_1$ and $s_2$ are on opposite sides of $f$ in $M$, Lemma \ref{lem:nbhd_lift} implies alternatives $\mathit{1}$ and $\mathit{2}$.

Suppose $H_1$ intersects $H_2$ in a square $s$ contained in some polyhedron $\p_i$, and for $j = 0,1$ let $m_j$ be the midline of $s$ in $H_j$.  For each $j$, there is a unique external edge $e_j$ of $s$ parallel to $m_j$.  By definition, the faces $f_1$ and $f_2$ of $\p_i$ containing $e_1$ and $e_2$ are contained in $\Sigma(H_1)$ and $\Sigma(H_2)$, respectively.  Since $\p_i$ is right-angled they meet at right angles, establishing the lemma.  \end{proof}

\section{Embedding in Coxeter groups} \label{sec:separability}

Let $M = \mathbb{H}^3/\Gamma$ be a complete, orientable hyperbolic $3$-manifold of finite volume, equipped with a decomposition $\{\p_i\}$ into right-angled ideal polyhedra.  In this section we describe separability properties of hyperplane subgroups which allow pathologies to be removed in finite covers of $M$.

If $H$ is a subgroup of a group $G$, we say $H$ is \textit{separable} in $G$ if for each $g \in G - H$ there is a subgroup $K$, of finite index in $G$, such that $H <K$ and $g \notin K$.  The separability result needed for the proof of Theorem 1.1 follows from \cite[Lemma 1]{Long} and extends its conclusion to a slightly more general class of subgroups.

\newcommand\Longlemma{Let $M = \mathbb{H}^3/\Gamma$ be a complete, orientable hyperbolic $3$-manifold with finite volume, and let $\calh \subset \mathbb{H}^3$ be a hyperplane such that $\mathrm{Stab}_{\Gamma}(\calh)$ acts on $\calh$ with finite covolume.  Then the subgroup of $\mathrm{Stab}_{\Gamma}(\calh)$ that acts preserving an orientation of $\calh$ is separable in $\Gamma$.}
\begin{lemma}[Cf.\ \cite{Long} Lemma 1] \label{Long} \Longlemma \end{lemma}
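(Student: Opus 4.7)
Let $H = \mathrm{Stab}_\Gamma(\calh)$, and let $\Gamma_0 \leq H$ be the subgroup that preserves orientation on $\calh$. Because $\Gamma \leq \mathrm{Isom}^+(\mathbb{H}^3)$ and $H$ preserves $\calh$ setwise, an element of $H$ reverses orientation on $\calh$ if and only if it swaps the two sides of $\calh$ in $\mathbb{H}^3$; hence $[H : \Gamma_0] \in \{1,2\}$, and the quotient $\calh/\Gamma_0$ is in either case an orientable finite-area hyperbolic $2$-orbifold (being either $\calh/H$ itself or its orientation double cover).

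My plan is to reduce to Long's Lemma 1 of \cite{Long}. When $\Gamma_0 = H$ the surface $\calh/H$ is orientable and the cited lemma applies directly, so suppose $[H : \Gamma_0] = 2$. Following Long's strategy I plan to construct a finite cover $M'' \to M$ in which $\calh/\Gamma_0$ lifts to an embedded, two-sided, orientable totally geodesic surface $\widehat\Sigma$ with $\pi_1 \widehat\Sigma$ a finite-index subgroup of $\Gamma_0$. Long's construction then applies to $\widehat\Sigma$ inside $M''$: the cyclic cover of $M''$ dual to $\widehat\Sigma$ provides a homomorphism $\pi_1 M'' \to \mathbb{Z}$ whose kernel contains $\pi_1 \widehat\Sigma$, and reductions modulo suitable integers together with residual finiteness of $\Gamma$ (which is a finitely generated subgroup of $\mathrm{PSL}_2(\mathbb{C})$, hence linear) produce, for each $g \in \Gamma \setminus \Gamma_0$, a finite-index subgroup of $\Gamma$ containing $\pi_1 \widehat\Sigma$ but excluding $g$. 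The key point is that any $\tau \in H \setminus \Gamma_0$ swaps the two sides of $\calh$ in $\mathbb{H}^3$, and therefore its projection to $M''$ meets the two-sided surface $\widehat\Sigma$ with odd intersection number, so $\tau$ is separated from $\pi_1 \widehat\Sigma$ by the mod-$2$ reduction of the dual cover. Finally, since $\pi_1 \widehat\Sigma$ has finite index in $\Gamma_0$, separability of $\pi_1\widehat\Sigma$ in $\Gamma$ upgrades to separability of $\Gamma_0$: write $\Gamma_0$ as a finite union of cosets of $\pi_1\widehat\Sigma$, each of which is closed in the profinite topology on $\Gamma$; the union is therefore closed, and being a subgroup, $\Gamma_0$ is thus separable in $\Gamma$.

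The main obstacle is the construction of the finite cover $M''$ in which the orientable double cover $\calh/\Gamma_0$ lifts to an embedded two-sided orientable surface. In the orientable case handled by Long, this cover is produced directly from the immersed orientable surface $\calh/H$; in our setting, $\calh/H$ is only non-orientable and not even two-sided, so an additional step is required. I anticipate handling this by first passing to a finite cover of $M$ (using residual finiteness of $\Gamma$) in which the immersed surface $\calh/H$ lifts as a disjoint union of embedded (possibly non-orientable) totally geodesic surfaces, and then passing to a further $2$-fold cover corresponding to the orientation character of a chosen embedded non-orientable lift; the preimage of that lift is then its orientation double cover, which coincides with a lift of $\calh/\Gamma_0$ and is two-sided because orientable in orientable $M''$. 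Verifying that the orientation character can be extended (after a further finite cover if necessary) to a cover of $M$ is the delicate combinatorial step, and relies crucially on the finite covolume of $\Gamma_0$ on $\calh$ and on the explicit $\mathbb{Z}/2$ symmetry relating $\Gamma_0$ to $H$.
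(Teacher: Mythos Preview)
Your overall strategy is sound, but the proof is incomplete at precisely the point you flag as ``the delicate combinatorial step'': you never verify that the orientation character $w_1(\Sigma)\colon \pi_1\Sigma \to \mathbb{Z}/2\mathbb{Z}$ of the embedded non-orientable lift extends to a homomorphism on $\pi_1 M'$. Without this you have not produced the cover $M''$, and everything downstream is conditional. Your hedge ``after a further finite cover if necessary'' suggests uncertainty; in fact no further cover is needed, but this is exactly what must be shown. There is also some imprecision in the sentence about $\tau \in H\setminus\Gamma_0$ having odd intersection number with $\widehat\Sigma$: as written, $\tau$ need not lie in $\pi_1 M''$, so ``its projection to $M''$'' is not obviously a loop. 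This is repairable (if $\tau\notin\pi_1 M''$ then $\pi_1 M''$ itself separates), but again not carried out.

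The paper closes the gap with an explicit construction that is both simpler and more direct. After using \cite[Theorem~1]{Long} to embed the non-orientable $\Sigma = \calh/H$ in a finite cover $M'$, it observes that the regular neighbourhood $N$ of the one-sided $\Sigma$ is a twisted $I$-bundle with $\partial N \cong \widetilde\Sigma$, the orientation double cover of $\Sigma$. One then forms $\widetilde M$ by gluing two copies of $M_0 = M'\setminus \mathrm{int}\,N$ to the product $\widetilde\Sigma\times[-1,1]$ along $\widetilde\Sigma\times\{\pm 1\}$. This is visibly a double cover of $M'$ in which $\Sigma$ lifts to the orientable $\widetilde\Sigma$; equivalently, it realises exactly the extension of $w_1(\Sigma)$ to $\pi_1 M'$ that you sought. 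The single subgroup $\pi_1\widetilde M$ then contains $\Gamma_0$ and omits every element of $H\setminus\Gamma_0$; combined with the separability of $H$ itself from \cite[Lemma~1]{Long}, this finishes immediately, without re-running Long's cyclic-cover argument for an orientable surface or upgrading from a finite-index subgroup of $\Gamma_0$ via cosets.
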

\theoremstyle{plain} \newtheorem*{Longlemma_lemma}{Lemma \ref{Long}}
\begin{proof}
It follows from \cite[Lemma 1]{Long} that $\mathrm{Stab}_\Gamma(\calh)$ is separable.  It remains to consider the case in which $\mathrm{Stab}_\Gamma(\calh)$ is orientation-reversing on $\calh$ and to show that the orientation-preserving subgroup is separable.

As in \cite[Theorem 1]{Long}, there is a finite-sheeted covering $M'\to M$ such that the immersed surface $\calh/\mathrm{Stab}_\Gamma(\calh)$ lifts to an embedded surface $\Sigma$ in $M'$.  Because $M'$ is orientable, the surface $\Sigma$ is one-sided.  Let $N$ be a closed regular neighbourhood of $\Sigma$ and let $M_0$ be the complement of the interior of $N$ in $M'$.  The boundary of $N$ is homeomorphic to $\widetilde\Sigma$, the orientable double cover of $\Sigma$.

The neighbourhood $N$ has the structure of a twisted interval bundle over $\Sigma$, so $\pi_1N\cong\pi_1\Sigma$.  The double cover $\widetilde N$ of $N$ obtained by pulling back the bundle structure along the covering map $\widetilde\Sigma\to\Sigma$ is an orientable interval bundle over $\widetilde\Sigma$ and hence homeomorphic to the product $\widetilde\Sigma\times [-1,+1]$.  This homeomorphism can be chosen so that $\widetilde\Sigma\times\{0\}$ double covers $\Sigma$.

The inclusion map $i:\partial N\hookrightarrow N$ has precisely two lifts to $\widetilde N$; let $i^\pm$ be the lift that identifies $\partial N$ with $\widetilde\Sigma\times\{\pm 1\}$.  Construct a new manifold $\widetilde M$ as follows: let $M_0^\pm$ be two copies of $M_0$ and let $\partial^\pm N$ be the corresponding copy of $\partial N$ in $M^\pm$; then $\widetilde M$ is obtained from
\[
M_0^+\sqcup \widetilde N\sqcup M_0^-
\]
by identifying $x\in \partial^\pm N$ with $i^\pm(x)$.  By construction, $\widetilde M$ is a double cover of $M'$ and so a finite-sheeted cover of $M$.  The image of $\calh$ in $\widetilde M$ is precisely the orientable double cover of $\Sigma$, so $\pi_1\widetilde M$ is a finite-index subgroup of $\Gamma$ that contains the orientation-preserving elements of $\mathrm{Stab}_\Gamma(\calh)$ but not the orientation-reversing ones, as required.
\end{proof}

If $H$ is a hyperplane of the standard square complex associated to the decomposition of $M$ into right-angled ideal polyhedra, Lemma \ref{stabilizer} and Corollary \ref{geod sfces carry} together describe a geodesic hyperplane $\calh$, such that $\mathrm{Stab}_{\Gamma}(\calh)$ acts on it with finite covolume and $\pi_1 H$ is the subgroup which preserves an orientation of $\calh$.  Thus:

\begin{cor} \label{Longsep}  Suppose $M = \mathbb{H}^3/\Gamma$ is a complete, orientable hyperbolic $3$-manifold of finite volume that admits a decomposition $\{\p_i\}$ into right-angled ideal polyhedra.  If $H$ is a hyperplane of the standard square complex associated to $\{\p_i\}$, then $\pi_1 H$ is separable in $\Gamma$.  \end{cor}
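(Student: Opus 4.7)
The plan is to recognize the corollary as an immediate synthesis of the results assembled in Section \ref{sec:geodesic hyperplanes} with Lemma \ref{Long}, so that the separability of $\pi_1 H$ in $\Gamma$ is reduced to a separability statement about the orientation-preserving subgroup of a geodesic hyperplane stabilizer.

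First, given a hyperplane $H$ of the standard square complex $\s$, I would invoke the construction of Section \ref{sec:geodesic hyperplanes} to produce the totally geodesic surface $\widehat{\Sigma}(H)$ immersed in $M$, together with a lift to a geodesic hyperplane $\calh \subset \mathbb{H}^3$ (as in the diagram preceding the definition of $\widehat{\Sigma}(H)$). By Lemma \ref{stabilizer}, we have the identification $\Gamma(H) = \mathrm{Stab}_{\Gamma}(\calh)$. Moreover, as remarked just above Lemma \ref{stabilizer}, $\widehat{\Sigma}(H)$ is a union of finitely many faces of finite-area right-angled ideal polygons, hence has finite area; equivalently, $\mathrm{Stab}_{\Gamma}(\calh)$ acts on $\calh$ with finite covolume. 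This places us squarely in the hypotheses of Lemma \ref{Long}.

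Second, I would apply Corollary \ref{geod sfces carry} to identify $\pi_1 H$ as the orientation-preserving subgroup of $\Gamma(H)$: when $\widehat{\Sigma}(H)$ is orientable, $\Gamma(H)$ already acts preserving an orientation of $\calh$, so $\pi_1 H = \Gamma(H)$ coincides with the orientation-preserving subgroup; when $\widehat{\Sigma}(H)$ is nonorientable, $\Gamma(H)$ contains orientation-reversing elements and $\pi_1 H$ is its index-two orientation-preserving subgroup. Either way, $\pi_1 H$ is exactly the subgroup to which Lemma \ref{Long} ascribes separability in $\Gamma$.

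There is essentially no serious obstacle remaining: the technical content has already been packaged into Lemmas \ref{stabilizer}, \ref{lem:nbhd_lift} and \ref{Long}, and the corollary follows by combining them. The only small point worth stating carefully is the observation that finite area of $\widehat{\Sigma}(H)$ translates to finite covolume of $\mathrm{Stab}_{\Gamma}(\calh)$ on $\calh$; this is immediate from Lemma \ref{stabilizer}, which realizes the former as a quotient of $\calh$ by the latter.
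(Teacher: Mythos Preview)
Your proposal is correct and matches the paper's own argument essentially verbatim: the paper deduces the corollary by observing that Lemma \ref{stabilizer} and Corollary \ref{geod sfces carry} identify $\pi_1 H$ with the orientation-preserving subgroup of $\mathrm{Stab}_{\Gamma}(\calh)$ acting on $\calh$ with finite covolume, and then applies Lemma \ref{Long}. There is nothing to add.
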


This implies, using \cite[Corollary 8.9]{HW}, that a hyperbolic manifold $M$ with a right-angled ideal polyhedral decomposition has a finite cover whose associated square complex lacks most pathologies forbidden in the definition of special complexes.

\begin{prop} \label{no self osculate}  Suppose $M = \mathbb{H}^3/\Gamma$ is a complete, orientable hyperbolic $3$-manifold with finite volume that admits a decomposition into right-angled ideal polyhedra $\{\p_i\}$.  There is a cover $M' \to M$ of finite degree such that hyperplanes of the standard square complex of $M'$ do not self-intersect or -osculate.  \end{prop}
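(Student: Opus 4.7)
The plan is to reduce the proposition to a direct application of \cite[Corollary 8.9]{HW} fed by the separability result Corollary \ref{Longsep}. The Haglund--Wise machinery is designed so that separability of every hyperplane subgroup $\pi_1 H$ in $\pi_1 \s$ yields a finite-sheeted cover in which every elevation of $H$ embeds --- in particular it does not self-intersect or self-osculate. Because $\s$ is a deformation retract of $M$ by Lemma \ref{spine}, $\pi_1 \s \cong \pi_1 M = \Gamma$, and a finite cover $\wt{\s} \to \s$ corresponds to a finite cover $M' \to M$ whose associated standard square complex is $\wt{\s}$.

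The steps would go as follows. First observe that the number of hyperplanes in $\s$ is finite: $M$ decomposes into finitely many polyhedra $\p_i$, each contributing finitely many squares to $\s$, so $\s$ has finitely many edges and hence finitely many hyperplanes. Enumerate them as $H_1,\dots,H_n$. For each $H_k$, Corollary \ref{Longsep} provides that $\pi_1 H_k$ is separable in $\Gamma$, and then \cite[Corollary 8.9]{HW} produces a finite-index subgroup $\Gamma_k \leq \Gamma$ in whose corresponding cover every elevation of $H_k$ appears as an embedded, two-sided hyperplane. Setting $\Gamma' = \bigcap_{k=1}^{n} \Gamma_k$ yields a single finite-index subgroup whose cover $M' \to M$ has the asserted property: no hyperplane of the standard square complex of $M'$ self-intersects or self-osculates.

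The substantive work has already been done in earlier sections, so there is no new obstacle in this proposition itself. The subtlety that had to be anticipated, however, was that $\pi_1 H$ is not the full stabilizer $\mathrm{Stab}_\Gamma(\calh)$ of the associated geodesic plane but only its orientation-preserving subgroup (Corollary \ref{geod sfces carry} together with Lemma \ref{lem:nbhd_lift}). This matters because, by Proposition \ref{hyperplanes vs surfaces}, self-osculation of $H$ along external edges corresponds precisely to nonorientability of $\widehat{\Sigma}(H)$, and it can only be repaired by passing to a cover that unfolds the orientation double cover of $\widehat{\Sigma}(H)$. Lemma \ref{Long} was formulated exactly to deliver the separability of this index-two subgroup, so feeding it into the HW finite-cover construction simultaneously removes self-intersections and self-osculations. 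What this proposition does \emph{not} address, and what must be handled separately to obtain a $C$-special cover, is inter-osculation between distinct hyperplanes.
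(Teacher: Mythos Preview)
Your proposal is correct and follows essentially the same route as the paper: identify $\pi_1\s\cong\Gamma$ via Lemma~\ref{spine}, invoke Corollary~\ref{Longsep} for separability of hyperplane subgroups, and apply \cite[Corollary~8.9]{HW} to obtain the finite cover. The only difference is cosmetic: the paper cites \cite[Corollary~8.9]{HW} once to produce a single cover $X'$ handling all hyperplanes simultaneously, whereas you apply it hyperplane-by-hyperplane and then intersect the resulting $\Gamma_k$ --- but since \cite[Corollary~8.9]{HW} already packages that intersection internally, your extra step is harmless redundancy rather than a genuine alternative.
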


\begin{proof}  Let $X$ be the standard square complex associated to $\{\p_i\}$.  Lemma \ref{spine} implies that the inclusion $X \hookrightarrow M$ induces an isomorphism $\pi_1 X \to \Gamma$.  By Corollary \ref{Longsep}, each hyperplane subgroup is separable in $\pi_1 X$, so by \cite[Corollary 8.9]{HW}, $X$ has a finite cover $X'$ such that hyperplanes of $X'$ do not self-intersect or -osculate.  Let $\Gamma'$ be the subgroup of $\pi_1 X$ corresponding to $X'$, and let $M' \to M$ be the cover corresponding to $\Gamma'$.  The decomposition $\{\p_i\}$ of $M$ lifts to a right-angled ideal decomposition of $M'$ with standard square complex $X'$, proving the proposition.
\end{proof}

Proposition \ref{no self osculate} already implies that a large class of hyperbolic $3$-manifolds is virtually special.  Below we will say that the decomposition $\{\p_i\}$ of $M$ is \textit{checkered} if the face pairing preserves a two-coloring --- an assignment of white or black to each face $f$ of each $\p_i$ such that if another face $f'$ of $\p_i$ intersects $f$ in an edge, it has the opposite color.  The decompositions of augmented link complements described in the appendix to \cite{LAD} are checkered, for example.

\begin{thm} \label{check cox}  Suppose $M$ is a complete hyperbolic $3$-manifold with finite volume that admits a checkered decomposition into right-angled ideal polyhedra.  Then $\pi_1M$ has a subgroup of finite index that is isomorphic to a word-quasiconvex subgroup of a right-angled Coxeter group.  \end{thm}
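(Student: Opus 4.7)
My plan is to prove that a finite-sheeted cover of the standard square complex $\s$ of $M$ is $C$-special, and then invoke Theorem \ref{t: Haglund--Wise} to embed its fundamental group in a right-angled Coxeter group. To begin, I would apply Proposition \ref{no self osculate} to pass to a finite cover $M'$ whose standard square complex $\s'$ has no self-intersecting or self-osculating hyperplanes. The checkered 2-coloring of faces pulls back through the covering map, so the decomposition of $M'$ remains checkered. By Lemma \ref{bipartite} the 1-skeleton of $\s'$ is automatically bipartite, and by Lemma \ref{two-sided} the regular neighborhood of every hyperplane has two distinct boundary components $\partial_e N$ and $\partial_i N$, so every hyperplane is two-sided. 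Hence $\s'$ is $C$-special provided that inter-osculation of hyperplanes can also be ruled out.

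To exploit the checkered hypothesis, I would 2-color the hyperplanes of $\s'$ as follows. Because face pairings preserve colors and adjacent faces of each polyhedron have opposite colors, flat $e$-neighbors always share a color; hence each totally geodesic surface $\widehat\Sigma(H)$ is monochromatic, and $H$ inherits a color from $\widehat\Sigma(H)$. The discussion preceding Proposition \ref{hyperplanes vs surfaces} shows that two crossing hyperplanes correspond to adjacent faces of a common polyhedron, so they necessarily carry opposite colors. Any hypothetical pair $(H_1, H_2)$ of inter-osculating hyperplanes must therefore be oppositely colored and osculate at some vertex $v$ whose two dual edges fail to span a common square. Case analysis on whether $v$ is the barycenter of an edge, face, or polyhedron of $\c$ reduces the remaining pathology to a finite list of explicit geometric incidences between oppositely-colored immersed totally geodesic surfaces in $M'$.

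The main obstacle is to destroy these remaining incidences in a further finite cover. My plan is to apply Corollary \ref{Longsep} together with the separability-to-cover machinery of \cite[Section 8]{HW} in the same spirit as Proposition \ref{no self osculate}: each forbidden incidence corresponds to a pair of hyperplane subgroups in a specified relative configuration inside $\pi_1 M'$, and separability of these subgroups (and of the relevant double cosets) allows us to find a finite cover $M''$ of $M'$ in which the pathology no longer occurs. The checkered hypothesis is essential here because it forces intersecting hyperplanes to be oppositely colored, which limits the configurations that must be separated to a finite list; iterating over this list yields a finite cover $M''$ whose standard square complex $\s''$ is $C$-special. Theorem \ref{t: Haglund--Wise} then produces an injective map $\pi_1\s'' \hookrightarrow W$ onto a word-quasiconvex subgroup of a right-angled Coxeter group $W$; combining this with Lemma \ref{spine} (which identifies $\pi_1\s''$ with the finite-index subgroup $\pi_1 M'' \le \pi_1 M$) completes the proof.
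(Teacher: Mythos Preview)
Your overall framework is correct, but you are overcomplicating the inter-osculation step and invoking a tool that is neither available nor needed.  The paper's proof shows that the checkered hypothesis \emph{by itself} rules out inter-osculation, with no further covers required beyond Proposition~\ref{no self osculate}.  You already observe that intersecting hyperplanes carry opposite colors.  The companion observation you are missing is that \emph{osculating} hyperplanes always carry the \emph{same} color.  For osculation along an external edge this is immediate from Proposition~\ref{hyperplanes vs surfaces}: the two hyperplanes are parallel and share the surface $\widehat\Sigma(H)$.  For osculation along an internal edge $(\bar g,\olp_i)$, the two squares involved have their external boundary edges in faces $f_0,f_1$ of $\p_i$ that each meet $g$ along an edge; hence $f_0$ and $f_1$ are both the color opposite to $g$, and the two hyperplanes are again the same color.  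Since intersecting pairs are oppositely colored and osculating pairs are identically colored, inter-osculation is impossible in $\s'$ already.

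Your proposed alternative --- a vertex-by-vertex case analysis producing a ``finite list of explicit geometric incidences'' to be eliminated by further separability --- has two defects.  First, if carried out carefully the case analysis yields an empty list, so the subsequent machinery is idle.  Second, the ingredient you name, separability of the relevant \emph{double cosets} of hyperplane subgroups, is not established anywhere in the paper; Corollary~\ref{Longsep} provides only subgroup separability, which is what feeds into \cite[Corollary 8.9]{HW} for self-intersection and self-osculation, but the analogous inter-osculation statement in \cite{HW} requires the stronger double-coset input.  So your plan either collapses to the paper's direct combinatorial argument or rests on an unproved hypothesis.  (A minor omission: you should also cite Lemma~\ref{nonpos} to record that $\s'$ is nonpositively curved, as the paper does before invoking Theorem~\ref{t: Haglund--Wise}.)
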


\begin{proof}  Let $M = \mathbb{H}^3/\Gamma$ be a complete hyperbolic $3$-manifold of finite volume with a decomposition $\{\p_i\}$ into right-angled polyhedra.  If the decomposition is checkered, and $f$ represents a face of the decomposition, it is easy to see that for each edge $e \subset f$, the flat $e$-neighbor of $f$ has the same color as $f$.  It follows that each face of the surface $\Sigma_f$ described in Section \ref{sec:geodesic hyperplanes} has the same color as $f$.  If $H$ is a hyperplane of the square complex $X$ associated to $\{\p_i\}$, we will say $H$ is white if all faces of $\Sigma(H)$ are white, and black if they are black.

By Proposition \ref{hyperplanes vs surfaces}, a hyperplane intersects only hyperplanes of the opposite color and osculates only hyperplanes of the same color along an external edge.  If hyperplanes $H_0$ and $H_1$ osculate along an internal edge, let $s_0$ and $s_1$ be squares of $\s$, meeting along an internal edge $e$, with parallel midlines $m_0 \in H_0$ and $m_1 \in H_1$.  Then $e$ is of the form $(\bar{g},\olp_i)$, where $\p_i$ is the polyhedron containing $s_0$ and $s_1$ and $g$ is a face of $\p_i$.  The edges of $s_0$ and $s_1$ opposite $e$ are contained in faces $f_0$ and $f_1$ of $\p_i$ in $\widehat{\Sigma}(H_0)$ and $\widehat{\Sigma}(H_1)$, respectively.  Then each of $f_0$ and $f_1$ intersects $g$, so the color of $f_0$ and $f_1$ is opposite that of $g$.  It follows that hyperplanes of $\s$ do not inter-osculate.

By Proposition \ref{no self osculate}, $M$ has a finite cover $M'$ such that hyperplanes of the square complex $X'$ associated to the lifted ideal polyhedral decomposition of $M'$ do not self-intersect or -osculate.  The lifted ideal polyhedral decomposition of $M'$ inherits the checkered property from that of $M$, so by the above, hyperplanes of $X'$ do not inter-osculate.  In addition, Lemma \ref{nonpos} implies that $X'$ is nonpositively curved, Lemma \ref{two-sided} implies that each hyperplane is two-sided, and Lemma \ref{bipartite} implies that $X'^{(1)}$ is bipartite.  Thus $X'$ is $C$-special, and by Theorem \ref{t: Haglund--Wise}, the subgroup $\Gamma' < \Gamma$ corresponding to $M'$ embeds as a word-quasiconvex subgroup of a right-angled Coxeter group.  \end{proof}

In fact, we will show below that every right-angled decomposition determines a twofold cover whose associated decomposition is checkered.  This uses the lemma below, which is a well known consequence of Andreev's theorem.

\begin{lemma}  Let $\calp \subset \mathbb{H}^3$ be a right-angled ideal polyhedron of finite volume.  There are exactly two checkerings of the faces of $\calp$.  \end{lemma}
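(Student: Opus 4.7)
The plan is to reduce the statement to a classical fact about face-colorings of planar graphs whose vertices all have even degree, after first establishing the key local property that exactly four faces meet at each ideal vertex.

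First, I would verify that the $1$-skeleton of $\calp$, viewed as a graph on the sphere $\partial\calp \cong S^2$ (with ideal vertices as vertices), is $4$-regular. The horospherical link of an ideal vertex $v$ of $\calp$ is a Euclidean polygon whose interior angles are the dihedral angles of $\calp$ along the edges incident to $v$. Since $\calp$ is right-angled, every such angle is $\pi/2$, and a Euclidean $n$-gon with interior angle sum $(n-2)\pi$ all of whose angles equal $\pi/2$ must satisfy $n=4$. Hence exactly four edges (and four faces) meet at every ideal vertex.

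Next, I would invoke the classical fact that a connected planar (multi)graph admits a proper $2$-coloring of its complementary faces if and only if every vertex has even degree. A self-contained argument: fix any reference face $f_0$ and, for any other face $f$, define its color by the parity of the number of transverse intersections of a generic path from a point in $f_0$ to a point in $f$ with the $1$-skeleton. Well-definedness reduces to showing every generic loop in $S^2$ crosses an even number of edges; at each vertex, the transverse crossings of the loop partition the incident edge-ends into pairs (because the degree is even, so adjacent sectors can be matched), and between vertices the loop has even intersection with each edge by the Jordan curve theorem applied in a disk neighborhood.  Applied to the $4$-regular planar graph $\partial\calp$, this produces a proper $2$-coloring of the faces, which by construction is a checkering.

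Finally, I would observe that the face-adjacency (dual) graph of $\calp$ is connected because $\calp$ is a convex polyhedron, so a checkering is determined by the color assigned to any single face. Thus there are at most two checkerings, and combined with the existence result above there are exactly two. The main obstacle, such as it is, is only the clean invocation of the even-degree planar coloring lemma; the geometric input (right angles forcing four-valent links at ideal vertices) is immediate from Andreev's local analysis.
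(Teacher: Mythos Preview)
Your proof is correct and supplies exactly the argument the paper omits: the paper gives no proof at all, stating only that the lemma ``is a well known consequence of Andreev's theorem.''  The role of Andreev's theorem here is precisely to force $4$-valence at every ideal vertex (the paper makes this explicit elsewhere, in the proof of Lemma~\ref{simple right}); you obtain the same fact directly from the Euclidean geometry of the horospherical link, which is an equivalent and arguably more transparent route.  After that, the reduction to the classical face $2$-colorability of even-degree planar graphs, together with connectedness of the face-adjacency graph for uniqueness, is the standard argument the paper has in mind.

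One small remark: your justification that a generic loop on $S^2$ crosses the $1$-skeleton an even number of times is worded confusingly.  A generic loop avoids vertices, and it certainly can cross a single edge an odd number of times, so the sentence about ``even intersection with each edge by the Jordan curve theorem'' is not right as stated.  The clean argument is either: an even-degree planar graph decomposes into edge-disjoint cycles, and a loop on $S^2$ crosses each cycle evenly by Jordan; or: the mod-$2$ crossing number depends only on the class of the loop in $H_1(S^2\setminus V;\mathbb{Z}/2)$, whose generators are small circles around vertices, each meeting the graph in $\deg(v)\equiv 0$ points.  Either version repairs the step, and your overall conclusion is unaffected.
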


Theorem \ref{rt ang cox} follows quickly from this lemma and Theorem \ref{check cox}.

\begin{proof}[Proof of Theorem \ref{rt ang cox}]  Suppose $\{ \calp_i \}_{i=1}^n$ is a right-angled ideal decomposition of $M$.  Let $\{ \calp_i^{(0)}, \calp_i^{(1)} \}_{i=1}^n$ be a collection of disjoint right-angled polyhedra such that for each $i$, $\calp_i^{(0)}$ and $\calp_i^{(1)}$ are each isometric to $\calp_i$, and the faces of $\calp_i^{(0)}$ have the opposite checkering of the faces of $\calp_i^{(1)}$.  Here we take for granted that we have fixed marking isometries $\calp_i^{(j)} \to \calp_i$ for each $j \in \{0,1\}$, so that each face $f$ of $\calp_i$ has fixed correspondents  $f^{(0)} \subset \calp_i^{(0)}$ and $f^{(1)} \subset \calp_i^{(1)}$.

For each $i$ and each face $f$ of $\calp_i$, we determine face pairing isometries $\phi_{f^{(0)}}$ and $\phi_{f^{(1)}}$ for $\{ \calp_i^{(0)}, \calp_i^{(1)} \}$ using the following requirements:  each $\phi_{f^{(j)}}$, $j \in \{0,1\}$ must commute with $\phi_f$ under the marking isometries, and each must preserve color.  Thus if $f' = \phi_f(f)$ and $f'^{(0)}$ has the same color as $f^{(0)}$, we take $\phi_{f^{(j)}}(f^{(j)}) = f'^{(j)}$ for each $j$; otherwise we take $\phi_{f^{(j)}}(f^{(j)}) = f'^{(1-j)}$

Let $\widetilde{M}$ be the quotient of $\{ \calp_i^{(0)}, \calp_i^{(1)} \}_{i=1}^n$ by the face pairing isometries described above.  By construction, $\widetilde{M}$ is a double cover of $M$, and it is easy to see that $\widetilde{M}$ is disconnected if and only if the original decomposition $\{ \calp_i \}$ admits a checkering.  If it did, Theorem \ref{check cox} would apply directly to $M$, so we may assume that it does not.  Then, by Theorem \ref{check cox}, the conclusion of Theorem \ref{rt ang cox} applies to $\widetilde{M}$; hence it applies as well to $M$.
\end{proof}

\section{Virtual retractions and quasiconvexity}  \label{sec:QCERF}

This section contains the proof of Theorem \ref{rel qc still sep}.  We will need to work with various different definitions of quasiconvexity for subgroups.  These definitions all coincide in the case of a Gromov-hyperbolic group because Gromov-hyperbolic metric spaces enjoy a property sometimes known as the Morse Property, which asserts that quasigeodesics are uniformly close to geodesics.  In our case, $M$ has cusps and therefore $\Gamma=\pi_1M$ is not Gromov hyperbolic but rather relatively hyperbolic.  One of the results we use to circumvent this difficulty, Proposition \ref{p: Fully qc implies combinatorially qc}, makes use of of \cite[Theorem 1.12]{druu_tree-graded_2005}, which the authors call the `Morse Property for Relatively Hyperbolic Groups'.

\begin{dfn}
Let $X$ be a geodesic metric space.  A subspace $Y$ is \emph{quasiconvex} if there exists a constant $\kappa$ such that any geodesic in $X$ between two points of $Y$ is contained in the $\kappa$-neighbourhood of $Y$.
\end{dfn}

We will apply this notion in two contexts.  If $U$ is a CAT(0) cube complex with base vertex $v$ and a group $G$ acts properly discontinuously by combinatorial isometries on $U$ then we consider the one-skeleton $X=U^{(1)}$ with the induced length metric (where each edge has length one).  We say that a subgroup $H$ is \emph{combinatorially quasiconvex} if $Hv$ is a quasiconvex subspace of $X$.  In fact, combinatorial quasiconvexity is independent of the choice of basepoint if the action of $G$ on $U$ is special \cite[Corollary 7.8]{HW}.

On the other hand, given a group $G$ with a generating set $S$ we can consider the Cayley graph $Cay_S(G)$.  A subgroup $H$ is \emph{word quasiconvex} if $H$ is a quasiconvex subspace of $Cay_S(G)$.

Let $W$ be a right-angled Coxeter group with standard generating set $S$ and let $U$ be the universal cover of the Davis--Moussong complex for $W$.  The one-skeleton of $U$ is very closely related to $Cay_S(W)$: the edges of the Cayley graph come in pairs; identifying these pairs gives $U^{(1)}$.  Furthermore, the image of the universal cover of a special cube complex under the isometry defined by Haglund and Wise to the Davis--Moussong complex of $W$ is a convex subcomplex \cite[Lemma 7.7]{HW}.  We therefore have the following relationship between combinatorial quasiconvexity and word quasiconvexity in special cube complexes.

\begin{remark}
Suppose that $G$ is the fundamental group of a C-special cube complex $\s$, so that $G$ is isomorphic to a word-quasiconvex subgroup of a right-angled Coxeter group $W$ \cite{HW}.  If $H$ is a subgroup of $G$, then $H$ is combinatorially quasiconvex in $G$ (with respect to the action of $G$ on the universal cover of $\s$) if and only if $H$ is word quasiconvex in $W$ (with respect to the standard generating set).
\end{remark}

The idea is to prove Theorem \ref{rel qc still sep} by applying the following theorem of Haglund.

\begin{thm}[\cite{Hag} Theorem A]\label{t: Haglund's Theorem}
Let $W$ be a right-angled Coxeter group with the standard generating set and let $H$ be a word-quasiconvex subgroup.  Then $H$ is a virtual retract of $W$.
\end{thm}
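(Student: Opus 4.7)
My plan is to apply a canonical completion and retraction construction, in the spirit of the special cube complex methods of \cite{HW}, to the combinatorial convex hull of an $H$-orbit in the Davis--Moussong complex $U$ of $W$. Fix a base vertex $v \in U$ and let $Y$ denote the combinatorial convex hull of the orbit $Hv$. Word-quasiconvexity of $H$ forces $Y$ to sit in a bounded neighbourhood of $Hv$, so $Y$ is $H$-invariant and $H$ acts cocompactly on $Y$; in particular $Y/H$ is a compact non-positively curved cube complex, and the inclusion $Y \hookrightarrow U$ descends to a local isometry from $Y/H$ into an appropriate (finite-index) quotient of $U$.

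I would then exploit the canonical hyperplane labelling of $U$: each hyperplane of $U$ is stabilized by a single reflection, conjugate to a unique standard generator $s \in S$, which serves as its label. This labelling is $W$-equivariant, and its restriction to $Y$ descends to a labelling on $Y/H$ under which only finitely many $H$-orbits of hyperplanes appear. Using these finitely many orbits, I would build a finite-index subgroup $K \leq W$ containing $H$ together with a retraction $\rho \co K \to H$. The rough recipe: associate to each label-orbit a finite quotient recording how elements of $W$ permute the hyperplanes of $U$ transverse to $Y$, take $K$ to be the kernel of the resulting homomorphism from $W$ to a finite group, and define $\rho$ by ``gate-map'' projection of $Kv$ onto $Y$.

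The main obstacle is to prove that this set-theoretically defined $\rho$ is actually a group homomorphism (with $\rho|_H = \mathrm{id}_H$). The crucial combinatorial input is that for $k \in K$, the set of hyperplanes of $U$ separating $v$ from $kv$ that cross $Y$ can be organised according to the $H$-orbit structure on the hyperplanes of $Y$; this is what the combinatorial projection in a CAT(0) cube complex computes. One must tune the finite quotient defining $K$ so that the projection of $kv$ lies in $Hv$, that $\rho(k)$ depends only on $k$, and that $\rho(k_1 k_2) = \rho(k_1)\rho(k_2)$. Verifying multiplicativity is the technical heart of the argument: it reduces to checking that for $k_1, k_2 \in K$, the sequence of hyperplane crossings along a combinatorial geodesic from $v$ to $k_1 k_2 v$ can be concatenated, via the gate map, out of those for $k_1 v$ and $\rho(k_1)^{-1} k_1 k_2 v$, which is where the convexity of $Y$ and the right-angled Coxeter structure on $U$ (no pairs of disjoint non-commuting reflections meeting $Y$ in a bad pattern) combine to give exactly the compatibility required.
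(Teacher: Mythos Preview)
The paper does not give its own proof of this theorem: it is quoted from \cite{Hag}, with the remark that ``Theorem A of \cite{Hag} is not stated in this form. Nevertheless, as observed in the paragraph following Theorem A, this is what is proved.'' So there is no in-paper argument to compare your proposal against.

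That said, your outline is broadly aligned with Haglund's actual proof. He does work with the combinatorial convex hull $Y$ of an $H$-orbit in the Davis--Moussong complex, and word-quasiconvexity is precisely what ensures $Y$ lies in a bounded neighbourhood of $Hv$ and hence that $H\backslash Y$ is compact. The retraction is then produced via a canonical completion construction for the locally isometric immersion $H\backslash Y \to W\backslash U$, which is essentially the mechanism you describe in terms of hyperplane labels and gate projection.

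Where your sketch is thin is exactly where you say it is: the multiplicativity of $\rho$. Your proposed recipe --- take $K$ to be the kernel of a homomorphism from $W$ to a finite group recording permutations of hyperplane orbits transverse to $Y$, then project by the gate map --- does not obviously yield a homomorphism, and the paragraph you offer on concatenating hyperplane crossings is more of a hope than an argument. In Haglund's treatment this step is handled not by checking multiplicativity directly on the gate map, but by building a finite cube complex $\widehat{Y}$ (the canonical completion) that contains $H\backslash Y$ as a retract and covers $W\backslash U$; the finite-index subgroup $K$ and the retraction $K\to H$ then come for free from $\pi_1$ of this diagram. If you want to turn your sketch into a proof, you should either reconstruct that completion explicitly or give a genuine verification that your gate-map $\rho$ respects multiplication --- the latter is not straightforward, and the former is what is actually done in \cite{Hag} and \cite{HW}.
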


Theorem A of \cite{Hag} is not stated in this form.  Nevertheless, as observed in the paragraph following Theorem A, this is what is proved.

\begin{cor}[Cf.\ \cite{HW} Corollary 7.9]\label{c: Word-qc implies separable}
{If $G$ is the fundamental group of a compact, virtually special cube complex and $H$ is a combinatorially quasiconvex subgroup of $G$ then $H$ is a virtual retract of $G$.}
\end{cor}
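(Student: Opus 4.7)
The plan is to reduce to Haglund's Theorem \ref{t: Haglund's Theorem} after passing to a $C$-special finite cover. Choose a $C$-special finite-sheeted cover $\s_0 \to \s$ of the given virtually special complex with $\pi_1\s = G$; by replacing $\s_0$ with the cover corresponding to the normal core of $\pi_1\s_0$ in $G$ (still $C$-special, since hyperplane pathologies lift to covers) we may arrange that $G_0 := \pi_1\s_0$ is normal in $G$. Theorem \ref{t: Haglund--Wise} then realises $G_0$ as a word-quasiconvex subgroup of a right-angled Coxeter group $W$ with its standard generating set.

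Set $\tilde H := H \cap G_0$, a finite-index normal subgroup of $H$. Since combinatorial quasiconvexity passes to sub-orbits of finite index, $\tilde H$ is combinatorially quasiconvex in $G_0$, and the Remark identifies this with word quasiconvexity of $\tilde H$ in $W$. Theorem \ref{t: Haglund's Theorem} therefore provides a finite-index subgroup $W_0 \leq W$ with a retraction onto $\tilde H$. Intersecting with $G_0$ and then intersecting the result with its $H$-conjugates --- a finite intersection, because the resulting subgroup has only finitely many conjugates in $G$ --- yields a finite-index, $H$-invariant subgroup $K \leq G_0$ containing $\tilde H$ together with a retraction $\rho \colon K \to \tilde H$ (the $H$-invariance of $\tilde H$ ensures it survives this intersection). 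Since $K$ is $H$-invariant, $KH$ is a subgroup of $G$, of finite index and containing $H$; and $K \leq G_0$ gives $K \cap H = \tilde H$.

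The main obstacle is to extend $\rho$ to a retraction $\bar\rho \colon KH \to H$ via the formula $\bar\rho(kh) := \rho(k)h$. Well-definedness follows from $K \cap H = \tilde H$ together with $\rho|_{\tilde H} = \mathrm{id}$, but multiplicativity is equivalent to $\rho$ being $H$-equivariant for the conjugation actions on $K$ and on $\tilde H \triangleleft H$. I expect to establish this equivariance by exploiting the functoriality of the canonical completion and retraction underlying Haglund's proof of \cite[Theorem A]{Hag}, applied to the convex hull of the $\tilde H$-orbit in the universal cover of $\s_0$ --- a subcomplex that carries a natural $H/\tilde H$-symmetry compatible with the canonical retraction. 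Once equivariance is in place, $\bar\rho$ is the desired homomorphism and exhibits $H$ as a virtual retract of $G$.
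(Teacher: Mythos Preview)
Your overall strategy --- pass to a $C$-special finite cover, apply Haglund's Theorem~\ref{t: Haglund's Theorem} to the intersection $\tilde H = H \cap G_0$, then promote the virtual retraction back up to $H$ --- matches the paper's approach in its first two steps but diverges, with a real gap, at the last.

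The problem is the $H$-equivariance of $\rho$. After restricting the retraction to the $H$-invariant subgroup $K$, you correctly observe that $\bar\rho(kh) := \rho(k)h$ defines a homomorphism if and only if $\rho(hkh^{-1}) = h\rho(k)h^{-1}$ for all $h \in H$ and $k \in K$. But nothing in your construction forces this: intersecting $W_0 \cap G_0$ with its $H$-conjugates makes the \emph{domain} $H$-invariant, not the map. Your proposed fix --- that the convex hull of the $\tilde H$-orbit carries an $H/\tilde H$-symmetry compatible with the canonical retraction --- is not clearly true. For $h \in H \smallsetminus G_0$ one has $h \cdot (\tilde H v) = \tilde H \cdot (hv)$, a \emph{different} $\tilde H$-orbit in general, so the convex hull of $\tilde H v$ need not be $H$-invariant; and the canonical completion is built from the embedding of the universal cover of $\s_0$ into the Davis--Moussong complex of $W$, a construction on which the quotient $H/\tilde H$ has no evident action. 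The phrase ``I expect to establish'' is exactly where the argument stops being a proof.

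The paper sidesteps this entirely. After obtaining that $H' = H \cap G'$ is a virtual retract of $G'$, it invokes the linearity of $G$ (from \cite[Theorem~4.4]{HW}) and then applies the argument of \cite[Theorem~2.10]{long_subgroup_2008}, which is precisely a tool for promoting ``$H'$ is a virtual retract of $G'$'' to ``$H$ is a virtual retract of $G$'' in the presence of a faithful linear representation. Linearity is the ingredient missing from your attempt: it supplies enough rigidity to replace the equivariance you could not establish by hand.
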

\begin{proof}
Let $G'$ be a special subgroup of finite index in $G$.  It is clear that $H'=H\cap G'$ is combinatorially quasiconvex in $G'$.  By the above remark, $H'$ is word-quasiconvex in the right-angled Coxeter group $W$, so $H'$ is a virtual retract of $W$ and hence of $G'$ by Theorem \ref{t: Haglund's Theorem}.  By \cite[Theorem 4.4]{HW}, $G$ is linear.  We can now apply the argument of \cite[Theorem 2.10]{long_subgroup_2008} to deduce that $H$ is a virtual retract of $G$.
\end{proof}

The reader is referred to \cite{manning_separation_2008} and \cite{hruska_relative_2008} for definitions of \textit{relatively hyperbolic} groups and \textit{relatively quasiconvex} subgroups, which are the subject of Theorem \ref{rel qc still sep}.  (See Proposition \ref{p: Hruska's characterization} below for a characterization of relative quasiconvexity.)  In order to deduce Theorem \ref{rel qc still sep} from Corollary \ref{c: Word-qc implies separable}, it would be enough to show that every relatively quasiconvex subgroup of the relatively hyperbolic fundamental group of a C-special cube complex is combinatorially quasiconvex.  Unfortunately, this may be false.  For instance, the diagonal subgroup of $\mathbb{Z}^2$ with the standard generating set is not quasiconvex.  The next theorem, a minor modification of a result of \cite{manning_separation_2008}, gets round this difficulty.

\begin{dfn}
Suppose a group $G$ is hyperbolic relative to a finite set of subgroups $\p$.  Then a relatively quasiconvex subgroup is called \emph{fully relatively quasiconvex} if for every $P\in\p$ and every $g\in G$, either $H\cap gPg^{-1}$ is trivial or $H\cap gPg^{-1}$ has finite index in $gPg^{-1}$.
\end{dfn}

\begin{thm}[Cf.\ \cite{manning_separation_2008} Theorem 1.7]\label{t: Fully rel. qc subgroups}
Suppose that $G$ is hyperbolic relative to $\p$ and that every $P\in\p$ is finitely generated and abelian.  If $Q$ is a relatively quasiconvex subgroup of $G$ then $G$ has a fully relatively quasiconvex subgroup $H$ such that $Q$ is a retract of $H$.
\end{thm}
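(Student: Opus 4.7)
The plan is to enlarge $Q$ by adjoining, inside each relevant parabolic subgroup, a complement to the infinite-index intersection $Q \cap gPg^{-1}$; the abelian hypothesis on the parabolics is exactly what makes such complements available.

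By \cite{hruska_relative_2008} there are only finitely many $Q$-conjugacy classes of infinite subgroups of the form $Q \cap gPg^{-1}$ with $P \in \p$. I would select representatives $Q_1 \leq P_1, \ldots, Q_k \leq P_k$ for the classes in which $Q_i \doteq Q \cap P_i$ is nontrivial and of infinite index in $P_i$, with each $P_i$ a conjugate of some element of $\p$. Since $P_i$ is a finitely generated abelian group, $Q_i$ admits a near-complement: a subgroup $C_i \leq P_i$ with $Q_i \cap C_i = \{1\}$ and $Q_i \oplus C_i$ of finite index in $P_i$. Set
\[
H \doteq \langle Q,\, C_1, \ldots, C_k \rangle \leq G.
\]

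The first claim is that $H$ is naturally the fundamental group of a graph of groups with central vertex group $Q$, satellite vertex groups $Q_i \oplus C_i$, and edge groups $Q_i$ attached in the obvious way. The algebraic input for the amalgamation is that $Q \cap (Q_i \oplus C_i) = Q_i$ already inside $G$: if $qc \in Q$ with $q \in Q_i$ and $c \in C_i \leq P_i$, then $c \in Q \cap P_i = Q_i$, which forces $c \in Q_i \cap C_i = \{1\}$. That this graph-of-groups decomposition of $H$ is faithful, and that $H$ is itself relatively quasiconvex in $G$, is the technical heart of the proof. I would invoke a combination theorem for relatively quasiconvex subgroups amalgamated along almost malnormal parabolic edge groups; almost malnormality of the parabolic subgroups of $G$, together with the fact that each factor $Q_i \oplus C_i$ is itself parabolic and hence automatically relatively quasiconvex, supplies the hypotheses.

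Granted this graph-of-groups decomposition, the retraction $\rho \co H \to Q$ is defined to be the identity on $Q$ and the trivial map on each $C_i$, and is consistent on the amalgamating subgroups $Q_i$. Full relative quasiconvexity of $H$ then follows from Bass--Serre theory together with almost malnormality of the $P_i$: every infinite intersection $H \cap gPg^{-1}$ is $H$-conjugate to one sitting inside a single vertex group, and is by construction either trivial or of finite index in the corresponding conjugate of some element of $\p$. The main obstacle is precisely the combination step --- establishing the graph-of-groups decomposition faithfully inside $G$ and the relative quasiconvexity of $H$ --- but once that is accomplished, the retraction and the full-relative-quasiconvexity conclusion follow routinely.
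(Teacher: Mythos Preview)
Your proposal shares the core idea of the paper's proof: use abelianness of the peripherals to find direct-factor complements to the infinite-index parabolic intersections, so that the enlarged group retracts onto $Q$.  The difference is structural.  The paper proceeds \emph{inductively}, one parabolic class at a time, by directly invoking the Manning--Martinez-Pedroza construction \cite[Theorem 1.7]{manning_separation_2008}: at each step they find a finite-index subgroup $R_k \leq f_kP_kf_k^{-1}$ containing $K_k = Q_{k-1}\cap f_kP_kf_k^{-1}$, set $Q_k = \langle Q_{k-1}, R_k\rangle$, and appeal to \cite[Theorem 3.6]{manning_separation_2008} to conclude that $Q_{k-1} *_{K_k} R_k \to Q_k$ is an isomorphism and $Q_k$ is relatively quasiconvex.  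The only modification is to insist that $K_k$ be a direct factor of $R_k$, which abelianness makes easy.  You instead attempt all parabolics simultaneously via a star-shaped graph of groups.

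The step you yourself flag as ``the main obstacle'' is a genuine gap.  The Manning--Martinez-Pedroza combination theorem does not follow from almost malnormality alone: it requires that the adjoined parabolic subgroup $R_k$ avoid a specific finite subset $F$ of $G$, which is why one passes to a \emph{sufficiently deep} finite-index subgroup of $f_kP_kf_k^{-1}$ rather than taking an arbitrary complement.  Your $Q_i \oplus C_i$ need not satisfy this, so the natural map from your graph of groups to $H$ need not be injective, and relative quasiconvexity of $H$ is not guaranteed.  The paper sidesteps this by leaning on the existing iterative machinery and simply observing that the ``avoid $F$'' and ``contain $K_k$ as a direct factor'' conditions are simultaneously satisfiable in a finitely generated abelian group.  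Your argument can be repaired along the same lines, but at that point it collapses into the paper's inductive proof.
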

\begin{proof}
In the proof of \cite[Theorem 1.7]{manning_separation_2008}, the authors construct a sequence of relatively quasiconvex subgroups
\[
Q=Q_0\subseteq Q_1\subseteq\ldots\subseteq Q_n=H
\]
with $H$ fully relatively quasiconvex.  We recall a few details of the construction of $Q_k$ from $Q_{k-1}$.  We will modify this construction slightly so that $Q_{k-1}$ is a retract of $Q_k$ for each $k$.  For some maximal infinite parabolic subgroup $K_k$ of $Q_{k-1}$, there is $P_k\in\p$ and $f_k\in G$ such that $K_k\subseteq f_kP_kf_k^{-1}$.  Manning and Martinez-Pedroza find a finite-index subgroup $R_k$ of $f_kP_kf_k^{-1}$ that contains $K_k$ and excludes a certain finite set $F$.  We shall impose an extra condition on $R_k$ that is easily met when $P_k$ is abelian, namely that $K_k$ should be a direct factor of $R_k$.  Just as in \cite{manning_separation_2008}, the next subgroup in the sequence is now defined as $Q_k=\langle Q_{k-1}, R_k\rangle$, and just as in that setting it follows that $Q_k$ is relatively quasiconvex.

It remains only to show that $Q_{k-1}$ is a retract of $Q_k$.  By assertion (1)  of \cite[Theorem 3.6]{manning_separation_2008}, the natural map
\[
Q_{k-1}*_{K_k} R_k\to Q_k
\]
is an isomorphism.  But $K_k$ is a direct factor of $R_k$ and so there is a retraction $R_k\to K_k$, which extends to a retraction $Q_k\to Q_{k-1}$ as required.
\end{proof}

In light of Theorem \ref{t: Fully rel. qc subgroups}, to prove Theorem \ref{rel qc still sep} it will suffice to show that when $G$ is the relatively hyperbolic fundamental group of a non-positively curved cube complex, its fully relatively quasiconvex subgroups are combinatorially convex.  This is the content of Proposition \ref{p: Fully qc implies combinatorially qc} below.

Hruska has extensively investigated various equivalent definitions of relative hyperbolicity and relative quasiconvexity \cite{hruska_relative_2008}.  Corollary 8.16 of \cite{hruska_relative_2008} provides a characterization of relative quasiconvexity in terms of geodesics in the Cayley graph.  Unfortunately, to prove Theorem \ref{rel qc still sep} we need to work in the one-skeleton of the universal cover of a cube complex.  This is not actually a Cayley graph unless the cube complex in question has a unique vertex.  It is, however, quasi-isometric to the Cayley graph.  Therefore, we will need a quasigeodesic version of Hruska's Corollary 8.16.  Fortunately, we shall see that Hruska's proof goes through.

In what follows, $S$ is any choice of finite generating set for $G$ and $d$ is the usual length metric on $Cay_S(G)$.  For any $g\in G$ write $l(g)$ for $d(1,g)$, the word length of $g$ with respect to $S$.  For $x\in Cay_S(G)$ we denote by $B(x,R)$ the open ball of radius $R$ about $x$.  We define
\[
N_R(Y)=\bigcup_{y\in Y} B(y,R)
\]
for any subspace $Y\subseteq Cay_S(G)$ and any $R>0$.  To keep notation to a minimum we will work with $\tau$-quasigeodesics, which are more usually defined as $(\tau,\tau)$-quasigeodesics.  That is, a path $c$ is a $\tau$-quasigeodesic if
\[
\frac{1}{\tau}|s-t|-\tau\leq d(c(s),c(t))\leq \tau|s-t|+\tau
\]
for all suitable $s$ and $t$.  We will always assume that our quasigeodesics are continuous, which we can do by \cite[Lemma III.H.1.11]{BrH}.  The following definition is adapted from \cite{hruska_relative_2008}.

\begin{dfn}[Cf.\ \cite{hruska_relative_2008} Definition 8.9]
Let $H$ be a subgroup of $G$.  Let $c$ be (the image of) a quasigeodesic in $Cay_S(G)$.  If $x\in c$ is not within distance $R$ of the endpoints of $c$ and
\[
B(x,R)\cap c\subseteq N_\epsilon(gP)
\]
for some $g\in G$ and $P\in\p$ then $x$ is called \emph{$(\epsilon,R)$-deep} in $gP$.  If $x\in c$ is not $(\epsilon,R)$-deep in any such coset $gP$ then $x$ is called an \emph{$(\epsilon,R)$-transition point} of $c$.
\end{dfn}

The next proposition characterizes relatively quasiconvex subgroups in terms of quasigeodesics in the Cayley graph.  Roughly, it asserts that every point on a quasigeodesic between elements of $H$ is either close to $H$ or is close to some peripheral coset $gP$.

\begin{prop}[Cf.\ \cite{hruska_relative_2008} Corollary 8.16]\label{p: Hruska's characterization}
Suppose $G$ is hyperbolic relative to $\p$ and $H$ is a subgroup of $G$.  Then $H$ is relatively quasiconvex in $G$ if and only if for every $\tau$ there are constants $\epsilon, R, \kappa$ such that the following two properties hold.
\begin{enumerate}
\item For any continuous $\tau$-quasigeodesic $c$ in $Cay_S(G)$, any connected component $\bar{c}$ of the set of all $(\epsilon,R)$-deep points of $c$ is $(\epsilon,R)$-deep in a unique peripheral left coset $gP$; that is, there exists a unique $P\in\p$ and $gP\in G/P$ such that every $x\in\bar{c}$ is $(\epsilon,R)$-deep in $gP$ and  no $x\in\bar{c}$ is $(\epsilon,R)$-deep in any other peripheral left coset.
\item If the quasigeodesic $c$ joins two points of $H$ then the set of $(\epsilon,R)$-transition points of $c$ is contained in $N_\kappa(H)$.
\end{enumerate}
\end{prop}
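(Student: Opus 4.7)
The plan is to deduce Proposition \ref{p: Hruska's characterization} as the quasigeodesic refinement of \cite[Corollary 8.16]{hruska_relative_2008}, which proves the analogous characterization when $c$ is restricted to be a geodesic in $Cay_S(G)$. The converse implication is immediate: assuming conditions (1) and (2) hold for every $\tau$, specializing to $\tau = 1$ recovers Hruska's geodesic criterion and forces $H$ to be relatively quasiconvex.

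For the forward direction, assume $H$ is relatively quasiconvex and let $\epsilon_0, R_0, \kappa_0$ be the constants guaranteed by \cite[Corollary 8.16]{hruska_relative_2008} for geodesics. Given $\tau$, I would invoke the Morse Property for Relatively Hyperbolic Groups \cite[Theorem 1.12]{druu_tree-graded_2005} to obtain $\sigma = \sigma(\tau)$ such that any continuous $\tau$-quasigeodesic $c$ and any geodesic $\gamma$ with the same endpoints are at Hausdorff distance at most $\sigma$. The essential additional content I would extract from Drutu--Sapir's proof is that the ``deep segments'' of $c$ in peripheral cosets correspond, up to error $\sigma$, to deep segments of $\gamma$ in the same cosets; this reflects the tree-graded structure of asymptotic cones, which forces $c$ and $\gamma$ to enter and exit the same pieces.

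With these in hand, I would set $\epsilon := \epsilon_0 + 2\sigma$, $R := R_0 + 2\sigma + \tau$, and $\kappa := \kappa_0 + \sigma$, and verify (1) and (2) by transferring between $c$ and a companion geodesic $\gamma$. For (1), each $(\epsilon, R)$-deep point $x \in \bar c$ lies within $\sigma$ of a point $x' \in \gamma$ that is $(\epsilon_0, R_0)$-deep in some peripheral coset $gP$; applying Hruska's uniqueness on $\gamma$ pins down $gP$, while observing that a deep neighbourhood of $x'$ pulls back to a deep neighbourhood of $x$ in the same coset rules out any other. For (2), given an $(\epsilon, R)$-transition point $x \in c$ on a quasigeodesic joining two points of $H$, the contrapositive of the pull-back argument produces an $(\epsilon_0, R_0)$-transition point $x' \in \gamma$ within $\sigma$ of $x$; Hruska's criterion places $x'$ within $\kappa_0$ of $H$, giving $d(x, H) \leq \kappa$ by the triangle inequality.

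The main obstacle is establishing the refined form of the Morse Property needed in the second paragraph: not merely Hausdorff closeness of $c$ and $\gamma$, but a bijective-up-to-error correspondence of their excursions into peripheral cosets. The cleanest route is to pass to the coned-off Cayley graph $\widehat{G}$, which is $\widehat{\delta}$-hyperbolic; there the classical Morse lemma yields that the coned-off images of $c$ and $\gamma$ are Hausdorff close, and unravelling the cones back into $Cay_S(G)$ produces the required coset-matching after absorbing the resulting errors into the constants $\epsilon, R, \kappa$.
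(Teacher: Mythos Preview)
Your approach differs from the paper's and contains a genuine error.  The paper does not deduce the quasigeodesic statement from the geodesic one; it simply observes that the proofs in Section 8 of \cite{hruska_relative_2008} (Lemmas 8.10--8.15, leading to Corollary 8.16) are already written in a way that applies to $\tau$-quasigeodesics, so the quasigeodesic version follows by inspection of Hruska's arguments.

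Your reduction strategy fails at the key step.  You invoke \cite[Theorem 1.12]{druu_tree-graded_2005} to claim that a $\tau$-quasigeodesic $c$ and a geodesic $\gamma$ with the same endpoints are at Hausdorff distance at most $\sigma(\tau)$ in $Cay_S(G)$.  This is false in relatively hyperbolic groups that are not hyperbolic: inside a single peripheral coset (say $\mathbb{Z}^2$), two geodesics with common endpoints can have arbitrarily large Hausdorff distance, so a quasigeodesic certainly need not stay close to a fixed geodesic.  What the Dru\c{t}u--Sapir Morse property actually gives is that $c$ lies in a bounded neighbourhood of the \emph{saturation} of $\gamma$ --- the union of $\gamma$ with the peripheral cosets it travels near --- not of $\gamma$ itself.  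Consequently, your transfer argument in the third paragraph breaks down: an $(\epsilon,R)$-deep point $x$ on $c$ need not be within $\sigma$ of any point on $\gamma$ at all, let alone a deep one in the same coset, and the contrapositive step producing a nearby transition point on $\gamma$ is similarly unsupported.

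Your final paragraph gestures at a repair via the coned-off Cayley graph, and this is indeed closer to how Hruska's own arguments proceed.  But carrying that out amounts to reproving the lemmas of \cite[\S 8]{hruska_relative_2008} in the quasigeodesic setting --- which is exactly what the paper asserts already works --- rather than reducing to the geodesic case as you set out to do.
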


The statement of \cite[Corollary 8.16]{hruska_relative_2008} only deals with the case when $c$ is a geodesic.  However, the necessary results of Section 8 of \cite{hruska_relative_2008} also hold in the quasigeodesic case.

The following proposition completes the proof of Theorem \ref{rel qc still sep}.

\begin{prop}\label{p: Fully qc implies combinatorially qc}
Let $G$ be finitely generated and relatively hyperbolic.  Suppose that $G$ acts properly discontinuously and cocompactly by isometries on a geodesic metric space $X$.   Fix a basepoint $v\in X$.  For any fully relatively quasiconvex subgroup $H\subseteq G$ there exists a constant $\nu$ such that any geodesic between two points of the orbit $Hv$ lies in the $\nu$-neighbourhood of $Hv$.  In particular, if $G$ is the fundamental group of a non-positively curved cube complex then, taking $X$ to be the one-skeleton of the universal cover, it follows that $H$ is combinatorially quasiconvex.
\end{prop}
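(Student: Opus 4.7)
The plan is to transfer the problem from $X$ to the Cayley graph of $G$ via the Milnor--Schwarz lemma, apply Proposition \ref{p: Hruska's characterization} to control the resulting quasigeodesic between two elements of $H$, and then use full relative quasiconvexity to handle any segments that enter a peripheral coset deeply.

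First, fix a finite generating set $S$ for $G$.  Since $G$ acts properly discontinuously, cocompactly, and by isometries on $X$, the orbit map $g\mapsto gv$ extends to a quasi-isometry $\phi\colon\mathrm{Cay}_S(G)\to X$.  A geodesic $\gamma$ in $X$ from $h_1v$ to $h_2v$ (with $h_i\in H$) pulls back along $\phi$ to a continuous $\tau$-quasigeodesic $c$ in $\mathrm{Cay}_S(G)$ from $h_1$ to $h_2$, for some $\tau$ depending only on the quasi-isometry constants.  Proposition \ref{p: Hruska's characterization} then supplies $\epsilon,R,\kappa$ such that every $(\epsilon,R)$-transition point of $c$ lies in $N_\kappa(H)$ and each maximal component of $(\epsilon,R)$-deep points of $c$ is deep in a unique peripheral coset $gP$.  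It therefore suffices to bound the distance from each deep component to $H$.

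Let $\bar c$ be a maximal deep segment, deep in $gP$, with endpoints $x_1,x_2$; choose $h'_i\in H$ and $gp_i\in gP$ with $d(x_i,h'_i)\leq\kappa$ and $d(x_i,gp_i)\leq\epsilon$.  Left-translating everything by $(h'_1)^{-1}$ (an isometry of $\mathrm{Cay}_S(G)$ preserving $H$ setwise, since $h'_1\in H$) and replacing $g$ by $(h'_1)^{-1}gp_1$ reduces to the case $|g|_S\leq\kappa+\epsilon$, so $g$ is drawn from the finite ball $B_{\kappa+\epsilon}$; the two cases guaranteed by full relative quasiconvexity are preserved.  If $H\cap gPg^{-1}$ has finite index in $gPg^{-1}$, then $K:=H\cap gPg^{-1}$ acts freely on $gP$ by left multiplication with finitely many orbits; choosing shortest representatives $y_1,\ldots,y_N\in gP$ and writing $y=ky_i$ with $k\in K\subseteq H$ gives $d(y,H)\leq d(y,k)=|y_i|$, so maximizing $\max_i|y_i|$ over the finitely many eligible pairs $(g,P)\in B_{\kappa+\epsilon}\times\p$ yields a uniform $D$ with $\bar c\subseteq N_\epsilon(gP)\subseteq N_{D+\epsilon}(H)$.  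If instead $H\cap gPg^{-1}=\{1\}$, then for each $w\in B_{\kappa+\epsilon}$ the set $H\cap gPw$ contains at most one element: two such elements $gp_1w,gp_2w\in H$ would force $gp_1p_2^{-1}g^{-1}\in H\cap gPg^{-1}$ nontrivial.  Hence $|H\cap N_{\kappa+\epsilon}(gP)|\leq|B_{\kappa+\epsilon}|$, and maximizing the diameter of this finite set over the finitely many eligible pairs yields a uniform bound $D'$.  Since $h'_1,h'_2\in H\cap N_{\kappa+\epsilon}(gP)$ satisfy $d(h'_1,h'_2)\leq D'$, the quasigeodesic $\bar c$ has length at most $\tau(D'+2\kappa+\tau)$, placing $\bar c$ in a uniform neighbourhood of $h'_1\in H$.

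Combining the transition-point estimate with the two-case deep-segment bound produces a constant $M$ with $c\subseteq N_M(H)$; pushing forward by $\phi$ and absorbing the quasi-isometry error gives the required $\nu$ with $\gamma\subseteq N_\nu(Hv)$.  The concluding assertion for cube complexes is immediate upon taking $X$ to be the one-skeleton of the universal cover of $\s$ with the edge-path metric, on which $G$ acts properly discontinuously and cocompactly by combinatorial isometries.  The hard part is the normalization step combined with the triviality argument in the second case: this is precisely where the hypothesis that $H$ is \emph{fully} (rather than merely) relatively quasiconvex is decisive, converting the purely algebraic information about $H\cap gPg^{-1}$ into uniform geometric control of the deep regions of the quasigeodesic.
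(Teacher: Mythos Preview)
Your proof is correct and follows essentially the same approach as the paper's: transfer to the Cayley graph via \v{S}varc--Milnor, apply Hruska's characterization to handle transition points, normalize the coset representative $g$ into a fixed ball, and then split into the two cases dictated by full relative quasiconvexity.  The only real difference is organizational: the paper isolates your trivial-intersection argument as two separate lemmas (a pigeonhole lemma showing $H\cap N_r(K)$ is finite when $H\cap K=1$, and a consequence bounding the word length of such elements), whereas you inline the same idea directly via the observation that $H\cap gPw$ has at most one element for each $w$.
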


Proposition \ref{p: Hruska's characterization} implies that, to prove Proposition \ref{p: Fully qc implies combinatorially qc}, it is enough to prove that deep points of quasigeodesics between points of $H$ lie in a bounded neighbourhood of $H$.  The key technical tool is the following lemma, which is nothing more than the Pigeonhole Principle.

\begin{lemma}\label{l: Pigeonhole Lemma}
Let $G$ be a finitely generated group.  Fix a choice of finite generating set and the corresponding word metric on $G$.  If $H,K$ are subgroups and $H\cap K=1$ then
\[
\#(H\cap N_r(K))<\infty
\]
for any $r>0$.
\end{lemma}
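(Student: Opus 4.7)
The plan is to run a direct pigeonhole argument using the fact that in a finitely generated group the closed ball of any finite radius around the identity is finite. The bijection between $K$-cosets of $1$ and elements at bounded distance from $K$ will let us package each element of $H \cap N_r(K)$ as (element of $K$)$\cdot$(short word), and then $H \cap K = 1$ forces different elements of $H$ to use different short words.

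In detail, first I would note that with respect to the fixed finite generating set the ball $B(1,r)$ in $G$ is finite; call its elements $g_1,\ldots,g_N$. Next, for each $h \in H \cap N_r(K)$, by definition of $N_r$ there exists $k \in K$ with $d(k,h) \leq r$, equivalently $k^{-1}h \in B(1,r)$; so we may write $h = k \cdot g_i$ for some $1 \leq i \leq N$. This gives a function $H \cap N_r(K) \to \{g_1,\ldots,g_N\}$ sending each $h$ to (a choice of) the corresponding $g_i$.

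Finally I would check that this function is injective. If $h_1, h_2 \in H \cap N_r(K)$ map to the same $g_i$, then $h_1 = k_1 g_i$ and $h_2 = k_2 g_i$ for some $k_1, k_2 \in K$, so
\[
h_1 h_2^{-1} = k_1 g_i g_i^{-1} k_2^{-1} = k_1 k_2^{-1} \in H \cap K = \{1\},
\]
forcing $h_1 = h_2$. Hence $\#(H \cap N_r(K)) \leq N < \infty$.

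There is no real obstacle here; the only thing to be a little careful about is that the choice of $k$ (and hence of $g_i$) for each $h$ need not be canonical, but since we just need an injection, picking any such $k$ for each $h$ suffices. The argument uses nothing about relative hyperbolicity or the cube-complex structure; it is purely a counting argument in $G$.
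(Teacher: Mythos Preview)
Your argument is correct and is essentially the same as the paper's: both write each $h \in H \cap N_r(K)$ as an element of $K$ times a short word, observe that the ball $B(1,r)$ is finite, and use $H \cap K = 1$ to conclude that distinct $h$'s must use distinct short words. The only cosmetic difference is that the paper phrases this as a proof by contradiction (assuming infinitely many $h_i$ and finding $g_i = g_j$), whereas you directly build an injection into the finite set $B(1,r)$.
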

\begin{proof}
For a contradiction, suppose $h_i\in H\cap N_r(K)$ are distinct for all $i\in\mathbb{N}$.  For each $i$, there is $k_i\in K$ with $d(h_i,k_i)< r$. Let $g_i=h_i^{-1}k_i$, so $l(g_i)<r$.  The ball of radius $r$ in $G$ is finite, so $g_i=g_j$ for some $i\neq j$ by the Pigeonhole Principle.  But now
\[
h_ih_j^{-1}=h_ig_ig_j^{-1}h_j^{-1}=k_ik_j^{-1}
\]
is a non-trivial element of $H\cap K$, a contradiction.
\end{proof}

It follows that only short elements of $H$ can be close to parabolic left cosets for which $H$ intersects the stabilizer trivially.

\begin{lemma}\label{l: Only short elements are close to empty parabolics}
Suppose $G$ is hyperbolic relative to $\p$ and $H$ is any subgroup of $G$.   Let $g\in G$ and $P\in\p$ be such that $H\cap gPg^{-1}=1$.  For any $r>0$ there exists finite $\lambda=\lambda(r,gP)$ such that if $h\in N_r(gP)\cap H$ then $l(h)\leq\lambda$.
\end{lemma}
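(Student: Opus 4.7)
The plan is to reduce the statement directly to Lemma \ref{l: Pigeonhole Lemma} by comparing the neighbourhood of the left coset $gP$ with the neighbourhood of the conjugate subgroup $gPg^{-1}$.

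First I would write every element of $gP$ in the form $kg$ with $k = gpg^{-1} \in gPg^{-1}$, so that $gP = gPg^{-1}\cdot g$. Given any $h \in H \cap N_r(gP)$, there is $p \in P$ with $l(h^{-1}gp) < r$; setting $k = gpg^{-1}$ and applying the triangle inequality in the word metric gives
\[
l(h^{-1}k) \leq l(h^{-1}kg) + l(g^{-1}) = l(h^{-1}gp) + l(g) < r + l(g).
\]
Hence $h \in N_{r+l(g)}(gPg^{-1})$, so
\[
H \cap N_r(gP) \subseteq H \cap N_{r+l(g)}(gPg^{-1}).
\]

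Next, since $H \cap gPg^{-1} = 1$ by hypothesis, I would apply Lemma \ref{l: Pigeonhole Lemma} with $K = gPg^{-1}$ and radius $r + l(g)$ to conclude that the right-hand side is finite. Any finite subset of $G$ has bounded word length, so setting $\lambda = \lambda(r, gP)$ to be the maximum word length of the (finitely many) elements of $H \cap N_r(gP)$ gives the required bound.

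There is no real obstacle here: the only subtlety is that $gP$ is not a subgroup, so Lemma \ref{l: Pigeonhole Lemma} does not apply verbatim, but the cost of moving from the coset to the conjugate subgroup is just the additive constant $l(g)$, which is harmless.
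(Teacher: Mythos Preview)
Your proof is correct and follows essentially the same approach as the paper: both reduce the coset $gP$ to the subgroup $gPg^{-1}$ via the triangle inequality, paying an additive cost of $l(g)$ in the radius, and then apply Lemma~\ref{l: Pigeonhole Lemma}. The only cosmetic difference is that the paper first replaces $g$ by a minimal-length representative of $gP$, but this is irrelevant to the argument.
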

\begin{proof}
Choose $g$ of minimal word length in $gP$ and set $k=l(g)$.  For any $p\in P$, $d(gp,gpg^{-1})=k$ and it follows that
\[
N_r(gP)\subseteq N_{k+r}(gPg^{-1})
\]
by the triangle inequality.  Therefore, by Lemma \ref{l: Pigeonhole Lemma} with $K=gPg^{-1}$, $N_r(gP)\cap H$ is finite and so
\[
\lambda=\max\{l(h)\mid h\in N_r(gP)\cap H\}
\]
is as required.
\end{proof}

We are now ready to prove Proposition \ref{p: Fully qc implies combinatorially qc}.
\proof[Proof of Proposition \ref{p: Fully qc implies combinatorially qc}]
Consider a geodesic $b$ in $X$ joining two points of $Hv$.  We need to show that $b$ is contained in a uniformly bounded neighbourhood of $Hv$.

By the \v{S}varc--Milnor Lemma, $G$ has a finite generating set $S$ and $X$ is quasi-isometric to the Cayley graph $Cay_S(G)$.  The geodesic $b$ maps to some $\tau$-quasigeodesic in $Cay_S(G)$, which we denote $c$.  Furthermore, we can assume that $c$ is continuous by \cite[Lemma III.H.1.11]{BrH}.  It is therefore enough to show that $c$ is contained in a uniformly bounded neighbourhood of $H$ in the word metric $d$ on $Cay_S(G)$.

Let $\epsilon$, $R$ and $\kappa$ be as in Proposition \ref{p: Hruska's characterization}.  By assertion 2 of Proposition \ref{p: Hruska's characterization}, the $(\epsilon,R)$-transition points of $c$ are contained in the $\kappa$-neighbourhood of $H$.  Therefore, it remains to show that the $(\epsilon,R)$-deep points of $c$ are contained in a uniformly bounded neighbourhood of $H$.

Let $\bar{c}$ be a connected component of the set of all $(\epsilon,R)$-deep points of $c$.  By definition, every $x\in\bar{c}$ is in the $\epsilon$-neighbourhood of some peripheral left coset $gP$. By assertion 1 of Proposition \ref{p: Hruska's characterization}, the component $\bar{c}$ is contained between two $(\epsilon,R)$-transition points of $c$, which we shall denote $y_1$ and $y_2$.  We can take these points to be arbitrarily close to $\bar{c}$, and hence we can assume that $d(y_i,gP)\leq\epsilon$ for $i=1,2$.  On the other hand, by assertion 2 of Proposition \ref{p: Hruska's characterization}, there exist $h_1,h_2\in H$ such that $d(h_i,y_i)<\kappa$ for $i=1,2$.  Therefore, $h_i\in N_{\epsilon+\kappa}(gP)$ for $i=1,2$.

Let $h_0=h_1^{-1}h_2$ and let $g_0=h_1^{-1}g$, so $h_0\in N_{\epsilon+\kappa}(g_0P)$ and, without loss of generality, $l(g_0)\leq \epsilon+\kappa$.  There are two cases to consider, depending on whether $h_0$ is long or short.  Let
\[
\lambda_{\max}=\max\{\lambda(\epsilon+\kappa,gP)\mid P\in\p,l(g)\leq\epsilon+\kappa, H\cap gPg^{-1}=1\}
\]
where $\lambda(\epsilon+\kappa,gP)$ is provided by Lemma \ref{l: Only short elements are close to empty parabolics}.  In the first case, $l(h_0)\leq\lambda_{\max}$ so $d(h_1,h_2)\leq\lambda_{\max}$ and therefore $d(y_1,y_2)<\lambda_{\max}+2\kappa$.  Because $c$ is a $\tau$-quasigeodesic it follows that for every $x\in\bar{c}$, for some $i=1,2$, we have that
\[
d(x,y_i)<\lambda'=\frac{\tau^2}{2}(\lambda_{\max}+2\kappa+\tau)+\tau
\]
and so $d(x,h_i)<\lambda'+\kappa$.

In the second case, $l(h_0)>\lambda_{\max}$ and so $H\cap g_0Pg_0^{-1}\neq 1$ by Lemma \ref{l: Only short elements are close to empty parabolics}.  Therefore $H\cap g_0Pg_0^{-1}$ has finite index in $g_0Pg_0^{-1}$ because $H$ is fully relatively quasiconvex.  For each $g\in G$ and $P\in\p$ for which $H\cap gPg^{-1}$ has finite index in $P$, let $\mu=\mu(gP)$ be a number such that $gPg^{-1}\subseteq N_\mu(H\cap gPg^{-1})$.  Set
\[
\mu_{\max}=\max\{\mu(gP)\mid P\in\p, l(g)\leq\epsilon+\kappa, |gPg^{-1}:H\cap gPg^{-1}|<\infty\}.
\]
Therefore
\[
g_0Pg_0^{-1}\subseteq N_{\mu_{\max}}(H)
\]
and so
\[
g_0P\subseteq N_{\mu_{\max}+\epsilon+\kappa}(H)
\]
because $l(g_0)\leq\epsilon+\kappa$.  For each $x\in\bar{c}$ we have $h_1^{-1}x\in N_\epsilon(g_0P)$ and so $h_1^{-1}x\in N_{\mu_{\max}+2\epsilon+\kappa}(H)$. Therefore $x\in N_{\mu_{\max}+2\epsilon+\kappa}(H)$.

In summary, we have shown the following: the $(\epsilon,R)$-transition points of the geodesic $c$ are contained in the $\kappa$-neighbourhood of $H$; the short $(\epsilon,R)$-deep components of $c$ are contained in the $(\lambda'+\kappa)$-neighbourhood of $H$; and the long $(\epsilon,R)$-deep components of $c$ are contained in the $(\mu_{\max}+2\epsilon+\kappa)$-neighbourhood of $H$.  Therefore, $c$ is completely contained in the $\nu$-neighbourhood of $H$, where
\[
\nu=\max\{\kappa,\lambda'+\kappa,\mu_{\max}+2\epsilon+\kappa\}
\]
This completes the proof.
\endproof

We have assembled all the tools necessary to prove Theorem \ref{rel qc still sep}.

\proof[Proof of Theorem \ref{rel qc still sep}]
Let $Q$ be a relatively quasiconvex subgroup of $G=\pi_1\s$.  By Theorem \ref{t: Fully rel. qc subgroups}, there exists a fully relatively quasiconvex subgroup $H$ of $G$ such that $Q$ is a retract of $H$.  Let $X$ be the one-skeleton of the universal cover of $\s$, equipped with the induced length metric.  By Proposition \ref{p: Fully qc implies combinatorially qc}, for any basepoint $v$ the orbit $Hv$ is quasiconvex in $X$; that is, $H$ is a combinatorially quasiconvex subgroup of $G$.  Therefore, by Corollary \ref{c: Word-qc implies separable}, $H$ is a virtual retract of $G$ and so $Q$ is also a virtual retract of $G$, as required.
\endproof

Corollary \ref{c: Corollary 2} now follows easily.

\proof[Proof of Corollary \ref{c: Corollary 2}]
Let $\Gamma=\pi_1M$.  As pointed out in \cite{canary_mardens_2008}, to prove that $\Gamma$ is LERF it is enough to prove that $\Gamma$ is GFERF --- that is, that the geometrically finite subgroups are separable.  Furthermore, by \cite[Proposition 3.28]{Hag}, it is enough to prove that the geometrically finite subgroups of $G$ are virtual retracts.

First, suppose that $M$ is orientable.  Let $Q$ be a geometrically finite subgroup of $\Gamma$.  By \cite[Theorem 1.3]{manning_separation_2008}, for instance, $\Gamma$ is hyperbolic relative to its maximal parabolic subgroups and $Q$ is a relatively quasiconvex subgroup of $\Gamma$.  The maximal parabolic subgroups of $\Gamma$ are isomorphic to $\mathbb{Z}^2$.  By Theorem \ref{rt ang cox}, $\Gamma$ is the fundamental group of a virtually special cube complex $\s$, so $Q$ is a virtual retract of $\Gamma$ by Theorem \ref{rel qc still sep}.

If $M$ is nonorientable then we can pass to a degree-two orientable cover $M'$ with fundamental group $\Gamma'$.  As above, we see that for every geometrically finite subgroup $Q$ of $\Gamma$, the intersection $Q'=Q\cap \Gamma'$ is a virtual retract of $\Gamma'$.  Now, by the proof of \cite[Theorem 2.10]{long_subgroup_2008}, it follows that $H$ is a virtual retract of $\Gamma$.
\endproof

{
We take this opportunity to note that the combination of Proposition \ref{p: Fully qc implies combinatorially qc} and {Corollary \ref{c: Word-qc implies separable} }shows that many subgroups of virtually special relatively hyperbolic groups are virtual retracts, even without any hypotheses on the parabolic subgroups.  Indeed, we have the following.

\begin{thm}\label{t: Retracts with no hypotheses}
Let $\s$ be a compact, virtually special cube complex and suppose that $\pi_1\s$ is relatively hyperbolic.  Then every fully relatively quasiconvex subgroup of $\pi_1\s$ is a virtual retract.  
\end{thm}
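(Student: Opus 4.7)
The plan is to observe that the proof of Theorem \ref{rel qc still sep} used the hypothesis that parabolic subgroups are finitely generated abelian \emph{only} to invoke Theorem \ref{t: Fully rel. qc subgroups}, which upgrades a relatively quasiconvex subgroup $Q$ to a fully relatively quasiconvex subgroup $H$ retracting onto $Q$. Since the hypothesis of Theorem \ref{t: Retracts with no hypotheses} already supplies a fully relatively quasiconvex subgroup $H$, that first step can simply be skipped, and the remainder of the argument goes through verbatim.

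More precisely, I would start with a fully relatively quasiconvex subgroup $H$ of $\pi_1\s$. Taking $X$ to be the one-skeleton of the universal cover $\widetilde{\s}$ with its induced length metric, Proposition \ref{p: Fully qc implies combinatorially qc} applies directly (its statement assumes only that $\pi_1\s$ is finitely generated and relatively hyperbolic, acting properly and cocompactly on a geodesic metric space, with $H$ fully relatively quasiconvex; no restriction on parabolic subgroups is used in its proof). This yields that $H$ is combinatorially quasiconvex in $\pi_1\s$ with respect to the action on $\widetilde{\s}$.

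Finally, I would apply Corollary \ref{c: Word-qc implies separable} to the compact, virtually special cube complex $\s$ and the combinatorially quasiconvex subgroup $H$, concluding that $H$ is a virtual retract of $\pi_1\s$. Since Corollary \ref{c: Word-qc implies separable} also makes no hypothesis on parabolics, this completes the argument.

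There is no serious obstacle here: the theorem is essentially a repackaging of the machinery already developed, decoupling the ``upgrade to fully relatively quasiconvex'' step (which requires abelian parabolics) from the ``fully relatively quasiconvex implies virtual retract'' step (which does not). The only care required is to verify that neither Proposition \ref{p: Fully qc implies combinatorially qc} nor Corollary \ref{c: Word-qc implies separable} secretly imposes conditions on the parabolic subgroups, which a glance at their statements and proofs confirms.
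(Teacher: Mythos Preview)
Your proposal is correct and matches the paper's own treatment exactly: the paper presents Theorem \ref{t: Retracts with no hypotheses} as the direct combination of Proposition \ref{p: Fully qc implies combinatorially qc} and Corollary \ref{c: Word-qc implies separable}, noting explicitly that neither of these ingredients imposes any hypothesis on the parabolic subgroups. Your observation that Theorem \ref{t: Fully rel. qc subgroups} is the only place where the abelian-parabolics hypothesis enters the proof of Theorem \ref{rel qc still sep}, and that it can simply be omitted when the input subgroup is already fully relatively quasiconvex, is precisely the point.
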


Recall that an element $\gamma$ of a relatively hyperbolic group is called \emph{hyperbolic} if it is not conjugate into a parabolic subgroup.  Denis Osin has shown that cyclic subgroups generated by hyperbolic elements are strongly relatively quasiconvex \cite[Theorem 4.19]{osin_relatively_2006}.  In the torsion-free case this implies \emph{a fortiori} that such subgroups are fully relatively quasiconvex.

\begin{cor}
Let $\s$ be a compact, virtually special cube complex and suppose that $\pi_1\s$ is relatively hyperbolic. For any hyperbolic element $\gamma\in\pi_1\s$, the cyclic subgroup $\langle \gamma\rangle$ is a virtual retract of $\pi_1\s$.
\end{cor}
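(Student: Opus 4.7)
The plan is to verify that $\langle \gamma \rangle$ is fully relatively quasiconvex and then apply Theorem \ref{t: Retracts with no hypotheses} directly.

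First I would verify that $\pi_1\s$ is torsion-free. Since $\s$ is virtually special, it is in particular non-positively curved, so its universal cover is a CAT(0) cube complex and hence contractible. It follows that $\s$ is a $K(\pi_1\s,1)$, and any group admitting a finite-dimensional classifying space is torsion-free.

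Next, by Osin's \cite[Theorem 4.19]{osin_relatively_2006}, referenced in the paragraph preceding the corollary, the cyclic subgroup $\langle \gamma\rangle$ is strongly relatively quasiconvex in $\pi_1\s$. Unpacking the definition, this means that for every parabolic subgroup $P$ and every $g \in \pi_1\s$, the intersection $\langle\gamma\rangle \cap gPg^{-1}$ is finite. Combined with torsion-freeness of $\pi_1\s$, any such finite intersection must in fact be trivial, so $\langle\gamma\rangle$ is (trivially) fully relatively quasiconvex.

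With both hypotheses in place, Theorem \ref{t: Retracts with no hypotheses} applies to $H = \langle \gamma \rangle$ and delivers the conclusion that $\langle\gamma\rangle$ is a virtual retract of $\pi_1\s$. The substance of the argument has already been absorbed by Theorem \ref{t: Retracts with no hypotheses}, which rests on Proposition \ref{p: Fully qc implies combinatorially qc} and Corollary \ref{c: Word-qc implies separable}, together with Osin's quasiconvexity result. There is no significant new step to carry out, and consequently no serious obstacle to anticipate; the only observation one must make beyond quoting the two inputs is the torsion-freeness that upgrades strong relative quasiconvexity to full relative quasiconvexity for cyclic subgroups.
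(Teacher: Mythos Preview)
Your proof is correct and follows the same approach as the paper: invoke Osin's result that $\langle\gamma\rangle$ is strongly relatively quasiconvex, use torsion-freeness of $\pi_1\s$ to upgrade this to full relative quasiconvexity, and then apply Theorem~\ref{t: Retracts with no hypotheses}. The paper simply asserts the torsion-free case in the sentence preceding the corollary, whereas you spell out the standard $K(\pi,1)$ argument; otherwise the two are identical.
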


Combining Theorem \ref{t: Retracts with no hypotheses} with \cite[Theorem 1.7]{manning_separation_2008}, we obtain a slightly weaker version of Theorem \ref{rel qc still sep} that holds when the peripheral subgroups are only assumed to be LERF and slender.  (A group is \emph{slender} if each subgroup is finitely generated.)

\begin{cor}\label{c: Virtual retracts with more general parabolics}
Let $\s$ be a compact, virtually special cube complex and suppose that $\pi_1\s$ is hyperbolic relative to a collection of slender, LERF subgroups.  Then every relatively quasiconvex subgroup of $\pi_1\s$ is separable and every fully relatively quasiconvex subgroup of $\pi_1\s$ is a virtual retract.  \end{cor}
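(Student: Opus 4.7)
The plan is to derive both conclusions from the combination of Theorem \ref{t: Retracts with no hypotheses} and \cite[Theorem 1.7]{manning_separation_2008}. The second conclusion (every fully relatively quasiconvex subgroup is a virtual retract) is immediate, since Theorem \ref{t: Retracts with no hypotheses} requires only that $\s$ be a compact virtually special cube complex with $\pi_1 \s$ relatively hyperbolic and imposes no condition whatever on the peripheral subgroups. Thus the slender and LERF hypotheses on the parabolics play no role in this half of the corollary.

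For the first conclusion, I would first observe that a virtual retract of a residually finite group is automatically separable: if $H$ is a retract of a finite-index subgroup $K \leq \pi_1 \s$ via a retraction $\rho \co K \to H$, then residual finiteness of $K$ (a consequence of the linearity of $\pi_1 \s$ given by \cite[Theorem 4.4]{HW}) applied to $\ker \rho$ yields, for any $k \in K \setminus H$, a finite-index subgroup of $K$ containing $H$ but not $k$; intersecting with cosets of $K$ in $\pi_1 \s$ extends this to separate $H$ from any element outside $K$ as well. Consequently every fully relatively quasiconvex subgroup of $\pi_1 \s$ is separable. I would then invoke \cite[Theorem 1.7]{manning_separation_2008}, which under the hypothesis that the peripheral subgroups are slender and LERF reduces separability of an arbitrary relatively quasiconvex subgroup to separability of fully relatively quasiconvex subgroups. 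Combining this reduction with the separability just established yields the first conclusion.

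The only real obstacle is confirming that the Manning--Martinez-Pedroza reduction applies in exactly the form needed. Since \cite[Theorem 1.7]{manning_separation_2008} is stated in precisely the slender/LERF setting we assume, this is a matter of careful reading of that paper rather than new mathematics. The passage from ``virtual retract'' to ``separable'' is standard but merits being spelled out, as above, to confirm that the finite-index passage in the definition of virtual retract causes no loss.
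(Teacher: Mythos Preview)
Your proposal is correct and follows essentially the same approach as the paper, which simply states the corollary as the combination of Theorem \ref{t: Retracts with no hypotheses} with \cite[Theorem 1.7]{manning_separation_2008}. You have correctly unpacked this: the fully relatively quasiconvex case is exactly Theorem \ref{t: Retracts with no hypotheses}, and the separability of arbitrary relatively quasiconvex subgroups then follows from Manning--Martinez-Pedroza's reduction once one knows that virtual retracts of $\pi_1\s$ are separable. The only cosmetic difference is that where you argue ``virtual retract implies separable'' directly from residual finiteness (via linearity, \cite[Theorem 4.4]{HW}), the paper elsewhere cites \cite[Proposition 3.28]{Hag} for the same implication.
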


This result would apply if $\pi_1\s$ were the fundamental group of a finite-volume negatively curved manifold of dimension greater than three, in which case the parabolic subgroups would be non-abelian but nilpotent.  Note that the full conclusion of Theorem \ref{rel qc still sep} does not hold in this case: nilpotent groups that are not virtually abelian contain cyclic subgroups that are not virtual retracts.}

\section{Examples}\label{sec:examples}

In this section we describe many hyperbolic $3$-manifolds that decompose into right-angled ideal polyhedra.  Our aim is to display the large variety of situations in which Theorem \ref{rt ang cox} applies, and to explore the question of when a manifold that decomposes into right-angled ideal polyhedra is commensurable with a right-angled reflection orbifold.  When this is the case, the results of this paper follow from previous work, notably that of Agol--Long--Reid \cite{ALR}.  The theme of this section is that this occurs among examples of lowest complexity, but that one should not expect it to in general.

Lemma \ref{symm comm} describes when one should expect a manifold $M$ that decomposes into right-angled ideal polyhedra to be commensurable with a right-angled reflection orbifold.  This is the case when all of the polyhedra decomposing $M$ are isometric to a single right-angled ideal polyhedron $\calp$, which furthermore is highly symmetric.  A prominent example which satisfies this is the Whitehead link complement, which is commensurable with the reflection orbifold in the regular ideal octahedron.  

The octahedron (also known as the $3$-antiprism, see Figure \ref{antiprisms}) is the simplest right-angled ideal polyhedron, as measured by the number of ideal vertices.  Propositions \ref{P_1 comm} and \ref{P_2 comm} imply that any manifold that decomposes into isometric copies of the right-angled ideal octahedron or, respectively, the $4$-antiprism, is commensurable with the corresponding reflection orbifold.  On the other hand, in Section \ref{sec:One cusp} we will describe an infinite family of ``hybrid'' hyperbolic $3$-manifolds $N_n$, each built from both the $3$- and $4$-antiprisms, that are not commensurable with any $3$-dimensional hyperbolic reflection orbifold.  We use work of Goodman-Heard-Hodgson \cite{GHH} here to expicitly identify the commensurator quotients for the $N_n$.


\subsection{The simplest examples.}\label{sec: rt ang comm}

It may initially seem that a manifold that decomposes into right-angled polyhedra should be commensurable with the right-angled reflection orbifold in one or a collection of the polyhedra.  This is not the case in general; however, the technical lemma below implies that it holds if all of the polyhedra are isometric and sufficiently symmetric.  

\begin{lemma}\label{symm comm}  Let $M$ be a complete hyperbolic $3$-manifold with a decomposition $\{\calp_i\}$ into right-angled ideal polyhedra.  For a face $f \in \calp_i$, let $\gamma_f$ be reflection in the hyperplane containing $f$.  If for each such face, $\phi_f \circ \gamma_f$ is an isometry to the polyhedron $\calp_j$ containing $\phi_f(f)$, then $\pi_1 M$ is contained in $\Gamma \rtimes \mathrm{Sym}(\calp_1)$, where $\Gamma$ is the reflection group in $\calp_1$ and $\mathrm{Sym}(\calp_1)$ is its symmetry group.  \end{lemma}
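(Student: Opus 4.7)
The plan is to develop the universal cover of $M$ and show that the resulting tiling of $\mathbb{H}^3$ by lifts of the $\calp_i$ coincides with the $\Gamma$-tiling by translates of $\calp_1$; then $\pi_1 M$, as the deck group, must lie in the subgroup of $\mathrm{Isom}(\mathbb{H}^3)$ preserving that tiling, which I will identify with $\Gamma \rtimes \mathrm{Sym}(\calp_1)$.

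First I would observe that chaining the isometries $\phi_f \circ \gamma_f$ produces isometries among the $\calp_i$, so each is isometric to $\calp_1$.  Choose the initial lift so that $T_1 = \calp_1 \subset \mathbb{H}^3$, with developing isometry $\widetilde{D}_1 = \mathrm{id}$.  Propagate inductively: given a lift $T_i = \widetilde{D}_i(\calp_i)$ and a face pairing $\phi_f \co f \to f'$ with $f \subset \calp_i$ and $f' \subset \calp_j$, define
\[
\widetilde{D}_j \ = \ \widetilde{D}_i \circ \phi_f^{-1} \AND T_j \ = \ \widetilde{D}_j(\calp_j).
\]
The face-pairing condition $\phi_f(\calp_i) \cap \calp_j = f'$ quickly yields that $T_j$ is the unique lift of $\calp_j$ adjacent to $T_i$ across $\widetilde{D}_i(H_f)$.

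The hypothesis enters next.  Setting $\tau := \phi_f \circ \gamma_f$, by assumption $\tau$ is an isometry $\calp_i \to \calp_j$, so $\tau^{-1}(\calp_j) = \calp_i$; and since $\gamma_f$ is an involution, $\phi_f^{-1} = \gamma_f \circ \tau^{-1}$.  Substituting,
\[
T_j \ = \ \widetilde{D}_i \bigl( \phi_f^{-1}(\calp_j) \bigr) \ = \ \widetilde{D}_i(\gamma_f(\calp_i)) \ = \ \gamma_{\widetilde{D}_i(H_f)}(T_i),
\]
the reflection of $T_i$ across the shared hyperplane.  Propagating from $T_1 = \calp_1$, every tile in the universal cover is a product of such reflections applied to $\calp_1$, hence has the form $g \cdot \calp_1$ for some $g \in \Gamma$.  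So the tiling of $\mathbb{H}^3$ by lifts of the $\calp_i$ coincides with the $\Gamma$-tiling.

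Finally, $\pi_1 M$ acts on $\mathbb{H}^3$ permuting the lifts, so it preserves the $\Gamma$-tiling.  For any isometry $g$ preserving the tiling, $g(\calp_1) = \gamma \cdot \calp_1$ for some $\gamma \in \Gamma$, so $\gamma^{-1} g$ stabilizes $\calp_1$ setwise, giving $\gamma^{-1} g \in \mathrm{Sym}(\calp_1)$.  The subgroup $\mathrm{Sym}(\calp_1)$ permutes the faces of $\calp_1$ and thus normalizes the generating reflections of $\Gamma$, while $\Gamma$ acts simply transitively on its tiles so that $\Gamma \cap \mathrm{Sym}(\calp_1) = \{1\}$.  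Hence the tiling-preserving group is precisely $\Gamma \rtimes \mathrm{Sym}(\calp_1)$, and $\pi_1 M$ lies inside it.  The only real technical care required is the bookkeeping in the developing-map step, where one must verify that the face pairings, reflections, and developing isometries combine in the correct order; once that is done, the rest is formal.
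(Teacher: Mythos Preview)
Your argument is correct and is essentially the paper's: both use the hypothesis in the form $\phi_f^{-1}(\calp_j)=\gamma_f(\calp_i)$ to show that adjacent lifts are reflections of one another, hence the tiling of $\mathbb{H}^3$ by lifts coincides with the $\Gamma$-tiling, and then conclude $\pi_1 M \subset \Gamma \rtimes \mathrm{Sym}(\calp_1)$.  The only cosmetic difference is that the paper repositions the finitely many $\calp_i$ along a spanning tree of the dual graph and invokes Poincar\'e on the face pairings, whereas you pass to the full developing map and argue via the deck group preserving the tiling.
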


\begin{proof}  Let $M$ be a hyperbolic $3$-manifold satisfying the hypotheses of the lemma.  There is a ``dual graph'' to the polyhedral decomposition $\{\calp_i\}$ with a vertex for each $i$, such that the vertex corresponding to $\calp_i$ is connected by an edge to that corresponding to $\calp_j$ for every face $f$ of $\calp_i$ such that $\phi_f(f)$ is a face of $\calp_j$.  Let $\calt$ be the tiling of $\mathbb{H}^3$ by $\Gamma$-translates of $\calp_1$.  A maximal tree $T$ in the dual graph determines isometries taking the $\calp_i$ into $\calt$ as follows.

Suppose $f$ is a face of $\calp_1$ that corresponds to an edge of $T$.  Then by hypothesis $\phi_f^{-1}(\calp_i) = \gamma_f(\calp_1)$, where $\calp_i$ contains $\phi_f(f)$.  For arbitrary $i$, let $\alpha$ be an embedded edge path in $T$ from the vertex corresponding to $\calp_1$ to that of $\calp_i$, and suppose $\calp_{i_0}$ corresponds to the vertex with distance one on $\alpha$ from that of $\calp_i$.  We inductively assume that there exists an isometry $\phi_{i_0}$ such that $\phi_{i_0}(\calp_{i_0})$ is a $\Gamma$-translate of $\calp_1$.  Let $f$ be the face of $\calp_{i_0}$ corresponding to the edge of $T$ between $\calp_{i_0}$ and $\calp_i$.  Then $\phi_{i_0} \gamma_f \phi_{i_0}^{-1} = \gamma_{\phi_{i_0}(f)}\in \Gamma$, so by hypothesis,
$$  \phi_{i_0} \circ \phi_f^{-1}(\calp_i) = \phi_{i_0} \gamma_f(\calp_{i_0}) = (\phi_{i_0}\gamma_f\phi_{i_0}^{-1})(\phi_{i_0}(\calp_{i_0}))$$ 
is a $\Gamma$-translate of $\calp_1$.

Now for each $i$, after replacing $\calp_i$ by $\phi_i(\calp_i)$ we may assume that there is some $\gamma_i \in \Gamma$ such that $\calp_i = \gamma_i(\calp_1)$.  For a face $f$ of $\calp_i$, let $\calp_j$ be the polyhedron containing $\phi_f(f)$.  Then by hypothesis $\gamma_j^{-1} \phi_f \gamma_f \gamma_i \in \mathrm{Sym}(\calp_1)$.  Therefore $\phi_f \in \Gamma \rtimes \mathrm{Sym}(\calp_1)$; thus the lemma follows from the Poincar\'e polyhedron theorem.
\end{proof}

\begin{figure}
\begin{center}
\input{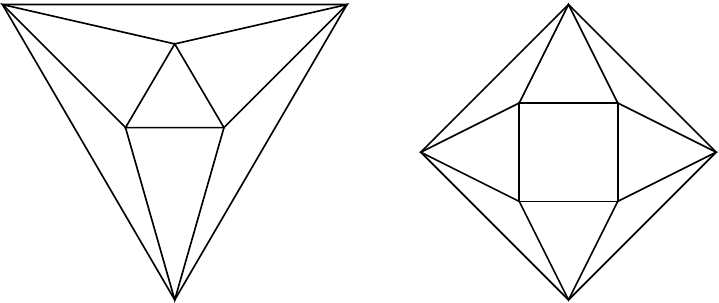_t}
\end{center}
\caption{The $3$- and $4$-antiprisms.}
\label{antiprisms}
\end{figure}

A natural measure of the complexity of a right-angled ideal polyhedron is its number of ideal vertices.  By this measure, the two simplest right-angled ideal polyhedra are the $3$- and $4$-antiprisms, pictured in Figure \ref{antiprisms}.  (The general definition of a $k$-antiprism, $k \geq 5$ should be evident from the figure.)  

\begin{lemma}\label{simple right}  The only right-angled ideal polyhedra with fewer than ten vertices are the $3$- and $4$- antiprisms.  \end{lemma}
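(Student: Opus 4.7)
Plan.

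The plan is to combine an analysis of the vertex link with Euler's formula to enumerate the candidate face-vectors for a right-angled ideal polyhedron $\calp$ with $V < 10$ vertices, then dispatch the non-antiprism cases using the convexity of $\calp$.

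First, at each ideal vertex of $\calp$ the intersection with a small horoball is a Euclidean polygon all of whose angles equal $\pi/2$. Since the angle sum of an $n$-gon is $(n-2)\pi$, this forces $n=4$, so $\calp$ is $4$-valent at every vertex. Hence $E = 2V$ and Euler's formula gives $F = V+2$. Letting $f_k$ denote the number of $k$-gonal faces, the relations $\sum_k f_k = V+2$ and $\sum_k k f_k = 2E = 4V$ combine to give
\[
V \;=\; 6 + f_4 + 2f_5 + 3f_6 + \cdots,
\]
so $V \geq 6$, with equality only for the face-vector $(f_3, f_4, \ldots)=(8,0,\ldots)$ of the octahedron. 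For $V \leq 9$ the tail $f_4 + 2f_5 + 3f_6 + \cdots$ is at most $3$, leaving only the candidates: $V=7$ with $(f_3,f_4)=(8,1)$; $V=8$ with $(8,2,0)$ or $(9,0,1)$; and $V=9$ with $(8,3,0,0)$, $(9,1,1,0)$, or $(10,0,0,1)$.

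To rule out the non-antiprism candidates I use convexity of $\calp$: any edge whose two endpoints lie on a common face $Q$ must be a boundary edge of $Q$, so there are no ``chord'' edges inside a face. In particular, at every vertex $v \in V(Q)$ the two off-$Q$ edges at $v$ go to two distinct vertices of $T := V(\calp)\setminus V(Q)$. For each candidate with a unique large face $Q$ of size $n\geq 4$ (i.e.\ $V=7$, $V=8$ pentagon, and $V=9$ hexagon), label the third vertices $p_0,\ldots,p_{n-1}\in T$ of the $n$ triangles adjacent to $Q$ along its edges; cyclically consecutive $p_i$'s are distinct by the double-edge prohibition. A pigeonhole count, combined with the degree-$4$ constraint at each $t \in T$ and the absence from the face-vector of faces of any size other than $3$ and $n$, then forces a contradiction: either some $t \in T$ receives more than $4$ edges, or the fourth face at some $v \in V(Q)$ (bounded by $v$'s two off-$Q$ edges) is a triangle $vt_1t_2$ requiring an edge within $T$ that cannot exist. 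For instance in the $V=9$ hexagon case $|T|=3$ and the $12$ off-$Q$ edges already fill the total degree of $T$, leaving no room for any edge inside $T$, so the fourth faces around $Q$ cannot close. The $V=9$ cases with two large faces (square-and-pentagon, or three squares) are handled by applying the same bookkeeping to each large face simultaneously; the three-squares case additionally uses the observation that three faces pairwise sharing edges either share a common vertex or form a prismatic $3$-circuit (forbidden for ideal polyhedra), the former being ruled out by a short cyclic-order argument at the would-be shared vertex.

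In the two surviving cases $V=6$ and $V=8,(f_3,f_4)=(8,2)$ the combinatorial type is forced: for $V=6$ the only $4$-valent, $3$-connected, $8$-triangle planar $1$-skeleton is the $3$-antiprism, and for $V=8$ the no-chord condition forces the $8$ off-square edges to form a $2$-regular bipartite graph between the two squares whose only realization compatible with the adjacent triangle faces is the antiprism matching, yielding the $4$-antiprism. Andreev's theorem then supplies a unique right-angled ideal realization for each combinatorial type, completing the identification. The main obstacle is the combinatorial bookkeeping for the $V=9$ cases: each succumbs to a pigeonhole argument unified by convexity, but no single clean count handles all three at once, so the proof reduces to a finite but non-trivial case analysis.
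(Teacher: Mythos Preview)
Your approach is genuinely different from the paper's, and the difference is worth understanding.  The paper extracts from Andreev's theorem the following criterion: if $f_0,f_1,f_2$ are faces with $f_0,f_2$ each adjacent to $f_1$, then any vertex common to $f_0$ and $f_2$ already lies on $f_1$.  Applied to a $k$-gon face $f$, this says the $k$ faces abutting $f$ contribute $k$ pairwise distinct vertices off $f$, so $V\ge 2k$.  For $V<10$ this forces every face to be a triangle or quadrilateral, eliminating in one stroke your pentagon, hexagon, and mixed $V=9$ cases.  The paper then needs only the all-triangle count (octahedron) and the single-quadrilateral case at $V=8$ (4-antiprism), closing with a parity remark for $V=7,9$.

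Your route trades this structural bound for a longer enumeration handled by convexity (``no chord edges'') and pigeonhole.  That is a legitimate strategy and uses less of Andreev's theorem, but as written it is a plan rather than a proof.  Only the $V=9$ hexagon case is actually argued; the remaining five cases are asserted to ``succumb to a pigeonhole argument'' without the bookkeeping.  These \emph{can} be completed --- for instance, at $V=7$ your cyclic labels $p_0,\dots,p_3\in T$ with $|T|=3$ and $p_i\ne p_{i+1}$ force some $p_i=p_{i+2}$, so one $t\in T$ is adjacent to all four square vertices and hence has no edges inside $T$; but then the two remaining vertices of $T$ must absorb two internal edges between just two vertices, impossible --- but each case needs its own short argument, and the $V=9$ three-squares case in particular will require you to invoke the no-prismatic-$3$-circuit condition (which you mention) together with a careful cyclic-order check.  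Your $V=8$ antiprism identification also glosses over why the two quadrilaterals must be disjoint rather than sharing a vertex or edge.

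In short: the plan is sound and completable, but you have left the harder half of the work as an exercise.  If you want to shorten it dramatically, prove the $V\ge 2k$ bound first; it collapses six of your cases to zero.
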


\begin{proof}  By a \textit{polyhedron} we mean a $3$-complex with a single $3$-cell whose underlying topological space is the $3$-dimensional ball, such that no two faces that share an edge $e$ have vertices in common other than the endpoints of $e$.  By Andreev's theorem, there is a right-angled ideal polyhedron in $\mathbb{H}^3$ with the combinatorial type of a given polyhedron if and only if each vertex has valence $4$, there are no prismatic $3$- or $4$-circuits, and the following criterion holds: given faces $f_0$, $f_1$, and $f_2$ such that $f_0$ and $f_2$ each share an edge with $f_1$, $f_0$ and $f_2$ have no vertices in common with each other but not $f_1$.  (A prismatic $k$-circuit is a sequence of $k$ faces $f_0, f_1, \hdots f_{k-1}$ such that no three faces have a common vertex but for each $i$, $f_i$ shares an edge with $f_{i-1}$ and $f_{i+1}$, taking indices modulo $k$.)

If $f$ is a $k$-gon face of a right-angled ideal polyhedron $\calp$, the final criterion above implies that $\calp$ has at least $2k$ ideal vertices, since each face that abuts $f$ contributes at least one unique vertex to $\calp$.  Thus any right-angled ideal polyhedron with fewer than $10$ ideal vertices has only triangular and quadrilateral faces.  Let $v$, $e$, and $f$ be the number of vertices, edges and faces of $\calp$, respectively.  Since each vertex has valence $4$, we have $4v = 2e$.  If $\calp$ has only triangular faces, then $2e = 3f$, and an Euler characteristic calculation yields
$$ v - e + f = \frac{3f}{4} - \frac{3f}{2} + f = 2.  $$
Therefore in this case $f=8$, and it is easy to see that $\calp$ must be the $3$-antiprism.

If $\calp$ has a quadrilateral face $f$ and only $8$ vertices, then by the final criterion of the first paragraph all faces adjacent to it are triangles.  The union of $f$ with the triangular faces adjacent to it is thus a subcomplex that is homeomorphic to a disk and contains all vertices of $\calp$.  It follows that $\calp$ is the $4$-prism.  Since each vertex of a right-angled ideal polyhedron is $4$-valent, the number of vertices is even, and the lemma follows.  \end{proof}

It is well known that the $3$-antiprism $\calp$, better known as the octahedron, is \textit{regular}: there is  a symmetry exchanging any two ordered triples $(v,e,f)$ where $v \subset e \subset f$ are faces of dimension $0$, $1$, and $2$, respectively.  Now suppose $M$ is a manifold with a decomposition into polyhedra $\{\calp_i\}$ such that for each $i$, there is an isometry $\gamma_i \co \calp \to \calp_i$.  If $\calp_i$ and $\calp_j$ are polyhedra in this decomposition, containing faces $f$ and $f'$, respectively, such that $\phi_f (f) = f'$, then $\gamma_j^{-1} \phi_f \gamma_i$ takes one face of $\calp$ isometrically to another; hence it is realized by a symmetry $\sigma$ of $\calp$.  It follows that $\gamma_{f'} \circ \gamma_j \sigma \gamma_i^{-1} = \phi_f$.  Thus Lemma \ref{symm comm} implies:

\begin{prop}\label{P_1 comm}  Let $\Gamma_1$ be the group generated by reflections in the sides of the octahedron $\calp$, and let $\Sigma_1$ be its symmetry group.  If $M$ is a complete hyperbolic manifold that decomposes into copies of $\calp$, then $\pi_1 M < \Gamma_1 \rtimes \Sigma_1$.  In particular, $\pi_1 M$ is commensurable to $\Gamma_1$.  \end{prop}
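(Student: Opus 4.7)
The plan is to apply Lemma \ref{symm comm}; it suffices to verify its hypothesis, namely that for every face pairing $\phi_f\co f\to f'$ with $f\subset\calp_i$ and $f'\subset\calp_j$, the composition $\phi_f\circ\gamma_f$ carries $\calp_i$ isometrically onto $\calp_j$. The key input is a uniqueness statement for right-angled ideal octahedra: given an ideal triangle $g\subset\mathbb{H}^3$ and a choice of side of its spanning hyperplane, there is a unique right-angled ideal octahedron having $g$ as a face and lying on the chosen side.

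To prove this uniqueness, suppose $\calq$ and $\calq'$ are two such octahedra. By Andreev's theorem both are isometric to $\calp$, so there is some $\alpha\in\mathrm{Isom}(\mathbb{H}^3)$ with $\alpha(\calq)=\calq'$. Post-composing $\alpha$ with a symmetry of $\calq'$ carrying $\alpha(g)$ to $g$ (available because $\mathrm{Sym}(\calp)$ acts transitively on faces), we may assume $\alpha(g)=g$; and since $\calq$ and $\calq'$ both lie on the prescribed side of $g$, $\alpha$ preserves that side. Then $\alpha$ restricts to a self-isometry of the ideal triangle $g$. The stabilizer of a face in $\mathrm{Sym}(\calp)$ has order six and acts on that face as the full symmetry group $S_3$, so this restriction of $\alpha$ extends to a symmetry of $\calq$; but an isometry of $\mathbb{H}^3$ is determined by its restriction to a hyperplane together with a choice of side, so $\alpha$ equals that symmetry and $\calq'=\alpha(\calq)=\calq$.

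Granted the uniqueness, the hypothesis of Lemma \ref{symm comm} follows immediately. The polyhedron $\gamma_f(\calp_i)$ is a right-angled ideal octahedron sharing $f$ with $\calp_i$ but lying on the opposite side, so $\phi_f(\gamma_f(\calp_i))$ is a right-angled ideal octahedron having $f'$ as a face and lying on the side of $f'$ opposite to $\phi_f(\calp_i)$. The face-pairing convention $\phi_f(\calp_i)\cap\calp_j=f'$ puts $\calp_j$ on precisely that side, so uniqueness forces $\phi_f(\gamma_f(\calp_i))=\calp_j$. Lemma \ref{symm comm} then yields $\pi_1 M<\Gamma\rtimes\mathrm{Sym}(\calp_1)$, which, because $\calp_1$ is isometric to $\calp$, is conjugate in $\mathrm{Isom}(\mathbb{H}^3)$ to $\Gamma_1\rtimes\Sigma_1$. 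For the commensurability claim, $\Sigma_1$ is finite so $\Gamma_1$ has finite index in $\Gamma_1\rtimes\Sigma_1$, and $\pi_1 M$ also has finite index there by comparison of hyperbolic covolumes.

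The main substantive ingredient is the uniqueness statement in the second paragraph: it is precisely where the \emph{regularity} of $\calp$ enters, in the form that every self-symmetry of a face extends to a symmetry of the whole polyhedron. This rigidity is not to be expected for less symmetric right-angled ideal polyhedra, and its failure is exactly what allows Section \ref{sec:One cusp} to produce hybrid manifolds not commensurable to any reflection group.
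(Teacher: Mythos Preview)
Your proof is correct and follows essentially the same route as the paper: both arguments verify the hypothesis of Lemma~\ref{symm comm} by using the regularity of the octahedron to extend any isometry between faces to a full symmetry of $\calp$. The paper phrases this as ``$\gamma_j^{-1}\phi_f\gamma_i$ takes one face of $\calp$ isometrically to another; hence it is realized by a symmetry $\sigma$ of $\calp$,'' whereas you package the same fact as a uniqueness statement for octahedra on a given side of a given ideal triangle --- but the substance is identical.
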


The $4$-antiprism does not have quite enough symmetry to directly apply Lemma \ref{symm comm}, but its double across a square face is the cuboctahedron, the semi-regular polyhedron pictured on the right-hand side of Figure \ref{P_1 and P_2}.  The cuboctahedron has a symmetry exchanging any two square or triangular faces, and each symmetry of each face extends over the cuboctahedron.

\begin{figure}
\begin{center}
\input{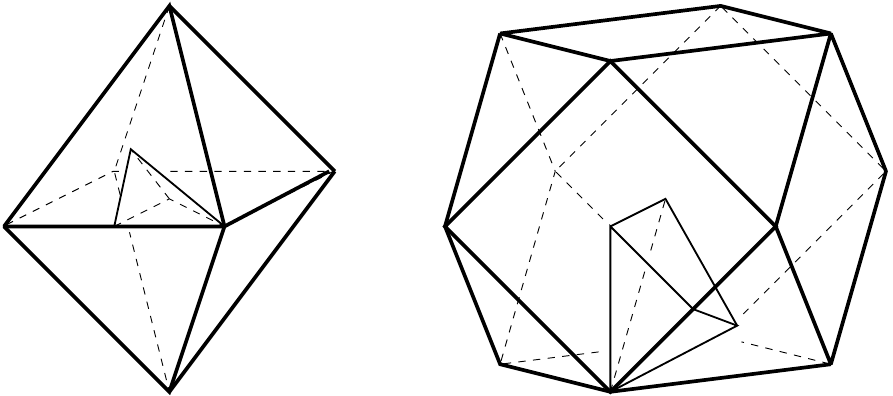_t}
\end{center}
\caption{The ideal octahedron $\calp_1$ and cuboctahedron $\calp_2$.}
\label{P_1 and P_2}
\end{figure}

\begin{prop}\label{P_2 comm}  Let $\Gamma_2$ be the group generated by reflections in the sides of the cuboctahedron, and let $\Sigma_2$ be its group of symetries.  If $M$ is a complete hyperbolic $3$-manifold that decomposes into copies of the cuboctahedron, then $\pi_1(M)<\Gamma_2 \rtimes \Sigma_2$.  If $M$ decomposes into $4$-antiprisms, then $\pi_1(M)$ has an index-$2$ subgroup contained in $\Gamma_2 \rtimes \Sigma_2$.   \end{prop}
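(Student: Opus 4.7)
My plan for the proof has two steps.

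For the first assertion, suppose $M$ decomposes into copies of the cuboctahedron $\calp_2$. For any face pairing $\phi_f$ the face pairing axiom $\phi_f(\calp_i) \cap \calp_j = f'$ forces $\phi_f(\calp_i) = \gamma_{f'}(\calp_j)$, and hence $\phi_f \circ \gamma_f$ sends $\calp_i$ isometrically onto $\calp_j$. Since all the $\calp_i$ are congruent to $\calp_2$, this verifies the hypothesis of Lemma \ref{symm comm}, and its conclusion yields $\pi_1 M \subset \Gamma_2 \rtimes \Sigma_2$ directly.

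For the second assertion, applying Lemma \ref{symm comm} with $\calp_1$ taken to be the $4$-antiprism $\calp$ gives $\pi_1 M \subset H_1 := \Gamma' \rtimes \mathrm{Sym}(\calp)$, where $\Gamma'$ is the reflection group in $\calp$. The $4$-antiprism has strictly less symmetry than $\calp_2$, so this inclusion does not land directly in $\Gamma_2 \rtimes \Sigma_2$; my plan is instead to exhibit an index-$2$ subgroup $H_1^+$ of $H_1$ that is contained in $\Gamma_2 \rtimes \Sigma_2$ and then to intersect with $\pi_1 M$.

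The bridge between these two groups comes from the doubling construction. Fix a square face $S$ of $\calp$ and write $\calp_2 = \calp \cup \gamma_S(\calp)$; the right-angle condition forces the four pairs of triangular faces adjacent to $S$ to become coplanar, merging into the four extra square faces needed to form the cuboctahedron. From this description I would derive that $\Gamma_2$ is an index-$2$ subgroup of $\Gamma'$ with $\Gamma' = \Gamma_2 \cup \gamma_S \Gamma_2$, and that $\gamma_S$ lies in $\Sigma_2$ because the plane of $S$ is a mirror plane of $\calp_2$. Together these observations give $\Gamma' \subset \Gamma_2 \rtimes \Sigma_2$.

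Finally, let $\mathrm{Sym}^+(\calp)$ denote the index-$2$ subgroup of $\mathrm{Sym}(\calp)$ consisting of symmetries that preserve each of the two square faces of $\calp$ setwise. Such a symmetry commutes with $\gamma_S$ and so extends naturally to a symmetry of the cuboctahedron, giving $\mathrm{Sym}^+(\calp) \subset \Sigma_2$. Setting $H_1^+ := \Gamma' \cdot \mathrm{Sym}^+(\calp)$ then yields an index-$2$ subgroup of $H_1$ contained in $\Gamma_2 \rtimes \Sigma_2$, and so $\pi_1 M \cap H_1^+$ is a subgroup of $\pi_1 M$ of index at most $2$ contained in $\Gamma_2 \rtimes \Sigma_2$. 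The main technical obstacle will be verifying the combinatorial description of $\calp_2$ as the double of $\calp$---particularly the coplanarity of the merged triangular pairs---but this should follow directly from a dihedral-angle computation using the $90^\circ$ condition.
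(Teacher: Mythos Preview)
Your first paragraph follows the paper's line, but the key claim --- that the face--pairing axiom $\phi_f(\calp_i)\cap\calp_j=f'$ alone forces $\phi_f(\calp_i)=\gamma_{f'}(\calp_j)$ --- is not a general fact.  It only says that $\phi_f(\calp_i)$ lies on the side of $f'$ opposite $\calp_j$ and contains $f'$; to conclude equality you need that the cuboctahedron is the \emph{unique} copy with that face on that side, i.e.\ that every isometry of a face extends to a symmetry of $\calp_2$.  This holds because $\calp_2$ is semi-regular ($|\Sigma_2|=48$ and the face--stabilisers have orders $8$ and $6$, matching the full symmetry groups of the square and triangular faces).  You should say this explicitly, since the same reasoning is what fails below.

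Your second assertion breaks at the very first step.  Lemma~\ref{symm comm} does \emph{not} apply to the $4$-antiprism: $|\mathrm{Sym}(\calp)|=16$, so the stabiliser of a triangular face has order $16/8=2$, while the triangle itself has a $6$-element isometry group.  Hence for a triangular face pairing $\phi_f$, the polyhedron $\phi_f\gamma_f(\calp_i)$ is one of \emph{three} $4$-antiprisms sharing $f'$ on the correct side and need not equal $\calp_j$.  This is precisely what the paper means by ``the $4$-antiprism does not have quite enough symmetry to directly apply Lemma~\ref{symm comm}.''  Without the containment $\pi_1 M\subset \Gamma'\rtimes\mathrm{Sym}(\calp)$, the rest of your argument (which is otherwise correct --- your descriptions of $\Gamma'\subset\Gamma_2\rtimes\Sigma_2$ and $\mathrm{Sym}^+(\calp)\subset\Sigma_2$ are fine) has nothing to act on.

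The paper's route is quite different and bypasses the antiprism reflection group entirely.  The two square faces of a $4$-antiprism receive opposite colours in any checkering, so if the decomposition of $M$ is checkered one can glue the antiprisms in pairs along, say, dark square faces to obtain a decomposition into cuboctahedra and then invoke the first assertion.  If the decomposition is not checkered, the twofold cover $\widetilde{M}\to M$ constructed in the proof of Theorem~\ref{rt ang cox} inherits a checkered decomposition; applying the first assertion to $\widetilde{M}$ gives the required index-$2$ subgroup.
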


\begin{proof}  Since face pairing isometries must in particular preserve combinatorial type, it follows from Lemma \ref{symm comm} as argued above Proposition \ref{P_1 comm} that if $M$ decomposes into copies of the cuboctahedron, then $\pi_1(M) < \Gamma_2 \rtimes \Sigma_2$.

Opposite square faces of the $4$-antiprism inherit opposite colors from any checkering.  Thus if a hyperbolic $3$-manifold $M$ has a checkered decomposition into right-angled ideal $4$-antiprisms, they may be identified in pairs along, say, dark square faces, yielding a decomposition into right-angled ideal cuboctahedra.  The proof of Theorem \ref{rt ang cox} shows that if the decomposition of $M$ is not checkered, there is a twofold cover $\widetilde{M} \to M$ that inherits a checkered decomposition.  Hence if $M$ decomposes into $4$-antiprisms, $\widetilde{M}$ decomposes into copies of the cuboctahedron.  The final claim of the proposition follows.
\end{proof}

The results of \cite{Hatcher} imply that for $j=1,2$, $\Gamma_j \rtimes \Sigma_j$ is isomorphic to the arithmetic group $\mathrm{PGL}_2(\calo_j)$, where $\calo_j$ is the ring of integers of $\mathbb{Q}(\sqrt{-j})$. 

The fundamental domain for $\mathrm{Sym}(\calp_1)$ pictured in Figure \ref{P_1 and P_2} intersects $\partial \calp_1$ in a $(2,3,\infty)$ triangle.  We refer by $\Lambda$ to the group generated by reflections in the sides of this triangle.  The fundamental domain for $\mathrm{Sym}(\calp_2)$ intersects a triangular face in a $(2,3,\infty)$ triangle as well; thus $\Lambda$ embeds in $\Gamma_j \rtimes \mathrm{Sym}(\calp_j)$ for $j=1$ and $2$.  The lemma below records an observation we will find useful in the following sections.

\begin{lemma}\label{transitive}  For $j=1,2$, let $\calt_j$ be the tiling of $\mathbb{H}^3$ by $\Gamma_j$-conjugates of $\calp_j$.  The action of $\Gamma_j \rtimes \mathrm{Sym}(\calp_j)$ is transitive on the set of all geodesic planes that contain a triangular face of a tile of $\calt_j$.  \end{lemma}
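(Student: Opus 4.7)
The plan is to split the action into its two constituents. First, since $\Gamma_j$ is the reflection group in the faces of $\calp_j$ and $\calt_j$ is its tile-orbit of $\calp_j$, $\Gamma_j$ acts (simply) transitively on the tiles of $\calt_j$. Thus, given any plane $\mathcal{H}$ in the set under consideration---say, one containing a triangular face $f$ of some tile $\gamma\calp_j$ with $\gamma\in\Gamma_j$---applying $\gamma^{-1}$ carries $\mathcal{H}$ to the plane spanned by the triangular face $\gamma^{-1} f$ of $\calp_j$. The problem therefore reduces to showing that $\mathrm{Sym}(\calp_j)$ acts transitively on the set of planes spanned by triangular faces of the single tile $\calp_j$.

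Since a triangular face spans a unique geodesic plane in $\mathbb{H}^3$, the above reduces further to the purely combinatorial assertion that $\mathrm{Sym}(\calp_j)$ acts transitively on the triangular faces of $\calp_j$. For $j=1$, $\calp_1$ is the regular ideal octahedron, all eight of whose faces are triangles, and the full symmetry group (of order $48$) acts transitively on them. For $j=2$, $\calp_2$ is the ideal cuboctahedron; its symmetry group (also of order $48$) has exactly two face-orbits, namely the eight triangles and the six squares, and in particular it acts transitively on the triangular faces. In either case, combining the $\Gamma_j$-move with a subsequent symmetry of $\calp_j$ produces a single element of $\Gamma_j\rtimes\mathrm{Sym}(\calp_j)$ carrying any plane of the prescribed type to a fixed reference plane through a chosen triangular face of $\calp_j$.

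I do not anticipate a real obstacle here: the whole argument rests on the well-known transitivity of the symmetry groups of the octahedron and the cuboctahedron on their triangular faces, which is essentially the same input already exploited in the proofs of Propositions \ref{P_1 comm} and \ref{P_2 comm}. The only point that requires a moment's care is the observation that a triangular face determines its containing plane uniquely, so that transitivity on triangular faces of $\calp_j$ is genuinely equivalent to transitivity on the planes they span.
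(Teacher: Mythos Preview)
Your proof is correct and follows essentially the same approach as the paper: the paper's argument simply states that the lemma follows from the fact that $\mathrm{Sym}(\calp_j)$ acts transitively on the triangular faces of $\calp_j$, and you have supplied the routine reduction (via the transitive $\Gamma_j$-action on tiles) that makes this implication explicit. The only step the paper leaves entirely implicit is exactly the one you spelled out.
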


This lemma follows from the fact, evident by inspection of the fundamental domains in Figure \ref{P_1 and P_2}, that $\mathrm{Sym}(\calp_j)$ acts transitively on triangular faces of $\calp_j$.

\subsection{A family of one-cusped manifolds}\label{sec:One cusp}

In this section, we exhibit an infinite family $\{N_n\}$ of pairwise incommensurable manifolds that are not commensurable to \emph{any} 3-dimensional reflection group.  Each of these manifolds has a single cusp, and they are constructed using an explicit right-angled ideal polyhedral decomposition.


\begin{dfn}  For $n\geq 2$, let $\{ \p_i \}_{i=1}^{n+2}$ be a collection of right-angled ideal polyhedra embedded in $\mathbb{H}^3$ with the following properties.
\begin{enumerate}
\item $\p_i$ is an octahedron if  $i \in \{1,n+2\}$, and a cuboctahedron otherwise.
\item There is an ideal vertex $\hat{v}$ shared by all the polyhedra.
\item $\p_i \cap \p_j$ if and only if $i = j\pm1$.
\item If $\p_i$ and $\p_j$ meet, then they share a triangular face.
\end{enumerate}  
Define $\mathcal{D}_n = \bigcup_{i=1}^{n+2} \, \p_i$.  \end{dfn}

An isometric copy in $\mathbb{H}^3$ of such a collection is determined by an embedding of $\calp_1$, a choice of $\hat{v}$, and a choice of triangular face $\calp_1 \cap \calp_2$.  If we use the upper half space model for $\mathbb{H}^3$ then $\mathrm{Isom}^+(\mathbb{H}^3)$ is identified with $\mathrm{PSL}_2(\mathbb{C})$, by isometrically extending the action by M\"obius transformations on $\partial \mathbb{H}^3 = \mathbb{C}\cup\{\infty\}$.  Using this model, we apply an isometry so that $\hat{v} = \infty$ we can project the faces of the $\p_i$'s to $\bound \mathbb{H}^3$ to get a cell decomposition of $\C$.  This decomposition is pictured for $n=2$ in Figure \ref{one}.  

Each 2-cell in the figure corresponds to a face of some $\p_i$ which is not shared by any other $\p_j$.  Shade half of the faces of $\p_1$ and $\p_{n+2}$ gray and label them $A, B, C, D, E, F, G$, and $H$ as indicated in the figure.  Label the square face of $\p_2$ which shares an edge with $B$ (respectively $A$, $D$) as $X_1$ (respectively $Y_1$, $Z_1$).  Label the square face opposite $X_1$ as $X_1'$ and so on.  Now use the parabolic translation ${\sf c}$ that takes $\p_2$ to $\p_3$ to translate the labeling to the other cuboctahedra, adding one to the subscript every time we apply ${\sf c}$.

\begin{figure}[h]

\setlength{\unitlength}{.1in}

\begin{picture}(40,17)
\put(-3,0) { \includegraphics[width=4.5in]{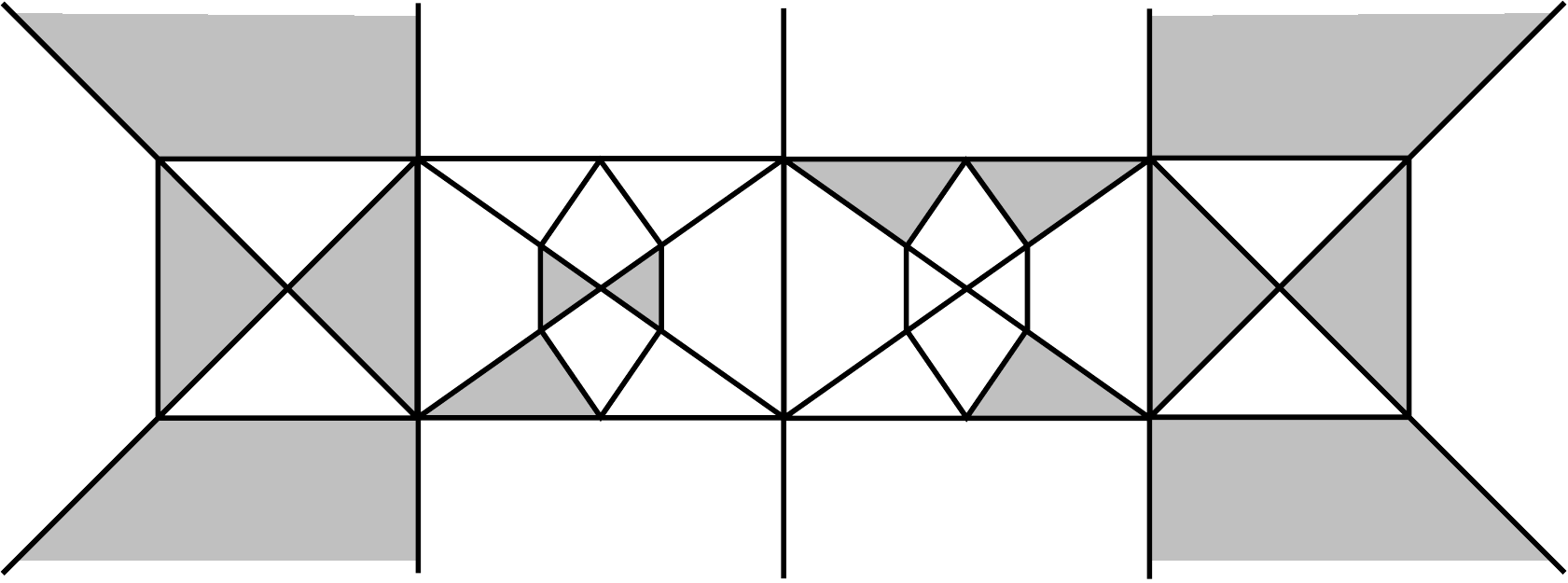} }
\put(7.2,8){$A$}
\put(5,14){$B$}
\put(2.75,8){$C$}
\put(5,2){$D$}

\put(13.8,14){$X_1$}
\put(13.8,2){$Z_1$}
\put(10.5,8){$Y_1$}
\put(17.5,8){$Y_1'$}
\put(13.8,9.6){$Z_1'$}
\put(13.8,6.5){$X_1'$}

\put(24.3,14){$X_2$}
\put(24.3,2){$Z_2$}
\put(21,8){$Y_2$}
\put(28,8){$Y_2'$}
\put(24.3,9.6){$Z_2'$}
\put(24.3,6.5){$X_2'$}

\put(36,8){$F$}
\put(33.8,14){$G$}
\put(31.5,8){$H$}
\put(33.8,2){$E$}

\end{picture}
   \caption{$\mathcal{D}_2$.}
   \label{one}
\end{figure}

Define the isometries ${\sf a, b, f, g, x, y, z} \in \text{Isom}^+(\mathbb{H}^3)$ as follows.  The isometry taking $A$ to $B$ so that their shared vertex is taken to the vertex shared by $B$ and $C$ is ${\sf a}$.  The isometry taking $C$ to $D$ so that their shared vertex is taken to the vertex shared by $B$ and $D$ is ${\sf b}$.  The isometry taking $E$ to $F$ so that their shared vertex is taken to the vertex shared by $F$ and $G$ is ${\sf f}$.  The isometry taking $G$ to $H$ so that their shared vertex is taken to the vertex shared by $H$ and $F$ is ${\sf g}$.  The isometry taking $Y_1'$ to $X_1$ so that their shared vertex is taken to the vertex shared by $X_1$ and $Z_1'$ is ${\sf x}$.  The isometry taking $Z_1'$ to $Z_1$ so that the vertex shared by $Z_1'$ and $Y_1'$ is taken to the vertex shared by $X_1'$ and $Z_1$ is ${\sf y}$.  The isometry taking $X_1'$ to $Y_1$ so that their shared vertex is taken to the vertex shared by $Y_1$ and $Z_1$ is ${\sf z}$.

The set $S_n$ defined below is a collection of face pairings for $\{ \p_i \}_1^{n+2}$.  Here we take $\sf x^c = c x c^{-1}$.
\[ S_n \ = \ \left\{ {\sf a, b, f, g, x, y, z, x^c, y^c, z^c,} \ldots, {\sf x}^{{\sf c}^{n-1}}, {\sf y}^{{\sf c}^{n-1}}, {\sf z}^{{\sf c}^{n-1}}\right\}\]
By examining the combinatorics of these face pairings, one deduces that the quotient by these side pairings is a complete hyperbolic manifold $N_n$ with finite volume and a single cusp.  (See, for instance,  \cite[Theorem 11.1.6]{Ratcliffe}.)  By Poincar\'e's polyhedron theorem \cite[Theorem 11.2.2]{Ratcliffe}, $\Delta_n = \langle S_n \rangle$ is discrete and $\mathcal{D}_n$ is a fundamental domain for $\Delta_n$.  Furthermore, in the manner of \cite{CD},  one can write down explicit matrices in $\text{PSL}_2(\C)$ which represent these isometries and see that the trace field for $\Delta_n$ is $\Q(i, \sqrt{2})$.  Hence, $N_n \cong \mathbb{H}^3/\Delta_n$ is non-arithmetic.

\begin{dfn}  The \textit{commensurator} of $\Gamma < \mathrm{Isom}(\mathbb{H}^3)$ is defined as
$$  \mathrm{Comm}(\Gamma) \doteq \{ \sfg \in \mathrm{Isom}(\mathbb{H}^3)\,|\,[\Gamma:\sfg\Gamma\sfg^{-1} \cap \Gamma]<\infty\}.  $$  \end{dfn}

It is easy to see that every group commensurable with $\Gamma$ is contained in $\mathrm{Comm}(\Gamma)$.  A well known theorem of Margulis asserts that if $\Gamma$ is discrete and acts with finite covolume, then $\mathrm{Comm}(\Gamma)$ is itself discrete if and only if $\Gamma$ is not arithmetic (see \cite[(1) Theorem]{Margulis}).  

Let $G_n = \mathrm{Comm}(\Delta_n)$ and $O_n = \mathbb{H}^3/G_n$.  Since $\Delta_n$ is a non-arithmetic Kleinian group, $G_n$ is discrete and $O_n$ is an orbifold.  We will use the techniques of Goodman--Hodgson--Heard \cite{GHH} to prove the following proposition.

\begin{prop}\label{OP comm}
Every element of $G_n$ is orientation preserving.  Hence, $\Delta_n$ is not commensurable to any 3-dimensional reflection group.
\end{prop}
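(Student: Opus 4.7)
The plan is to apply the technique of Goodman--Heard--Hodgson \cite{GHH}, which identifies the commensurator of a non-arithmetic cusped hyperbolic 3-manifold with the full combinatorial symmetry group of its canonical Epstein--Penner cell decomposition (lifted to $\mathbb{H}^3$).  The first step is to verify that the tiling of $\mathbb{H}^3$ by $\Delta_n$-translates of the polyhedra $\calp_i$ in $\mathcal{D}_n$ coincides with the canonical decomposition of $N_n$. Since $N_n$ has a single cusp, this amounts to checking a horoball-packing condition at $\hat{v}$, which can be done using the explicit matrices for the generators in $\mathrm{PSL}_2(\mathbb{C})$ described in the manner of \cite{CD}; the presence of octahedra and cuboctahedra as right-angled cells makes the tilt computation especially clean.

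Once the canonical decomposition is identified with $\mathcal{D}_n$, every element of $G_n$ induces a combinatorial automorphism of the tiling.  Octahedra and cuboctahedra are distinguished by vertex and face count, so any such automorphism must permute octahedra among themselves and cuboctahedra among themselves.  After composition with a suitable element of $\Delta_n$, any $\gamma\in G_n$ may be assumed to fix $\hat{v}=\infty$, in which case it acts on the horosphere centred at $\hat v$ as a Euclidean isometry preserving the induced pattern on the cusp cross-section illustrated (for $n=2$) in Figure \ref{one}.

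The main step, and I expect the principal obstacle, is to show that no such $\gamma$ reverses orientation.  This requires exhibiting a chirality of the cusp cross-section tiling.  A natural candidate is the cyclic arrangement of the face labels ${\sf x},{\sf y},{\sf z}$ around each cuboctahedron-vertex under iteration by the parabolic ${\sf c}$: an orientation-reversing Euclidean isometry of the horosphere would reverse this cyclic order, which is incompatible with the combinatorial identifications of $\mathcal{D}_n$ (since under the face pairings these three labels are forced into a specific cyclic pattern consistent with the ${\sf c}$-translation).  Granting this, $G_n\subset \mathrm{Isom}^+(\mathbb{H}^3)$.  The second assertion is then immediate: any $3$-dimensional hyperbolic reflection orbifold has fundamental group containing orientation-reversing reflections, so its commensurator contains orientation-reversing elements; hence $\Delta_n$ cannot be commensurable to such a group.
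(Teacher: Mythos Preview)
Your overall framework matches the paper's: invoke Goodman--Heard--Hodgson to identify $G_n$ with the symmetry group of the canonical tiling, verify that the canonical decomposition coincides with the $\Delta_n$-tiling by the $\calp_i$ (the paper carries this out carefully as a separate proposition, using explicit horospherical vectors and tilt computations in the hyperboloid model), and then analyse combinatorial symmetries.  So the outline is sound.

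The genuine gap is in the chirality step.  Your proposed invariant, the cyclic order of the labels ${\sf x},{\sf y},{\sf z}$, is not an intrinsic feature of the tiling; those labels were assigned by hand.  To turn this into a proof you would need to replace them by a quantity that any tiling symmetry must respect, and then verify that this quantity is genuinely chiral.  You also reduce to the horosphere at $\hat v$, but the 2-dimensional cusp pattern alone may admit orientation-reversing Euclidean symmetries that do not extend to symmetries of the 3-dimensional tiling; the obstruction lives in the full combinatorics of face identifications, not just in the horosphere picture.

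The paper handles this differently and more concretely.  It introduces an intrinsic colouring: triangular faces of cuboctahedra adjacent (through the face pairings) to an octahedron are coloured gray, and among the remaining cuboctahedra a second colour records which neighbour a triangular face is glued to.  These colours are determined by the tiling itself, so any $\sfg\in G_n$ must respect them.  From this the paper deduces that, after composing with an element of $\Delta_n$, $\sfg$ sends $\calp_2$ to either $\calp_2$ or $\calp_{n+1}$.  A further intrinsic classification of triangular faces into two types (according to whether a vertex is shared with an oppositely coloured triangle) pins down $\sfg$ up to a very short list, and in each case one checks directly that $\sfg$ is orientation-preserving.  This avoids any appeal to a vague chirality of the horosphere pattern and makes the argument finite and checkable.
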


Theorem \ref{t: One-cusp examples} will follow immediately from the proposition above upon observing that the $N_n$ are pairwise incommensurable.  This follows most easily from a \textit{Bloch invariant} computation.  The Bloch invariant of a hyperbolic $3$-manifold $M$ is a sum of parameters, each an element of $\mathbb{C}^*$, of a tetrahedral decomposition of $M$, considered as an element of $\calp(\mathbb{C})$.  For a field $k$, the \textit{Pre-Bloch group} $\calp(k)$ is the quotient of the free $\mathbb{Z}$-module on $k-\{0,1\}$ by a ``five-term relation'' that can be geometrically interpreted as relating different decompositions of the union of two tetrahedra.  The Bloch group $\calb(k)$ is a subgroup of $\calp(k)$; see eg.~\cite{Neumanns_bloch}.

We will use the decomposition of $N_n$ into a collection of $2$ right-angled ideal octahedra and $n$ cuboctahedra.  These may each be divided into tetrahedra yielding a decomposition of $N_n$.  The parameters of the tetrahedra contained in the octahedron sum to an element $\beta_1\in \calb(\mathbb{Q}(i))$, and those of the cuboctahedron sum to an element $\beta_2\in \calb(\mathbb{Q}(i\sqrt{2}))$.  It can be showed that $\beta_1$ and $\beta_2$ are linearly independent in $\calb(\mathbb{Q}(i,\sqrt{2}))$, and this in turn implies that the invariants $2\cdot \beta_1 + n\cdot\beta_2$ of the $N_n$ are pairwise linearly independent.  Hence the $N_n$ are pairwise incommensurable; see \cite[Prop.~4.5]{CD} for an analogous proof.

In proving Proposition \ref{OP comm}, we give a partial description of the commensurator $G_n$.  We use the algorithm of \cite{GHH} to perform such computations here and in Section \ref{sec:lobel}, so we briefly introduce the set-up below.  The \textit{Lorentz inner product} on $\R^4$ is the degenerate bilinear pairing 
\[ \langle {\bf v, w} \rangle \ = \ v_1 w_1+v_2w_2+v_3w_3-v_4w_4.\]
The \textit{hyperboloid model} of $\mathbb{H}^3$ is the set $\{ \mathbf{v} \, | \,  \langle {\bf v, v} \rangle=-1, \, v_4>0 \}$ equipped with the Riemannian metric on tangent spaces determined by the Lorentz inner product.  The \textit{positive light cone} is the set $L^+ = \{  {\bf v} \, | \,  \langle {\bf v,v} \rangle=0, \, v_4\geq0 \}$.  The \textit{ideal boundary} $\partial \mathbb{H}^3$ is identified with the set $PL^+$ of equivalence classes of $\bv \in L^+$, where $\bv \sim \bw$ if $\bw = \lambda \bv$ for $\lambda \in \mathbb{R}^+$.

Given a vector ${\bf v} \in L^+$, we say the set $H_{\bf v} = \{ {\bf w} \in \mathbb{H}^3 \, | \, \langle {\bf v, w} \rangle =-1\}$ is a \textit{horosphere centered at $v = [\bv]$}.  If $\alpha \in \R^+$ the horosphere $H_{\alpha {\bf v}}$ is a horosphere centered at the same ideal point as $H_{\bf v}$ and if $\alpha \leq 1$ then $H_{\bf v}$ is contained in the horoball determined by $\alpha {\bf v}$.  This correspondence between vectors in $L^+$ and horospheres in $\mathbb{H}^3$ is a bijection.  Hence, we call the vectors in $L^+$ \emph{horospherical vectors}.  

The group $\text{Isom}(\mathbb{H}^3)$ is the subgroup $\text{O}_0(3,1) \subset \text{GL}_4(\R)$ (acting by matrix multiplication) which preserves the Lorentz inner product and the sign of the last coordinate of each vector in $\R^4$.

Suppose $M=\mathbb{H}/\Lambda$ is a complete finite volume hyperbolic orbifold with $k$ cusps.  For each cusp $c_i$ of $M$, choose a horospherical vector ${\bf v}_i$ for which $H_{{\bf v}_i}$ projects to a cross section of $c_i$ under the covering map $\mathbb{H}^3 \rightarrow M$.  Then $V = \Lambda \cdot \{ {\bf v}_i \}_1^k$ is $\Lambda$-invariant and determines a $\Lambda$-invariant set of horospheres.  The convex hull $C$ of $V$ in $\R^4$ is called the \emph{Epstein--Penner convex hull}.  Epstein and Penner show that $\bound C$ consists of a countable set of 3-dimensional faces $F_i$, where each $F_i$ is a finite sided Euclidean polyhedron in $\R^4$.  Furthermore, this decomposition of $\bound C$ projects to a $\Lambda$--invariant tiling $\mathcal{T}$ of $\mathbb{H}^3$ \cite[Prop 3.5 and Theorem 3.6]{epstein_euclidean_1988}.  If $M$ is a manifold then the quotient of this tiling by $\Lambda$ gives a cell decomposition of $M$.  We refer to the tiling as a {\em canonical tiling} for $M$ and to the cell decomposition as a {\em canonical cell decomposition} of $M$.  If we make a different choice for $\{ {\bf v}_i\}_1^k$ by multiplying each vector by a common positive scalar then the resulting Epstein--Penner convex hull differs from $C$ by multiplication by this scalar.  The combinatorics of the boundary of this scaled convex hull is identical to that of $C$ and projects exactly to the tiling $\mathcal{T}$.  Hence, we obtain all possibilities for canonical tilings using initial sets of the form $\{ {\bf v}_1, \alpha_2 {\bf v}_2, \ldots, \alpha_k {\bf v}_k \}$.

Consider the group of symmetries $\text{Sym}(\mathcal{T}) \subset \text{Isom}(\mathbb{H}^3)$.  Since $\mathcal{T}$ is $\Lambda$-invariant $\Lambda \subset \text{Sym}(\mathcal{T})$.  On the other hand, $\text{Sym}(\mathcal{T})$ acts on the set $V$ of horospherical vectors.  It follows that $\text{Sym}(\mathcal{T})$ is discrete \cite[Lemma 2.1]{GHH} and therefore $\mathbb{H}^3/\Lambda \rightarrow \mathbb{H}^3/\text{Sym}(\mathcal{T})$ is a finite cover between orbifolds.

Suppose that $\Lambda$ is non-arithmetic.  Since $\text{Comm}(\Lambda)$ is the unique maximal discrete group that contains $\Lambda$, then $\text{Sym}(\mathcal{T}) \subset \text{Comm}(\Lambda)$ for every canonical tiling $\mathcal{T}$.  Futhermore, every canonical tiling for $\text{Comm}(\Lambda)$ is also a canonical tiling for $\Lambda$, hence $\text{Comm}(\Lambda) = \text{Sym}(\mathcal{T})$ for some canonical tiling $\mathcal{T}$ for $\Lambda$.

We say that a set $\{ \calp_i \}$ of ideal polyhedra $\Lambda$-{\em generate} the tiling $\calt$ if every tile of $\calt$ is of the form $\gamma \calp_i$ for some $\gamma \in \Lambda$ and some $i$.  The canonical tilings can be determined using elementary linear algebra.  According to \cite[Lemma 3.1]{GHH}, a set $\{ \calp_i \}$ of ideal polyhedra $\Lambda$-generates the canonical tiling associated to the set $V$ if 
\begin{enumerate}
\item $\Lambda \cdot \{ \calp_i\}$ is a tiling of $\mathbb{H}^3$,
\item  given any vertex of any $\calp_i$ there is a horospherical vector ${\bf v} \in V$ so that the vertex lies at the center of the horosphere $H_{\bf v}$,
\item\label{coplanar} the set of horospherical vectors corresponding to the vertices of any given $\calp_i$ lie on a single plane in $\R^4$,
\item\label{positive tilt} if $\calp_i$ and $\gamma \calp_j$ are two tiles that meet in a common face then the Euclidean planes in $\R^4$ determined by the two tiles meet convexly.
\end{enumerate}
The last two conditions can be re-phrased using linear algebra.  If $\{ {\bf v}_1,\hdots, {\bf v}_s \}$ are the horospherical vectors for $\calp_i$ and ${\bf w}$ is a horospherical vector for a neighboring tile which is not shared by $\calp_i$ then there exists a {\em normal} vector for $\calp_i$, ${\bf n} \in \R^4$ such that
\begin{enumerate}
\item[(3)] (coplanar) ${\bf n} \cdot {\bf v}_i =1$ for every $i=1, \ldots s$, and
\item[(4)] (positive tilt) ${\bf n} \cdot {\bf w} >1$,
\end{enumerate}
where $\cdot$ denotes the standard Euclidean inner product.  Observe that these conditions are invariant under $\text{Isom}(\mathbb{H}^3)$, for if ${\bf n} \cdot \bv=\alpha$ and $A \in \text{Isom}(\mathbb{H}^3)$ then $({\bf n} A^{-1}) \cdot A\bv= \alpha$.

\begin{prop} \label{tiling}  Let $\Delta_n < \mathsf{O}_0(3,1)$ be determined by the following embedding of the $\calp_i$ in $\mathbb{H}^3$: the isometry group of $\calp_2$ fixes $(0,0,0,1)^T$, the ideal vertex $\hat{v}$ shared by the $\calp_i$ is $[\hat\bv]$, where $\hat\bv = (2,0,0,2)^T$, and $\calp_1 \cap \calp_2$ has ideal vertices $[\hat\bv],[\bv_9], [\bv_4]$, where $\bv_4 = (1,1,-\sqrt{2},2)^T$ and $\bv_9 = (1,-1,-\sqrt{2},2)^T$.  Let $\calt_n$ be the tiling of $\mathbb{H}^3$ determined by $V_n =  \Delta_n \cdot \{\hat\bv\}$.  The tiles of $\mathcal{T}_n$ are the $\Delta_n$-orbits of the $\p_i$.
\end{prop}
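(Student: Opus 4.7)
The plan is to verify the four criteria of \cite[Lemma 3.1]{GHH} for the collection $\{\calp_i\}_{i=1}^{n+2}$ together with the set $V_n=\Delta_n\cdot\{\hat{\bv}\}$; these criteria characterize when a candidate $\Delta_n$-equivariant polyhedral decomposition coincides with the Epstein--Penner canonical tiling. Condition (1), that $\Delta_n\cdot\{\calp_i\}$ tiles $\mathbb{H}^3$, is immediate: $\mathcal{D}_n=\bigcup_i\calp_i$ is a fundamental domain for $\Delta_n$ by Poincar\'e's polyhedron theorem, as invoked during the construction of $\Delta_n$. Condition (2), that every ideal vertex of every $\calp_i$ lies at the center of some horosphere in the $\Delta_n$-orbit of $H_{\hat{\bv}}$, is a consequence of the single-cusp condition for $N_n$: the face pairings in $S_n$ identify all ideal vertices of the $\calp_i$ into a single equivalence class, which is therefore the $\Delta_n$-orbit of $\hat v$. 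One verifies this concretely by tracing each ideal vertex through the face pairings and checking that it lies in the orbit of $\hat v$.

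Conditions (3) and (4) are the substantive computation. For each $\calp_i$ I would list explicit horospherical vectors $\bv_1^{(i)},\ldots,\bv_{k_i}^{(i)}$ at its ideal vertices, obtained by applying suitable elements of $\Delta_n$ to $\hat{\bv}$; solve the linear system $\mathbf{n}_i\cdot\bv_j^{(i)}=1$ for a normal vector $\mathbf{n}_i\in\mathbb{R}^4$, so as to verify coplanarity (condition (3)); then, for each face $f$ of $\calp_i$ and the neighboring tile $\gamma\calp_j$ across $f$, check that $\mathbf{n}_i\cdot\bw>1$ for every horospherical vector $\bw$ at a vertex of $\gamma\calp_j$ not lying on $f$ (positive tilt, condition (4)). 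The symmetries of $\mathcal{D}_n$ cut the casework down drastically: the parabolic ${\sf c}$ conjugates consecutive cuboctahedra, and there is an involution of $\mathcal{D}_n$ swapping $\calp_i$ with $\calp_{n+3-i}$. Hence it suffices to perform the verification for $\calp_1$ (an octahedron) and $\calp_2$ (the first cuboctahedron), together with each of their neighbors.

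The main obstacle is the explicit bookkeeping. The generators ${\sf a,b,f,g,x,y,z}$ are described geometrically in the text, and one must first convert these descriptions into explicit elements of $\mathsf{O}_0(3,1)$ acting on the stipulated embedding; then compose them to produce the horospherical vectors at each ideal vertex of each $\calp_i$; and finally solve the linear systems for the $\mathbf{n}_i$ and verify the tilt inequalities. Coplanarity is not automatic --- it amounts to a compatibility statement about the heights of horospheres at the vertices of $\calp_i$ --- so successfully completing the verification is tantamount to showing that the chosen horoball neighborhood of the single cusp of $N_n$ realizes this decomposition as its Epstein--Penner tiling.
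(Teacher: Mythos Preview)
Your strategy matches the paper's: verify the four criteria of \cite[Lemma 3.1]{GHH}. The difference lies in implementation, and the paper's is substantially cleaner. Rather than tracing each ideal vertex of each $\calp_i$ back to $\hat\bv$ through words in $\Delta_n$, the paper chooses horospherical vectors for $\calp_2$ (resp.\ $\calp_1$) directly so that they form an $\mathrm{Isom}(\calp_2)$-invariant (resp.\ $\mathrm{Isom}(\calp_1)$-invariant) set, normalized so that the two sets agree on the shared triangular face. The point is that each face pairing in $S_n$ is a composition of an element of $\mathrm{Isom}(\calp_i)$ with a parabolic fixing an ideal vertex of $\calp_i$; both types of isometry preserve the chosen horospherical vectors, so these vectors automatically lie in $V_n=\Delta_n\cdot\{\hat\bv\}$ without ever writing a generator of $\Delta_n$ as a matrix in $\mathsf{O}_0(3,1)$. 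This symmetry choice also forces all twelve (resp.\ six) horospherical vectors to share the same last coordinate, so coplanarity is witnessed by the single normal $\mathbf{n}=(0,0,0,\tfrac12)^T$ (or $\sqrt 2\,\mathbf{n}$ for the octahedron), and only three tilt inequalities need checking: cuboctahedron--cuboctahedron across a triangular face, cuboctahedron--cuboctahedron across a square face, and octahedron--cuboctahedron.

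Your proposal, by contrast, is a plan rather than a proof: the promised matrices for ${\sf a,b,f,g,x,y,z}$, the resulting horospherical vectors, the normals $\mathbf n_i$, and the tilt checks are all deferred. Carried out as written your plan would eventually produce the same numbers, but the bookkeeping is considerable, and the coplanarity step---which you correctly flag as not automatic---only becomes visible once you recognize the $\mathrm{Isom}(\calp_i)$-invariance. That observation is what makes the argument short enough to write down.
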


\proof
If $X$ is a $4 \times n$ matrix we denote the $i^{\text{th}}$ column of $X$ by $x_i$.  When the columns of $X$ lie in $L^+$ and the convex hull of the corresponding ideal points is an ideal polyhedron we call the polyhedron $\p_X$.  Consider the matrices
\[M \ = \ \left(
\begin{array}{llllllllllll}
 2 & 1 & 0 & 1 & 0 & -1 & -2 & -1 & 1 & -1 & -1 & 1 \\
 0 & 1 & 2 & 1 & -2 & -1 & 0 & -1 & -1 & 1 & 1 & -1 \\
 0 & \sqrt{2} & 0 & -\sqrt{2} & 0 & \sqrt{2} & 0 & -\sqrt{2} & -\sqrt{2} & -\sqrt{2} & \sqrt{2} & \sqrt{2} \\
 2 & 2 & 2 & 2 & 2 & 2 & 2 & 2 & 2 & 2 & 2 & 2
\end{array}
\right).\]
\[N \ = \ \left(
\begin{array}{llllll}
 \sqrt{2} & 0 & 0 & -\sqrt{2} & 0 & 0 \\
 0 & \sqrt{2} & 0 & 0 & -\sqrt{2} & 0 \\
 0 & 0 & \sqrt{2} & 0 & 0 & -\sqrt{2} \\
 \sqrt{2} & \sqrt{2} & \sqrt{2} & \sqrt{2} & \sqrt{2} & \sqrt{2}
\end{array}
\right)\]
The columns of $M$ and $N$ are horospherical vectors and represent horospheres centered about the ideal vertices of a regular ideal cuboctahedron and octahedron respectively.  These matrices are chosen so that, for $X=M,N$, the isometries in $\text{Isom}(\p_X)$ all fix $(0,0,0,1)^T \in \mathbb{H}^3$ and the columns of $X$ are $\text{Isom}(\p_X)$--invariant.  Furthermore, if ${\sf h}$ is the orientation preserving hyperbolic isometry that takes the triangular face $(n_1, n_2, n_3)$ of $\p_N$ to the triangular face $(m_1, m_9, m_4)$ of $\p_M$ so that ${\sf h}(\p_N) \cap \p_M$ is exactly this face, then our choice of horospheres agree on this intersection.  That is, ${\sf h}(n_1, n_2, n_3)=(m_1, m_9, m_4)$.

Let $\p_1={\sf h}(\p_N)$ and $\p_2=\p_M$.  Embed the remaining polyhedra in $\{ \p_i \}_1^{n+2}$, as described above, so that the common ideal vertex is the center of the $m_1$ horosphere.  Choose horospherical vectors for the $\p_i$'s so that they are $\text{Isom}(\p_i)$--invariant and to coincide with the horospherical vectors of $\p_{i\pm1}$ wherever ideal vertices are shared.

Notice that the face pairings of $\p_i$ in $S_n$ are all compositions of elements of $\text{Isom}(\p_i)$ with parabolics that fix an ideal vertex of $\p_i$.  Since we have chosen our horospherical vectors to be $\text{Isom}(\p_i)$--invariant, it follows that our choice of horospheres is compatible with the face pairings in $S_n$.  Hence, the choice of horospheres descends to a choice of horospherical torus in $N_n$ and therefore determines a canonical cell decomposition of $N_n$ and a canonical tiling of $\mathbb{H}^3$ whose symmetry group is $G_n$.   To prove the proposition, we need to show that this tiling is $\mathcal{T}_n$.

Take ${\bf n} = (0,0,0,1/2)^T$.  Then ${\bf n} \cdot m_i =1$ for $i=1, \ldots 12$ and $\sqrt{2} {\bf n} \cdot n_i =1$ for $i=1,\ldots, 6$.  Therefore by Goodman--Hodgson--Heard's criterion (\ref{coplanar}), the horospherical vertices of ${\sf k} (\p_i)$ are coplanar for every ${\sf k} \in \Delta_n$.  It remains only to show that condition (\ref{positive tilt}) holds for adjacent pair of cuboctahedra that meet along a triangular face, an adjacent pair of cuboctahedra that meet along a square face, and an octahedron adjacent to a cuboctahedron.

If $Q$ is a cuboctahedron adjacent to $\p_M$ sharing the triangular face $(m_1, m_9, m_4)$ with $\text{Isom}(Q)$--invariant horospherical vectors which agree with $(m_1, m_9, m_4)$ then $w=(7, 1, -5 \sqrt{2}, 10)^T$ is a horospherical vector for $Q$ which is not shared by $\p_M$.  We have ${\bf n} \cdot w = 5 >1$.  If $Q$ is a cuboctahedron adjacent to $\p_M$ sharing the square face $(m_1, m_2, m_3, m_4)$ with $\text{Isom}(Q)$--invariant horospherical vectors which agree with $(m_1, m_2, m_3, m_4)$ then $w=(3, 5, - \sqrt{2}, 6)^T$ is a horospherical vector for $Q$ which is not shared by $\p_M$.  We have ${\bf n} \cdot w = 3 >1$.  The octahedron ${\sf h}(\p_N)$ is adjacent to $\p_M$ sharing the face $(m_1, m_9, m_4)$.  Its vectors are invariant under the isometry group of ${\sf h}(\p_N)$ and they agree with those of $\p_M$ along the shared face.  The vector $w=(2+2\sqrt{2}, 0, -2-2\sqrt{2}, 4+4\sqrt{2})^T$ is a horospherical vector for ${\sf h}(\p_N)$ which is not shared by $\p_M$.  We have ${\bf n} \cdot w = 2+\sqrt{2}>1$.
\endproof

For $i=2, n+1$, shade each face of $\p_i$ gray if it is identified with a face of an octahedron in the quotient.  For the other cuboctahedra $\p_i$, color each triangular face red if it is identified with a face of $\p_{i-1}$.  (In Figure \ref{one}, every white triangular face of $\p_3$ should be colored red.)

The tiles of $\mathcal{T}_n$ inherit a coloring from the coloring of the $\p_i$'s.   We can further classify the triangular faces in cuboctahedral tiles of $\mathcal{T}$ into \emph{type I} and \emph{type II} triangles.  A face of a cuboctahedral tile $T$ is type I if it has exactly one ideal vertex that is shared by a triangular face of $T$ of the opposite color.  Triangular faces of cuboctahedra that are not type I are type II.

\proof[Proof of Proposition \ref{OP comm}]    
Suppose ${\sf h} \in G_n - \Delta_n$.  By \cite{GHH}, ${\sf h}$ is a symmetry for the tiling $\mathcal{T}_n$.  The polyhedron $\mathcal{D}_n$ is a fundamental domain for $\Delta_n$, so by composing ${\sf h}$ with some element of $\Delta_n$, we may assume that ${\sf h}(\p_2) \in \{ \p_i \}_1^{n+2}$.  It is clear that ${\sf h}$ must preserve the set of gray faces in the tiling, hence ${\sf h}(\p_2)$ is either $\p_2$ or $\p_{n+1}$.

The isometry ${\sf h}$ must also preserve the types of the triangular faces of cuboctahedra.  By examining the combinatorics of the face pairings in $S_n$, we see that every cuboctahedron in the tiling has exactly two vertices that are shared by a pair of type I triangles.  There is one such vertex for each of the two triangular colors on the tile.  Let $v$ be the vertex of $\p_2$ which is shared by the two gray type I triangles of $\p_2$ and $w$ the vertex shared by the two white type I triangles.   If ${\sf h}(\p_2)=\p_2$ then, by considering the coloring of $\p_2$ we see that ${\sf h}$ must be the order-2 elliptic fixing $v$ and $w$.  If, on the other hand, we have ${\sf h}(\p_2)=\p_{n+1}$ then ${\sf h}(v)$ must be the vertex shared by the two gray type I triangles of $\p_{n+1}$ and ${\sf h}(w)$ must be the vertex shared by the two red type I triangles of $\p_{n+1}$.  The gray pattern on $\p_{n+1}$ forces ${\sf h}$ to be orientable.
\endproof

\section{Augmented links}\label{sec: augmented}

A rich class of examples that satisfy the hypotheses of Theorem \ref{rt ang cox} is that of the \textit{augmented links}.  These were introduced by Adams \cite{Adams} and further studied in e.g.~\cite{LAD}, \cite{Purcell}, and \cite{Purcell_cusps}.  In this section we will describe their construction and, in Section \ref{sec:low cx augmented}, classify up to scissors congruence the complements of augmented links with at most $5$ twist regions.  We will discuss when an augmented link complement is commensurable with a right-angled reflection orbifold, and in Section \ref{sec:lobel} describe an infinite family of augmented link complements that do not have this property.

A link $L$ in $S^3$ with hyperbolic complement determines (not necessarily uniquely) an augmented link using a projection of $L$ which is \textit{prime} and \textit{twist-reduced}.  We will regard a projection of $L$ as a $4$-valent graph in the plane, together with crossing information at each vertex, and use the term \textit{twist region} to denote either a maximal collection of bigon regions of the complement arranged end-to-end or an isolated crossing that is not adjacent to any bigon.

A projection is prime if there is no simple closed curve $\gamma$ in the projection plane intersecting it in exactly two points, with the property that each component of the complement of $\gamma$ contains a crossing.  A projection is twist-reduced if for every simple closed curve $\gamma$ in the projection plane which intersects it in four points, such that two points of intersection are adjacent to one crossing and the other two are adjacent to another, there is a single twist region containing all crossings in one component of the complement of $\gamma$.  

\begin{figure}
\includegraphics{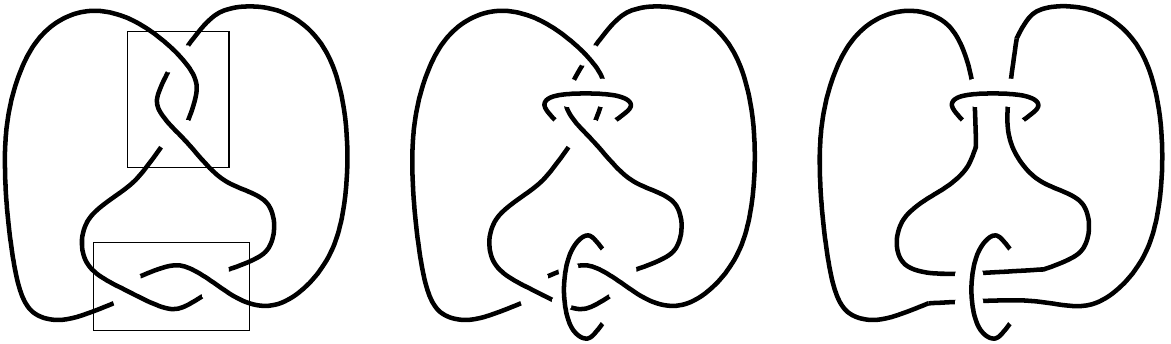}
\caption{Augmenting the figure-$8$ knot.}
\label{augmentfig8}
\end{figure}

An augmented link is obtained from a prime, twist reduced projection by encircling each twist region with a single unknotted component, which we call a \textit{clasp}.  This process is illustrated in Figure \ref{augmentfig8} for the figure-$8$ knot, pictured on the left-hand side with its twist regions in boxes.  The augmented link that it determines is pictured in the middle of the figure.  Each link with hyperbolic complement admits a prime, twist reduced diagram, and the augmented link obtained from such a diagram also has hyperbolic complement (a direct proof of this fact is given in Theorem 6.1 of \cite{Purcell_cusps}).  Thus every hyperbolic link complement in $S^3$ is obtained by Dehn surgery on some cusps of the complement of an augmented link.  

Each clasp of an augmented link $L$ bounds a disk that has two points of transverse intersection with $L$.  Given such a disk $D$, a family of homeomorphisms of $S^3-L$ is determined by cutting along the twice-punctured open disk $D-L$ and re-gluing by a rotation of angle $n\cdot 2\pi$, where $n \in \mathbb{Z}$.  This adds or subtracts $2n$ crossings to the twist region of $L$ encircled by the clasp bounding $D$.  It follows that the link on the right-hand side of Figure \ref{augmentfig8} has a complement homeomorphic to that of the link in the middle.  The complements of two augmented links that differ by only a single crossing in a twist region are not necessarily homeomorphic; however, we will see below that they are scissors congruent.   We also have:

\begin{lemma}\label{projection reflection}  Let $L$ be an augmented link.  Reflection through the projection plane determines an automorphism of $S^3 - L$.  \end{lemma}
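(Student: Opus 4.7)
The plan is to exhibit an isotopic representative $L'$ of $L$ in $S^3$ for which the reflection $r$ through the projection plane, possibly composed with local homeomorphisms supported near the clasp disks, restricts to a self-homeomorphism of $S^3 - L'$. The claim of the lemma follows since $S^3-L'\cong S^3-L$.

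First I would use the $2\pi$-Dehn twists along the twice-punctured disks bounded by the clasps, described in the paragraph preceding the lemma, to reduce to an augmented link $L'$ in which each twist region contains either $0$ or $1$ crossings. In such a representative, the clasp of each twist region can be positioned so that it lies in a plane perpendicular to the projection plane, with the two strands passing through it lying symmetrically on either side, and the single crossing (when present) occurring precisely on the projection plane. The $0$-crossing case is now immediate: $r$ fixes each of the two strands setwise and the clasp as a set. For the $1$-crossing case, $r$ reverses the sign of the crossing, carrying $L'$ to a link $L''$; I would then compose $r$ with a $2\pi$-Dehn twist on the corresponding clasp disk, taken in the direction that changes the signed crossing count by $+2$, to obtain a self-homeomorphism of $(S^3,L')$ since a $-1$ crossing is then converted back to the original $+1$ crossing.

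The resulting automorphism of $S^3-L'$ agrees with the ambient reflection $r$ outside small neighborhoods of the clasp disks, so we may reasonably call it ``the reflection through the projection plane.'' The main point requiring verification is that the $2\pi$-Dehn twists correctly identify $L''$ with $L'$, in particular that their support can be chosen small enough that they do not interfere with the configuration of other twist regions. An alternative, cleaner approach is to appeal directly to the Agol--Thurston decomposition of Example~\ref{fully augmented}: $S^3-L$ splits as the union of two isometric right-angled ideal polyhedra glued along faces that (after the normalization above) lie in the projection plane, and $r$ manifestly exchanges the two polyhedra and so induces an automorphism of the complement.
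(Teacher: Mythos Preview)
Your argument is correct and is essentially an unpacking of the paper's one-sentence justification: the paper simply observes that reflection changes the sign of each crossing but preserves the parity of crossings in each twist region, which (by the $2\pi$-twist discussion preceding the lemma) is all that matters for the homeomorphism type of the complement.  Your reduction to $0$ or $1$ crossings per region and explicit composition with Dehn twists is exactly this observation made concrete; the alternative via the Agol--Thurston decomposition is also valid and is in fact what the paper records later in Lemma~\ref{LAD decomp}.
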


This is because while such a reflection changes the sign of each crossing, it does not change the parity of the number of crossings per twist region.

Given an augmented link projection, the appendix to \cite{LAD} describes a decomposition of its complement into two isometric ideal polyhedra.  These polyhedra may be checkered so that each white face lies in the projection plane and each dark face is an ideal triangle in a ``vertical'' twice-punctured disk.  This is illustrated in Figure \ref{augmentpoly} for an augmented link with two twist regions.  

\begin{figure}
\includegraphics{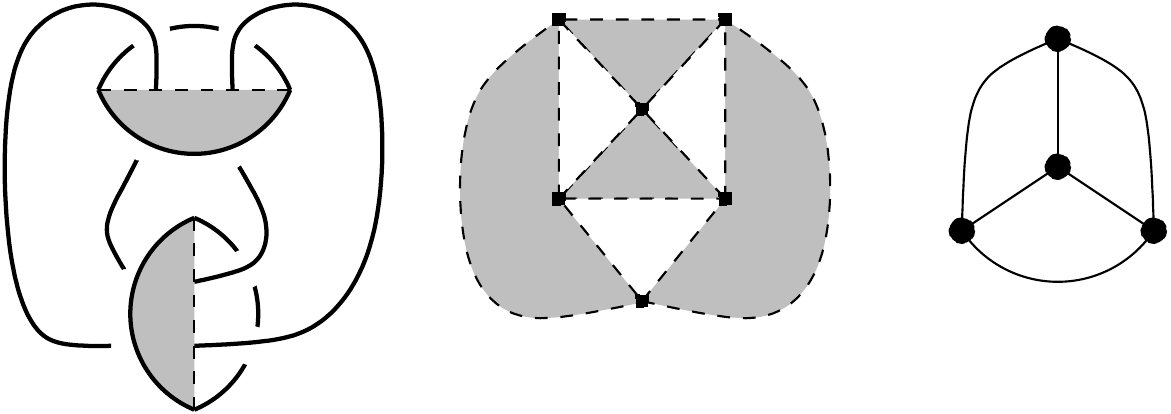}
\caption{An augmented link, the associated polyhedron, and its \crush.}
\label{augmentpoly}
\end{figure}

On the left-hand side of the figure, the dotted lines divide each twice-punctured clasp disk into the union of two ideal triangles.  We arrange for these disks to meet the projection plane transversely in the dotted lines, so the darkened ideal triangles lie above the projection plane and the others below it.  Cutting the link complement along the clasp disks and the projection plane yields two ideal polyhedra, one above and one below the projection plane, with edges coming from the dotted arcs.  These are  isomorphic by reflection through the projection plane.  Flattening the two-skeleton of the polyhedron above it onto the plane yields the polyhedron in the middle of the figure, an ideal octahedron, where each of the darkened half-disks on the left-hand side gives rise to two ideal triangles and the link itself has been shrunken to darkened rectangles at the vertices.  (See also \cite[Figure 3]{Purcell}.)

If $L$ is an augmented link, after removing all crossings in each twist region, we call the polyhedron produced by cutting along the projection plane and clasp disks the \textit{ideal polyhedron associated to $L$}.  This polyhedron may be checkered by coloring black the triangular faces that lie in clasp disks and white the faces that lie in the projection plane.  Note also that each black triangular face has a unique ideal vertex corresponding to a clasp.  The following lemma summarizes the construction of the appendix to \cite{LAD}, in our language.

\begin{lemma}\label{LAD decomp}  If $L$ is an augmented link with hyperbolic complement, there is a right-angled checkered ideal polyhedron $\calp$ in $\mathbb{H}^3$ combinatorially isomorphic to the ideal polyhedron associated to $L$.  For a face $f$ of $\calp$, let $\rho_f$ denote reflection in the plane containing $f$.  Fix a white face $f_0$ of $\calp$, and let $\overline{\calp} = \rho_{f_0}(\calp)$, $\bar{f} = \rho_{f_0}(f)$ for each face $f$ of $\calp$, and $\bar{v} = \rho_{f_0}(v)$ for each ideal vertex.  Then the quotient of $\calp \cup \overline{\calp}$ by the following face pairing gives a right-angled ideal decomposition of $S^3-L$.  \begin{enumerate}
\item  If $f \neq f_0$ is a white face of $\calp$, let $\phi_f = \rho_{f_0} \circ \rho_f$, taking $f$ to $\bar{f} \subset \overline{\calp}$.
\item  If $f$ is a black triangular face of $\calp$, let $f'$ be the black face of $\calp$ that shares the ideal vertex $v$ of $f$ corresponding to a clasp.  \begin{enumerate}
   \item\label{even twist}  If the corresponding twist region has an even number of crossings, let $\phi_f$ be the unique orientation-preserving isometry with $\phi_f(f) = f'$, $\phi_f(v) = v$, and $\phi_f(\calp) \cap \calp= f'$.
   \item\label{odd twist}  If the corresponding twist region has an odd number of crossings, let $\phi_f$ be the unique orientation-preserving isometry with $\phi_f(f) = \bar{f}'$, $\phi_f(v) = \bar{v}$, and $\phi_f(\calp)\cap\overline{\calp} = \bar{f}'$.  \end{enumerate}  \end{enumerate} 
Furthermore, $\rho_{f_0}$ induces the isometry of $S^3 - L$ supplied by Lemma \ref{projection reflection}.  In particular, $\phi_{\bar{f}} = \rho_{f_0} \circ \phi_f \circ \rho_{f_0}$ for each face $f$ of $\calp$. \end{lemma}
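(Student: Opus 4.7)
The plan has three stages: produce the polyhedron $\calp$ via Andreev's theorem, verify that the prescribed face pairings generate a complete hyperbolic manifold via Poincar\'e's theorem, and identify this manifold with $S^3 - L$ by a direct topological argument.

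First, I would verify that the combinatorial polyhedron associated to $L$ satisfies Andreev's hypotheses.  Its ideal vertices are the two intersection points of each clasp disk with the projection plane; each is four-valent, meeting two white faces (projection-plane regions) and two black faces (triangular halves of a clasp disk).  It remains to rule out prismatic $3$- and $4$-circuits.  A prismatic $3$-circuit would require either three faces sharing a clasp vertex (impossible since the two black triangles at a clasp share only that vertex and not an edge) or a simple closed curve in the projection plane meeting the link in three points, which is excluded by primeness.  Every prismatic $4$-circuit either bounds the neighborhood of a single clasp or a single crossing (not prismatic, by inspection) or corresponds to a configuration ruled out by the twist-reduced hypothesis.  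Andreev's theorem then yields a right-angled ideal realization $\calp$ of the correct combinatorial type.

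Next I would define $\phi_f$ on each face of $\calp \cup \overline{\calp}$ as stated and check the hypotheses of the Poincar\'e polyhedron theorem.  The white-face maps $\phi_f = \rho_{f_0} \circ \rho_f$ are manifestly isometries carrying $f$ onto $\bar f$.  For a black triangular face $f$ with clasp vertex $v$, the requirement that $\phi_f$ be orientation-preserving with the prescribed image of $f$ and $v$ uniquely pins down an isometry, and it remains to check that its image polyhedron coincides with $\calp$ or $\overline{\calp}$ as prescribed.  Because every dihedral angle of $\calp$ is $\pi/2$, the edge-cycle condition reduces to the combinatorial check that every edge equivalence class has exactly four members; this is a direct diagram chase, with white edges cycling through two applications of white-face pairings and black edges through two applications of black-face pairings.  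Completeness is automatic because each ideal vertex becomes a cusp of the quotient.

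Finally, I would identify this quotient with $S^3 - L$ topologically.  After isotoping away the crossings in each twist region, the complement $S^3 - L$ is cut by the union of the projection plane and the clasp disks into two ideal polyhedra combinatorially isomorphic to $\calp$, exchanged by reflection through the projection plane.  The reassembly glues the two copies of each white face back to each other across that plane (yielding $\phi_f = \rho_{f_0} \circ \rho_f$) and glues the two halves of each clasp disk back after reinserting the twists; the parity of the twist count determines whether the two halves end up on the same side of the projection plane (even, case (\ref{even twist})) or on opposite sides (odd, case (\ref{odd twist})).  The concluding assertions that $\rho_{f_0}$ induces the isometry of Lemma \ref{projection reflection} and that $\phi_{\bar f} = \rho_{f_0} \circ \phi_f \circ \rho_{f_0}$ then follow from the manifest $\rho_{f_0}$-symmetry of the prescription.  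The main obstacle is this parity bookkeeping: confirming that the orientation-preserving isometry $\phi_f$, which either fixes or swaps the two half-spaces according to the parity, truly implements the correct re-twisting of the strands through the clasp, as worked out in the appendix to \cite{LAD}.
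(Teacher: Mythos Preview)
Your outline is reasonable, but you should know that the paper does not actually prove this lemma: it is stated explicitly as a summary of the construction in the Agol--Thurston appendix to \cite{LAD}, with a pointer to \cite[\S 2.3]{Purcell} for the even/odd parity distinction and the remark that the final sentence ``follows easily from the discussion above.''  So there is no proof in the paper to compare against; the lemma functions as a black-box citation.

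That said, your proposed route differs somewhat from what \cite{LAD} actually does.  The Agol--Thurston construction is more direct: it builds the right-angled ideal polyhedron from a circle packing associated to the link diagram (the nerve of which is dual to what the paper calls the \crush), rather than by invoking Andreev's theorem abstractly.  Your Andreev-based approach is fine in principle, but note that the version of Andreev's theorem quoted later in the paper (in the proof of Lemma~\ref{simple right}) has an additional condition beyond ruling out prismatic $3$- and $4$-circuits, which you do not address.  More substantively, your ordering---build the hyperbolic manifold first via Poincar\'e, then identify it with $S^3-L$ topologically---is the reverse of what \cite{LAD} does: there one first cuts $S^3-L$ along the projection plane and clasp disks into two combinatorial balls, and only then realizes them geometrically.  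Your Stage~3 essentially reproduces that topological decomposition anyway, so Stage~2 (Poincar\'e) is redundant once Stage~3 is done and Stage~1 provides the geometry.  You have correctly identified the parity bookkeeping as the one nontrivial point; this is exactly what the paper defers to \cite[Figure~4]{Purcell}.
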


For another discussion of the content of Lemma \ref{LAD decomp}, see \cite[\S 2.3]{Purcell}.  In particular, Figure 4 there clarifies the different gluings producing twist regions with even vs.~odd numbers of crossings.  The last sentence of the lemma is not covered in \cite{LAD}; however it follows easily from the discussion above.

On the right-hand side of Figure \ref{augmentpoly} is the compact polyhedron obtained from the checkered ideal octahedron by the following rule: it has a vertex corresponding to every dark face and an edge joining each pair of vertices that correspond to dark faces which share ideal vertices.  We will call this the \textit{\crush} of $L$, since it may be regarded as obtained by crushing the darkened faces of the associated right-angled polyhedron to points.  We note that each vertex of the \crush\ has valence 3, since each dark face is an ideal triangle.  The right-angled ideal polyhedron associated to $L$ is recovered by truncation from its \crush.

For an alternative perspective on obtaining the \crush\ and a connection with Andreev's theorem, we refer the reader to Section 6 of \cite{Purcell_cusps}, in particular page 487.  The one-skeleton of the \crush\ is the graph $\Gamma$ dual to the nerve $\gamma$ of the circle packing defined there.  We thank Jessica Purcell for pointing this out.

Figure \ref{6prismlinks} illustrates two augmented links with the same underlying polyhedron,  each depicted draped over the one-skeleton of its \crush, the $6$-prism.  (More generally, for $k \geq 3$ we will call the $k$-\textit{prism} the polyhedron combinatorially isomorphic to the cartesian product of a $k$-gon with an interval.)  Since the associated right-angled ideal polyhedron is obtained by truncating vertices of the \crush, its ideal vertices occur at midpoints of edges.  Each triangular face resulting from truncation is paired with one of its neighbors across an ideal vertex producing a clasp; thus for each vertex of the \crush, exactly one edge which abuts it is encircled by a clasp.  Each other edge carries a single strand of the ``horizontal'' component of the augmented link.  

\begin{figure}
\includegraphics{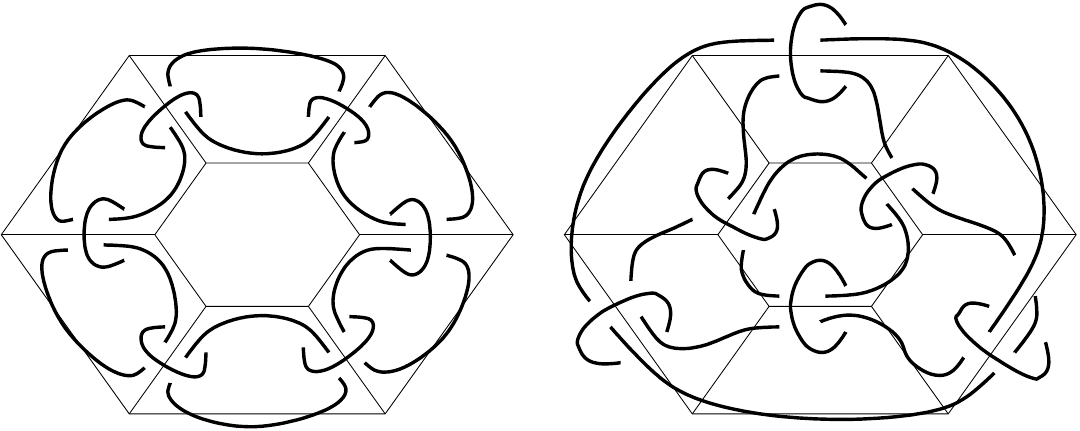}
\caption{Two augmented links with \crush\ the $6$-prism.}
\label{6prismlinks}
\end{figure}

Since the ideal polyhedron $\calp$ associated to an augmented link is canonically obtained from its \crush, each symmetry of the \crush\ determines a combinatorial symmetry of $\calp$.  Together with Mostow rigidity, this implies:

\begin{lemma}\label{symmetric crush}  Let $L$ be an augmented link, $\calp$ the associated right-angled ideal polyhedron in $\mathbb{H}^3$, and $\calc$ its \crush.  There is a canonical injection $\mathrm{Sym}(\calc) \to \mathrm{Sym}(\calp)$.  \end{lemma}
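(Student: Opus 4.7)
My plan is to exhibit the map combinatorially and then invoke rigidity to promote it to a group of hyperbolic isometries.

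First I would unpack the truncation relationship between $\calc$ and $\calp$. Since every vertex of $\calc$ is trivalent, truncating each vertex replaces it with a triangle while leaving the remaining combinatorial structure intact. This gives canonical bijections: faces of $\calc$ correspond to the white faces of $\calp$, vertices of $\calc$ correspond to the black triangular faces of $\calp$, and edges of $\calc$ correspond to the ideal vertices of $\calp$ (with incidence preserved in the obvious way). Thus any combinatorial automorphism $\sigma$ of $\calc$ determines a combinatorial automorphism $\tilde\sigma$ of the abstract cell structure on $\calp$, and by construction $\tilde\sigma$ preserves the checkering (sending white faces to white faces and black faces to black faces).

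Next I would promote $\tilde\sigma$ to an actual hyperbolic isometry of $\calp$. This is the heart of the argument. One approach is via Andreev's theorem: a right-angled ideal polyhedron in $\mathbb{H}^3$ is determined up to isometry by its combinatorial type, so the combinatorial automorphism $\tilde\sigma$ is realized by a unique isometry $\Phi(\sigma)\in\mathrm{Isom}(\mathbb{H}^3)$ preserving $\calp$. Alternatively, one can apply Mostow rigidity to the finite-volume reflection orbifold $\mathbb{H}^3/W$, where $W$ is the Coxeter group generated by reflections in the faces of $\calp$: the combinatorial automorphism induces a self-homotopy-equivalence of this orbifold, which must be realized by an isometry, and that isometry lifts to an element of $\mathrm{Sym}(\calp)$.

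Finally, I would verify that the assignment $\sigma\mapsto\Phi(\sigma)$ is a homomorphism (immediate from uniqueness of the realizing isometry) and is injective. For injectivity, note that one can read off $\sigma$ from $\Phi(\sigma)$: the isometry $\Phi(\sigma)$ determines a permutation of the white and black faces of $\calp$, and by the bijections above this recovers the action of $\sigma$ on faces and vertices of $\calc$, which in turn determines $\sigma$. The main obstacle is not a deep one but rather the appeal to rigidity; once Andreev's theorem (already cited earlier in the paper in the proof of Lemma \ref{nonpos}) is invoked, the rest is a straightforward combinatorial bookkeeping exercise.
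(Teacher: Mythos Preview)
Your proposal is correct and follows essentially the same approach as the paper, which dispatches the lemma in a single sentence: canonicity of truncation gives a combinatorial symmetry of $\calp$ from one of $\calc$, and Mostow rigidity promotes it to an isometry. Your write-up simply unpacks the bookkeeping (the bijections between faces/vertices/edges) and the injectivity check that the paper leaves implicit, and offers Andreev's uniqueness theorem as an alternative to Mostow rigidity; either rigidity statement suffices here.
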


Lemma \ref{symmetric crush} implies that the complement of an augmented link with a highly symmetric \crush\ may be commensurable with the reflection group in the associated right-angled polyhedron.

\begin{lemma}\label{symmetric links}  Let $L$ be an augmented link, $\calp$ the associated right-angled polyhedron, and $\calc$ its \crush, and suppose $\calc$ has the property that for each clasp component $K$ of $L$, corresponding to an edge $e$ of $\calc$ with vertices $v$ and $v'$, \begin{enumerate}
\item  if $K$ encloses a twist region with an even number of crossings, there is a reflective involution of $\calc$ preserving $e$ and exchanging $v$ with $v'$.  
\item  if $K$ encloses a twist region with an odd number of crossings, there is a rotational involution of $\calc$ preserving $e$ and exchanging $v$ with $v'$.  \end{enumerate}
Then $\pi_1(S^3-L) < \Gamma_{\calp} \rtimes \mathrm{Sym}(\calp)$, where $\Gamma_{\calp}$ is the group generated by reflections in $\calp$ and $\mathrm{Sym}(\calp)$ is the group of symmetries of $\calp$.\end{lemma}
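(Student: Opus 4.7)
My plan is to verify the hypotheses of Lemma \ref{symm comm} for the decomposition $\{\calp,\overline\calp\}$ of $S^3-L$ supplied by Lemma \ref{LAD decomp}; this will immediately give $\pi_1(S^3-L)<\Gamma_\calp\rtimes\mathrm{Sym}(\calp)$.  Using the relation $\phi_{\bar f}=\rho_{f_0}\circ\phi_f\circ\rho_{f_0}$, the verification for faces of $\overline\calp$ reduces to that for faces of $\calp$, so it suffices to check for each face $f$ of $\calp$ that $\phi_f\circ\rho_f$ is an isometry from $\calp$ onto the polyhedron containing $\phi_f(f)$.  For a white face this is immediate from Lemma \ref{LAD decomp}(1), giving $\phi_f\circ\rho_f=\rho_{f_0}\colon\calp\to\overline\calp$.

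For a black face $f$ coming from a clasp $K$, let $f'$ be the other black face of $\calp$ at the common clasp vertex $u$, and let $e\subset\calc$ be the corresponding edge of the \crush, with endpoints $v,v'$ in bijection with $f,f'$.  By hypothesis I obtain an involution $\sigma_e\in\mathrm{Sym}(\calc)$ preserving $e$ and exchanging $v,v'$; via Lemma \ref{symmetric crush}, $\sigma_e$ lifts to an involutive isometry $\tilde\sigma_e\in\mathrm{Sym}(\calp)$ that swaps $f,f'$ and fixes $u$.  Because the canonical injection from $\mathrm{Sym}(\calc)$ to $\mathrm{Sym}(\calp)$ preserves orientation behaviour, $\tilde\sigma_e$ is orientation-reversing (a reflection in a hyperplane through $u$) when $K$ has an even twist and orientation-preserving (a $\pi$-rotation about an axis through $u$) when $K$ has an odd twist.

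The crux is then a uniqueness argument based on the characterization of $\phi_f$ in Lemma \ref{LAD decomp}(2).  In the even case I will check that $\tilde\sigma_e\circ\rho_f$ is an orientation-preserving isometry sending $f$ to $f'$ and fixing $u$, and, using the conjugation identity $\tilde\sigma_e\rho_f=\rho_{f'}\tilde\sigma_e$ combined with $\tilde\sigma_e(\calp)=\calp$, carrying $\calp$ to $\rho_{f'}(\calp)$, a polyhedron meeting $\calp$ only in $f'$.  These are precisely the four properties characterizing $\phi_f$ in Lemma \ref{LAD decomp}(2a), so by uniqueness $\phi_f=\tilde\sigma_e\circ\rho_f$ and hence $\phi_f\circ\rho_f=\tilde\sigma_e\in\mathrm{Sym}(\calp)$.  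The parallel computation in the odd case will show that $\rho_{f_0}\circ\tilde\sigma_e\circ\rho_f$ satisfies the four properties characterizing $\phi_f$ in Lemma \ref{LAD decomp}(2b), giving $\phi_f\circ\rho_f=\rho_{f_0}\circ\tilde\sigma_e$, an isometry from $\calp$ onto $\overline\calp$.  With the hypothesis of Lemma \ref{symm comm} verified in every case, the conclusion follows.

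The main subtlety I expect is confirming that the reflective/rotational dichotomy on $\calc$ translates into the correct orientation behaviour for $\tilde\sigma_e$ in $\mathrm{Sym}(\calp)$, matched against the fixed orientation-preserving nature of $\phi_f$ and the orientation-reversing $\rho_f$ and $\rho_{f_0}$; this is precisely what makes the uniqueness identifications above produce symmetries of $\calp$ in the even case and $\rho_{f_0}$ followed by a symmetry of $\calp$ in the odd case.  Once the orientation correspondence is in hand, the remainder of the argument is the formal verification that the characterizing properties of $\phi_f$ from Lemma \ref{LAD decomp}(2) are met by the proposed expressions.
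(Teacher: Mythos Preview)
Your proposal is correct and follows essentially the same approach as the paper's proof.  Both arguments identify, for each black face $f$, an involutive symmetry of $\calp$ (called $\iota_e$ in the paper, $\tilde\sigma_e$ in yours) exchanging $f$ with $f'$ and fixing their shared ideal vertex, then verify that the face pairing $\phi_f$ from Lemma~\ref{LAD decomp} equals $\iota_e\circ\rho_f$ in the even case and $\rho_{f_0}\circ\iota_e\circ\rho_f$ in the odd case.  The paper concludes directly that each generator $\phi_f$ of $\pi_1(S^3-L)$ lies in $\Gamma_\calp\rtimes\mathrm{Sym}(\calp)$; you reach the same conclusion by packaging the verification as the hypothesis of Lemma~\ref{symm comm}, and you are somewhat more explicit about the uniqueness step and the reduction for faces of $\overline\calp$ via $\phi_{\bar f}=\rho_{f_0}\phi_f\rho_{f_0}$.
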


\begin{proof}  Lemma \ref{symmetric crush} implies that for each edge $e$ of $\calc$ corresponding to a clasp $K$ of $L$, there is an involution $\iota_e$ of $\calp$ that exchanges the triangular faces $f$ and $f'$ corresponding to $v$ and $v'$, and fixes the ideal vertex that they share.   This involution is  a reflection or $180$-degree rotation in case (1) or (2) above, respectively.

We now use the notation of Lemma \ref{LAD decomp}, and record that case (\ref{even twist}) there is the same as case (1) above.  In this case, $\iota_e \circ \rho_f$ realizes the orientation-preserving isometry $\phi_f$ there.  In case (2) above, corresponding to case (\ref{odd twist}) of Lemma \ref{LAD decomp}, the required isometry $\phi_f$ is realized by $\rho\circ\iota_e\circ\rho_f$.  \end{proof}

Lemma \ref{symmetric links} implies for instance that the link on the left-hand side of Figure \ref{6prismlinks} is commensurable with the reflection group in the corresponding right-angled polyhedron, but it does not apply to the link on the right-hand side on account of the twist region with a single crossing.  On the other hand, the commensurability classes of some links are entirely determined by their \crush s.

\begin{cor} \label{rt ang prism}  Suppose $L$ is an augmented link such that the \crush\ of $L$ is a regular polyhedron.  Then $\pi(S^3-L)$ is commensurable with the reflection group in the sides of the corresponding right-angled polyhedron.  \end{cor}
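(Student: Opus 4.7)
The plan is to check that when $\calc$ is a Platonic solid, the hypotheses of Lemma~\ref{symmetric links} are automatically satisfied for every clasp of $L$, regardless of the parity of crossings in the twist region that clasp encloses.  Once that verification is in place, Lemma~\ref{symmetric links} yields $\pi_1(S^3-L)\subset\Gamma_{\calp}\rtimes\mathrm{Sym}(\calp)$.  Since $\mathrm{Sym}(\calp)$ is finite, $\Gamma_{\calp}$ has finite index in this semidirect product, and since both $\pi_1(S^3-L)$ and $\Gamma_{\calp}$ are lattices in $\mathrm{Isom}(\mathbb{H}^3)$ the inclusion of $\pi_1(S^3-L)$ also has finite index; the two groups are therefore commensurable.

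The key verification is the following: for any edge $e$ of a regular polyhedron $\calc$ with endpoints $v$ and $v'$, the stabilizer of $e$ in $\mathrm{Sym}(\calc)$ contains both a reflective involution exchanging $v$ and $v'$, and a rotational involution exchanging $v$ and $v'$.  For a Platonic solid these are classical symmetries: by edge-transitivity the midpoint of $e$, the center of $\calc$, and the line between them lie in an orthogonal configuration with $e$, so the plane through the midpoint of $e$ perpendicular to $e$ (which contains the center) gives an orientation-reversing reflection swapping the endpoints, while the $180^{\circ}$ rotation about the axis through the center and the midpoint of $e$ gives an orientation-preserving involution that also swaps them.  Hence whichever of conditions (1) or (2) of Lemma~\ref{symmetric links} is required by the parity of the twist region, the necessary involution is available in $\mathrm{Sym}(\calc)$, and by Lemma~\ref{symmetric crush} it lifts to a symmetry of $\calp$ of the correct type.

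There is no substantial obstacle here; the entire content is the observation that a Platonic solid is rich enough in symmetries to handle both parities at every edge simultaneously.  If one wanted to be even more economical, one could note that the full symmetry group of a Platonic solid acts transitively on ordered pairs $(e,v)$ with $v \subset e$, so it suffices to produce both involution types at a single edge, after which they propagate to every edge.
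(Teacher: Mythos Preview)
Your argument is correct and is exactly the intended deduction: the paper states this as a corollary of Lemma~\ref{symmetric links} without further proof, and the content you supply---that every edge of a Platonic solid admits both a reflective and a rotational edge-stabilizing involution swapping its endpoints, so both parity cases of Lemma~\ref{symmetric links} are covered---is precisely what the reader is meant to fill in. Your finite-index observations in the closing sentences are also the right way to pass from the containment $\pi_1(S^3-L)<\Gamma_{\calp}\rtimes\mathrm{Sym}(\calp)$ to commensurability with $\Gamma_{\calp}$.
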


In some cases the \crush\ of an augmented link may not have much symmetry, but it may be built from highly symmetric polyhedra.  In such cases the link may have hidden symmetries.  We will say a \crush\ is \textit{decomposable} if it contains a prismatic $3$-cycle --- that is, a sequence of three faces so that any two intersect along an edge but all three do not share a common vertex --- and \textit{indecomposable} otherwise.  

\begin{figure}
\input{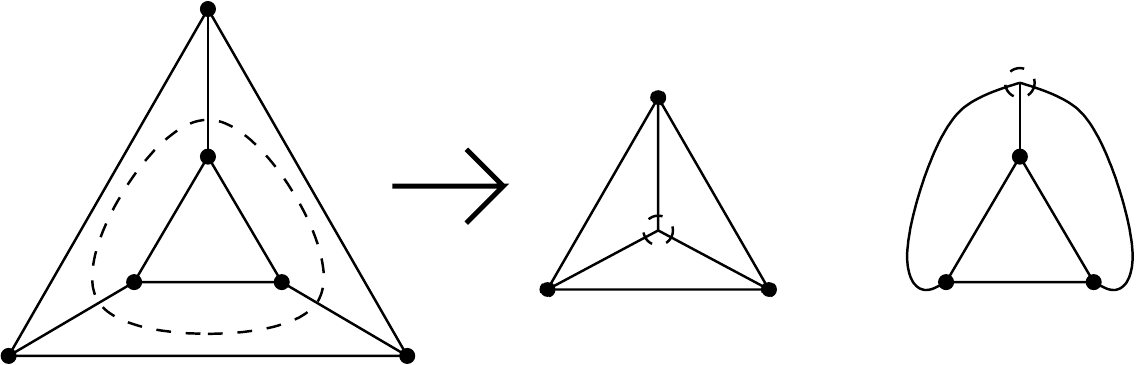_t}
\caption{Decomposing the $3$-prism into two tetrahedra}
\label{decompose3prism}
\end{figure}

If $\calc$ is a decomposable \crush, we decompose along a prismatic $3$-cycle by selecting a simple closed curve $\gamma$ which lies in the union of the faces of the cycle and intersects each of the edges of the cycle once, and using the following procedure: cut along $\gamma$, separate the components that result, and complete each by replacing $\gamma$ with a single vertex containing the endpoints of all the three edges intersecting it.  This is illustrated for the triangular prism in Figure \ref{decompose3prism}, with the dotted curve on the left-hand side representing $\gamma$.  Decomposing results in a disjoint union of two tetrahedra.

Suppose $L$ is a link with a decomposable \crush\ $\calc$, and let $f_0$, $f_1$, and $f_2$ determine a prismatic $3$-cycle of $\calc$.  Then the corresponding faces in the associated right-angled ideal polyhedron $\calp$, obtained by truncating vertices of $\calc$, do not pairwise intersect but each two share an ideal vertex.  It is an elementary fact of hyperbolic geometry that there is a single hyperplane $\calh$ which perpendicularly intersects the hyperplanes containing each of $f_0$, $f_1$ and $f_2$.  Cutting $\calp$ along $\calh$ decomposes it into two new right-angled ideal polyhedra, each with an ideal triangular face contained in $\calh$.  Their \crush s are obtained by decomposing $\calc$ along the prismatic cycle determined by $f_0$, $f_1$, and $f_2$.

\begin{lemma}\label{hidden symmetries}  Suppose $L$ is an augmented link such that the \crush\ of $L$ decomposes into a disjoint union of copies of $\calc$, where $\calc$ is a regular polyhedron.  Then $\pi_1(S^3-L)$ is contained in $\Gamma_{\calp} \rtimes \mathrm{Sym}(\calp)$, where $\calp$ is the right-angled ideal polyhedron obtained from $\calc$ by truncating vertices.  \end{lemma}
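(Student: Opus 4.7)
The plan is to refine the Agol--Thurston decomposition of $S^3-L$ from Lemma \ref{LAD decomp} into a decomposition by copies of $\calp$, show this refinement lifts to a union of tiles of the Coxeter tiling $\calt_\calp$ of $\mathbb{H}^3$ generated by reflections in faces of $\calp$, and verify that each generating face pairing of $\pi_1(S^3-L)$ lies in $\Gamma_\calp\rtimes\mathrm{Sym}(\calp)$.

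First I would cut $\calp_L$ and $\overline{\calp_L}$ along the perpendicular hyperplanes associated to the prismatic $3$-cycles of $\calc_L$; by hypothesis the resulting pieces are copies of $\calp$. Fixing a base tile $\calp_0$ of $\calt_\calp$ and identifying one $\calp$-piece of $\calp_L$ with $\calp_0$, I would prove by induction on the tree of cuts that every $\calp$-piece of $\calp_L$ coincides with a tile of $\calt_\calp$. For two such pieces $P_1,P_2$ sharing a cut triangle $f$, one has $P_2=\alpha\cdot\gamma_f(P_1)$ for some isometry $\alpha$ of $\mathbb{H}^3$ fixing $f$ setwise and preserving sides of its hyperplane; the regularity of $\calc$ makes the stabilizer of $f$ in $\mathrm{Sym}(\calp)$ act on $f$ as the full dihedral group, so $\alpha\in\mathrm{Sym}(\calp)$ and is therefore a symmetry of $\gamma_f(P_1)$ as well, forcing $P_2=\gamma_f(P_1)\in\calt_\calp$. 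Since $\rho_{f_0}\in\Gamma_\calp$ carries $\calp_L$ to $\overline{\calp_L}$, all of $S^3-L$ lifts to a union of $\calt_\calp$-tiles.

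Next I would verify that each generating face pairing from Lemma \ref{LAD decomp} lies in $\Gamma_\calp\rtimes\mathrm{Sym}(\calp)$. The white-face pairings $\phi_f=\rho_{f_0}\circ\rho_f$ are compositions of reflections across supporting planes of $\calt_\calp$-tile faces (the supporting plane of a cut white face of $\calp_L$ coincides with that of each of its coplanar $\calp$-pieces), so $\rho_f,\rho_{f_0}\in\Gamma_\calp$. For the black-face pairings (cases \ref{even twist} and \ref{odd twist} of Lemma \ref{LAD decomp}), I would adapt the argument of Lemma \ref{symmetric links}, constructing for each clasp ideal vertex $v$ of $\calp_L$ an involution $\iota_v\in\Gamma_\calp\rtimes\mathrm{Sym}(\calp)$ fixing $v$ and exchanging the two black truncation triangles at $v$, so that $\phi_f=\iota_v\circ\rho_f$ (even twist) or $\phi_f=\rho_{f_0}\circ\iota_v\circ\rho_f$ (odd twist).

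The hard part will be constructing $\iota_v$ when the clasp edge of $\calc_L$ at $v$ is a cut cycle edge, so that the two black truncation triangles belong to distinct $\calp$-tiles $P_1$ and $P_2=\gamma_\calh(P_1)$ on opposite sides of a cut hyperplane $\calh$ through $v$. In that case the required involution is realised as $\iota_v=\tau\circ\gamma_\calh$, where $\tau\in\mathrm{Sym}(\calp)$ is an involution of $P_2$ fixing $v$ and exchanging the two truncation triangles at $v$ in $P_2$; its existence follows from the flag-transitivity of $\mathrm{Sym}(\calp)$, itself a consequence of regularity of $\calc$. When $v$ does not lie on any cut hyperplane, an involution $\iota_v\in\mathrm{Sym}(\calp)$ of a single $\calp$-tile suffices. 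Once each $\phi_f$ has been exhibited in $\Gamma_\calp\rtimes\mathrm{Sym}(\calp)$, the conclusion $\pi_1(S^3-L)<\Gamma_\calp\rtimes\mathrm{Sym}(\calp)$ follows from Poincar\'e's polyhedron theorem.
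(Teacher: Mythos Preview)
Your overall strategy matches the paper's: refine the Agol--Thurston decomposition into copies of $\calp$, show these are tiles of $\calt_\calp$, and then check that the face pairings of Lemma~\ref{LAD decomp} lie in $\Gamma_\calp\rtimes\mathrm{Sym}(\calp)$. Your argument that the $\calp$-pieces of $\calp_L$ are genuine tiles of $\calt_\calp$ is correct, and the white-face pairings are handled correctly.

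The gap is in your construction of $\iota_v$ in the ``hard case.''  Since $P_2=\gamma_\calh(P_1)$ and $\gamma_\calh$ fixes the cut triangle $T\subset\calh$ pointwise, it carries the dark face of $P_1$ opposite $T$ at $v$ (namely $f_w$) to the dark face of $P_2$ opposite $T$ at $v$ (namely $f_{w'}$).  Thus $\gamma_\calh$ already exchanges $f_w$ and $f_{w'}$, and your $\iota_v=\tau\circ\gamma_\calh$ sends $f_w\mapsto f_{w'}\mapsto T$, not to $f_{w'}$; it is also not an involution.  What you actually need are \emph{two} involutions, a reflective one for even twist and a rotational one for odd twist (cf.\ Lemma~\ref{symmetric links}).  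The reflection $\gamma_\calh$ handles the even case, but producing the rotational involution requires more care: on a horosphere at $v$ the planes of $T$ and $f_{w'}$ meet it in \emph{parallel} lines, so composing $\gamma_\calh$ with a reflection through either gives a translation, not an order-two rotation.  Your case analysis is also incomplete: if several prismatic cuts pass through the clasp edge, the two black faces can lie in $\calp$-tiles that are not adjacent, and your construction does not address this.

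The paper circumvents both issues by proving a stronger claim by induction on the number of tiles in $\calp_0$: for \emph{every} pair of dark faces sharing an ideal vertex, there exist in $\Gamma_\calp\rtimes\mathrm{Sym}(\calp)$ both a reflective and a rotational involution exchanging them.  The inductive step peels off one tile $\gamma(\calp)$ from $\calp_1$ across a cut triangle $f$; for $f_0\subset\gamma(\calp)$ and $f_1\subset\calp_1$ sharing $v$, it takes the already-known involutions exchanging $f_0\leftrightarrow f$ and $f_1\leftrightarrow f$ and composes them (together with $\rho_{f_1}$) to produce the required involutions for $f_0\leftrightarrow f_1$.  This is exactly the bookkeeping your direct construction is missing.
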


\begin{proof}  There is a tiling $\calt$ of $\mathbb{H}^3$ consisting of $\Gamma_{\calp}$-translates of $\calp$.  If $\calc_0$ is the \crush\ of $L$, then the hypothesis and the description above the lemma establish that the associated right-angled polyhedron $\calp_0$ is a union of tiles of $\calt$.  Checkering $\calp_0$ so that dark faces are triangles obtained by truncating vertices of $\calc_0$, we claim that for each pair of dark faces $f$ and $f'$ which share an ideal vertex $v$, there exist in $\Gamma_{\calp} \rtimes \mathrm{Sym}(\calp)$ both a reflective and a rotational involution of $\mathbb{H}^3$ exchanging $f$ and $f'$ and fixing $v$.  We will prove the claim by induction on the number of tiles comprising $\calp_0$.  The case of one tile, $\calc_0 = \calc$, follows as in the proof of Lemma \ref{symmetric links} from the fact that $\calc$ is regular.  

Suppose that $\calp_0$ is the union of more than one tile, and let $\gamma(\calp)$ be a $\Gamma_{\calp}$-translate of $\calp$ such that $\calp_0$ is the union of $\gamma(\calp)$ and a polyhedron $\calp_1 \subset \calt$ across a face $f$ which is an ideal triangle.  The checkering of $\calp_0$ determines checkerings of each of $\calp_1$ and $\gamma(\calp)$ by declaring $f$ to be dark.  The claim holds for $\calp_1$ by induction and for $\gamma(\calp)$ by the base case.  Thus it only remains to verify the claim for dark faces of $\calp_0$ sharing an ideal vertex, one of which lies in $\calp_1$ and one in $\gamma(\calp)$.

Suppose $f_0$ and $f_1$ are dark faces, of $\gamma(\calp)$ and $\calp_1$ respectively, which share an ideal vertex $v$ in $\calp_0$.  Then each of $f_0$ and $f_1$ shares $v$ with $f$.  Let $\rho_0$ (respectively, $\rho_1$) be a reflective involution in $\Gamma_{\calp} \rtimes \mathrm{Sym}(\calp)$ fixing $v$ and exchanging $f_0$ (resp. $f_1$) with $f$, and let $\iota_0$ and $\iota_1$ be rotational involutions satisfying the same description.  Then $\rho_1 \circ \rho_0$ and $\iota_1 \circ \rho_0$ are isometries of infinite order taking $f_0$ to $f_1$.  This can be discerned by considering their actions on a horosphere centered at $v$, intersected by $\gamma(\calp)$ and $\calp_1$ in adjacent rectangles.  The first acts on this cross section as a translation and the second as a glide reflection.  If $\rho_{f_1}$ is reflection in the hyperplane containing $f_1$, it follows that $\rho_{f_1} \circ \rho_1 \circ \rho_0$ and $\rho_{f_1} \circ \iota_1 \circ \rho_0$ satisfy the conclusion of the claim.

The conclusion of the lemma now follows from Lemma \ref{LAD decomp}.  \end{proof}

\subsection{Examples with low complexity}\label{sec:low cx augmented}

The most natural measure of complexity of an augmented link is the number of twist regions, which is equal to half the number of dark faces of the associated right-angled polyhedron, or half the number of vertices of its \crush.  Here we will classify the augmented link complements with up to five twist regions up to \textit{scissors congruence}.  We will say that finite-volume hyperbolic 3-manifolds are scissors congruent if they can be cut into identical collections of ideal polyhedra.  It is natural for us to use this invariant because many different augmented links may be produced by different choices of face pairing on the same underlying right-angled polyhedron.  

\begin{lemma}\label{indecomp up to ten}  The indecomposable \crush s with at most ten vertices are the tetrahedron, the cube (or $4$-prism), and the $5$-prism.  \end{lemma}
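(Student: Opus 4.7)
The plan is a case analysis driven by Euler's formula. Since $\calc$ is $3$-valent and simple, $2E = 3V$, giving $V = 2F - 4$, $E = 3F - 6$, and $\sum_k (6-k) f_k = 12$, where $f_k$ counts $k$-gonal faces. The constraint $V \leq 10$ forces $(V, F) \in \{(4, 4), (6, 5), (8, 6), (10, 7)\}$.

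My first step would be the reduction: \emph{if $\calc$ has a triangular face and is indecomposable, then $\calc$ is the tetrahedron.} Let $f$ be a triangular face with vertices $v_1, v_2, v_3$ and let $g_{ij}$ denote the neighbor sharing the edge $v_iv_j$. At each $v_i$, the third edge (not on $f$) is shared by $g_{ij}$ and $g_{ik}$; call its far endpoint $u_i$. Then $g_{12}, g_{13}, g_{23}$ pairwise share edges, so indecomposability forces them to share a common vertex $w$, necessarily off $f$. But $g_{12}$ and $g_{13}$ share the edge $v_1 u_1$, and the simple-polyhedron condition says their only common vertices are $v_1$ and $u_1$; hence $w = u_1$, and symmetrically $w = u_2 = u_3$. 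Each $g_{ij}$ then has vertex set $\{v_i, v_j, w\}$, so all four faces are triangles and $\calc$ is the tetrahedron.

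Assuming now $f_3 = 0$, the Euler identity becomes $2 f_4 + f_5 = 12 + \sum_{k \geq 7}(k - 6) f_k \geq 12$. The case $F = 5$ is impossible, since $\sum f_k \leq 5$ forces $2 f_4 + f_5 \leq 10$. The case $F = 6$ forces $f_4 = 6$ and all other $f_k = 0$; this is uniquely realized by the cube (for instance, because the octahedron is the only simplicial $3$-polytope on $6$ vertices with every vertex of degree $4$). The case $F = 7$ gives, after combining $2 f_4 + f_5 \leq 14 - 2 \sum_{k \geq 7} f_k$ with the lower bound, that no face has seven or more sides, and then exactly two face vectors survive: $(f_4, f_5, f_6) = (5, 2, 0)$, realized by the $5$-prism, and $(f_4, f_5, f_6) = (6, 0, 1)$.

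I would eliminate $(6, 0, 1)$ by passing to the simplicial dual, a triangulation of $S^2$ on seven vertices in which one vertex $v_0$ of degree $6$ is adjacent to the remaining six. The link of $v_0$ is a hexagonal $6$-cycle, and closing the complementary disk into a triangulation requires three chords; each chord $v_iv_j$ yields a triangle $v_0 v_i v_j$ in the $1$-skeleton that is not a face, i.e., a non-facial triangle, which dualizes to a prismatic $3$-cycle in $\calc$. For the $(5, 2, 0)$ case the simplicial dual has degree sequence $(5, 5, 4, 4, 4, 4, 4)$; if the two degree-$5$ vertices were adjacent one quickly locates a $K_{3,3}$ subgraph, contradicting the planarity of $1$-skeleta of $3$-polytopes (Steinitz). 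Hence they are non-adjacent, each is adjacent to all five degree-$4$ vertices, and the remaining edges must form a $5$-cycle among the degree-$4$ vertices — the pentagonal bipyramid, whose dual polyhedron is the $5$-prism. A direct check shows the $5$-prism has no prismatic $3$-cycle. The main obstacle is the $F = 7$ analysis, in which the duality correspondence between prismatic $3$-cycles in $\calc$ and non-facial triangles in the simplicial dual is the essential technical device.
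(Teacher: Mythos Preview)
Your argument is correct and reaches the same conclusion, but by a genuinely different route than the paper's.

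The paper works entirely in the primal polyhedron. Its key structural tool is a claim: if two edges emanating from distinct vertices of a face $f$ share a far endpoint, they bound a triangular face with an edge of $f$. Combined with the initial observation about triangular faces, this immediately rules out any $k$-gon with $k \geq 6$ when $V \leq 10$ (the $k$ outgoing edges cannot have distinct endpoints). So the paper never confronts your $(f_4,f_5,f_6)=(6,0,1)$ case at all. Euler's relation then gives only $(j,k)=(6,0)$ or $(5,2)$, and uniqueness is handled directly: a $k$-gon surrounded by quadrilaterals forces the $k$-prism, while adjacent pentagons would force $V\geq 11$.

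Your approach instead passes to the simplicial dual and exploits the correspondence between prismatic $3$-cycles and non-facial triangles. This is more systematic --- it generalises cleanly, and the elimination of $(6,0,1)$ via chords in the hexagonal link is quite clean --- but it imports more machinery (Steinitz, duality, planarity obstructions). The paper's approach is more self-contained.

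One small point: your $K_{3,3}$ step for the $(5,2,0)$ case is a sketch. If the two degree-$5$ vertices $a,b$ are adjacent, they share at least three common neighbours among the $c_i$; when they share four (so $c_5$ is adjacent to $c_1,\dots,c_4$) you do get a $K_{3,3}$ subgraph on $\{a,b,c_5\}$ versus $\{c_1,c_2,c_3\}$. When they share exactly three, one sometimes needs to contract the edge $c_4c_5$ to produce a $K_{3,3}$ \emph{minor} rather than a subgraph. The conclusion stands either way, but ``subgraph'' is slightly too strong as stated.
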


\begin{proof}  The only indecomposable \crush\ with a triangular face is the tetrahedron, since the family of faces adjacent to a triangular face determines a prismatic $3$-cycle unless they share a common vertex.  On the other hand, if a \crush\ $\calc$ with at most ten vertices has a face which is a $k$-gon for $k \geq 6$, then two edges which emanate from distinct vertices of this face must share a common endpoint.  That $\calc$ is decomposable follows from the claim below.

\begin{claim}  Suppose $f$ is a face of a \crush\ $\calc$, and $e_0$ and $e_1$ are distinct edges of $\calc$, each with one endpoint on $f$, which share a vertex $v$.  Then $e_0$ and $e_1$ bound a triangle face of $\calc$ together with an edge of $f$.  \end{claim}

\begin{proof}[Proof of claim]  The set $f \cup e_0 \cup e_1$ cuts $\partial \calc$ into two disks.  Let $D$ be the closure of the disk that does not intersect the edge $e_2 \neq e_0, e_1$ with an endpoint at $v$.  There is a face $f' \subset D$ of $\calc$ which has $v$ as a vertex and $e_0$ and $e_1$ as edges.  Then $f'$ intersects $f$ along an edge $e_0'$ with an endpoint at $e_0 \cap f$ and also along an edge $e_1'$ with an endpoint at $e_1\cap f$.  But since $f$ and $f'$ cannot meet along more than one edge, we must have $e_0' = e_1'$.  Thus since $e_0 \cup e_1 \cup e_0'$ forms a simple closed curve in the boundary of $f'$, $f' = D$ is a triangle.  \end{proof}

Thus if $\calc$ is indecomposable and not a tetrahedron, with at most ten vertices, then every face of $\calc$ is a quadrilateral or pentagon.  Let $j$ be the number of quadrilateral faces and $k$ the number of pentagon faces, and let $v$ and $e$ be the number of vertices and edges, respectively.  Since each vertex is $3$-valent, we have $3v=2e$, and since each edge bounds two faces we have $2e = 4j+5k$.  Computing the Euler characteristic thus yields:
$$ v - e + (j+k) = \frac{4j+5k}{3} - \frac{4j+5k}{2} + (j+k) = \frac{j}{3}+\frac{k}{6}=2.  $$
Using the equation above we find that $j+k/2 = 6$.  Since we require that $\calc$ have at most ten vertices, the vertex and edge equations yield $4j+5k \leq 30$.  Thus using the fact that $j$ and $k$ are non-negative integers, we find that either $j=6$ and $k=0$ (and hence $v=8$) or $j=5$ and $k=2$ (and $v= 10$).  The cube and the $5$-prism respectively realize these possibilities.  It remains to show that these are the unique \crush s with the prescribed numbers of quadrilateral and pentagon faces.

In general, if a \crush\ $\calc$ has a $k$-gon face which is adjacent to only quadrilaterals, then $\calc$ is the $k$-prism.  This immediately implies that the only \crush\ with six quadrilateral faces and no pentagons is a cube.  Similarly, if $\calc$ is an indecomposable \crush\ with two pentagonal faces and five quadrilaterals, then $\calc$ is a $5$-prism unless the pentagonal faces are adjacent.  In the latter case, we note that the union of the pentagonal faces has eight vertices, and by the claim above and indecomposability, the three ``free'' edges emanating from one of them have distinct vertices.  Hence $\calc$ has at least eleven vertices, a contradiction.  Therefore the $5$-prism is the only indecomposable \crush\ with five quadrilateral faces and two pentagons.  \end{proof}

\begin{lemma}\label{decomp up to ten}  If $\calc$ is a decomposable \crush\ with at most ten vertices, a maximal sequence of decompositions yields a disjoint union of up to four tetrahedra or of a single tetrahedron and a single cube.  \end{lemma}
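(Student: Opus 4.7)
The plan is to argue by a vertex-counting invariant. The key step is to show that a single decomposition of a \crush\ $\calc$ with $v$ vertices along a prismatic $3$-cycle produces two \crush s $\calc_1, \calc_2$ whose vertex counts satisfy $v_1 + v_2 = v + 2$. To see this, note that the three shared edges $e_0, e_1, e_2$ of the prismatic cycle have six \emph{distinct} endpoints: a shared endpoint of $e_i$ and $e_j$ for $i\neq j$ would lie in $f_i \cap f_j \cap f_k$, contradicting the definition of a prismatic $3$-cycle. Hence the cut curve $\gamma$, meeting each $e_i$ at an interior point, partitions the original $v$ vertices between the two pieces, and the completion step replaces $\gamma$ by one new vertex in each piece, accounting for the $+2$. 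Moreover, each new vertex is incident to three half-edges, so each piece is again a $3$-valent polyhedron, i.e.\ a \crush.

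Iterating, a maximal sequence of decompositions of $\calc$ producing $k$ indecomposable \crush s satisfies $\sum_{i=1}^{k} v_i = v + 2(k-1)$. By Lemma \ref{indecomp up to ten}, each indecomposable piece is a tetrahedron (4 vertices), a cube (8 vertices), or a $5$-prism (10 vertices). Writing $a, b, c$ for the respective numbers of cubes, tetrahedra, and $5$-prisms among the pieces, the vertex equation simplifies to
\[ 3a + b + 4c = (v-2)/2. \]
Since every \crush\ is $3$-valent, $v$ is even and the right side is an integer.

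The hypothesis $v \leq 10$ gives $3a + b + 4c \leq 4$, while the hypothesis that $\calc$ is decomposable gives $k = a + b + c \geq 2$. If $c \geq 1$, then $a + b \geq 1$ forces $3a + b + 4c \geq 5$, contradicting the bound; hence $c = 0$ and no $5$-prism appears as a factor. The remaining nonnegative integer solutions to $3a + b \leq 4$ with $a + b \geq 2$ are $(a,b) \in \{(0,2), (0,3), (0,4), (1,1)\}$, yielding respectively two, three, or four tetrahedra, or one tetrahedron together with one cube. This is the conclusion of the lemma.

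The main subtlety in the plan is establishing the $+2$ vertex-count formula, which hinges on the combinatorial observation that the three shared edges of a prismatic $3$-cycle are pairwise vertex-disjoint. Once that is secured, the lemma reduces to the routine Diophantine enumeration above.
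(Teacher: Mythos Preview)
Your argument is correct and shares its core with the paper's: both establish that a single decomposition satisfies $v_1 + v_2 = v + 2$. From there the paper proceeds by a brief recursive case analysis on $v \in \{6,8,10\}$, tracking the pieces step by step, while you iterate to $\sum_i v_i = v + 2(k-1)$ and solve the resulting Diophantine constraint $3a + b + 4c \leq 4$ with $a+b+c \geq 2$. Your packaging is a bit more systematic and makes the exclusion of the $5$-prism explicit rather than leaving it implicit in the case analysis.

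One small point you should state: invoking Lemma~\ref{indecomp up to ten} requires each terminal piece to have at most ten vertices. This is immediate --- since every \crush\ has at least four vertices, each piece of a single decomposition has at most $v-2$ vertices, so inductively no piece ever exceeds the original $v \leq 10$ --- but as written your appeal to that lemma is not yet justified.
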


\begin{proof}  Suppose $\calc$ is a decomposable \crush, and let $\calc_0$ and $\calc_1$ be obtained by decomposing $\calc$ along a prismatic $3$-cycle.  If $v$, $v_0$, and $v_1$ are the numbers of vertices of $\calc$, $\calc_0$, and $\calc_1$, respectively, then from the description of decomposition one finds that 
$$v+2 = v_0 + v_1.$$
It is easy to see that each \crush\ has at least four vertices, and that the tetrahedron is the unique such with exactly four.  Thus by the equation above, any \crush\ with six vertices decomposes into two tetrahedra.  (By the classification of indecomposable \crush s, every \crush\ with six vertices is decomposable.)  If $\calc$ is a decomposable \crush\ with eight vertices, we thus find that a sequence of two decompositions yields a disjoint union of three tetrahedra.

Finally, suppose that $\calc$ is a decomposable \crush\ with ten vertices, and decompose it along a prismatic $3$-cycle into \crush s $\calc_0$ and $\calc_1$ with $v_0 \leq v_1$ vertices, respectively.  Then either $v_0 = v_1 = 6$ or $v_0 = 4$ and $v_1=8$.  In the former case, the above implies that neither $\calc_0$ nor $\calc_1$ is indecomposable; hence each decomposes into a disjoint union of two tetrahedra.  In the case $v_0 = 4$ and $v_1 = 8$, $\calc_0$ is a tetrahedron.  If $\calc_1$ is indecomposable, it is a cube; othewise, a sequence of two decompositions cuts it into a disjoint union of three tetrahedra.  \end{proof}

The scissors congruence classification of augmented links with up to five twist regions is now readily obtained.  Below let $L$ be an augmented link.  \begin{itemize}
\item  If the \crush\ of $L$ decomposes into a disjoint union of tetrahedra, then $S^3-L$ is a union of right-angled ideal octahedra.  It thus follows from Lemma \ref{hidden symmetries} and the results of \cite{Hatcher} that $\pi_1(S^3-L) < \mathrm{PGL}_2(\calo_1)$.  This holds in particular for all augmented links with at most three twist regions, or for any with four twist regions and a decomposable \crush.
\item  If $L$ has four twist regions and an indecomposable \crush, then $S^3-L$ is a union of two right-angled ideal cuboctahedra, and by Corollary \ref{rt ang prism} and the results of \cite{Hatcher}, $\pi_1(S^3-L) < \mathrm{PGL}_2(\calo_2)$.  \end{itemize}

\begin{figure}
\begin{center}
\includegraphics{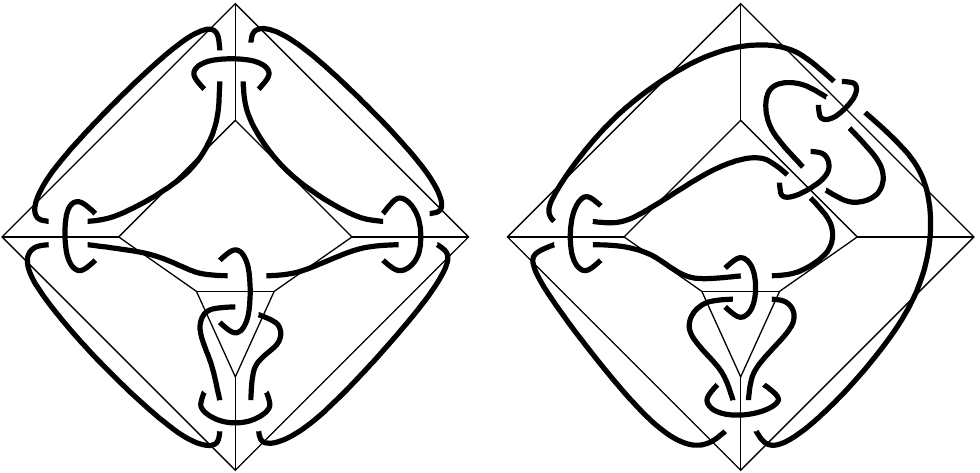}
\end{center}
\caption{Augmented links with $5$ twist regions and a decomposable \crush.}
\label{decomp5twist}
\end{figure}

In particular, the commensurability class of an augmented link with at most four twist regions is determined by its \crush, and each such link falls into one of two commensurability classes.  The augmented links with five twist regions display more variability.  \begin{itemize}
\item  If $L$ has five twist regions and an indecomposable \crush\ $\calc$, then $\calc$ is the $5$-prism.  In most cases, we have $\pi_1(S^3-L) < \Gamma_{\calp} \rtimes \mathrm{Sym}(\calp)$, where $\calp$ is the associated right-angled polyhedron, the double of the $5$-antiprism across one of its pentagon faces.  This holds by Lemma \ref{symmetric links}, unless $L$ has a twist region with an odd number of crossings that corresponds to an edge of a pentagon face of $\calc$. 
\item  If $L$ has five twist regions and a decomposable \crush\ that does not decompose into tetrahedra, then $S^3-L$ is a union of two right-angled octahedra and two cuboctahedra.  Two such links are pictured in Figure \ref{decomp5twist}.  Using the techniques of \cite[\S 4.3]{CD}, one can show that the horizontal component that runs across all vertices of the \crush\ on the right-hand side has cusp parameter that is $\mathrm{PGL}_2(\mathbb{Q})$-inequivalent to the parameters of all cusps of the left-hand link.  Hence their complements are incommensurable.  \end{itemize}

From the classification above, we find that an augmented link with at most five twist regions is almost determined up to commensurability by its \crush.  This is primarily because the indecomposable \crush s with at most ten vertices have so much symmetry.  Already among those with twelve vertices, we find an example with less symmetry.  This is pictured on the left-hand side of Figure \ref{indecomp}.  On the right-hand side is an augmented link that has this polyhedron as a \crush.

\begin{figure}[ht]
\begin{center}
\includegraphics{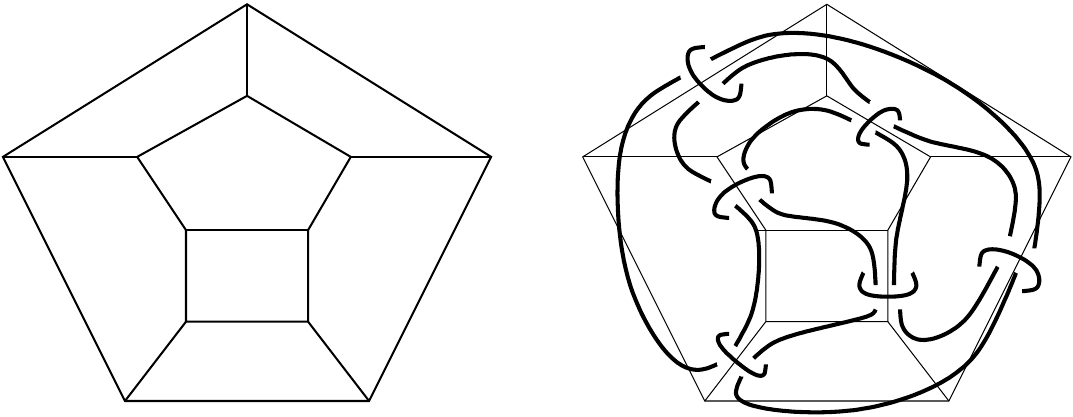}
\end{center}
\caption{An indecomposable \crush\ with $12$ vertices, and an augmented link built on it.}
\label{indecomp}
\end{figure}

\begin{lemma}\label{indecomp twelve}  The indecomposable \crush s with twelve vertices are the $6$-prism and the polyhedron on the left-hand side of Figure \ref{indecomp}.  \end{lemma}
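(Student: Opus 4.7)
The plan is to extend the Euler-characteristic arithmetic of Lemma \ref{indecomp up to ten} to the case $v = 12$. From $3v = 2e$ and $v - e + \sum_k f_k = 2$ I get $e = 18$ and $\sum_k f_k = 8$, while $\sum_k k f_k = 2e = 36$. The argument in the proof of Lemma \ref{indecomp up to ten} that any triangular face in a non-tetrahedral indecomposable \crush\ produces a prismatic 3-circuit still applies, so $f_3 = 0$. The ``no shared emanation endpoint'' claim from that proof also applies in this setting: it forces each $k$-gonal face to have $v \geq 2k$, so all faces have at most six sides. Combining, I reduce to $f_4 + f_5 + f_6 = 8$ and $4f_4 + 5f_5 + 6f_6 = 36$, i.e., $f_5 + 2 f_6 = 4$, leaving exactly the three admissible distributions $(f_4, f_5, f_6) \in \{(6,0,2),(5,2,1),(4,4,0)\}$.

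I would then rule out the middle distribution and identify the first with the 6-prism. In case $(6,0,2)$: two hexagons cannot share a vertex (such a vertex would be incident to two edges of each, violating 3-valency), so the hexagons partition the 12 vertices, and the 6 remaining edges form a perfect matching between them. The requirement that every remaining face be a quadrilateral then forces consecutive top vertices to match to consecutive bottom vertices, yielding the 6-prism. In case $(5,2,1)$: applying the emanation claim to the unique hexagon partitions the vertices into its 6 vertices $h_1,\ldots,h_6$ and their 6 distinct neighbors $n_1,\ldots,n_6$; each $n_i$ then has two residual edges, giving a 2-regular subgraph on $\{n_1,\ldots,n_6\}$. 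Two disjoint 3-cycles are ruled out by $f_3 = 0$, so this subgraph is a single 6-cycle $C$. In the planar embedding, $C$ separates the sphere into two regions; one contains the hexagon and the emanating edges, while the other contains no vertex or edge and so is a hexagonal face, forcing $f_6 \geq 2$ and contradicting $f_6 = 1$.

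This leaves $(4,4,0)$, and the aim is to show that the unique indecomposable \crush\ in this case is the one depicted on the left of Figure \ref{indecomp}. The strategy is to fix a pentagonal face $P$, mark its five outside neighbours (pairwise distinct by the emanation claim) together with the two remaining vertices, and then reconstruct the full adjacency structure by combining 3-valency, $f_3 = f_6 = 0$, the prescribed totals of three further pentagons and four quadrilaterals, and the absence of prismatic 3-circuits. The main obstacle is precisely this last step: the case analysis for $(4,4,0)$ is more delicate than for the other two distributions because of the number of local configurations of the two ``extra'' vertices relative to $P$, and the crux is showing that all of these compatibility constraints collapse to a single combinatorial type, which one then identifies explicitly with the polyhedron of Figure \ref{indecomp}.
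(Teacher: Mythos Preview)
Your setup and the reduction to the three face distributions $(f_4,f_5,f_6)\in\{(6,0,2),(5,2,1),(4,4,0)\}$ are correct, and your treatment of $(5,2,1)$ is fine. But there are two problems.

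First, your $(6,0,2)$ argument has a gap. You claim two hexagons cannot share a vertex because such a vertex would be incident to two edges of each; but if the hexagons share an \emph{edge}, each endpoint of that edge has valence exactly three (the shared edge plus one further edge in each hexagon), so your count does not apply. You would need to rule out this adjacent-hexagon configuration separately. The paper avoids this entirely by arguing, as you did for $(5,2,1)$, directly from a single hexagonal face: the emanation claim forces its six outside neighbours to exhaust the remaining vertices, the residual $2$-regular graph on those six must be a single $6$-cycle (no $3$-cycles by indecomposability), and you get the $6$-prism. This disposes of both $(6,0,2)$ and $(5,2,1)$ at once.

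Second, and more seriously, you have not proved the $(4,4,0)$ case. You describe a strategy (fix a pentagon, label its neighbours and the two remaining vertices, then reconstruct) and then explicitly flag the reconstruction as the ``main obstacle'' without carrying it out. The paper's argument here is short and structural rather than a brute case analysis: one shows that no vertex lies in three pentagons (such a configuration, together with two adjacent quadrilaterals, would fill a disk containing all twelve vertices and force a triangle in the complement), that no quadrilateral meets only pentagons (again by vertex/edge counting in the resulting disk), and that every pentagon meets at least two other pentagons. These three facts together force the four pentagons to form a prismatic $4$-cycle; since neither complementary region can be a single quadrilateral, the polyhedron is determined. You should replace your open-ended reconstruction plan with this adjacency argument.
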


\begin{proof}  Reasoning as in the proof of Lemma \ref{indecomp up to ten}, we find that a \crush\ with twelve vertices and a face which is a $k$-gon for $k > 6$ is decomposable, and that such a \crush\ with a hexagonal face is the $6$-prism.  Thus as in the proof of that lemma, we are left to consider \crush s with all quadrilateral and pentagon faces.  If $j$ is the number of quadrilateral and $k$ the number of pentagonal faces, an Euler characteristic calculation again yields $j +k/2 = 6$.  Counting vertices in this case yields $4j + 5k = 36$, and solving these two equations yields $j = 4$ and $k = 4$.

Let $\calc$ be an indecomposable \crush\ with twelve vertices and $4$ each of quadrilateral and pentagon faces.  Then every pentagon face of $\calc$ is adjacent to at least one other pentagon face.  

\begin{claim}  No vertex of $\calc$ is shared by three pentagon faces.  \end{claim}
\begin{proof}[Proof of claim]  Suppose $v$ is a vertex with this property, and let $v_0$, $v_1$, and $v_2$ be the vertices adjacent to $v$ in the one-skeleton of $\calc$.  Then for $i \in \{0,1,2\}$, let $f_i$ be the face of $\calc$ which contains $v_i$ but not $v$.  We may assume without loss of generality that $f_0$ and $f_1$ are quadrilaterals (at least two must be).

Consider the subcomplex of $\partial \calc$ which is the union of $f_0$, $f_1$, and the pentagon faces containing $v$.  If any edges on the boundary of this subcomplex were identified in $\partial \calc$, then it would have a prismatic $k$-cycle for $k \leq 3$; hence this subcomplex is a disk embedded in $\partial \calc$.  It contains all twelve vertices, and sixteen out of the eighteen edges of $\calc$.  But it is easy to see that any way of joining the four ``free'' vertices by two edges in the complement yields a triangular face, contradicting indecomposability.
\end{proof}

One may also rule out the possibility of a quadrilateral face which meets only pentagonal faces ---  the union of these faces would be an embedded disk containing all twelve vertices but only fourteen edges --- and to establish that each pentagonal face meets at least two other pentagonal faces.  
Thus the pentagonal faces form a prismatic $4$-cycle of $\calc$, neither of whose complementary regions can be occupied by a single quadrilateral.  It follows that $\calc$ is as pictured in Figure \ref{indecomp}.
\end{proof}

\subsection{L\"obell links}  \label{sec:lobel}

\begin{figure}[ht]
   \centering
   \input{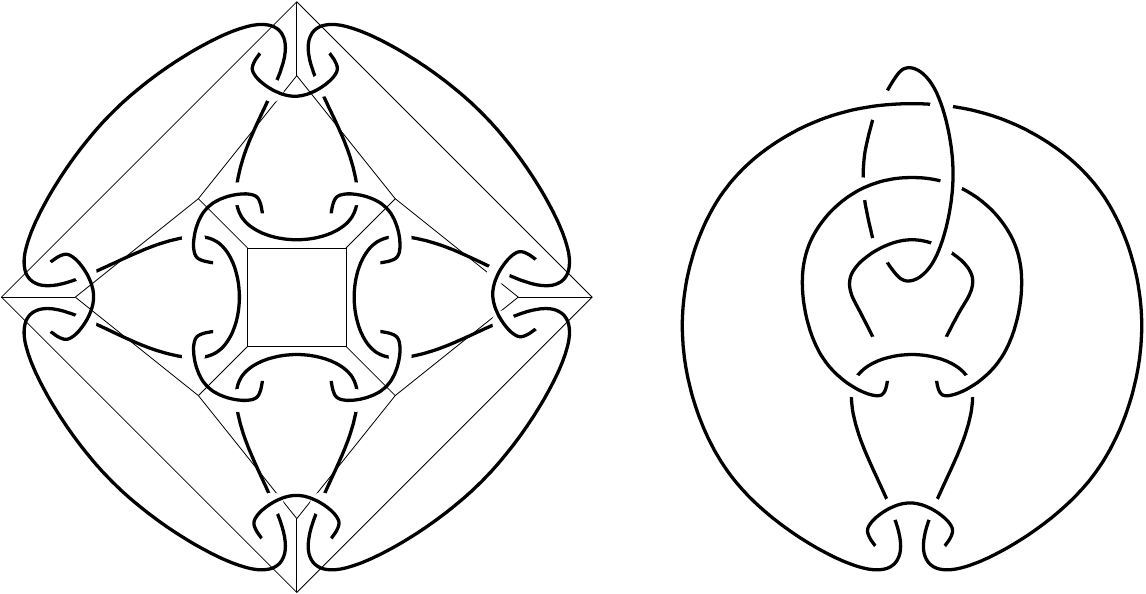_t}
   \caption{The L\"obell link $L(4)$ and its $4$-fold cyclic quotient.}
   \label{fig:L4 and quot}
\end{figure}

For $n \geq 3$, we will denote by $\call(n)$ the $n$th L\"obell polyhedron.  This is the unique polyhedron with vertices of valence $3$ and faces consisting of $n$-gons $F$ and $F'$, and $2n$ pentagons, such that $F$ has distance $3$ from $F'$ in the dual graph.  The L\"obell polyhedron $\call(4)$ is pictured on the left-hand side of Figure \ref{fig:L4 and quot}, under a link that has it as a \crush.  We denote this link $L(4)$.  There is an evident rotational symmetry of $(S^3,L(4))$, with order $4$ and quotient the link on the right-hand side of Figure \ref{fig:L4 and quot}.  An additional component, the fixed axis of this rotation, has been added to the diagram and labeled with $4$.  For arbitrary $n \geq 3$, we define $L(n)$ to be the link with \crush\ $\call(n)$ that $n$-fold branched covers the diagram on the right-hand side.  The main result of this section is:

\begin{thm}\label{Lobell thm}  For all but finitely many $n \geq 4$, $M(n) \doteq S^3 - L(n)$ is not arithmetic nor commensurable with any $3$-dimensional hyperbolic reflection orbifold.  Moreover, at most finitely many $M(n)$ occupy any commensurability class.  \end{thm}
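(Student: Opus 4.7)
The plan is to apply the Goodman--Heard--Hodgson machinery developed in Section \ref{sec:One cusp} to identify the canonical cell decomposition of $M(n)$ and, through it, its commensurator. First I would embed the right-angled ideal polyhedron $\calp_n$ obtained by truncating the vertices of $\call(n)$ in $\mathbb{H}^3$ so that its cyclic symmetry of order $n$ acts as rotation about a vertical axis in the upper half-space model. Using explicit horospherical vectors at the cusps of $M(n)$ and verifying the coplanarity and positive-tilt conditions of \cite[Lemma 3.1]{GHH}, I would show that the canonical tiling $\calt_n$ of $\mathbb{H}^3$ associated to $M(n)$ is generated by two copies of $\calp_n$ paired by the face pairings of Lemma \ref{LAD decomp}.

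Granted this, the non-arithmetic part of the commensurator $G(n)$ coincides with $\mathrm{Sym}(\calt_n)$, so it suffices to analyze this symmetry group. Following the strategy of Proposition \ref{OP comm}, I would examine which combinatorial symmetries of $\calt_n$ respect the black/white checkering and the twist-region data inherited from $L(n)$. The combinatorial symmetries of $\call(n)$ itself form a group isomorphic to $D_n \times \mathbb{Z}/2$ of order $4n$, namely the dihedral symmetries fixing the two $n$-gonal faces setwise together with the involution swapping them. By tracking how these symmetries permute the pentagonal faces of $\call(n)$ (each of which carries a clasp encircling a twist region) and comparing with the parity constraints in Lemma \ref{LAD decomp} cases (\ref{even twist}) and (\ref{odd twist}), a case-by-case inspection should show that every element of $G(n)$ surviving this combinatorial analysis is orientation-preserving, which rules out commensurability with any $3$-dimensional hyperbolic reflection orbifold.

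For non-arithmeticity, I would compute the invariant trace field of $\pi_1 M(n)$ from the explicit representation above and verify that for $n$ large it is not an imaginary quadratic extension of $\mathbb{Q}$. Alternatively, Borel's volume formula for arithmetic Kleinian groups bounds covolumes from below in each commensurability class, and since $\mathrm{vol}(M(n))$ grows linearly in $n$, only finitely many $M(n)$ can be arithmetic. For the final claim about pairwise non-commensurability, I would use the fact that the canonical tiling is a commensurability invariant in the non-arithmetic setting: since $\calt_n$ consists of tiles combinatorially isomorphic to $\calp_n$, and $\calp_n$ is not combinatorially isomorphic to $\calp_m$ for $n \neq m$, any fixed commensurability class contains at most one of the non-arithmetic $M(n)$.

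The main obstacle will be the combinatorial case analysis in the second step: ruling out all orientation-reversing elements of $\mathrm{Sym}(\calt_n)$ requires verifying, uniformly in $n$, that no symmetry of $\call(n)$ lifts compatibly with the L\"obell face pairings to an orientation-reversing isometry of $\mathbb{H}^3$. A secondary technical point is verifying the positive-tilt condition for adjacent tiles of $\calt_n$; here the cyclic symmetry of order $n$ reduces the computation to a fundamental sector containing only finitely many adjacencies to check, making the verification independent of $n$ once a single sector has been handled.
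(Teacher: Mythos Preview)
Your proposal has a fatal gap at the step where you rule out commensurability with a reflection orbifold. You propose to show that every element of $\mathrm{Comm}(\pi_1 M(n))$ is orientation-preserving, but this is simply false: by Lemma \ref{projection reflection}, reflection through the projection plane is an isometry of every augmented link complement, so $M(n)$ itself has an orientation-reversing self-isometry. The paper's commensurator $\langle \Gamma(n),\sfa,\sfs,\sfd_1\rangle$ explicitly contains the reflections $\sfd_1$ and $\sfs$. Thus the obstruction to being commensurable with a reflection orbifold cannot be orientation-preservation. The paper's argument is entirely different: it pins down the full commensurator and then examines the stabilizer of one particular cusp, showing that all reflections in that stabilizer fix mutually parallel lines on the horosphere, so no reflection subgroup can act with finite coarea there.

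There is also a structural difference you should be aware of. You propose to run the Goodman--Heard--Hodgson algorithm on each $M(n)$ and hope the computation is uniform in $n$. The paper avoids this entirely: it quotients $M(n)$ by its cyclic symmetry to get $O(n)$, observes that the $O(n)$ are all Dehn fillings on a single five-cusped link complement $S^3-L$, and runs GHH \emph{once} on the limit group $\Gamma$ (Proposition \ref{prop:comm}). The conclusion for all but finitely many $n$ then follows from geometric convergence and Chabauty compactness. This sidesteps the difficulty that $M(n)$ has a growing number of cusps, which would make a direct Epstein--Penner analysis intractable, and it is also why the theorem only claims ``all but finitely many $n$.'' Finally, your argument that the $M(n)$ are pairwise incommensurable via combinatorics of $\calp_n$ presupposes that the canonical decomposition of $M(n)$ is the given polyhedral one, which you have not established; the paper instead uses the unbounded degree of the invariant trace field $k\Gamma(n)\supset\mathbb{Q}(\cos(4\pi/n))$.
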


\begin{remark}  Since $\call(5)$ is the dodecahedron, $L(5)$ falls under the purview of Corollary \ref{rt ang prism} and so is commensurable with a right-angled reflection orbifold.  Therefore the stipulation ``all but finitely many'' above is necessary.  We do not know of any $M(n)$ that is arithmetic, however.  We note also that $\call(3)$ decomposes into two tetrahedra and a cube, whereas $\call(n)$ is indecomposable for $n > 3$.  \end{remark}

Proving the theorem requires identifying the commensurator quotient of $M(n)$.  We begin by identifying the symmetry group of $\call(n)$.  

\begin{fact}  For $n \neq 5$, the symmetry group of $\call(n)$ has presentation
$$ \Sigma(n) = \langle\ \sfa, \sfb_n, \sfs\,|\, (\sfb_n)^n = \sfs^2 = \sfa^2 = 1, \sfs\sfb_n \sfs = (\sfb_n)^{-1}, \sfa\sfb_n\sfa = (\sfb_n)^{-1}, \sfa \sfs \sfa = \sfb_n\sfs\ \rangle.  $$
The subgroup $\langle \sfa, \sfb_n\rangle$ preserves orientation, and $\sfs$ reverses it.  The subgroup $\langle \sfb_n,\sfs \rangle$ preserves each $n$-gon face, and $\sfa$ exchanges them.  \end{fact}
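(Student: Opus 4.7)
The plan is to realize $\sfa$, $\sfb_n$, and $\sfs$ as explicit combinatorial symmetries of $\call(n)$, verify the stated relations, and then show $|\mathrm{Sym}(\call(n))|\le 4n$ when $n\neq 5$.

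First I would describe $\call(n)$ concretely. Label the vertices of $F$ cyclically as $a_0,\ldots,a_{n-1}$ and those of $F'$ as $a_0',\ldots,a_{n-1}'$, and denote the upper (resp.\ lower) ring of pentagons by $P_0,\ldots,P_{n-1}$ (resp.\ $Q_0,\ldots,Q_{n-1}$), where $P_i$ shares the edge $a_ia_{i+1}$ with $F$ and $Q_i$ the edge $a_i'a_{i+1}'$ with $F'$. The uniqueness clause in the definition of $\call(n)$ forces the two rings to be glued antiprismatically, realising $\call(n)$ as an antiprism-like polyhedron with the vertices of $F'$ rotated by $\pi/n$ relative to those of $F$; this determines the adjacencies of upper and lower pentagons through a middle ring of $2n$ trivalent vertices.

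Next, define the generators: let $\sfb_n$ be the combinatorial symmetry induced by $a_i\mapsto a_{i+1}$, $a_i'\mapsto a_{i+1}'$; let $\sfs$ be induced by $a_i\mapsto a_{-i}$, $a_i'\mapsto a_{-1-i}'$ (a reflection fixing $a_0$); and let $\sfa$ be induced by $a_i\mapsto a_{-i}'$, $a_i'\mapsto a_{-i}$ (a half-turn about a horizontal axis). Each formula extends uniquely to a combinatorial automorphism of $\call(n)$ because it respects the antiprismatic gluing, and a direct computation on the $a_i$ and $a_i'$ verifies all four relations; for instance $\sfa\sfs\sfa(a_i)=a_{1-i}=\sfb_n\sfs(a_i)$, with the analogous identity on the $a_i'$. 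Geometrically $\sfb_n$ and $\sfa$ are orientation-preserving while $\sfs$ is a reflection; moreover $\sfb_n$ and $\sfs$ each fix $\{F,F'\}$ pointwise, whereas $\sfa$ swaps them.

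The four relations force the abstract group $G=\langle\sfa,\sfb_n,\sfs\mid\cdots\rangle$ to have every element in the normal form $\sfa^\epsilon\sfb_n^i\sfs^\delta$ with $\epsilon,\delta\in\{0,1\}$ and $0\le i<n$: the relations involving only $\sfb_n$ and $\sfs$ make $\langle\sfb_n,\sfs\rangle$ a dihedral quotient of order at most $2n$, and the remaining two relations show $\sfa$ normalises it. Hence $|G|\le 4n$. To see $|G|=4n$, observe that in the realisation inside $\mathrm{Sym}(\call(n))$ the element $\sfb_n$ has order exactly $n$ (it acts as an $n$-cycle on $\{P_i\}$), $\sfa\notin\langle\sfb_n\rangle$ because only $\sfa$ swaps $\{F,F'\}$, and $\sfs\notin\langle\sfa,\sfb_n\rangle$ because $\sfs$ reverses orientation while $\sfa,\sfb_n$ preserve it.

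Finally, $|\mathrm{Sym}(\call(n))|\le 4n$ will be proved using $n\neq 5$. Since every face of $\call(n)$ other than $F$ and $F'$ is a pentagon, the hypothesis $n\neq 5$ guarantees that $\{F,F'\}$ is the set of $n$-gon faces and hence is preserved by every symmetry. The stabiliser of $F$ acts on $F$ as a subgroup of $D_n$, and the action is faithful: a symmetry fixing every vertex and edge of $F$ must fix each $P_i$ setwise, hence by trivalence at $a_i$ fixes the unique middle vertex adjacent to $a_i$, and iterating this argument with the antiprismatic gluing propagates to every vertex of $\call(n)$. Therefore $|\mathrm{Stab}(F)|\le 2n$ and $|\mathrm{Sym}(\call(n))|\le 4n$; combined with $G\hookrightarrow\mathrm{Sym}(\call(n))$ and $|G|=4n$ this identifies $G$ with the full symmetry group. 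The main obstacle is the careful index bookkeeping needed to make $\sfa$ and $\sfs$ compatible with the antiprismatic gluing and to verify the asymmetric relation $\sfa\sfs\sfa=\sfb_n\sfs$; once the labels are chosen coherently, every other step is an elementary permutation check.
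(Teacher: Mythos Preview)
Your argument is correct and follows essentially the same route as the paper's proof: both exploit the hypothesis $n\neq 5$ to ensure that every symmetry preserves the pair $\{F,F'\}$, identify the stabiliser of $F$ with its dihedral symmetry group (the paper phrases this as ``since $\langle\sfb_n,\sfs\rangle$ is the entire symmetry group of $F$, it is the entire kernel''), and exhibit $\sfa$ as an explicit involution swapping $F$ and $F'$. Your treatment is somewhat more explicit in the vertex bookkeeping and in the order count $|G|=4n=|\mathrm{Sym}(\call(n))|$, whereas the paper works with edge labels and leaves the faithfulness of the action on $F$ implicit, but the underlying strategy is the same.
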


\begin{figure}
\begin{center}
\input{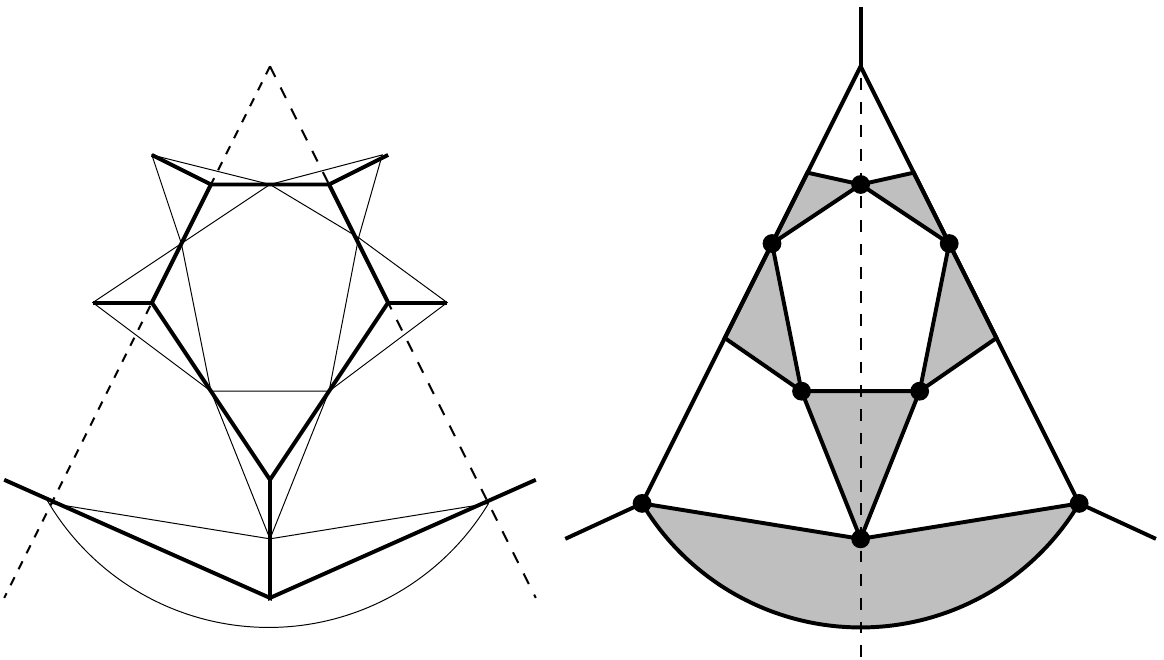_t}
\end{center}
\caption{A fundamental domain for the action of $\langle \sfb_n\rangle$ on $\call(n)$, and the corresponding sub-polyhedron $\calo(n)$ of $\calp(n)$.}
\label{Lnsymm}
\end{figure}

\begin{proof}  Since $n \neq 5$, $\call(n)$ has exactly two $n$-gon faces $F$ and $F'$.  Let $e_0,e_1,\hdots,e_{n-1}$ be a cyclic ordering of the edges of $F$; ie, for each $i$, $e_i$ shares a vertex with $e_{i+1}$, where $i+1$ is taken modulo $n$.  The union of $F$ with the pentagonal faces of $\call(n)$ that abut it is a disk $D$ embedded in $\partial \call(n)$, with boundary consisting of $2n$ edges that can be cyclically ordered $f_1,f_2,\hdots,f_{2n}$ as follows: for $0 \leq i < n$, let $F_i$ be the pentagonal face of $\call(n)$ containing $e_i$ and let $f_{2i+1} \subset F_i \cap \partial D$ and $f_{2(i+1)} \subset F_{i +1} \cap \partial D$ be the unique pair of edges that share a vertex (with $i+1$ taken modulo $n$).

We now let $\sfb_n$ be the rotational symmetry of $F$ taking $e_i$ to $e_{i+1}$ for each $i$, and take $\sfs$ to be the reflection of $F$ preserving $e_0$ and exchanging $e_i$ with $e_{n-i}$ for $0 < i<n$.  It is easy to see that these extend to a rotation and reflection of $\call(n)$, respectively, yielding the subgroup $\langle \sfb_n,\sfs \rangle$ described above (we refer to the extensions by the same name).  

There is a symmetry $\sfa$ of the embedded circle $f_1 \cup f_2 \cup \hdots f_{2n}$ that preserves $f_1$ and $f_{n+1}$, exchanging endpoints of each, and exchanges $f_i$ with $f_{2n+2-i}$ for $1<i\leq n$.  This extends to a rotational symmetry of $\call(n)$ taking $F$ to $F'$.  In particular, for $0 \leq i < n$, we can take $F_i'$ to be the pentagonal face adjacent to $F'$ that contains $f_{2i+1}$ and $f_{2(i+1)}$.  Then $\sfa$ takes $F_i$ to $F'_{n-i}$.

The relations on $\sfb_n$, $\sfs$, and $\sfa$ follow by considering their actions on $F$.  Since every automorphism of $\call$ either exchanges $F$ and $F'$ or preserves each, there is a map to $\mathbb{Z}/2\mathbb{Z} = \{\pm 1\}$ taking such an element to $-1$ or $1$, respectively.  The subgroup $\langle \sfb_n,\sfs\rangle$ is contained in the kernel of this map; since it is the entire symmetry group of $F$, it is the entire kernel.  Hence the entire symmetry group of $\call(n)$ is generated by $\langle \sfb_n,\sfs\rangle$ and $\sfa$, which maps to $-1$.
\end{proof}

A fundamental domain for the action on $\call(n)$ of the cyclic group $\langle \sfb_n \rangle$ is depicted on the left-hand side of Figure \ref{Lnsymm}, cut out by the dotted line segments.  These should be interpreted as meeting at the point at infinity, in addition to the center of $F$.  The segment that runs through the edge joining endpoints of $e_0$ and $f_{2n}$ is fixed by the reflection $\sfs\sfb_n$, and the other is fixed by $\sfb_n\sfs$.  

Recall that by Lemma \ref{symmetric crush}, each symmetry of $\call(n)$ determines a symmetry of the right-angled ideal polyhedron $\calp(n)$ obtained by truncating vertices of $\call(n)$.  In particular, $\sfs\sfb_n$ and $\sfb_n\sfs$ determine reflective symmetries of $\calp(n)$.  Cutting along the mirrors of these reflections yields the polyhedron $\calo(n)$ pictured on the right-hand side of the figure.  The three edges with ``free'' ends should again be interpreted as meeting at the point at infinity.  The darkened vertices of $\calo(n)$ are ideal; the remaining vertices, each the midpoint of an edge of $\calp(n)$, are not.

The intersection of the mirror of $\sfs$ with $\partial \calo(n)$ is the dotted axis on the right-hand side of Figure \ref{Lnsymm}.  Clearly, $\sfs$ restricts to an isometry of $\calo(n)$.  Although $\sfa$ does not preserve $\calo(n)$, it does preserve the sub-polyhedron, obtained by cutting along the mirror of $\sfs$, that contains the ideal vertex labeled $r_5$.  Indeed, it acts on this polyhedron as a $180$-degree rotation fixing $r_5$ and the midpoint of the edge labeled $2\pi/n$, exchanging each of $r_3$ and $r_2$ with an unlabeled ideal vertex.

Since $\calp(n)$ is right-angled, each edge of $\calo(n)$ that is contained in one of $\calp(n)$ has dihedral angle $\pi/2$.  Since the mirrors of $\sfs\sfb_n$ and $\sfb_n\sfs$ meet each edge of $\calp(n)$ transversely, each edge of $\calo(n)$ that is the intersection of $\partial\calp(n)$ with a mirror of one of these reflections has dihedral angle $\pi/2$ as well.  Thus the only edge of $\calo(n)$ with a dihedral angle different than $\pi/2$ is the intersection of the mirrors of $\sfs\sfb_n$ and $\sfb_n\sfs$, labeled $2\pi/n$ at the top of the figure.  That this is the dihedral angle follows from the fact that the product of these reflections is the rotation $(\sfb_n)^2$, through an angle of $2\cdot 2\pi/n$.

Each symmetry of $\call(n)$, $n \neq 5$, exchanges edges enclosed by clasps of $L(n)$; hence the corresponding isometry of $\calp(n)$ induces one of $M(n) = S^3-L(n)$.  Since $\calo(n)$ is a fundamental domain for the action of the rotation group $\langle \sfb_n\rangle$ on $\call(n)$, Lemma \ref{LAD decomp} implies $\calo(n) \cup \overline{\calo}(n)$ is a fundamental domain for the action on $\mathbb{H}^3$ of the orbifold fundamental group of $O(n) = M(n)/\langle\sfb_n\rangle$.  Here $\overline{\calo}(n) \doteq \sfd_1(\calo(n))$, where $\sfd_1$ is the reflection through the white face of $\calo(n)$ whose sole ideal vertex is $r_2$.  Using the further symmetries $\sfa$ and $\sfs$ of $\calp(n)$, we thus obtain the lemma below.

\begin{lemma}\label{O(n) group}  Let $\sfd_2$ be the reflection through the white face of $\calo(n)$ with ideal vertices $r_2$, $r_3$, $\sfs(r_3)$, $r_5$, $r_6$, and let $\sfc$ be the parabolic isometry fixing $r_3$ and taking $r_2$ to $r_5$.  Then $O(n)$ is isometric to $\mathbb{H}^3/\Gamma(n)$, where $$
\Gamma(n) = \langle\ \sfd_1\sfd_2, \sfd_1\sfd_2^{\sfa}, \sfd_1\sfd_2^{\sfs\sfa}, \sfd_1\sfd_1^{\sfa}, \sfb_n, \sfc, \sfc^{\sfa}, \sfc^{\sfs}, \sfb_n^{\sfd_1}, \sfc^{\sfd_1}, \sfc^{\sfd_1\sfa}, \sfc^{\sfd_1\sfs}\ \rangle.  $$
Furthermore, the isometry of $O(n)$ visible on the right-hand side of Figure \ref{fig:L4 and quot} as reflection  through the projection plane is induced by $\sfd_1$.  \end{lemma}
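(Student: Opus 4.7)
My plan is to derive the presentation by enumerating the face pairings of the fundamental domain $\calo(n)\cup\overline{\calo}(n)$ and then invoking the Poincar\'e polyhedron theorem. Since $\sfb_n$ acts on $M(n)$ as the deck transformation of the regular $n$-fold cyclic cover $M(n)\to O(n)$, there is a short exact sequence
\[
1 \to \pi_1(M(n)) \to \Gamma(n) \to \langle\sfb_n\rangle \to 1,
\]
realized inside $\mathrm{Isom}(\mathbb{H}^3)$ by $\Gamma(n)=\pi_1(M(n))\cdot\langle\sfb_n\rangle$. Lemma \ref{LAD decomp} provides $\calp(n)\cup\overline{\calp}(n)$ as a fundamental domain for $\pi_1(M(n))$, with $\overline{\calp}(n)=\sfd_1(\calp(n))$ and $\sfd_1=\rho_{f_0}$, where $f_0\subset\calp(n)$ is the white face whose sole ideal vertex is $r_2$. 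Because $(\sfs\sfb_n)(\sfb_n\sfs)=\sfs\sfb_n^2\sfs=\sfb_n^{-2}$, the mirrors of the two reflections $\sfs\sfb_n$ and $\sfb_n\sfs$ intersect along the axis of $\sfb_n$ at dihedral angle $2\pi/n$, so the wedge they bound is a fundamental domain for the $\langle\sfb_n\rangle$-rotation. Intersecting this wedge with $\calp(n)$ yields $\calo(n)$, and $\calo(n)\cup\overline{\calo}(n)$ is then a fundamental domain for $\Gamma(n)$.

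The face pairings of $\calo(n)\cup\overline{\calo}(n)$ fall into three types. First, the white faces of $\calo(n)$ distinct from $f_0$ inherit the gluings $\phi_f=\sfd_1\circ\rho_f$ of Lemma \ref{LAD decomp}; I expect that the orbits of these faces under $\sfa$ and $\sfs\sfa$ (which act on $\calp(n)$ via Lemma \ref{symmetric crush}) account for exactly the four listed generators $\sfd_1\sfd_2$, $\sfd_1\sfd_2^{\sfa}$, $\sfd_1\sfd_2^{\sfs\sfa}$, and $\sfd_1\sfd_1^{\sfa}$. Second, the two mirror faces of $\calo(n)$ created by the cuts along the mirrors of $\sfs\sfb_n$ and $\sfb_n\sfs$ are identified to each other by the rotation $\sfb_n$; likewise, their images in $\overline{\calo}(n)$ are identified by the conjugate $\sfb_n^{\sfd_1}$. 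Third, the black ideal triangular faces of $\calo(n)\cup\overline{\calo}(n)$---images of the truncation triangles of $\calp(n)$, corresponding to clasps of $L(n)$---are glued in pairs by parabolic face pairings; fixing one such gluing as $\sfc$ (as specified in the statement) and then conjugating by $\sfa$, $\sfs$, and $\sfd_1$ produces the remaining parabolic generators $\sfc^{\sfa},\sfc^{\sfs},\sfc^{\sfd_1},\sfc^{\sfd_1\sfa},\sfc^{\sfd_1\sfs}$.

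Once the enumeration is complete and the edge- and ideal-vertex-cycle conditions are verified (all non-cusp edge cycles close with total angle $2\pi$, and the cusp cycles have parabolic holonomy), the Poincar\'e polyhedron theorem guarantees that $\Gamma(n)$ is discrete with the given fundamental domain and is generated by the listed face pairings. The concluding claim that $\sfd_1$ induces the reflection visible in Figure \ref{fig:L4 and quot} is then immediate: by construction in the appendix to \cite{LAD}, the white face $f_0$ is contained in the projection plane of $L(n)$, so $\sfd_1=\rho_{f_0}$ realizes the reflection of Lemma \ref{projection reflection}, which is $\langle\sfb_n\rangle$-equivariant and hence descends to $O(n)$ as the indicated symmetry.

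The principal obstacle I anticipate is the combinatorial bookkeeping: explicitly identifying which white and black faces of $\calp(n)$ lie entirely inside $\calo(n)$ versus which are sliced by the mirrors, and verifying that their $\Sigma(n)$-orbits under $\sfa$, $\sfs$, and $\sfd_1$ exactly reproduce the twelve generators listed. In particular, confirming the count of four white-face pairings and six parabolic pairings requires a careful face census of $\calo(n)$ (using the picture on the right of Figure \ref{Lnsymm}) rather than an abstract symmetry argument. After this census, the remaining verifications are routine.
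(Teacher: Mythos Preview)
Your approach is correct and follows the same line as the paper: the paper's justification (the paragraph immediately preceding the lemma) observes that $\calo(n)\cup\overline{\calo}(n)$ is a fundamental domain for $\Gamma(n)$ by combining Lemma~\ref{LAD decomp} with the fact that $\calo(n)$ is a fundamental domain for $\langle\sfb_n\rangle$ acting on $\calp(n)$, and then invokes the symmetries $\sfa$ and $\sfs$ of $\calp(n)$ to organize the face pairings into the listed generators. Your proposal spells out more of the face census and explicitly invokes Poincar\'e's theorem, but the strategy is identical to the paper's sketch.
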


Let $L$ be the link in $S^3$ that is the union of the fixed locus of $O(n)$ with the other components pictured on the right-hand side of Figure \ref{fig:L4 and quot}.  Then $O(n)$ is obtained from $S^3-L$ by $(n,0)$-Dehn filling on the added component, where the meridian here is chosen to lie in the projection plane and the longitude bounds a $3$-punctured disk.  Because the singular locus of $O(n)$ is the image of the edge $e$ of $\calo(n)$ with dihedral angle $2\pi/n$, $S^3 - L$ is obtained from $\calo(n) - e$ by the restriction of the face pairings described in Lemma \ref{O(n) group}.  Thus Poincar\'e's polyhedron theorem implies:

\begin{lemma}\label{L group}  Let $\calo$ be the all-right polyhedron in $\mathbb{H}^3$ homeomorphic to $O(n) - e$, and let $\sfa$, $\sfb$, $\sfc$, $\sfd_1$ and $\sfd_2$ have the same combinatorial descriptions as the correspondingly-named isometries determined by $\calo(n)$.  Let
$$ \Gamma_L = \langle  \sfd_1\sfd_2, \sfd_1\sfd_2^{\sfa}, \sfd_1\sfd_2^{\sfs\sfa}, \sfd_1\sfd_1^{\sfa}, \sfb, \sfc, \sfc^{\sfa}, \sfc^{\sfs}, \sfb^{\sfd_1}, \sfc^{\sfd_1}, \sfc^{\sfd_1\sfa}, \sfc^{\sfd_1\sfs} \rangle.  $$
Then $S^3 - L$ is homeomorphic to $\mathbb{H}^3/\Gamma_L$.  \end{lemma}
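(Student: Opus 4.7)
The plan is to invoke Poincaré's polyhedron theorem for cusped hyperbolic manifolds (e.g.\ \cite[Theorem 11.2.2]{Ratcliffe}). First I would construct the polyhedron $\calo$ concretely. Starting from the family $\calo(n)$, in which the edge $e$ carries dihedral angle $2\pi/n$, let this angle tend to $0$: in the limit the two faces of $\calo(n)$ meeting along $e$ become asymptotic, so $e$ degenerates to a single new ideal vertex $\hat{e}$ at which these two faces become tangent. All other combinatorial and dihedral-angle data persist, so the resulting polyhedron $\calo$ is an all-right ideal polyhedron of finite volume, with one more ideal vertex than $\calo(n)$. Existence of $\calo$ with these properties can be confirmed either by this limiting construction or by a direct application of Andreev's theorem (in the version allowing ideal vertices). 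The symmetries $\sfa, \sfs, \sfd_1, \sfd_2$ and the parabolic $\sfc$ extend verbatim from $\calo(n)$ to $\calo$; the only substantive change is that the elliptic rotation $\sfb_n$ of order $n$ fixing $e$ is replaced by a parabolic isometry $\sfb$ fixing $\hat{e}$.

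I would then verify the hypotheses of Poincaré's theorem for the candidate fundamental domain $\calo \cup \overline{\calo}$ (where $\overline{\calo} = \sfd_1(\calo)$) with the face pairings specified by the generators of $\Gamma_L$. The face-pairing bijection and the edge cycle condition (each edge cycle has total angle $2\pi$) are inherited from the corresponding verifications for $\calo(n)$ that underlie Lemma \ref{O(n) group}, because all dihedral angles of $\calo$ are $\pi/2$, edge cycles have length $4$, and no edge cycle involves the removed edge $e$. The new ingredient is the completeness check at the cusp $\hat{e}$: one must verify that the stabilizer of $\hat{e}$ in $\Gamma_L$ acts on a horosphere as a rank-two lattice of Euclidean translations. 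This should follow from the fact that $\sfb$ is parabolic and that the ``horizontal'' generators arising from the boundary identifications of the faces incident to $\hat{e}$ commute with $\sfb$ and give linearly independent translations on the horospherical cross-section.

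Poincaré's theorem will then yield that $\Gamma_L$ is discrete, that $\calo \cup \overline{\calo}$ is a fundamental domain, and that $M' \doteq \mathbb{H}^3/\Gamma_L$ is a complete finite-volume hyperbolic $3$-manifold. To identify $M'$ with $S^3 - L$, I would note that $(n,0)$-Dehn filling the cusp of $M'$ corresponding to $\hat{e}$ amounts algebraically to adjoining the relation $\sfb^n = 1$, which turns $\Gamma_L$ into the group $\Gamma(n)$ of Lemma \ref{O(n) group}; the filled manifold is therefore $O(n)$. Since by construction (in the paragraph preceding Lemma \ref{L group}) the manifold $S^3 - L$ also Dehn-fills to $O(n)$ along the same slope on the axis component, and since the polyhedral decompositions match under the filling, we conclude that $M' \cong S^3 - L$.

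The main obstacle will be establishing the existence and correct combinatorics of $\calo$, in particular verifying that the two faces formerly adjacent to $e$ truly become tangent at an ideal vertex $\hat{e}$ while the remaining dihedral angles stay $\pi/2$ and the polyhedron retains finite volume. The cusp cross-section at $\hat{e}$ also warrants care: one must rule out the possibility that the stabilizer of $\hat{e}$ is merely cyclic, which would produce an incomplete structure rather than a genuine rank-two cusp.
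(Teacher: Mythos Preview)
Your approach is essentially the same as the paper's: invoke Andreev's theorem to produce the all-right polyhedron $\calo$ (with the edge $e$ replaced by an ideal vertex, so that $\sfb$ becomes parabolic), and then apply Poincar\'e's polyhedron theorem to the face-paired domain $\calo\cup\overline{\calo}$. The paper in fact says nothing more than this, apart from remarking that Andreev is what supplies $\calo$.

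One point where you take a detour: your final identification of $\mathbb{H}^3/\Gamma_L$ with $S^3-L$ via matching Dehn fillings is more work than needed and, as stated, not quite a proof (two manifolds with a common Dehn filling need not be homeomorphic). The paragraph immediately preceding the lemma already observes that $S^3-L$ is obtained \emph{topologically} from $\calo(n)-e$ by restricting the face pairings of Lemma~\ref{O(n) group}; since $\calo$ is by hypothesis homeomorphic to $\calo(n)-e$ and carries combinatorially identical face pairings, the identification $\mathbb{H}^3/\Gamma_L\cong S^3-L$ is immediate. Your care about the completeness check at the new cusp $\hat{e}$ is appropriate and is something the paper leaves implicit.
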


The only aspect of this lemma that requires comment is that Andreev's theorem implies that there is a right-angled polyhedron $\calo$ with the requisite combinatorial description.  An ideal vertex of $\calo$ replaces the edge of $\calo(n)$ with dihedral angle $2\pi/n$.  Thus $\sfb$ is parabolic, rather than elliptic like $\sfb_n$.

Denote by $r_7$ the ideal vertex of $\calo$ fixed by $\sfb$; that is, $r_7$ replaces the edge of $\calo(n)$ with dihedral angle $2\pi/n$.  The polyhedron obtained by cutting along the mirror of $\sfs$, that has $r_5$ as an ideal vertex, has $180$-degree rotational symmetry $\sfa$ fixing $r_5$ and $r_7$.  Therefore a single geodesic plane contains the ideal vertices $r_2$, $r_5$, $r_7$, and $\sfa(r_2)$.  Let $\calq_0$ be the polyhedron with $r_3$ as an ideal vertex that is obtained by cutting along this plane.

\begin{figure}
\begin{center}
\input{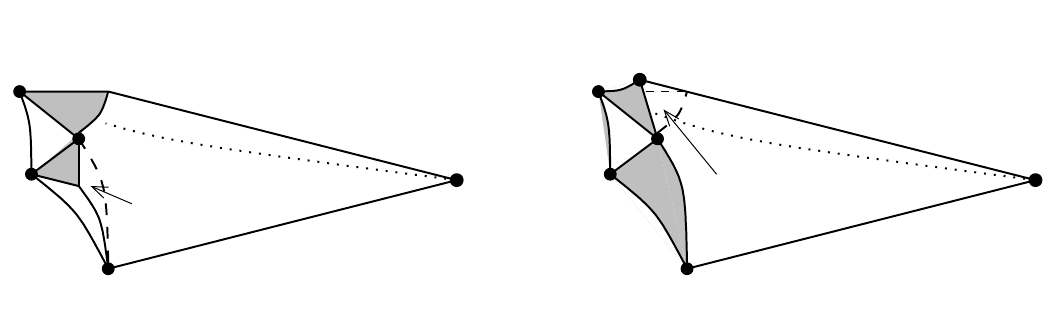_t}
\end{center}
\caption{$\calq_0$ and $\calq$}
\label{calq_0}
\end{figure}

An ideal polyhedron $\calq$ may be obtained from $\calq_0$ as follows.  The geodesic plane $[r_5,r_3,\sfa(r_2)]$ containing $r_5$, $r_3$, and $\sfa(r_2)$ cuts off a tetrahedron $\calt$, with a finite vertex opposite this plane, from the remainder of $\calq_0$.  Let $\calq = (\overline{\calq_0 - \calt})\cup \sfc^{-1}(\calt)$.  Since all edges abutting each finite vertex of $\calq_0$ have dihedral angle $\pi/2$, the finite vertices of $\calq_0$, which are identified in $\calq$, lie in the interior of an edge of $\calq$.  We have depicted $\calq_0$ and $\calq$ on the left- and right-hand sides of Figure \ref{calq_0}, respectively, coloring black the face of $\calq$ in $[r_5,r_3,\sfa(r_2)]$ and its image under $\sfc^{-1}$.

The lemma below follows from Poincar\'e's polyhedron theorem and the descriptions from Lemma \ref{L group} of face pairing isometries on $\calo \cup \sfd_1(\calo)$ yielding $S^3 - L$.

\begin{lemma}\label{commensurator}  Let $\Gamma = \langle \sfa, \sfc,\sfd_1,\sfd_2,\sfd_3 \doteq \sfa\sfs\sfa \rangle$ be generated by face pairings for $\calq$.  Then $\mathbb{H}^3/\Gamma$ is a three-cusped hyperbolic $3$-orbifold, and $\Gamma_L \lhd \Gamma$ with index $8$.  \end{lemma}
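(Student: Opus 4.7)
The plan is to apply Poincar\'e's polyhedron theorem to $\calq$ with the five face pairings $\sfa, \sfc, \sfd_1, \sfd_2, \sfd_3$, and then deduce the index and normality statements by a volume calculation combined with an explicit homomorphism from $\Gamma$ to a finite group.

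First I would enumerate the faces of $\calq$ and verify that each is paired by one of the listed isometries. The faces fall into three types: white faces inherited from $\calo$, paired by restrictions of $\sfa, \sfd_1, \sfd_2$, and $\sfd_3 = \sfa\sfs\sfa$ analogously to the pairings described in Lemma~\ref{L group}; faces produced by the two cuts used to carve $\calq_0$ out of $\calo$ (the $\sfs$-mirror and the plane $[r_2, r_5, r_7, \sfa(r_2)]$), paired among themselves by the involutions stabilizing these mirror planes; and the two black triangular faces, namely $[r_5,r_3,\sfa(r_2)]$ and its $\sfc^{-1}$-image bordering $\sfc^{-1}(\calt)$, paired by the parabolic $\sfc$. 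Every edge of $\calq$ has dihedral angle $\pi/2$: edges inherited from $\calo$ have this angle because $\calo$ is right-angled, and new edges along the cuts have it because the cutting planes are mirrors of elements in the symmetry group of $\calo$ and so meet every face they cross perpendicularly. Each edge cycle thus has total angle $2\pi$. Completeness at the ideal vertices follows once we know $\Gamma_L$ is a finite-index subgroup, since $\mathbb{H}^3/\Gamma_L$ is complete; the cusp count of three is confirmed by tracing the orbits of the ideal vertices $r_2, r_3, r_5, r_7, \sfa(r_2)$ through the face pairings.

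For the index, by construction $\calo$ is the union of four copies of $\calq_0$: cutting along the $\sfs$-mirror gives two halves, and each half decomposes under $\sfa$-rotation into two $\calq_0$-pieces via the plane $[r_2, r_5, r_7, \sfa(r_2)]$. Thus $\calo \cup \sfd_1(\calo)$ contains $8$ copies of $\calq_0$, and since $\mathrm{vol}(\calq) = \mathrm{vol}(\calq_0)$ the volume ratio gives $[\Gamma : \Gamma_L] = 8$.

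Normality is the main technical obstacle, and I would establish it by constructing an explicit homomorphism $\phi : \Gamma \to (\mathbb{Z}/2)^3$ with kernel exactly $\Gamma_L$. Define $\phi$ on generators by $\phi(\sfa) = (1,0,0)$, $\phi(\sfd_1) = (0,0,1)$, $\phi(\sfd_2) = (0,0,1)$, $\phi(\sfd_3) = (0,1,0)$, and $\phi(\sfc) = 0$; this forces $\phi(\sfs) = \phi(\sfa\sfd_3\sfa) = (0,1,0)$, consistent with the informal identification of $\sfs$ with the second $\mathbb{Z}/2$-factor. To see that $\phi$ is well-defined, one verifies that it vanishes on each edge-cycle relation of $\calq$; this holds because every such relation is a product of involutions in which each generator appears an even number of times, modulo the parabolic relation $\sfa\sfs\sfa = \sfb\sfs$ inherited from $\Sigma(n)$ (with the parabolic $\sfb$ replacing the elliptic $\sfb_n$). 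One then checks directly that each generator of $\Gamma_L$ listed in Lemma~\ref{L group} -- such as $\sfd_1\sfd_2$, $\sfd_1\sfd_1^{\sfa}$, and the parabolic $\sfb = \sfa\sfs\sfa\sfs$ -- lies in $\ker\phi$. Since $\phi$ is surjective, $[\Gamma : \ker\phi] = 8 = [\Gamma : \Gamma_L]$, so $\ker\phi = \Gamma_L$, and hence $\Gamma_L$ is normal with $\Gamma/\Gamma_L \cong (\mathbb{Z}/2)^3$.
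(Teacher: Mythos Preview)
Your application of Poincar\'e's polyhedron theorem and the volume-based index computation match the paper's argument; the paper likewise records that $\calo$ is a union of four copies of $\calq$ (via the scissors move relating $\calq$ and $\calq_0$), so that the fundamental domain $\calo \cup \sfd_1(\calo)$ for $\Gamma_L$ comprises eight copies.

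Your treatment of normality, however, differs from the paper's and has a gap. The paper does not construct a quotient homomorphism; normality is implicit in the prior observation (made before Lemma~\ref{O(n) group} and reused in the proof of Theorem~\ref{Lobell thm}) that $\sfa$, $\sfs$, and $\sfd_1$ each induce an isometry of the link complement and hence normalize $\Gamma_L$. Since $\Gamma = \langle \Gamma_L, \sfa, \sfs, \sfd_1 \rangle$ --- one recovers $\sfd_2 = \sfd_1\cdot(\sfd_1\sfd_2)$ and $\sfd_3 = \sfa\sfs\sfa$ from these --- this yields normality directly, and the quotient is visibly $(\mathbb{Z}/2)^3$. Your homomorphism $\phi$ would also work in principle, but the step ``$\phi$ vanishes on each edge-cycle relation because each generator appears an even number of times'' is asserted without justification and is not automatic: the edge cycles of $\calq$ involve the infinite-order element $\sfc$ alongside the four involutions, and there is no a priori parity constraint of the kind you invoke. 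Closing this gap requires actually enumerating the faces and edge cycles of $\calq$ and checking each relation word, which you have not done. Your face description is also imprecise in places: $\sfa$ is the $180^\circ$ rotation pairing the cut-face $[r_2,r_5,r_7,\sfa(r_2)]$ to itself, not a pairing of white faces of $\calo$; and $\calq$ has no face in the $\sfs$-mirror --- the relevant face lies in the $\sfd_3$-mirror, the $\sfa$-image of the $\sfs$-mirror, which is exactly why $\sfd_3 = \sfa\sfs\sfa$ appears among the face-pairing generators rather than $\sfs$ itself.
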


The isometry $\sfd_3$ defined in Lemma \ref{commensurator} acts as reflection in the face of $\calq$ containing $r_7$, $\sfa(r_2)$, $r_3$, and $\sfc^{-1}\sfa(r_2)$, since $\sfa$ takes this face into the mirror of $\sfs$.  That the other generators act as face pairings follows from previous observations.  The index computation uses the fact that $\calo$ is the union of $4$ isometric copies of $\calq$; namely, $\calo = \calq \cup \sfa(\calq) \cup \sfs(\calq \cup \sfa(\calq))$.  In verifying that each generator for $\Gamma_L$ lies in $\Gamma$, it is helpful to note that $\sfb = \sfd_1 \sfs \in \Gamma$.

The key result in the proof of Theorem \ref{Lobell thm} is the proposition below.

\begin{prop}\label{prop:comm}  $\Gamma$ is its own commensurator.  \end{prop}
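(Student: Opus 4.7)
The plan is to proceed as in the proof of Proposition \ref{OP comm}, applying the Goodman--Heard--Hodgson algorithm to compute $\mathrm{Comm}(\Gamma)$ directly from a canonical Epstein--Penner decomposition of $\mathbb{H}^3/\Gamma$. The argument will break into three parts: establishing non-arithmeticity of $\Gamma$, identifying the canonical tiling $\calt_\Gamma$ as the $\Gamma$-orbit of $\calq$ for a suitable choice of cusp horospheres, and ruling out symmetries of $\calt_\Gamma$ that fail to lie in $\Gamma$.

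First, because the commensurator of an arithmetic lattice is dense (by \cite[(1) Theorem]{Margulis}), we must verify that $\Gamma$ is non-arithmetic. I intend to do this in the same spirit as Section \ref{sec:One cusp}, by writing down explicit matrices in $\mathrm{PSL}_2(\mathbb{C})$ for the generators $\sfa, \sfc, \sfd_1, \sfd_2, \sfd_3$ coming from the face pairings of $\calq$, and then computing the invariant trace field --- or, equivalently, comparing cusp shapes at the three cusps of $\mathbb{H}^3/\Gamma$ and observing that they do not all lie in a common imaginary quadratic extension of $\mathbb{Q}$. Granted non-arithmeticity, Margulis's theorem gives that $\mathrm{Comm}(\Gamma)$ is discrete and coincides with the symmetry group of any canonical tiling of $\mathbb{H}^3$ associated to $\Gamma$.

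Next, I will identify such a canonical tiling $\calt_\Gamma$ by exhibiting horospherical vectors $\bv_2, \bv_3, \bv_7$ centered at the three cusp classes of $\mathbb{H}^3/\Gamma$ (corresponding to $r_2$, $r_3$, $r_7$) and verifying, just as in Proposition \ref{tiling}, the two conditions of \cite[Lemma 3.1]{GHH}: the six horospherical vectors at the ideal vertices of $\calq$ are coplanar in $\R^4$, and any pair of $\Gamma$-adjacent translates of $\calq$ meet with positive tilt. The relative sizes of $\bv_2, \bv_3, \bv_7$ must be tuned so that both conditions hold simultaneously; this is a finite linear-algebra check using the explicit matrices from the previous step, exactly parallel to the computations at the end of the proof of Proposition \ref{tiling}.

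Finally, any $\sfh \in \mathrm{Sym}(\calt_\Gamma)$ can be composed with some $\gamma \in \Gamma$ so that $\gamma\sfh(\calq) = \calq$, reducing the problem to showing that no non-identity hyperbolic isometry preserves $\calq$ together with its decoration --- the cusp class of each ideal vertex, the edge containing a finite vertex in its interior, and the two distinguished ``black'' faces (the image of the plane $[r_5,r_3,\sfa(r_2)]$ and its $\sfc^{-1}$-translate from the construction preceding Lemma \ref{commensurator}). This rigidity step is the main obstacle: one must argue that the six ideal vertices $r_2, r_3, r_5, r_7, \sfa(r_2), \sfc^{-1}\sfa(r_2)$ are pairwise distinguishable by the combinatorial data ---  for instance, $r_7$ is characterized as the unique ideal vertex incident to the edge of $\calq$ containing a finite vertex in its interior, while the remaining five partition into the three cusp classes of $\mathbb{H}^3/\Gamma$ in a way determined by the face identifications --- so that no nontrivial permutation of them extends to a hyperbolic self-isometry of $\calq$. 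Combining the three steps yields $\mathrm{Comm}(\Gamma) = \Gamma$.
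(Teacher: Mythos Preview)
Your outline follows the Goodman--Heard--Hodgson strategy, but it contains a genuine gap at the interface between steps 2 and 3: you implicitly assume that \emph{any} canonical tiling for $\Gamma$ has symmetry group equal to $\mathrm{Comm}(\Gamma)$.  This is false in general.  A canonical tiling $\calt$ for $\Gamma$ depends on a choice of horosphere at each $\Gamma$-cusp, and $\mathrm{Comm}(\Gamma) = \mathrm{Sym}(\calt)$ only when that choice descends to a choice of horospheres on the commensurator quotient $O = \mathbb{H}^3/\mathrm{Comm}(\Gamma)$.  If $O$ has fewer cusps than $\mathbb{H}^3/\Gamma$ --- here, if some element of $\mathrm{Comm}(\Gamma)$ exchanges the cusp classes of $r_2$ and $r_7$ --- then for a generic choice of horospheres you will find $\mathrm{Sym}(\calt) = \Gamma$ and yet $\mathrm{Comm}(\Gamma) \supsetneq \Gamma$.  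So tuning the horosphere sizes once, verifying coplanarity and positive tilt for $\calq$, and then computing $\mathrm{Sym}(\calt)$ does not by itself determine the commensurator.  Relatedly, in step 3 you treat ``cusp class of each ideal vertex'' as part of the decoration that a symmetry must preserve; but that is precisely what is at issue, and cannot be assumed.

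The paper handles this in two stages.  First, a cusp-parameter computation shows that the cusp at $r_3$ has modulus $\mathrm{PGL}_2(\mathbb{Q})$-inequivalent to those at $r_1$ and $r_7$, so no commensurator element can merge $c_3$ with the others; this reduces the question to whether $c_1$ and $c_2$ are merged.  Second --- and this is the bulk of the work --- the paper computes the \emph{entire} one-parameter family of canonical tilings $\calt(\beta,1)$ as $\beta$ ranges over $(0,\infty)$ (twenty-one combinatorial types, recorded in Tables~\ref{table:first two}--\ref{table:last eleven}), and checks for each transition value that no symmetry exchanges the orbits $\Gamma\cdot\bv_1$ and $\Gamma\cdot\bv_7$.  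Only after this exhaustive sweep (Lemma~\ref{lem:sym}) is it legitimate to fix a single tiling and compute its symmetry group.  Note also that in none of these tilings is $\calq$ itself a tile, so your step 2 would fail as stated: the coplanarity condition for the six ideal vertices of $\calq$ is three linear constraints on only two free horosphere ratios, and one should not expect it to be satisfiable.
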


We defer the proof of Proposition \ref{prop:comm} for now, and first apply it.

\begin{proof}[Proof of Theorem \ref{Lobell thm}]   Since the orbifold fundamental group $\Gamma(n)$ of $O(n)$ contains the elliptic element $\sfb_n$, with order $n$, its invariant trace field $k\Gamma(n)$ contains the trace of $(\sfb_n)^2$ and thus $\mathbb{Q}(\cos (2\pi i\frac{2}{n}))$ (cf.~\cite[\S 3.3]{MaR} for the definition and properties of the invariant trace field).  This is a degree-two subfield of the cyclotomic field $\mathbb{Q}(\zeta_k)$, where $k = n$ if $n$ is odd and $k=n/2$ otherwise.  Thus $\liminf_{n \to \infty} [k\Gamma(n):\mathbb{Q}]$ is infinite.  It follows that at most finitely many $O(n)$ belong to any one commensurability class.  Furthermore, at most finitely many are arithmetic, since non-compact arithmetic hyperbolic $3$-manifolds have quadratic invariant trace fields.

Throwing away the arithmetic $\Gamma(n)$, Margulis' theorem implies that $\mathrm{Comm}(\Gamma(n))$ is a finite extension of $\Gamma(n)$ for the remaining $n$.  We remarked above Lemma \ref{O(n) group} that each symmetry of $\call(n)$ determines an isometry of $M(n) = S^3 - L(n)$.  In particular, there are isometries determined by $\sfa$ and $\sfs$, and since $\langle \sfb_n \rangle \lhd \langle \sfa,\sfb_n,\sfs \rangle$, these generate a group of isometries of $O(n)= M(n)/\langle \sfb_n \rangle$ with order $4$.  By Lemma \ref{O(n) group}, $\sfd_1$ determines an additional isometry of $O(n)$, that can easily be seen to commute with $\langle \sfa,\sfs\rangle$.  Thus $\mathrm{Comm}(\Gamma(n))$ contains the degree-$8$ extension $\langle \Gamma(n),\sfa,\sfs,\sfd_1\rangle$ of $\Gamma(n)$.

As the right-hand side of Figure \ref{fig:L4 and quot} makes clear, $O(n)$ is obtained from $S^3-L$ by $(n,0)$-Dehn filling on a fixed component.  Therefore the hyperbolic Dehn surgery theorem implies that the $O(n)$ converge geometrically to the hyperbolic structure on $S^3 - L$, and in particular, their volumes approach its from below.  (See eg.~\cite[\S E.5]{BenPet} for background on the hyperbolic Dehn surgery theorem.)  Furthermore, the explicit descriptions above imply that the $\Gamma(n)$ converge algebraically to $\Gamma_L$, and the $\langle \Gamma(n), \sfa,\sfs,\sfd_1\rangle$ to $\Gamma$.

If on an infinite subsequence, $\langle \Gamma(n),\sfa,\sfs,\sfd_1\rangle$ were contained in  $\mathrm{Comm}(\Gamma(n))$ properly, then a further subsequence of the $\mathrm{Comm}(\Gamma(n))$ would converge to a discrete group $\Gamma_0$ with covolume a proper fraction of that of $\Gamma$.  This follows from the fact that the Chabauty topology on discrete subgroups of $\mathrm{PSL}_2(\mathbb{C})$ with bounded covolume is compact, see eg.~\cite[Corollary E.1.7]{BenPet}.  In this case, since $\langle \Gamma(n), \sfa,\sfs,\sfd_1\rangle \to \Gamma$ and limits are unique in this topology (see eg.~\cite[Lemma E.1.1]{BenPet}), we would have $\Gamma < \Gamma_0$ properly, contradicting Proposition \ref{prop:comm}.  Thus for all but finitely many $n$, $\mathrm{Comm}(\Gamma(n)) = \langle\Gamma(n),\sfa,\sfs,\sfd_1\rangle$.

Fixing a horosphere $\calh$ centered at the ideal vertex $r_3$ of $\calo(n)$, a fundamental domain for the action on $\calh$ of its stabilizer in $\mathrm{Comm}(\Gamma(n))$ is thus the rectangle $\calo(n) \cap \calh$.  Two parallel sides of this rectangle are given by the intersection of $\calh$ with the white sides of $\calo(n)$ containing $r_3$.  One of these, contained in the side with ideal vertices $r_2$, $r_3$, $\sfs(r_3)$, $r_5$, and $r_6$, is stabilized by the reflection $\sfd_2 \in \Gamma(n)$ defined in Lemma \ref{O(n) group}.  The other is stabilized by the reflection $\sfd_3 \doteq \sfa\sfs\sfa \in \mathrm{Comm}(\Gamma(n))$, defined in analogy with the identically-named element of $\Gamma$ from Lemma \ref{commensurator}.  For the other pair of parallel sides of this rectangle, the parabolic $\sfc$ fixing $r_3$ acts by translation taking one to the other.

The stabilizer of $\calh$ in $\mathrm{Comm}(\Gamma(n))$ is thus $\langle \sfc, \sfd_2,\sfd_3\rangle$.  If $\Gamma'$ were a reflection group commensurable with $\Gamma(n)$, then $\mathrm{Stab}_{\Gamma'}(r_3)$ would be a reflection group contained in $\langle \sfc,\sfd_2,\sfd_3\rangle$, acting on $\calh$ with finite coarea.  But since $\sfc$ translates parallel to the lines fixed by $\sfd_2$ and $\sfd_3$, every reflection in $\langle \sfc,\sfd_2,\sfd_3\rangle$ fixes a line parallel to the lines fixed by $\sfd_2$ and $\sfd_3$.   Hence no reflection subgroup of $\langle \sfc,\sfd_2,\sfd_3\rangle$ acts on $\calh$ with finite coarea. Therefore $\mathrm{Comm}(\Gamma(n))$ is not commensurable with a reflection group.
\end{proof}

Proving Proposition \ref{prop:comm} requires an explicit description of $\Gamma$.  This will follow from the lemma below, which describes an embedding of $\calq$ in the upper half-space model for $\mathbb{H}^3$.

\begin{lemma}\label{first calq embed}  There is an isometric embedding of $\calq$ in $\mathbb{H}^3$ determined by the following ideal vertices:  $r_2 = -1+i$, $r_3 = 0$, $r_5 = (\sqrt{3}+i)/2$, $r_7 = \infty$.  \end{lemma}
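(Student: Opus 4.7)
The plan is to combine the uniqueness of the hyperbolic realization of $\calq$ (up to $\mathrm{Isom}(\mathbb{H}^3)$) with the triple transitivity of $\mathrm{PSL}_2(\mathbb{C})$ on $\partial\mathbb{H}^3=\widehat{\mathbb{C}}$. Since $\calq$ inherits right dihedral angles from $\calo$ at every non-ideal edge via the construction $\overline{\calq_0-\calt}\cup\sfc^{-1}(\calt)$, Andreev's theorem applied to the combinatorial polyhedron underlying $\calq$ gives a unique realization; and since $\mathrm{PSL}_2(\mathbb{C})$ is sharply triply transitive on the ideal boundary, I may freely place three of the four specified ideal vertices. The content of the lemma is then that the fourth vertex, once the other three are fixed, lies at the claimed location.

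Concretely, I would normalize $r_3=0$ and $r_7=\infty$, leaving the residual freedom $z\mapsto\lambda z$ for $\lambda\in\mathbb{C}^*$. I would then use the isometries inherited from the construction of $\calq$ out of $\calo$ to set up constraints on $r_2$ and $r_5$: namely, the parabolic $\sfc$ fixes $r_3$ and satisfies $\sfc(r_2)=r_5$, so in these coordinates $\sfc:z\mapsto z/(1+\gamma z)$ with $\gamma=1/r_5-1/r_2$; while the $180^\circ$ rotation $\sfa$ fixes $r_5$ and $r_7$ and hence acts on $\partial\mathbb{H}^3$ as $z\mapsto2r_5-z$. These give the ratio $r_2/r_5$ together with the direction of $\sfa(r_2)$. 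A third constraint, needed to fix the scale $\lambda$, comes from the metric structure of a horospherical cross-section of $\calo$ at $r_3$ (or at $r_7$): the faces of $\calq$ through such a vertex are vertical half-planes whose traces on any horizontal horosphere form a Euclidean polygon with side lengths determined by the right-angled geometry of $\calp(n)$ in the $n\to\infty$ limit.

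Once the scale is in hand, the verification reduces to the algebraic check $\gamma=(1+\sqrt{3})/2$, so that $\sfc(z)=2z/(2+(1+\sqrt{3})z)$ is a real parabolic, together with $\sfa(r_2)=1+\sqrt{3}$, so that the plane $[r_5,r_3,\sfa(r_2)]$ used in the cut-and-reglue definition of $\calq$ has the correct boundary circle.  It is then routine to see that $\sfc^{-1}(\sfa(r_2))$ is well-defined and lies where expected.

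The main obstacle will be extracting the scale data from the right-angled L\"obell polyhedra: Andreev's theorem guarantees that $\calp(n)$ exists, but obtaining explicit ideal-vertex coordinates requires either a direct trigonometric computation in $\calp(n)$ (passing to the all-right limit to recover $\calo$) or the explicit circle-packing description referenced in Section 6 of \cite{Purcell_cusps}. Once this is accomplished, verifying that the four specified coordinates are consistent with the prescribed combinatorics and dihedral angles is a direct matrix computation in $\mathrm{PSL}_2(\mathbb{C})$.
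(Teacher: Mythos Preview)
Your framework is right --- normalize $r_3=0$, $r_7=\infty$, and exploit the parabolic $\sfc$ and the half-turn $\sfa$ --- but you have misidentified where the remaining constraints come from, and this leads you toward a computation that is both unnecessary and much harder than what the paper actually does.

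The ``third constraint'' you seek from horospherical cross-sections or from the $n\to\infty$ limit of the L\"obell polyhedra is a red herring: the residual scale is \emph{not} determined by any external metric data; it is a free choice, and the paper simply sets $\sfc^{-1}\sfa(r_2)=-1$.  What actually pins down the coordinates is a pair of internal constraints from the face structure of $\calq$ that you do not use.  First, one places the quadrilateral face with ideal vertices $r_7,\sfa(r_2),r_3,\sfc^{-1}\sfa(r_2)$ over $\mathbb{R}\cup\{\infty\}$, forcing $\sfc^{-1}\sfa(r_2)=x$ and $\sfa(r_2)=z$ to be real; second, the right dihedral angle along the edge joining $r_7$ to $\sfc^{-1}\sfa(r_2)$ forces the adjacent face to lie over the vertical line $\mathrm{Re}=x$, hence $r_2=x+iy$.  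The midpoint formula from $\sfa$ gives $r_5=\tfrac{x+z}{2}+i\tfrac{y}{2}$, and since $\sfc$ must also take the real point $x$ to the real point $z$ one finds $\sfc=\left(\begin{smallmatrix}1&0\\(x-z)/xz&1\end{smallmatrix}\right)$.  The single equation $\sfc(r_2)=r_5$ then unwinds (comparing real and imaginary parts) to $z=-x(1+\sqrt{3})$ and $y=-x$.  Choosing $x=-1$ gives the stated embedding.  Your formulas for $\sfc$ and $\sfa$ in terms of $r_2,r_5$ are correct but by themselves underdetermined; the missing input is not a limiting trigonometric computation in $\calp(n)$ but simply the planarity of that one face and the one right angle, both of which are visible directly in the cut-and-paste description of $\calq$.
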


\begin{proof}  Our description of $\calq$ includes the following facts:  its edge joining the ideal vertex $r_7$ to $\sfc^{-1}\sfa(r_2)$ has a dihedral angle of $\pi/2$, and there are two quadrilateral faces with ideal vertices $r_7$, $\sfa(r_2)$, $r_5$, $r_2$ and $r_7$, $\sfa(r_2)$, $r_3$, $\sfc^{-1}\sfa(r_2)$, respectively.  We will choose an embedding of $\calq$ that sends the latter face into the geodesic plane of $\mathbb{H}^3$ with ideal boundary $\mathbb{R}\cup\{\infty\}$, taking $r_3$ to $0$ and $r_7$ to $\infty$  in particular.  

\begin{figure}
\begin{center}
\input{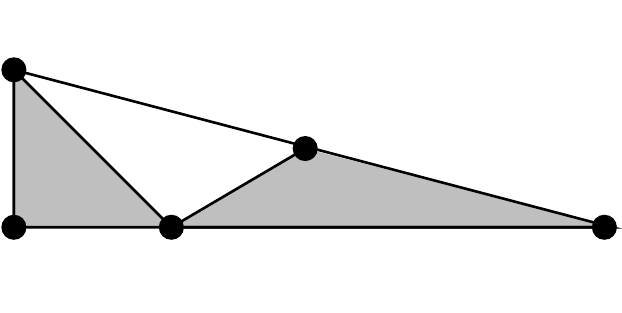_t}
\end{center}
\caption{An embedding of $\calq$ in $\mathbb{H}^3$.}
\label{calq_embed}
\end{figure}

We have pictured such an embedding in Figure \ref{calq_embed}.  The ideal vertices $\sfc^{-1}\sfa(r_2)$ and $\sfa(r_2)$ go to points $x$ and $z$, respectively, in $\mathbb{R}$ on either side of $r_3 = 0$.  We take $x < 0$ and $z>0$.  Since the edge joining $r_7$ to $\sfc^{-1}\sfa(r_2)$ has dihedral angle $\pi/2$, the image of $r_2$ is of the form $x+iy$ for some $y \in \mathbb{R}$.  We may assume $y >0$, by reflecting through $\mathbb{R}$ if necessary.  The final ideal vertex $r_5$ lies somewhere on the line segment joining $r_2$ with $\sfa(r_2)$, since it is in the ideal boundary of a plane containing $r_2$, $r_7$, and $\sfa(r_2)$.  Its coordinates are determined by the fact that $\sfa$ preserves this plane, fixing $r_5$ and $r_7$.  

We have darkened the triangles in $\mathbb{C}$ that lie under the dark faces of $\calq$ after the  embedding described above.  The parabolic isometry $\sfc$ takes one to the other, fixing $r_3$, thus it is of the form $\left(\begin{smallmatrix} 1 & 0 \\ w & 1 \end{smallmatrix}\right)$ for some $w \in \mathbb{C}$.  Using the fact that $\sfc$ takes $\sfc^{-1}\sfa(r_2)=x$ to $\sfa(r_2)=z$, a computation implies $w = (x-z)/xz$.  Another computation, using the fact that $\sfc(r_2) = r_5$, determines $z = -x(\sqrt{3}+1)$.

We are free to choose $x < 0$, since one choice may be changed to another by applying a hyperbolic isometry fixing $0$ and $\infty$ to $\calq$.  Choosing $x = -1$ yields: \begin{align*} 
  & \sfc^{-1}\sfa(r_2) = x = -1 && r_2 = x+iy = -1+i  && \sfa(r_2) = z = \sqrt{3}+1 && r_5 = \frac{\sqrt{3} + i}{2}.  \end{align*}
This is the embedding described in the statement.  \end{proof}

A few additional parabolic fixed points that will be useful below we name as follows: let $r_1 = \sfd_3(r_2)$, $r_4 = \sfd_3(r_5)$, and $r_6 = \sfd_3^{\sfa}(r_5)$.  Note that $r_2$, $r_4$, $r_5$, and $r_6$ are each $\Gamma$-equivalent to $r_1$.

In proving Proposition \ref{prop:comm}, it will be convenient to use a different embedding of $\calq$ than that described in Lemma \ref{first calq embed} above.  Let us apply a M\"obius transformation taking $r_1$, $r_2$, and $r_3$ to $0$, $1$, and $\infty$, respectively.  Such a map is given by $z \mapsto \frac{1+i}{2} + i/z$.  This takes the other ideal vertices to:  \begin{align*}
  & r_4 = i\frac{1+\sqrt{3}}{2} && r_5 = 1+i\frac{1+\sqrt{3}}{2} && r_6 = 1+ i\frac{3+\sqrt{3}}{6} && r_7 = \frac{1+i}{2}  \end{align*}
The representation of $\Gamma$ determined by the embedding described above is related to that determined by the embedding of Lemma \ref{first calq embed} by conjugation by 
$$\left(\begin{smallmatrix} -i\frac{\sqrt{2}}{2} & \frac{\sqrt{2}}{2}(1-i) \\ -\frac{\sqrt{2}}{2}(1+i) & 0 \end{smallmatrix}\right).$$

Since $\calq$ is a fundamental domain for $\Gamma$, each cusp of $\mathbb{H}^3/\Gamma$ corresponds to a point on $\partial \mathbb{H}^3$ that is $\Gamma$-equivalent to an ideal vertex of $\calq$.  Inspection of the face pairings of Lemma \ref{commensurator} thus reveals that $\mathbb{H}^3/\Gamma$ has exactly three cusps.  We let $c_1$ correspond to the points of $\Gamma \cdot r_1$, $c_2$ to $\Gamma \cdot r_7$, and $c_3$ to $\Gamma \cdot r_3$.

Our explicit description of $\calq$ allows computation of the invariant trace field and cusp parameters.    This implies:

\begin{lemma}  $\Gamma$ is non-arithmetic.  The cusps $c_1$ and $c_2$ are commensurable to each other and are not commensurable to $c_3$.
\end{lemma}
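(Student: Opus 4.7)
The plan is computational, starting from the explicit coordinatization of $\calq$ in Lemma \ref{first calq embed}. Each face pairing $\sfa, \sfc, \sfd_1, \sfd_2, \sfd_3$ is determined by its action on the ideal vertices of $\calq$, so I will first write each as an element of $\mathrm{PSL}_2(\mathbb{C})$ (composing the orientation-reversing $\sfd_i$ with complex conjugation when needed). Since all ideal vertex coordinates lie in the quartic field $k = \mathbb{Q}(\zeta_{12}) = \mathbb{Q}(i,\sqrt{3})$, every matrix entry will lie in $k$. I then carry out two independent computations: one to pin down the invariant trace field, one to pin down the three cusp shapes.

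For non-arithmeticity I will work in the index-two orientation-preserving subgroup $\Gamma^+ = \Gamma \cap \mathrm{PSL}_2(\mathbb{C})$, which contains $\sfa$, $\sfc$, and all length-two products of the $\sfd_i$. The invariant trace field is $k\Gamma = \mathbb{Q}(\{\mathrm{tr}(\gamma^2)\,:\,\gamma \in \Gamma^+\})$, so it suffices to exhibit two orientation-preserving composites whose squared traces together generate a subfield of $k$ of degree at least three over $\mathbb{Q}$. Natural candidates include $\sfc$, $\sfa$, $\sfd_1\sfd_2$, and $\sfd_1\sfa\sfd_1\sfa$, computed directly from the matrix representatives above. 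By \cite[Theorem 8.2.3]{MaR}, a non-cocompact arithmetic Kleinian group has imaginary quadratic invariant trace field; so any such calculation forcing $[k\Gamma:\mathbb{Q}] \geq 3$ immediately implies that $\Gamma$ is non-arithmetic.

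For the cusp comparison I will determine the similarity class of each Euclidean cusp cross-section by intersecting a horosphere based at $p \in \{r_3, r_7, r_1\}$ with $\calq$. This yields a Euclidean polygon whose side pairings, induced by the elements of $\Gamma$ fixing $p$, present the cross-sectional $2$-orbifold and determine a lattice $\Lambda_p \subset \mathbb{C}$ (up to similarity) with modulus $\tau_p \in \mathbb{H}^2$. Two cusps of a finite-volume hyperbolic $3$-orbifold are commensurable precisely when their moduli lie in a common $\mathrm{PGL}_2(\mathbb{Q})$-orbit. The hard step will be enumerating the complete rank-two parabolic stabilizer $\Gamma_p$ at each $p$ from the face-pairing data: this requires tracking all closed sequences of face identifications passing through $p$, organized by the link polygon of $\calq$ at $p$. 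Once these stabilizers are in hand, the final comparison should exhibit $\tau_1$ and $\tau_7$ as $\mathrm{PGL}_2(\mathbb{Q})$-equivalent while placing $\tau_3$, whose defining translations inherit a factor of $\sqrt{3}$ from $\sfc$ that cannot be rescaled away modulo $\mathrm{PGL}_2(\mathbb{Q})$, in a distinct orbit.
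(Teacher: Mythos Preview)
Your proposal is correct and follows essentially the same route as the paper: compute the invariant trace field from the explicit matrix generators coming from Lemma \ref{first calq embed}, observe it is the quartic field $\mathbb{Q}(i,\sqrt{3})$ rather than imaginary quadratic, and then compute each cusp modulus from the parabolic stabilizer and compare them under $\mathrm{PGL}_2(\mathbb{Q})$-equivalence. The paper carries out exactly these computations (reporting $c_2$ with parameter equivalent to $i(2+\sqrt{3})$, $c_3$ with parameter equivalent to $i(1+\sqrt{3})$, and $c_1$ matching $c_2$), so your outline is on target and requires only executing the arithmetic.
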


\begin{proof}  An explicit description of generators for $\Gamma$, as may be obtained from Lemma \ref{first calq embed}, enables direct computation of the invariant trace field (see \cite[\S 3.5]{MaR}).  Performing this calculation, we find that $\Gamma$ has trace field $\mathbb{Q}(i,\sqrt{3})$.  Alternatively, the link $L$ may be entered into the computer program Snappea, and the resulting triangulation data into Snap, yielding the same description (see \cite{CGHN}).  Since every non-compact arithmetic hyperbolic $3$-manifold has an imaginary quadratic invariant trace field, $\Gamma$ is not arithmetic.

Using the embedding described in Lemma \ref{first calq embed}, we find that an index-$8$ subgroup of $\mathrm{Stab}_{\Gamma}(\infty)$ is generated by $z \mapsto z + 2(2+\sqrt{3})$ and $z \mapsto z+ 2i$; thus the parameter of the associated cusp $c_2$ is $\mathrm{PGL}_2(\mathbb{Q})$-equivalent to $i(2+\sqrt{3})$ (cf.~\cite[\S 4.3]{CD}).  After re-embedding as above, the stabilizer of $\infty$ corresponds to the cusp $c_3$.  An index-$2$ subgroup of this lattice is generated by $\sfc \co z \mapsto z+i\frac{1+\sqrt{3}}{2}$ and the product of reflections $\sfd_2\sfd_3\co z \mapsto z + 1$.  Thus the parameter of $c_3$ is $\mathrm{PGL}_2(\mathbb{Q})$-equivalent to $i(1+\sqrt{3})$.  A similar computation reveals that $c_1$ has the same parameter as $c_2$.  Since the complex modulus is a complete commensurability invariant for lattices in $\mathbb{C}^2$, and $i(1+\sqrt{3})$ is not $\mathrm{PGL}_2(\mathbb{Q})$-equivalent to $i(2+\sqrt{3})$, the lemma follows.  \end{proof}

From Margulis' theorem, we immediately obtain:

\begin{cor} $\text{Comm}(\Gamma)$ is a finite extension of $\Gamma$, and the minimal orbifold $O \doteq \mathbb{H}^3/\text{Comm}(\Gamma)$ has either two or three cusps.  \end{cor}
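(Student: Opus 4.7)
The plan is to invoke Margulis' theorem for the first assertion and then use the cusp-parameter computation of the preceding lemma to bound the number of cusps.

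First, since the preceding lemma establishes that $\Gamma$ is a non-arithmetic Kleinian group of finite covolume, Margulis' theorem \cite{Margulis} applies to give that $\mathrm{Comm}(\Gamma)$ is itself a discrete subgroup of $\mathrm{Isom}(\mathbb{H}^3)$. By definition, $\Gamma$ is commensurable with $\mathrm{Comm}(\Gamma)$, so $\Gamma \cap \mathrm{Comm}(\Gamma) = \Gamma$ has finite index in $\mathrm{Comm}(\Gamma)$. In particular, $O = \mathbb{H}^3/\mathrm{Comm}(\Gamma)$ is a finite-volume hyperbolic orbifold and the natural map $\mathbb{H}^3/\Gamma \to O$ is a finite orbifold cover.

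Next I would use this cover to bound the number of cusps. The cusps of $O$ are in bijection with $\mathrm{Comm}(\Gamma)$-orbits of parabolic fixed points in $\partial \mathbb{H}^3$; each such orbit is a (possibly nontrivial) union of $\Gamma$-orbits, i.e.~of cusps of $\mathbb{H}^3/\Gamma$. Since $\mathbb{H}^3/\Gamma$ has exactly three cusps $c_1, c_2, c_3$, the orbifold $O$ has at most three cusps.

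For the lower bound of two, I would invoke the fact that the cusp parameter, regarded as an element of $\mathbb{C}/\mathrm{PGL}_2(\mathbb{Q})$, is a commensurability invariant of a cusp cross-section. Since $\mathbb{H}^3/\Gamma \to O$ is a finite cover, any two cusps of $\mathbb{H}^3/\Gamma$ mapped to the same cusp of $O$ must have $\mathrm{PGL}_2(\mathbb{Q})$-equivalent parameters. The preceding lemma records that $c_3$ has parameter $\mathrm{PGL}_2(\mathbb{Q})$-equivalent to $i(1+\sqrt{3})$, while both $c_1$ and $c_2$ have parameter equivalent to $i(2+\sqrt{3})$; the ratio $(2+\sqrt{3})/(1+\sqrt{3}) = (1+\sqrt{3})/2$ is irrational, so $c_3$ cannot be identified with either of $c_1$ or $c_2$ in $O$. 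Thus the image of $c_3$ is a cusp of $O$ distinct from the image(s) of $c_1, c_2$, so $O$ has at least two cusps.

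There is no serious obstacle here; the only delicate point is to have the cusp-parameter invariance in hand, which is what makes the last lemma's calculation useful. I would not attempt to decide between two and three cusps at this stage, since determining whether $c_1$ and $c_2$ are identified in $O$ requires the full analysis of $\mathrm{Comm}(\Gamma)$ carried out in the proof of Proposition \ref{prop:comm}.
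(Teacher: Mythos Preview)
Your proof is correct and matches the paper's approach: the paper simply writes ``From Margulis' theorem, we immediately obtain'' this corollary, and you have filled in exactly the intended details---Margulis' theorem gives discreteness of $\mathrm{Comm}(\Gamma)$ hence finite index, the three cusps of $\mathbb{H}^3/\Gamma$ give the upper bound, and the cusp-incommensurability from the preceding lemma gives the lower bound of two.

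One small slip worth flagging: the clause ``the ratio $(2+\sqrt{3})/(1+\sqrt{3}) = (1+\sqrt{3})/2$ is irrational, so $c_3$ cannot be identified with either of $c_1$ or $c_2$'' is not a complete justification on its own. For purely imaginary parameters $i\alpha$ and $i\beta$, the $\mathrm{PGL}_2(\mathbb{Q})$-equivalence condition is that either $\alpha/\beta \in \mathbb{Q}$ \emph{or} $\alpha\beta \in \mathbb{Q}$ (the second case coming from the transformation $\tau \mapsto q/\tau$). So you would also need to observe that $(2+\sqrt{3})(1+\sqrt{3}) = 5+3\sqrt{3}$ is irrational. This is moot for the argument, since the preceding lemma already asserts that $c_3$ is not commensurable to $c_1$ or $c_2$; you can simply cite that conclusion and drop the ratio remark.
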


In particular, if $O$ has two cusps then $c_1$ and $c_2$ are identified by the covering map $\mathbb{H}^3/\Gamma \rightarrow O$.  We have used the algorithm of Goodman--Heard--Hodgson \cite{GHH} to compute $\text{Comm}(\Gamma)$.  Recall that we introduced the setting for this algorithm in Section \ref{sec:One cusp} between the statements of Propositions \ref{OP comm} and \ref{tiling}.

Let 
\begin{align*}
{\bf v}_1& \ = \ \left( -2, \,2, \,-1, \,3 \right)^T \\
{\bf v}_7& \ = \ \left( 0,  \,0,  \,9-4\sqrt{3},  \,9-4\sqrt{3} \right)^T \\
{\bf v}_3& \ = \ \left( 0,  \,0,  \,-3,  \,3 \right)^T. 
\end{align*}

These vectors are chosen so that there is an isometry $\Phi$ from the upper half space model to the hyperboloid model which takes the parabolic fixed point $r_i$ to the center of the horosphere $H_{{\bf v}_i}$ when $i=1,3,7$.  Under $\Phi$, the isometries ${\sf a,b,c}, {\sf d}_1,  {\sf d}_2,  {\sf d}_3$ correspond to the matrices ${\sf A, B, C}, {\sf D}_1,  {\sf D}_2,  {\sf D}_3 \in \text{O}_0(3,1)$ listed below.
\begin{align*}
 {\sf A}&\  = \ \left(
\begin{array}{cccc}
 -1 & 0 & -1/2 & 1/2\\
 0 & -1 & \sqrt{3}/2 & -\sqrt{3}/2 \\
 -1/2 &\sqrt{3}/2 & 1/2 & 1/2 \\
 -1/2 & \sqrt{3}/2 & -1/2 & 3/2
\end{array}
\right) & 
{\sf D}_1& \ = \ \left(
\begin{array}{cccc}
 1 & 0 & 0 & 0 \\
 0 & -1 & -1 & 1 \\
 0 & -1 & \frac{1}{2} & \frac{1}{2} \\
 0 & -1 & -\frac{1}{2} & \frac{3}{2}
\end{array}
\right) 
\end{align*}\begin{align*}
{\sf B}& \ = \ \left(
\begin{array}{cccc}
 1 & 0 & -1 & 1 \\
 0 & 1 & 0 & 0 \\
 1 & 0 & 1/2 & 1/2\\
 1 & 0 & -1/2 & 3/2
\end{array}
\right) &
{\sf D}_2& \ = \ \left(
\begin{array}{cccc}
 -1 & 0 & 2 & 2 \\
 0 & 1 & 0 & 0 \\
 2 & 0 & -1 & -2 \\
 -2 & 0 & 2 & 3
\end{array}
\right)
\end{align*}\begin{align*}
{\sf C}&\ = \ \left(
\begin{array}{cccc}
 1 & 0 & 0 & 0 \\
 0 & 1 & -1-\sqrt{3} & -1-\sqrt{3} \\
 0 & 1+\sqrt{3} & -1-\sqrt{3} & -2-\sqrt{3} \\
 0 & -1-\sqrt{3} & 2+\sqrt{3} & 3+\sqrt{3}
\end{array}
\right) &
{\sf D}_3& \ = \ \left(
\begin{array}{cccc}
 -1 & 0 & 0 & 0 \\
 0 & 1 & 0 & 0 \\
 0 & 0 & 1 & 0 \\
 0 & 0 & 0 & 1
\end{array}
\right) 
\end{align*}

Thus, $\Phi$ allows us to also think of $\Gamma$ as a subgroup of $\text{O}_0(3,1)$.   Each ${\bf v}_i$ is a horospherical vector for the cusp $c_i$ of $\mathbb{H}^3/\Gamma$ so $\{ {\bf v}_1, {\bf v}_3, {\bf v}_7 \}$ determines a $\Gamma$-invariant set $V$ as above.  We have $\{ {\bf v}_i \}_1^7$ given by ${\bf v}_i = \Phi(r_i)$
and these vectors may be calculated explicitly by applying appropriate isometries from $\Gamma$.  We have that ${\bf v}_i$ is the $i^\text{th}$ column of the matrix
\[  \left(
\begin{array}{ccccccc}
 -2 & 2 & 0 & -2 & 2 & 6 & 0 \\
 2 & 2 & 0 & -2 \sqrt{3} & -2 \sqrt{3} & -2 \sqrt{3} & 0 \\
 -1 & -1 & -3 & -3 & -3 & -1 & 9-4 \sqrt{3} \\
 3 & 3 & 3 & 5 & 5 & 7 & 9-4 \sqrt{3}
\end{array}
\right).\]

As discussed above,  we obtain all possibilities for canonical tilings associated to $\Gamma$ by using initial sets of the form $\{ {\bf v}_1, \beta {\bf v}_7, \gamma {\bf v}_3 \}$ where $\beta, \gamma \in \R^+$. We write $\calh(\beta, \gamma)$ to denote the set $\Gamma \cdot \{ {\bf v}_1, \beta {\bf v}_7, \gamma {\bf v}_3 \}$ and $\calt(\beta, \gamma)$ to denote the associated canonical tiling.

 Recall that $O=\mathbb{H}^3/\text{Comm}(\Gamma)$ has either 2 or 3 cusps.  If $O$ has 3 cusps then, for any pair $(\beta, \gamma)$, $\calh(\beta, \gamma)$ descends to cusp cross sections of $O$ and so $\text{Comm}(\Gamma) = \text{Sym}(\calt(\beta, \gamma))$.  If $O$ has 2 cusps then there is some ${\sf g} \in \text{Comm}(\Gamma)$ and $\beta_0$ with ${\sf g}({\bf v}_1) = \beta_0 {\bf v}_7$.  We have $\text{Comm}(\Gamma) = \text{Sym}(\calt(\beta_0, \gamma))$ for any $\gamma \in \R^+$.  Therefore, it suffices to compute the triangulations $\calt(\beta, 1)$ for $\beta \in \R^+$.  Either there exists a unique $\beta_0$ so that $\text{Sym}(\calt(\beta_0,1))$ contains an isometry taking ${\bf v}_1$ to $\beta_0 {\bf v}_7$ or there is no such $\beta$.  In the first case, $O$ has 2 cusps and $\text{Comm}(\Gamma) = \text{Sym}(\calt(\beta_0,1))$.  In the second case, $O$ has 3 cusps and $\text{Comm}(\Gamma) = \text{Sym}(\calt(\beta, 1))$ for every $\beta$. 
 
 \begin{lemma}  \label{lem:sym}
 $O$ has 3 cusps and $\text{Comm}(\Gamma) = \text{Sym}(\calt(\beta, 1))$ for every $\beta$. 
 \end{lemma}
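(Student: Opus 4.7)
The strategy is to apply the Goodman--Heard--Hodgson algorithm as set up at the end of this subsection: for each $\beta \in \R^+$, explicitly describe $\calt(\beta,1)$ and then check whether $\text{Sym}(\calt(\beta,1))$ contains an isometry sending $\bv_1$ to $\beta \bv_7$. If no such $\beta$ exists, then by the dichotomy established just above Lemma \ref{lem:sym}, the orbifold $O$ has three cusps and $\text{Comm}(\Gamma) = \text{Sym}(\calt(\beta,1))$ for every $\beta$.

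First I would produce a candidate set $\{\calp_i^\beta\}$ of ideal polyhedra and propose that they $\Gamma$-generate $\calt(\beta,1)$. A natural starting point is to lift the fundamental domain $\calq$ for $\Gamma$ and its translates under the identifications of Lemma \ref{commensurator}. For each polyhedron in this candidate list I would assign to its ideal vertices the horospherical vectors from $\calh(\beta,1)$, read off from the orbit data for the matrix $(\bv_1,\dots,\bv_7)$ computed explicitly above, scaled by $\beta$ at vertices lying in the $\Gamma$-orbit of $\bv_7$. I would then verify the two linear-algebra conditions recalled from \cite[Lemma 3.1]{GHH}: namely, for each candidate polyhedron, the existence of a normal vector $\bn \in \R^4$ with $\bn \cdot \bv_j = 1$ for each horospherical vertex $\bv_j$ (the coplanarity condition), and $\bn \cdot \bw > 1$ for each horospherical vector $\bw$ of a neighboring polyhedron across a face (the positive tilt condition). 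Because the vertex coordinates involve $\sqrt{3}$, the components of $\bn$ and the tilt values are explicit polynomials in $\beta$ over $\Q(\sqrt{3})$; by choosing the candidate polyhedra correctly for each regime of $\beta$, these conditions will hold.

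Once $\calt(\beta,1)$ is described, I would enumerate the combinatorial symmetries of the resulting tiling, using the facts that $\text{Sym}(\calt(\beta,1))$ contains $\Gamma$ with finite index and must permute the horosphere orbits $\Gamma\cdot \bv_1$, $\Gamma\cdot \beta\bv_7$, $\Gamma\cdot \bv_3$. To rule out a symmetry swapping the $\bv_1$-orbit and the $\beta\bv_7$-orbit, I would compare the local combinatorial data at a vertex of type $\bv_1$ with that at a vertex of type $\bv_7$: specifically, the cycle of polyhedra and the face-angle pattern around each cusp cross-section. Because the cusps $c_1$ and $c_2$ of $\mathbb{H}^3/\Gamma$ had different invariants in the unscaled setting (and the tilt/coplanarity data depends on the distinguishing features of the polyhedral link of each cusp), no rescaling $\beta$ can make these local pictures match.

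The main obstacle will be the case analysis in $\beta$: the combinatorics of the canonical tiling can change at finitely many critical values of $\beta$ where two tilt expressions cross, and at each such transition the polyhedra in $\{\calp_i^\beta\}$ must be redescribed. I would handle this by first identifying the (finitely many) critical values, working out the tiling on each open interval between them and at each critical value, and checking the symmetry obstruction case by case. In each case the obstruction reduces to showing that a certain polynomial equation in $\beta$ over $\Q(\sqrt{3})$ has no positive real solution, which can be verified directly. Putting all cases together yields the conclusion that $O$ has three cusps and $\text{Comm}(\Gamma) = \text{Sym}(\calt(\beta,1))$ for all $\beta > 0$, completing the proof.
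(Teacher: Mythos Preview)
Your overall framework---compute the canonical tilings $\calt(\beta,1)$ as $\beta$ ranges over $(0,\infty)$, identify the finitely many critical values where the combinatorics change, and then rule out any symmetry exchanging the $\Gamma\cdot\bv_1$ orbit with the $\Gamma\cdot\bv_7$ orbit---is exactly the strategy the paper follows. The paper carries this out in full: there are twenty-one tilings $\calt_1,\ldots,\calt_{21}$, and it suffices to examine the even-indexed ones (those at critical values of $\beta$), since these are common coarsenings of their neighbours and therefore have the largest possible symmetry groups.

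However, your proposed mechanism for the obstruction is wrong. You write that ``the cusps $c_1$ and $c_2$ of $\mathbb{H}^3/\Gamma$ had different invariants in the unscaled setting'' and that this is why no rescaling can match them. But the paper has already established the opposite: $c_1$ and $c_2$ have \emph{the same} cusp parameter (both $\mathrm{PGL}_2(\mathbb{Q})$-equivalent to $i(2+\sqrt{3})$), while it is $c_3$ that differs. This is precisely why the lemma requires work---the coarse cusp invariants cannot distinguish $c_1$ from $c_2$, so one is forced into the explicit tiling computation. Your heuristic would, if valid, have made the whole tiling analysis unnecessary.

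The actual obstruction in the paper is not a polynomial equation in $\beta$ either. For each even-indexed tiling, one locates a tile $\calp$ that is uniquely determined up to $\Gamma$ by its vertex count (e.g.\ the unique five-vertex tile), so any orbit-swapping symmetry must send $\calp$ to a $\Gamma$-translate of itself; composing, one gets a symmetry of $\calp$ itself that swaps its vertices in $\Gamma\cdot\bv_1$ with those in $\Gamma\cdot\bv_7$. One then counts: in most cases $\calp$ has unequal numbers of vertices in the two orbits, an immediate contradiction. In the two remaining cases ($\calt_{12}$ and $\calt_{14}$) the counts match, but the two $\Gamma\cdot\bv_7$-vertices are joined by an edge of $\calp$ while the two $\Gamma\cdot\bv_1$-vertices are not, which again rules out the swap. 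So the obstruction is purely combinatorial, case by case, and your sketch does not anticipate its actual shape.
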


\proof The proof follows by showing that there does not exist a unique $\beta$ so that $\text{Sym}(\calt(\beta_0,1))$ contains an isometry taking ${\bf v}_1$ to $\beta_0 {\bf v}_7$.  We first describe the canonical triangulations as $\beta$ decreases from $\infty$ to $0$.  The interval $(0,\infty)$ has a finite cell decomposition so that if two values for $\beta$ are chosen from the same cell then they determine the same canonical triangulation.  As $\beta$ moves to the boundary of a $1$-cell there is a pair of neighboring tiles $T_1$ and $T_2$ so that the tilt at their common face changes from positive to zero.  At the boundary value, these two tiles merge to form a tile in the new canonical triangulation.  The decomposition of $(0,\infty)$ and the associated tilings of $\mathbb{H}^3$ are described in Tables \ref{table:first two} -- \ref{table:last eleven}.   The triangulations $\calt(\beta,1)$ can be checked by repeatedly verifying the coplanar and positive tilt conditions on sets of $\Gamma$-generating tiles.  In the tables, we let $[p_1, \ldots , p_k]$ denote the convex hull in $\mathbb{H}^3$ of a collection $\{ p_1, \ldots , p_k\}\subset \bound \mathbb{H}^3$.  

\begin{table}
\begin{tabular}{lclcl}  \\ \\ & &$\beta$ & & $\Gamma$-Generating Tiles \\
\hline \hline 
&& && $\calp_1=[\bv_3, \bv_4, \bv_5, {\sf A}(\bv_2)]$ \\
$\calt_1$&& $\beta > \frac{3}{11}(4+3\sqrt{3})$ && $\calp_2=[\bv_3, \bv_5, {\sf A}(\bv_2), {\sf A}(\bv_3)]$\\
&& && $\calp_3=[\bv_3, \bv_5, {\sf CA}(\bv_2), {\sf CA}(\bv_3)]$\\
&& && $\calp_4=[\bv_3, \bv_4, \bv_5, {\sf CA}(v_2)]$\\
&& && $\calp_5=[\bv_4, \bv_5, {\sf CA}(v_2), {\sf C}(\bv_7)]$\\

\hline && && $\calp_1=[\bv_3, \bv_4, \bv_5, {\sf A}(\bv_2)]$ \\
$\calt_2$&& $\beta=\frac{3}{11}(4+3\sqrt{3})\sim 2.51$ && $\calp_2=[\bv_3, \bv_5, {\sf A}(\bv_2), {\sf A}(\bv_3)]$\\
&& && $\calp_3=[\bv_3, \bv_5, {\sf CA}(\bv_2), {\sf CA}(\bv_3)]$\\
&& && $\calp_4=[\bv_3, \bv_4, \bv_5, {\sf CA}(v_2), {\sf C}(\bv_7)]$\\

\hline
\end{tabular}
\caption{The data that determine the first two canonical tilings.}
\label{table:first two}
\end{table}

\begin{table}
\centering
\begin{tabular}{lclcl}  \\ \\ & &$\beta$ & & $\Gamma$-Generating Tiles \\
\hline \hline 

&&  && $ \calp_1=[\bv_3, \bv_4, \bv_5, {\sf C}(\bv_7)]$ \\
$\calt_3$&& $\frac{1}{22}(21+13\sqrt{3})<\beta<\frac{3}{11}(4+3\sqrt{3})$&& $\calp_2=[\bv_3, \bv_4, \bv_5, {\sf A}(\bv_2)]$\\
&& && $\calp_3=[\bv_3, \bv_5, {\sf A}(\bv_2), {\sf A}(\bv_3)]$\\
&& && $\calp_4=[\bv_3,\bv_5, {\sf C}(\bv7), {\sf CA}(\bv_2)]$\\
&& && $\calp_5=[\bv_3, \bv_5, {\sf CA}(\bv_2), {\sf CA}(\bv_3)]$\\

\hline && && $\calp_1=[\bv_3, \bv_4, \bv_5, {\sf C}(\bv_7)]$ \\
$\calt_4$&& $\beta =\frac{1}{22}(21+13\sqrt{3})\sim 1.978$&&$\calp_2=[\bv_3, \bv_4, \bv_5, {\sf A}(\bv_2)]$\\
&& && $\calp_3=[\bv_3, \bv_5, {\sf A}(\bv_2), {\sf A}(\bv_3)]$\\
&& && $\calp_4=[\bv_3,\bv_5, {\sf C}(\bv7), {\sf CA}(\bv_2), {\sf CA}(\bv_3)]$ \\ 

\hline &&  && $ \calp_1=[\bv_3, \bv_4, \bv_5, {\sf C}(\bv_7)]$ \\
$\calt_5$&& $\frac{1}{11}(9+4\sqrt{3})<\beta<\frac{1}{22}(21+13\sqrt{3})$&& $ \calp_2=[\bv_3, \bv_4, \bv_5, {\sf A}(\bv_2)]$\\
&& && $ \calp_3=[\bv_3,\bv_5, {\sf A}(\bv_2), {\sf A}(\bv_3)]$\\
&& && $ \calp_4=[\bv_3,\bv_7, {\sf A}(\bv_2), {\sf A}(\bv_3)]$\\

\hline && && $\calp_1=[\bv_3, \bv_4, \bv_5, {\sf C}(\bv_7)]$ \\
$\calt_6$&& $\beta = \frac{1}{11}(9+4\sqrt{3})\sim 1.45$&&$ \calp_2=[\bv_3, \bv_4, \bv_5, {\sf A}(\bv_2)]$\\
&& &&$ \calp_3=[\bv_3,\bv_5, \bv_7, {\sf A}(\bv_2), {\sf A}(\bv_3)]$\\ 

\hline && && $\calp_1=[\bv_1, \bv_2, \bv_3, \bv_7]$ \\
$\calt_7$&&$\frac{1}{121}(72+43\sqrt{3})<\beta<\frac{1}{11}(9+4\sqrt{3})$&& $\calp_2=[\bv_2, \bv_3, \bv_5, \bv_7]$\\
&& &&$\calp_3=[\bv_3,\bv_5,\bv_7, {\sf A}(\bv_2)]$\\
&& &&$\calp_4=[\bv_3, \bv_4,\bv_5, {\sf A}(\bv_2)]$\\

\hline && && $\calp_1=[\bv_1, \bv_2, \bv_3, \bv_7]$ \\
$\calt_8$&& $\beta = \frac{1}{121}(72+43\sqrt{3}) \sim 1.21$&&$\calp_2=[\bv_2, \bv_3, \bv_5, \bv_7]$\\
&& &&$\calp_3=[\bv_3,\bv_4,\bv_5,\bv_7,{\sf A}(\bv_2)]$ \\  

\hline && && $\calp_1=[\bv_1, \bv_2, \bv_3, \bv_7]$ \\
$\calt_9$&&$(-21+13\sqrt{3})^{-1}<\beta<\frac{1}{121}(72+43\sqrt{3})$&& $\calp_2=[\bv_2, \bv_3, \bv_5, \bv_7]$\\
&& &&$\calp_3=[\bv_3,\bv_4,\bv_5,\bv_7]$\\
&& &&$\calp_4=[\bv_2, \bv_5, \bv_6, \bv_7]$\\

\hline && && $\calp_1=[\bv_1, \bv_2, \bv_3, \bv_7]$ \\
$\calt_{10}$&& $\beta = (-21+13\sqrt{3})^{-1} \sim 0.659$&&$\calp_2=[\bv_2, \bv_3, \bv_5, \bv_7]$\\
&& &&$\calp_3=[\bv_3,\bv_4,\bv_5,\bv_7]$\\
&& &&$\calp_4=[\bv_2, \bv_5, \bv_6, \bv_7,{\sf D}_2(\bv_7)]$\\ \hline 

\end{tabular}
\caption{More canonical tilings.}
\label{table:first ten}
 \end{table}
 
\begin{table}
\centering
\begin{tabular}{lclcl}  \\ \\ & &$\beta$ & & $\Gamma$-Generating Tiles \\
\hline \hline 
&& && $\calp_1=[\bv_3, \bv_4, \bv_5, {\sf C}(\bv_7)]$\\
$\calt_{11}$ && $\frac{1}{143}(48+25\sqrt{3})<\beta<(-21+13\sqrt{3})^{-1}$ && $\calp_2=[\bv_2, \bv_3, \bv_5, \bv_7]$ \\
&& && $\calp_3=[\bv_3, \bv_4, \bv_5, \bv_7]$\\
&& && $\calp_4=[\bv_5, \bv_6, \bv_7, {\sf D}_2(\bv_7)]$\\
&& && $\calp_5=[\bv_2, \bv_5, \bv_7,{\sf D}_2(\bv_7) ]$\\

\hline && && $\calp_1=[\bv_3, \bv_4, \bv_5, {\sf C}(\bv_7)]$\\
$\calt_{12}$ && $\beta = \frac{1}{143}(48+25\sqrt{3}) \sim 0.638$ && $\calp_2=[\bv_2, \bv_3, \bv_5, \bv_7,{\sf D}_2(\bv_7) ]$\\
&& && $\calp_3=[\bv_3, \bv_4, \bv_5, \bv_7]$\\
&& && $\calp_4=[\bv_5, \bv_6, \bv_7, {\sf D}_2(\bv_7)]$\\

\hline 
&& && $\calp_1=[\bv_3, \bv_4, \bv_5, {\sf C}(\bv_7)]$\\
$\calt_{13}$ && $\frac{1}{11}(6-\sqrt{3})<\beta<\frac{1}{143}(48+25\sqrt{3})$ && $\calp_2=[\bv_3,\bv_4,\bv_5,\bv_7]$\\
&& && $\calp_3=[\bv_3,\bv_5,\bv_7,{\sf D}_2(\bv_7)]$\\
&& && $\calp_4=[\bv_2, \bv_3,\bv_7,{\sf D}_2(\bv_7)]$\\
&& && $\calp_5=[\bv_5, \bv_6, \bv_7, {\sf D}_2(\bv_7)]$\\

\hline
&& && $\calp_1=[\bv_3, \bv_4, \bv_5, \bv_7, {\sf C}(\bv_7)]$\\
$\calt_{14}$ && $\beta = \frac{1}{11}(6-\sqrt{3}) \sim 0.39$ && $\calp_2=[\bv_3,\bv_5,\bv_7,{\sf D}_2(\bv_7)]$\\
&& && $\calp_3=[\bv_2, \bv_3,\bv_7,{\sf D}_2(\bv_7)]$\\
&& && $\calp_4=[\bv_5, \bv_6, \bv_7, {\sf D}_2(\bv_7)]$\\

\hline 
&& && $\calp_1=[\bv_3, \bv_5, \bv_7, {\sf C}(\bv_7)]$\\
$\calt_{15}$ && $\frac{1}{33}(3+5\sqrt{3})<\beta<\frac{1}{11}(6-\sqrt{3})$ && $\calp_2=[\bv_4, \bv_5, \bv_7, {\sf C}(\bv_7)]$\\
&& && $\calp_3=[\bv_3,\bv_5,\bv_7,{\sf D}_2(\bv_7)]$\\
&& && $\calp_4=[\bv_2, \bv_3,\bv_7,{\sf D}_2(\bv_7)]$\\
&& && $\calp_5=[\bv_5, \bv_6, \bv_7, {\sf D}_2(\bv_7)]$\\

\hline
&& && $\calp_1=[\bv_3, \bv_5, \bv_7, {\sf C}(\bv_7)]$\\
$\calt_{16}$ && $\beta=\frac{1}{33}(3+5\sqrt{3}) \sim 0.353$ && $\calp_2=[\bv_4, \bv_5, \bv_7, {\sf C}(\bv_7), {\sf D}_1^{\sf C}(\bv_7)]$\\
&& && $\calp_3=[\bv_3,\bv_5,\bv_7,{\sf D}_2(\bv_7)]$\\
&& && $\calp_4=[\bv_2, \bv_3,\bv_7,{\sf D}_2(\bv_7)]$\\
&& && $\calp_5=[\bv_5, \bv_6, \bv_7, {\sf D}_2(\bv_7)]$\\

\hline
&& && $\calp_1=[\bv_3, \bv_5, \bv_7, {\sf C}(\bv_7)]$\\
$\calt_{17}$ && $ \frac{1}{143}(24+7\sqrt{3}) <\beta<\frac{1}{33}(3+5\sqrt{3})$ && $\calp_2=[\bv_5, \bv_7, {\sf C}(\bv_7), {\sf D}_1^{\sf C}(\bv_7)]$\\
&& && $\calp_3=[\bv_3,\bv_5,\bv_7,{\sf D}_2(\bv_7)]$\\
&& && $\calp_4=[\bv_2, \bv_3,\bv_7,{\sf D}_2(\bv_7)]$\\
&& && $\calp_5=[\bv_5, \bv_6, \bv_7, {\sf D}_2(\bv_7)]$\\

\hline
&& && $\calp_1=[\bv_3, \bv_5, \bv_7, {\sf C}(\bv_7),{\sf D}_2(\bv_7), {\sf D}_2{\sf C}(\bv_7) ]$\\
$\calt_{18}$ && $\beta=\frac{1}{143}(24+7\sqrt{3}) \sim 0.252$ && $\calp_2=[\bv_5, \bv_7, {\sf AC}(\bv_7), {\sf D}_2(\bv_7)]$\\

\hline
&& && $\calp_1=[\bv_3, \bv_7, {\sf C}(\bv_7), {\sf D}_2(\bv_7), {\sf D}_2{\sf C}(\bv_7) ]$\\
$\calt_{19}$ && $\frac{1}{33}(6-\sqrt{3}) <\beta<\frac{1}{143}(24+7\sqrt{3})$ && $\calp_2=[\bv_5, \bv_7, {\sf C}(\bv_7), {\sf D}_2(\bv_7), {\sf D}_2{\sf C}(\bv_7) ]$\\
&& && $\calp_3=[\bv_5, \bv_7, {\sf AC}(\bv_7),{\sf D}_2(\bv_7) ]$\\

\hline
&& && $\calp_1=[\bv_3, \bv_7, {\sf C}(\bv_7), {\sf D}_2(\bv_7), {\sf D}_2{\sf C}(\bv_7) ]$\\
$\calt_{20}$ && $\beta=\frac{1}{33}(6-\sqrt{3})\sim 0.129 $ && $\calp_2=[\bv_5, \bv_7, {\sf C}(\bv_7), {\sf D}_2(\bv_7), {\sf D}_2{\sf C}(\bv_7), {\sf AC}(\bv_7) ]$\\

\hline
&& && $\calp_1=[\bv_3, \bv_7, {\sf C}(\bv_7), {\sf D}_2(\bv_7), {\sf D}_2{\sf C}(\bv_7) ]$\\
$\calt_{21}$ && $ \beta<\frac{1}{33}(6-\sqrt{3}) $ && $\calp_2=[ \bv_7, {\sf C}(\bv_7), {\sf D}_2(\bv_7), {\sf D}_2{\sf C}(\bv_7),{\sf AC}(\bv_7)]$\\
&& && $\calp_3=[\bv_5, {\sf C}(\bv_7), {\sf D}_2{\sf C}(\bv_7),{\sf AC}(\bv_7)]$\\

\hline 
\end{tabular} 
\caption{The remaining tilings.} 
\label{table:last eleven}
\end{table}

From our earlier observations, it remains only to check that there are no symmetries of the even numbered tilings that interchange vertices of $\Gamma.\bv_1$ with those of $\Gamma.\bv_7$.  The arguments for each of the  cases are very similar, we start with $\calt_2$ as a model case.  Recall that $\bv_2$, $\bv_4$, $\bv_5$, and $\bv_6$ are each $\Gamma$-equivalent to $\bv_1$.

Suppose there exists $\gamma \in \mathrm{Sym}(\calt_2)$ exchanging $\Gamma.\bv_1$ with $\Gamma.\bv_7$.  Then $\gamma(\calp_4)$ is a tile of $\calt_2$ with exactly five vertices.  $\calp_4$ is the unique generating tile with five vertices so there exists $\gamma' \in \Gamma$ with $\gamma' \gamma(\calp_4)=\calp_4$.  Since $\gamma' \in \Gamma$ it preserves the cusp classes of the vertices of tiles in $\calt_2$.  On the other hand, since $\gamma$ exists, the minimal orbifold must have exactly two cusps, hence $\gamma' \gamma$ must exchange the vertices of $\calp_4$ in $\Gamma.\bv_1$ with those in $\Gamma.\bv_7$.  But our explicit description implies that there are three of the former and only one of the latter, a contradiction.

The same sort of argument also works for the remaining even numbered triangulations with the exception of $\calt_{12}$ and $\calt_{14}$.  Consider the case of $\calt_{12}$.  Suppose there is $\gamma \in \mathrm{Sym}(\calt_{12})$ exchanging $\Gamma.\bv_1$ with $\Gamma.\bv_7$. Arguing as before, we have an element $\delta \in \text{Comm}(\Gamma)$ with $\delta(\calp_2)=\calp_2$ and which interchanges its vertices in $\Gamma.\bv_1$ with those in $\Gamma.\bv_7$.    Since $\calp_2$ has two vertices in $\Gamma.\bv_1$ and two in $\Gamma.\bv_7$, we have not yet arrived at a contradiction.  But such a $\delta$ still cannot exist since it can be seen that the two vertices in $\Gamma.\bv_7$ are connected by an edge but those in $\Gamma.\bv_1$ are not.

The argument for $\calt_{14}$ follows the outline of the argument for $\calt_{12}$.  Here $\calp_1$ is the unique generating tile with five vertices, its two vertices in $\Gamma.\bv_1$ are connected by an edge, and those in $\Gamma.\bv_7$ are not.
\endproof

\proof[Proof of Proposition \ref{prop:comm}]
By Lemma \ref{lem:sym}, we have $\text{Comm}(\Gamma) = \text{Sym}(\calt_{18})$ so to prove the theorem we need to show $\text{Sym}(\calt_{18})=\Gamma$.  We already know that $\Gamma \subset \text{Sym}(\calt_{18})$.  

Suppose that $\gamma \in \text{Sym}(\calt_{18})-\Gamma$ is non-trivial.  Since $\calt_{18}$ is $\Gamma$-generated by $\calp_1$ and $\calp_2$ and these two polyhedra have different numbers of vertices, we may assume that $\gamma(\calp_1)=\calp_1$.  By composing $\gamma$ with ${\sf d}_2$, if necessary, we may assume that $\gamma$ is orientation preserving.  $\calp_1$ has one vertex in $\Gamma.\bv_1$, one in $\Gamma.\bv_3$, and four in $\Gamma.\bv_7$; thus by Lemma \ref{lem:sym}, $\gamma$ fixes $\bv_5$ (which is in $\Gamma.\bv_1$) and $\bv_3$.  

Using our embedding in the upper half space model, the vertices of $\calp_1$ in $\Gamma.\bv_7$ are taken to: 
\begin{align*}
& r_7=\frac{1+i}{2}, && {\sf c}(r_7) =\frac{1+i(2+\sqrt{3})}{2}, && {\sf d}_2{\sf c}(r_7)=\frac{3+i(2+\sqrt{3})}{2}, && {\sf d}_2(r_7) = \frac{3+i}{2}.  \end{align*}
Since $\gamma$ is an elliptic isometry preserving $\calp_1$ and fixing $r_3=\infty$ and $r_5 = 1 + \frac{i}{2}(1+\sqrt{3})$, it must act as a cyclic permutation on the set $\{r_7, \,{\sf c}(r_7), \,{\sf d}_2{\sf c}(r_7), \,{\sf d}_2(r_7)\}$.  But it is easy to see that the axis of $\gamma$ is not perpendicular to the plane that these points span, so this is impossible.
\endproof

\bibliographystyle{plain}

\end{document}